\DeclareSymbolFont{sfoperators}{OT1}{bch}{m}{n} \DeclareSymbolFontAlphabet{\mathsf}{sfoperators} \makeatletter\def\operator@font{\mathgroup\symsfoperators}\makeatother 
\providecommand \@dotsep{5} \def\listtodoname{List of Todos} \def\listoftodos{\@starttoc{tdo}\listtodoname} \makeatother 
\patchcmd{\@startsection}{\@afterindenttrue}{\@afterindentfalse}{}{}             
\patchcmd{\part}{\bfseries}{\bfseries\LARGE}{}{}
\patchcmd{\section}{\scshape}{\bfseries}{}{}\renewcommand{\@secnumfont}{\bfseries} 
\patchcmd{\@settitle}{\uppercasenonmath\@title}{\large}{}{}
\patchcmd{\@setauthors}{\MakeUppercase}{}{}{}
\newtheorem{thm}{Theorem}[section] 
\newaliascnt{lemma}{thm}\newtheorem{lemma}[lemma]{Lemma}\aliascntresetthe{lemma}
\newaliascnt{cor}{thm}\newtheorem{cor}[cor]{Corollary}\aliascntresetthe{cor}
\newaliascnt{prop}{thm}\newtheorem{prop}[prop]{Proposition}\aliascntresetthe{prop}
\newaliascnt{conj}{thm}\aliascntresetthe{conj}
\newaliascnt{problem}{thm}\newtheorem{problem}[problem]{Problem}\aliascntresetthe{problem}
\newaliascnt{question}{thm}\aliascntresetthe{question}
\newaliascnt{propA}{thmA}\aliascntresetthe{propA}
\newtheorem*{thm*}{Theorem}
\newtheorem*{lem*}{Lemma}
\newtheorem*{cor*}{Corollary}
\newtheorem*{problem*}{Problem}
\theoremstyle{definition}
\newaliascnt{df}{thm}\newtheorem{df}[df]{Definition}\aliascntresetthe{df}
\newaliascnt{rem}{thm}\newtheorem{rem}[rem]{Remark}\aliascntresetthe{rem}
\newaliascnt{ex}{thm}\newtheorem{ex}[ex]{Example}\aliascntresetthe{ex}
\newtheorem*{df*}{Definition}
\newtheorem*{ex*}{Example}
\newtheorem*{rem*}{Remark}
\numberwithin{equation}{section}
\newcommand{\nocontentsline}[3]{}
\newcommand{\tocless}[2]{\bgroup\let\addcontentsline=\nocontentsline#1{#2}\egroup}
\DeclareMathOperator{\id}{id}
\DeclareMathOperator{\Hom}{Hom}
\DeclareMathOperator{\Maps}{Maps}
\DeclareMathOperator{\Aut}{Aut}
\DeclareMathOperator{\rk}{rk}
\DeclareMathOperator{\cork}{cork}
\DeclareMathOperator{\colim}{colim}
\DeclareMathOperator{\upR}{R}
\DeclareMathOperator{\Dr}{Dr}
\DeclareMathOperator{\Emb}{{Emb}}
\DeclareMathOperator{\USL}{{USL}}
\DeclareMathOperator{\Sym}{{Sym}}
\DeclareMathOperator{\sign}{{sign}}
\DeclareMathOperator{\ord}{{ord}}
\DeclareMathOperator{\Pastures}{{Pastures}}
\DeclareMathOperator{\Fields}{{Fields}}
\DeclareMathOperator{\Sets}{{Sets}}
\newcommand\D{{\mathbb D}}
\newcommand\F{{\mathbb F}}
\renewcommand\H{{\mathbb H}}
\newcommand\K{{\mathbb K}}
\newcommand\Q{{\mathbb Q}}
\newcommand\R{{\mathbb R}}
\renewcommand\S{{\mathbb S}}
\newcommand\T{{\mathbb T}}
\newcommand\U{{\mathbb U}}
\newcommand\V{{\mathbb V}}
\newcommand\Z{{\mathbb Z}}
\newcommand\cC{{\mathcal C}}
\newcommand\cF{{\mathcal F}}
\newcommand\cH{{\mathcal H}}
\newcommand\cL{{\mathcal L}}
\newcommand\cS{{\mathcal S}}
\newcommand\cX{{\mathcal X}}
\renewcommand\geq{\geqslant}
\renewcommand\leq{\leqslant}
\newcommand{\univ}{\textup{univ}}
\newcommand{\Funpm}{{\F_1^\pm}}
\newcommand{\gen}[1]{\langle #1 \rangle}
\newcommand{\pastgen}[2]{#1 \!\sslash\!\gen{#2}}
\newcommand{\cross}[5]{\mathchoice{\scalebox{1.3}{$\big[$}\,\raisebox{1pt}{$\begin{matrix}{\scalebox{0.9}{$#1$}}\hspace{-5pt}&{\scalebox{0.9}{$#2$}}\\[-2pt]{\scalebox{0.9}{$#3$}}\hspace{-5pt}&{{\scalebox{0.9}{$#4$}}}\end{matrix}$}\,\scalebox{1.3}{$\big]_{#5}$}}{\big[\begin{smallmatrix}{#1}&{#2}\\{#3}&{#4}\end{smallmatrix}\big]_{#5}}{}{}}   
\newcommand{\crossinv}[5]{\mathchoice{\scalebox{1.3}{$\big[$}\,\raisebox{1pt}{$\begin{matrix}{\scalebox{0.9}{$#1$}}\hspace{-5pt}&{\scalebox{0.9}{$#2$}}\\[-2pt]{\scalebox{0.9}{$#3$}}\hspace{-5pt}&{{\scalebox{0.9}{$#4$}}}\end{matrix}$}\,\scalebox{1.3}{$\big]_{#5}^{-1}$}}{\big[\begin{smallmatrix}{#1}&{#2}\\{#3}&{#4}\end{smallmatrix}\big]_{#5}^{-1}}{}{}}   
\newcommand{\tripleratio}[7]{\mathchoice{\scalebox{1.3}{$\big[$}\,\raisebox{1pt}{$\begin{matrix}{\scalebox{0.9}{$#1$}}\hspace{-5pt}&{\scalebox{0.9}{$#2$}}\hspace{-5pt}&{\scalebox{0.9}{$#3$}}\\[-2pt]{\scalebox{0.9}{$#4$}}\hspace{-5pt}&{\scalebox{0.9}{$#5$}}\hspace{-5pt}&{{\scalebox{0.9}{$#6$}}}\end{matrix}$}\,\scalebox{1.3}{$\big]_{#7}$}}{\big[\begin{smallmatrix}{#1}&{#2}&{#3}\\{#4}&{#5}&{#6}\end{smallmatrix}\big]_{#7}}{}{}}   
\newcommand{\minus}{\backslash}
\newcommand{\minor}[2]{\backslash #1 / #2}
\renewcommand\emptyset\varnothing
\title{A modern perspective on Tutte's homotopy theorem}
\author{Matthew Baker}
\address{\rm Matthew Baker, School of Mathematics, Georgia Institute of Technology, Atlanta, USA}
\email{mbaker@math.gatech.edu}
\author{Tong Jin}
\address{\rm Tong Jin, Department of Mathematics, Vanderbilt University, Nashville, USA}
\email{tong.jin@vanderbilt.edu}
\author{Oliver Lorscheid}
\address{\rm Oliver Lorscheid, University of Groningen, the Netherlands}
\email{o.lorscheid@rug.nl}
\begin{document}

\begin{abstract}
We begin with a review of Tutte's homotopy theory, which concerns the structure of certain graph associated to a matroid (together with some extra data). Concretely, Tutte's path theorem asserts that this graph is connected, and his homotopy theorem asserts that every cycle in the graph is a composition of ``elementary cycles'', which come in four different flavors. We present an extended version of the homotopy theorem, in which we give a more refined classification of the different types of elementary cycles. We explain in detail how the path theorem allows one to prove that the foundation of a matroid (in the sense of Baker--Lorscheid) is generated by universal cross-ratios, and how the extended homotopy theorem allows one to classify all algebraic relations between universal cross-ratios. The resulting ``fundamental presentation'' of the foundation was previously established in \cite{Baker-Lorscheid20}, but the argument here is more self-contained. We then recall a few applications of the fundamental presentation to the representation theory of matroids. Finally, in the most novel but also the most speculative part of the paper, we discuss what a ``higher Tutte homotopy theorem'' might look like, and we present some preliminary computations along these lines.
\end{abstract}

\maketitle

\begin{center}
 \textit{Dedicated to Andreas Dress.}
\end{center}

\begin{small}
 \tableofcontents
\end{small}


\section*{Introduction}
\label{section: introduction}

Of William Tutte's many seminal contributions to matroid theory, his ``homotopy theory'' is undoubtedly one of the deepest, and yet it remains relatively poorly understood. One reason is that Tutte's papers are written using rather archaic terminology; another is that the most important modern books and surveys do not discuss this aspect of Tutte's work. The omission of Tutte's homotopy theory from \cite{Oxley11}, for example, is presumably due to the fact that it is rather complicated to state and prove, and simpler proofs were subsequently found for the main applications that Tutte originally had in mind (e.g., his excluded minor characterization of regular matroids). Nevertheless, as we shall argue in this paper, Tutte's homotopy theory remains an important result; furthermore, we believe that it may be the tip of an iceberg, in the sense that there may very well be ``higher homotopy theorems'' still awaiting discovery. 

\medskip

The ``modern'' perspective on Tutte's homotopy theory mentioned in the title of our paper is, in a nutshell, that it allows us to write down generators and relations for the {\em foundation} of a matroid (with the generators in question being {\em canonical}). The foundation $F_M$ of a matroid $M$ is a {\em pasture} (an algebraic structure generalizing fields) possessing the universal property that rescaling classes of representations of $M$ over a field $F$ (or, more generally, over any pasture $P$) are naturally in one-to-one correspondence with homomorphisms from $F_M$ to $F$ (resp. $P$). If $F_M$ is given to us in terms of generators and relations, then computing the set of homomorphisms from $F_M$ to some pasture $P$ becomes a manageable task, at least when $P$ has a simple structure.\footnote{In general --- for example, if $P=\Q$ is the field of rational numbers --- computing $\Hom(F_M,\Q)$ might be very difficult, perhaps even algorithmically unsolvable, even if we know explicit generators and relations for $F_M$, thanks to Mn\"ev's universality theorem and the conjectured negative answer to Hilbert's 10th problem over $\Q$; see \cite{Sturmfels87,Mnev88}.} This perspective on Tutte's homotopy theory has its origins in the pioneering work of Andreas Dress and his student Walter Wenzel \cite{Dress-Wenzel89,Dress-Wenzel90,Wenzel89,Wenzel91}, in which they demonstrated for the first time that Tutte's homotopy theory could be ``encoded'', loosely speaking, in an algebraic way. More precisely, Dress and Wenzel introduced finitely generated abelian groups associated to $M$, which they called the {\em Tutte group} and {\em inner Tutte group} of $M$, respectively, and they showed that Tutte's homotopy theory could be used to better understand these groups. Their work was further extended in the paper of Gelfand, Rybnikov, and Stone \cite{Gelfand-Rybnikov-Stone95}, which was the direct inspiration for our own approach to Tutte's homotopy theory and its applications.\footnote{We owe a debt of gratitude here to Rudi Pendavingh, who first suggested to us that the results of \cite{Gelfand-Rybnikov-Stone95} might be clarified and enhanced by reformulating their multiplicative relations between cross-ratios in terms of the foundation of certain special embedded minors. This suggestion ended up being very fruitful, as the present paper hopefully makes clear.}

\medskip

More precisely, Tutte's path theorem asserts that a certain graph $G = G_{M,\Gamma}$, associated to a pair consisting of a matroid $M$ on $E$ and a modular cut\footnote{For the purposes of this introduction, it suffices to note that modular cuts are in one-to-one correspondence with the single-element extensions of $M$, i.e., matroids $\hat{M}$ together with a non-coloop $a$ such that $\hat{M} \backslash a = M$.} $\Gamma$ in the lattice of flats of $M$, is connected if the matroid $M$ is connected. In the simplest case where $\Gamma = \{ E \}$ is the trivial modular cut, the graph in question has vertices corresponding to hyperplanes of $M$ and edges corresponding to modular pairs of hyperplanes whose union is not all of $E$. The connectivity of $G$, which is equivalent to the statement that $H_0(G,\Z)$ is one-dimensional, is the most non-trivial ingredient needed to show that the foundation of $M$ is generated by universal cross-ratios. Similarly, Tutte's homotopy theorem asserts that every cycle in $G$ can be decomposed into ``elementary cycles'' belonging to one of four different kinds. As already noted in \cite{Bjorner95}, this result can be reformulated as the vanishing of $H_1(\Sigma,\Z)$ for a certain two-dimensional cell complex $\Sigma$ whose 1-skeleton is $G$. This result, in turn, is the most non-trivial ingredient needed to classify all relations between the universal cross-ratios which hold in the foundation of $M$.

\subsection*{Structure of the paper}

The present paper is partly expository, but also contains some previously unpublished material. Our first goal is to provide a modern formulation of Tutte's two main theorems from \cite{Tutte58a}: the {\em path theorem} (\autoref{thm: Tutte's path theorem}) and the {\em homotopy theorem} (\autoref{thm: Tutte's homotopy theorem}). By ``modern'', we simply mean in this context that we formulate these results in terms of the {\em lattice of flats} of a matroid $M$, rather than as Tutte did, in terms of the lattice of unions of circuits of $M$ (a less familiar object to modern readers).\footnote{Tutte's lattice coincides with the opposite lattice of the dual matroid $M^*$, so in principle there is no difficulty translating between the two perspectives, but in practice it can be a challenge.} The translation between the two points of view has already been discussed in \cite{Kung86}, but we recall it here for the reader's convenience. 

\medskip

We provide a complete, self-contained proof in this paper of Tutte's path theorem, relegating certain technical lemmas about indecomposable flats to \autoref{appendix:flats}. Unfortunately, providing a complete and self-contained proof of the homotopy theorem would take us too far afield from its applications. Therefore, we provide an outline of the structure of the proof in \autoref{sec:tutte-homotopy-outline}, which the reader who wishes to read the complete proof in \autoref{appendix: proof of the homotopy theorem} will hopefully find helpful. 

\medskip

In addition to giving a precise statement of Tutte's homotopy theorem in the modern language of flats, we also provide a generalization which we call the {\em extended homotopy theorem} (\autoref{thm:extended-homotopy-theorem}). While this result has not previously appeared in the literature, it is in some sense implicit in the earlier work of \cite{Gelfand-Rybnikov-Stone95} and \cite{Baker-Lorscheid20}. The extended homotopy theorem provides a more precise classification of the different cases which arise in Tutte's original homotopy theorem, and it is this extension which is directly relevant for the applications we have in mind.

\medskip

We then turn to a discussion of different presentations for the foundation of a matroid. Since our paper is designed to be accessible to readers with only a rudimentary knowledge of matroid theory, we begin by recalling the definition of a pasture and some of the basic constructions one can make with them (e.g., tensor products and free algebras). We provide a number of illustrative examples. We then define the foundation of a matroid in terms of a suitable universal property, as described above. In particular, this requires defining matroid representations over a pasture $P$ and rescaling classes thereof; this material is adapted largely from \cite{Baker-Lorscheid20}. We provide a new proof, different from the one in \cite{Baker-Lorscheid20}, that the foundation $F_M$ of a matroid $M$ exists, by giving an explicit algorithm to write down a presentation of $F_M$ in terms of certain generators and relations encoded in the ``hyperplane incidence matrix'' of $M$. This description of $F_M$ is implicit in \cite{Chen-Zhang}, but we provide many more details. It is important to note that this algorithm does {\em not} require Tutte's homotopy theory. We give several examples illustrating how to implement the algorithm, thereby explicitly calculating the foundation of important matroids such as $U_{2,4}, U_{2,5}$, and the Fano matroid $F_7$, and we observe in the process that in all of these cases the foundation is generated by ``universal cross-ratios'' (which we rigorously define). 

\medskip

The proof that universal cross-ratios generate the foundation for an arbitrary matroid $M$ (\autoref{thm: the foundation is generated by cross ratios}) requires Tutte's path theorem. The proof proceeds by induction on the size of $M$; replacing $M$ by its dual if necessary, one chooses an element $a \in M$ such that $M' := M \backslash a$ is connected, assumes that cross-ratios determine the foundation of $M'$, and uses this to ``bootstrap'' from $M'$ to $M$. The bootstrapping step involves repeatedly choosing paths between pairs of vertices in the Tutte graph $G_{M',a}$ associated to $M'$ and the modular cut corresponding to the single-element extension $M$ of $M'$; the path theorem is used to show that such paths exist.

\medskip

Previously, in \cite[Theorem 4.5]{Baker-Lorscheid21b}, we referred to the work of Wenzel \cite[Proposition 6.4]{Wenzel91} in the course of proving that universal cross-ratios generate the foundation, noting that the multiplicative group of the foundation coincides with the inner Tutte group. However, unpacking this argument requires understanding Wenzel's notation and terminology, which is different from ours. In any case, we believe that the argument given in the present paper is simpler and more intuitive than the one given in the union of \cite{Baker-Lorscheid21b} and \cite{Wenzel91}.

\medskip

We then show that Tutte's homotopy theorem, or more precisely the extended homotopy theorem, can be used to explicitly describe all relations in $F_M$ between universal cross-ratios, thereby yielding a presentation of $F_M$ which is different from, and more natural than, the presentation stemming from the hyperplane incidence matrix; cf.~\autoref{thm:relations}. We call this the ``fundamental presentation'' of $F_M$.\footnote{Technically speaking, we use the term {\em fundamental presentation} to refer to the statement that the foundation of $M$ is the colimit of the foundations of all special embedded minors of $M$, where ``special'' means isomorphic to $U_{2,4}, U_{2,5}, U_{3,5}, C_5, F_7$, or $F_7^*$. This implies, in a rather straightforward way, the GRS-style presentation in terms of relations between cross-ratios, as formalized in \autoref{thm:relations}.} The proof of this result in \cite[Theorem 4.18]{Baker-Lorscheid20} cites the work of Gelfand, Rybnikov, and Stone \cite[Theorem 4]{Gelfand-Rybnikov-Stone95}, which in turn cites the work of Dress and Wenzel \cite{Dress-Wenzel89,Dress-Wenzel90} and Tutte's homotopy theorem \cite{Tutte58a}. Each of those papers uses different notation and terminology, making it extremely difficult to understand the full proof without a concerted effort to master all the prior literature on the subject. One of the principal contributions of the present paper, we hope, is to make the proof of the fundamental presentation more accessible, understandable, and self-contained. In the process, we highlight the role of the extended homotopy theorem, which was merely implicit in the prior literature.

\medskip

Like the proof of \autoref{thm: the foundation is generated by cross ratios}, the proof of \autoref{thm:relations} proceeds by induction on the size of $M$; again, replacing $M$ by its dual if necessary, one chooses an element $a \in M$ such that $M' := M \backslash a$ is connected. We then assume that we are given a $P$-representation $\varphi'$ of $M'$ (for some pasture $P$) and a collection of cross-ratios satisfying certain necessary relations, and our task is to prove that there exists a representation $\varphi$ of $M$ extending $\varphi'$ and having the desired cross-ratios. Again, the bootstrapping step involves choosing paths between pairs of vertices in the Tutte graph $G_{M',a}$ in order to define the extension $\varphi$ in terms of the given cross-ratios, but this time instead of just knowing that a path exists, it is necessary to show that choosing different paths leads to the same result. Our extended version of Tutte's homotopy theorem reduces this verification to the case where the difference between the two paths is an elementary cycle whose corresponding configuration of hyperplanes ``comes from'' a special embedded minor $N$ of $M$, and this can be checked inside the foundation of $N$, where the relations between cross-ratios are exactly the relations we are given.

\medskip

Next, we give several applications of the fundamental presentation of $F_M$ to the representation theory of matroids. Most of these are taken from \cite{Baker-Lorscheid20} and \cite{Baker-Lorscheid21}. For example, we give a self-contained proof, using the fundamental presentation, of Tutte's theorem (\autoref{thm: excluded minors for regular matroids}) that a matroid is regular if and only if it has no minors of type $U_{2,4}$, $F_7$, or $F_7^\ast$. As mentioned above, this was one of Tutte's original motivations for developing his homotopy theory (though it was later reproved by Gerards \cite{Gerards89} without Tutte's homotopy theory). Tutte's original deduction of the excluded minor theorem for regular matroids from the homotopy theorem, as presented in \cite{Tutte58b}, involved elaborate casework and detailed analysis; we believe the proof presented below to be clearer and more conceptually satisfying. We also recall the statement and proof of the ``structure theorem for foundations of matroids without large uniform minors'' (\autoref{thm: structure theorem for foundations of wlum matroids}) from \cite{Baker-Lorscheid20}. This yields, as a particular consequence, the excluded minor characterization of ternary matroids, which was originally proved by Bixby and Reid \cite{Bixby79} using Tutte's homotopy theory (and later reproved by Seymour without the use of Tutte's theory \cite{Seymour79}).

\medskip

In addition to these ``classical'' applications, we also present the short, conceptual proof from \cite{Baker-Lorscheid20} of (a generalization of) the Lee--Scobee theorem~\cite{Lee-Scobee99}, which asserts that a matroid is both ternary and orientable if and only if it is dyadic. And we provide a new result, \autoref{thm: Dressians for wlum matroids}, which may be of independent interest: it classifies all possibilities, up to homeomorphism, for the {\em Dressian}\footnote{The {\em Dressian} of a matroid $M$ is a topological space whose underlying set consists of all valuated matroids with underlying matroid $M$.} of a matroid $M$ without $U_{2,5}$ or $U_{3,5}$ minors. 

\medskip

Last, but not least, in the final --- and most original --- section of the present paper, we discuss what a ``higher homotopy theorem'' might look like. As alluded to above, Tutte's path theorem asserts that a certain graph $G_{M,\Gamma}$ 
(which, for the purposes of generalization, we repackage as a 1-dimensional simplicial complex $\Sigma^1_{M,\Gamma}$) associated to a pair consisting of a connected matroid $M$ and a modular cut $\Gamma$ in $M$ is connected. We give a careful definition of a two-dimensional simplicial complex $\Sigma^2_{M,\Gamma}$ with the property that Tutte's homotopy theorem is equivalent to the assertion that $H_1(\Sigma^2_{M,\Gamma},\Z)=0$. We then speculate about how one might define a corresponding three-dimensional complex $\Sigma^3_{M,\Gamma}$ with the property that $H_2(\Sigma^3_{M,\Gamma},\Z)=0$, etc. The key point here is that the definition of $\Sigma^3_{M,\Gamma}$ should involve only a finite number of ``types'' of $3$-simplices, independent of the matroid $M$ (and modular cut) that we start with. We provide some preliminary computations illustrating certain types of $3$-simplices which would need to be included in the definition of $\Sigma^3_{M,\Gamma}$ in order for such a ``higher homotopy theorem'' to be true. We leave the conjectural determination of a complete list of such types of $3$-simplices, along with a proof of the resulting conjecture, to future work.

\medskip

Although we do not explore concrete potential applications of a higher version of Tutte's homotopy theory in this paper, we expect that such a theory could be quite useful. The fact that $\Sigma^1_{M,\Gamma}$ is connected implies that the foundation of $M$ is generated by universal cross-ratios, and the fact that $\Sigma^2_{M,\Gamma}$ is simply connected allows us to classify the relations between these generators. It is natural to expect that $H_2$ of the (yet to be precisely defined) complex $\Sigma^3_{M,\Gamma}$ is related to ``syzygies'' --- i.e., relations between relations --- among the universal cross-ratios. We leave it to future work to make this more precise, to rigorously establish such connections, and to deduce concrete results about matroid representations from such considerations. For now, we content ourselves with the following observation: in trying to apply our fundamental presentation for $F_M$ to concrete open problems about matroid representations (for example, the classification of quaternary orientable matroids \cite{Robbins-Slilaty23}), we have repeatedly run into difficulties related to the way in which the special embedded minors of a matroid $M$ ``interact'' with one another.\footnote{More precisely, we require additional information about the fundamental diagram over which the colimit is taken in the fundamental presentation of $F_M$.} Conjecturally, we believe that such information should be encoded in a suitable higher version of Tutte's homotopy theorem.

\medskip

\subsection*{Acknowledgements:} The authors thank Ju\v s Kocutar for several corrections and useful observations on early drafts of the paper. In particular, his comments led to several improvements in \autoref{section: towards higher homotopy theorems}.
The first author was partially supported by NSF grant DMS-2154224 and a Simons Fellowship in Mathematics (1037306, Baker).
The second author was also partially supported by NSF grant DMS-2154224.


\section{Tutte's homotopy theory}
\label{section: Tutte's homotopy theorem}

In this section, we describe Tutte's homotopy theory in detail. We begin with some background information on the lattice of flats of a matroid, and then state and prove Tutte's path theorem. We then formulate Tutte's homotopy theorem and provide a brief outline of the proof. Finally, we state and prove an extended version of the homotopy theorem which lends itself more readily to applications than Tutte's original formulation.

\subsection{Embedded minors and upper sublattices}
\label{subsection: embedded minors and upper sublattices}

Let $M$ be a matroid on the finite set $E$. 
We denote by $\gen{S}_{M}$ the closure of a subset $S \subseteq E$ in $M$.

An \emph{embedded minor of $M$} is a minor $M\minor JI$ of $M$ together with a fixed choice of an independent subset $I$ and a coindependent subset $J$ of $M$. Note that by the ``scum theorem'' (\cite[Lemma 3.3.2]{Oxley11}), every minor of $M$ is isomorphic to an embedded minor of $M$. 

Let $\Lambda = \Lambda_M$ be the lattice of flats of $M$, which is a geometric lattice, i.e., $\Lambda$ is finite, atomistic, and semimodular. 
We write $F_1 \lor F_2$ for the join of two flats $F_1$ and $F_2$, which is the smallest flat containing their union, i.e., $F_1 \lor F_2 = \langle F_1 \cup F_2 \rangle$.
The meet $F_1 \wedge F_2$ is given simply by the intersection $F_1 \cap F_2$, and we use these two notations interchangeably.

We denote by $\Lambda^{(d)}$ the set of all corank $d$ flats of $M$. In particular, $\cH = \Lambda^{(1)}$ is the set of all hyperplanes. An \emph{upper sublattice of $\Lambda$} is a geometric sublattice $\Lambda'$ of $\Lambda$ whose rank is equal to the corank of its bottom element as an element in $\Lambda$.

The lattice of flats $\Lambda_N$ of an embedded minor $N=M\minor JI$ of $M$ is canonically isomorphic to the sublattice of $\Lambda$ that consists of all flats $F\in\Lambda$ with $I\subset F$ and $J\cap F=\emptyset$. Since $J$ is coindependent, $E-J$ is spanning and thus $\Lambda_N$ is an upper sublattice. This defines a map $\Psi\colon\Emb_M\to\USL_\Lambda$ from the set $\Emb_M$ of embedded minors of $M$ to the set $\USL_\Lambda$ of upper sublattices of $\Lambda$. By \cite[Proposition 6.7]{Baker-Lorscheid-Zhang24}, the map $\Psi$ is surjective.

If $I$ and $J$ are disjoint subsets of the ground set $E$ of $M$, with $I$ independent and $J$ coindependent (or, equivalently, with $E-J$ spanning), 
we denote by $\Lambda\minor JI$ the upper sublattice of $\Lambda$ that consists of all flats of the form $\gen{S}$ of $\Lambda$ for which $I \subset S\subset E-J$.

By a theorem of Birkhoff \cite{Birkhoff67}, two matroids $M_1$ and $M_2$ have isomorphic lattices of flats if and only if $M_1$ and $M_2$ have the same simplification. In particular, the lattice of flats of a simple matroid $M$ determines $M$.


\subsection{Linear subclasses and modular cuts}
\label{subsection: linear subclasses and modular cuts}

Let $M$ be a matroid of rank $r$ on $E$ and $\Lambda$ its lattice of flats. 
Two flats $F_1, F_2 \in \Lambda$ form a {\em modular pair} in $M$ (or in $\Lambda$) if $\rk(F_1) + \rk(F_2) = \rk(F_1 \wedge F_2) + \rk(F_1 \lor F_2)$.

\begin{df}
A \emph{modular cut in $\Lambda$} is a subset $\Gamma$ of $\Lambda$ such that
\begin{enumerate}
 \item for all $F\in\Gamma$ and $F'\in\Lambda$ with $F \subseteq F'$, we have $F'\in\Gamma$; and
 \item for all modular pairs $(F_1, F_2)$ in $\Lambda$ with $F_1, F_2 \in \Gamma$, we have $F_1 \cap F_2 \in \Gamma$.
\end{enumerate}
\end{df}

An example of a modular cut is the collection $\Gamma=\{F\in\Lambda\mid F_0\subseteq F\}$ of all flats that contain a fixed flat $F_0$. The \emph{empty modular cut} is $\Gamma=\emptyset$ and the \emph{trivial modular cut} is $\Gamma=\{E\}$. If $\Lambda'$ is an upper sublattice of $\Lambda$, then $\Gamma'=\Gamma\cap\Lambda'$ is a modular cut of $\Lambda'$. 

A \emph{single-element extension} of a matroid $M$ is a matroid $\hat{M}$ of the same rank as $M$ such that $M = \hat{M} \minus a$ for some $a \in \hat{M}$.\footnote{In the literature, the extension $\hat M$ of $M$ by a coloop, which has rank $\rk\, M+1$, is often considered as a single-element extension of $M$ whose corresponding modular cut is $\Gamma=\emptyset$. This extension does not play a role in this text, and we ignore it without further mentioning.} The following result shows that modular cuts characterize the single-element extensions of a matroid; see~\cite[Theorem 7.2.3]{Oxley11}. 

\begin{prop}\label{prop: modular cuts and one element extensions}
 The association
 \[
  \big\{\text{\emph{single-element extensions of }} M \big\} \ \longrightarrow \ \big\{\text{\emph{non-empty modular cuts in }} \Lambda \big\}
 \]
 that sends a single-element extension $\widehat M$ of $M$ with $M=\widehat M\minus a$ to the associated modular cut $\Gamma=\{F\in\Lambda\mid a\in\gen{F}_{\widehat M}\}$ is a bijection.
\end{prop}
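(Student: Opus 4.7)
The plan is to prove the bijection by constructing an explicit inverse and checking both directions. First, I would verify that the proposed map is well-defined, i.e., that $\Gamma = \{F \in \Lambda \mid a \in \langle F \rangle_{\widehat M}\}$ really is a modular cut whenever $\widehat M$ is a single-element extension of $M$ by a non-coloop. Upward closure is immediate, since $F \subseteq F'$ forces $\langle F \rangle_{\widehat M} \subseteq \langle F' \rangle_{\widehat M}$. For the modular-pair closure, suppose $F_1, F_2 \in \Gamma$ form a modular pair in $\Lambda$. Then in $\widehat M$ we have $\rk_{\widehat M}(F_i \cup \{a\}) = \rk_M(F_i)$ and $\rk_{\widehat M}(F_i) = \rk_M(F_i)$, while $\rk_M$ and $\rk_{\widehat M}$ agree on subsets of $E$. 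Combining these equalities with submodularity of $\rk_{\widehat M}$ applied to $F_1 \cup \{a\}$ and $F_2 \cup \{a\}$ and using the modular pair hypothesis, one concludes $\rk_{\widehat M}((F_1 \cap F_2) \cup \{a\}) = \rk_M(F_1 \cap F_2)$, i.e., $F_1 \cap F_2 \in \Gamma$.

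Next, for injectivity, I would observe that $\widehat M$ is determined by knowing, for every subset $S \subseteq E$, whether $a \in \langle S \rangle_{\widehat M}$, which is the same as asking whether $\langle S \rangle_M \in \Gamma$. Concretely, the rank function of $\widehat M$ is recovered from $\Gamma$ by the formula $\rk_{\widehat M}(S) = \rk_M(S)$ if $a \notin S$, $\rk_{\widehat M}(S) = \rk_M(S - a)$ if $a \in S$ and $\langle S - a \rangle_M \in \Gamma$, and $\rk_{\widehat M}(S) = \rk_M(S - a) + 1$ otherwise. Hence two extensions with the same associated modular cut have the same rank function, so they coincide.

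For surjectivity, I would take an arbitrary non-empty modular cut $\Gamma$ and define $\rk_{\widehat M}$ on subsets of $E \cup \{a\}$ by the formula above. The main task is then to verify that this function satisfies the matroid rank axioms, and the crucial verification is submodularity $\rk_{\widehat M}(S_1 \cup S_2) + \rk_{\widehat M}(S_1 \cap S_2) \leq \rk_{\widehat M}(S_1) + \rk_{\widehat M}(S_2)$. The only configuration that is not immediately inherited from submodularity in $M$ is the one where both $\langle S_1 - a \rangle_M$ and $\langle S_2 - a \rangle_M$ lie in $\Gamma$ but $\langle (S_1 \cap S_2) - a \rangle_M$ does not. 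I would reduce this to the flat case by replacing each $S_i - a$ by its $M$-closure, then observe that semimodularity in $\Lambda$ forces the two flats in question to be a modular pair whenever equality fails; the second modular cut axiom then guarantees their intersection is in $\Gamma$, delivering the contradiction and hence the inequality. Once the rank axioms hold, it is immediate that $\widehat M \minus a = M$ and that the modular cut associated to $\widehat M$ is $\Gamma$, closing the bijection.

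The main obstacle is precisely this submodularity check: one has to trace through all the cases of how $a$ interacts with $S_1$ and $S_2$, and show that the second axiom of a modular cut is exactly what is needed to rescue submodularity in the one case where it could otherwise fail. All other parts reduce to routine bookkeeping once the language of flats, closures, and modular pairs is set up as in Sections~\ref{subsection: embedded minors and upper sublattices} and~\ref{subsection: linear subclasses and modular cuts}.
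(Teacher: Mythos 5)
The paper gives no proof of this proposition; it simply cites \cite[Theorem~7.2.3]{Oxley11}, which is Crapo's classification of single-element extensions. Your proposal is a faithful unpacking of that standard argument: you construct the inverse map via the explicit rank formula, check the modular-cut axioms, and reduce surjectivity to a submodularity verification where the second modular-cut axiom does the work. The overall structure, the rank formula, and the identification of the one dangerous configuration are all correct.

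There is one place that needs more care than your outline suggests. You say you would ``reduce to the flat case by replacing each $S_i - a$ by its $M$-closure'' and then apply the modular-pair axiom of $\Gamma$. But this is not a free replacement: setting $F_i = \langle S_i - a\rangle_M$, the modular-cut axiom hands you $F_1 \cap F_2 \in \Gamma$, whereas what you actually need is $\langle (S_1\cap S_2)-a\rangle_M \in \Gamma$, and in general $\langle (S_1\cap S_2)-a\rangle_M$ is only a subset of $F_1 \cap F_2$. The repair is available, though: writing $T_i = S_i - a$ and chaining
\[
 \rk_M(F_1)+\rk_M(F_2) \ \geq \ \rk_M(F_1\vee F_2)+\rk_M(F_1\cap F_2) \ \geq \ \rk_M(T_1\cup T_2)+\rk_M(T_1\cap T_2),
\]
one sees that in the only case where submodularity of $\rk_{\widehat M}$ could fail, both inequalities collapse to equalities. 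The first equality gives you the modular pair $(F_1,F_2)$; the second gives $\rk_M(T_1\cap T_2) = \rk_M(F_1\cap F_2)$, which (since $\langle T_1\cap T_2\rangle_M \subseteq F_1\cap F_2$ and both are flats) forces $\langle T_1\cap T_2\rangle_M = F_1\cap F_2$. With that identification in hand, the modular-cut axiom does deliver the contradiction as you intended. You should make this step explicit rather than treating the passage to closures as automatic.

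One further small omission: you should note that $\Gamma$ is non-empty (e.g., $E \in \Gamma$ because $a$ is not a coloop) so that the forward map really lands in the stated codomain, and conversely that non-emptiness of $\Gamma$ together with upward closure guarantees $E\in\Gamma$, so the constructed $\widehat M$ has the same rank as $M$ and $a$ is not a coloop. These are routine but worth a sentence each.
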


A \emph{linear subclass of $\Lambda$} (or of $M$) is a collection $\cL\subset\cH$ of hyperplanes of $M$ such that whenever $\cL$ contains two distinct hyperplanes $H$ and $H'$ that intersect in a corank $2$ flat $L=H\cap H'$, $\cL$ contains every hyperplane $H''$ with $L\subset H''$.

We recall from~\cite[Theorem 10.5]{Crapo-Rota} (see also \cite[Exercise 7.2.6]{Oxley11}) the following intimate relation between modular cuts and linear subclasses:

\begin{prop}\label{prop: linear subclasses correspond to modular cuts}
 Let $\Lambda$ be a geometric lattice with $\cH=\Lambda^{(1)}$. The association $\Gamma\mapsto\Gamma\cap\cH$ defines a bijection
 \[
  \big\{\text{\emph{non-empty modular cuts in $\Lambda$}}\big\} \quad \longrightarrow \quad \big\{\text{\emph{linear subclasses of $\Lambda$}}\big\}.
 \]
\end{prop}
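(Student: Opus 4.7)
The plan is to exhibit an explicit inverse $\cL \mapsto \Gamma_\cL$ and verify the four identities for bijectivity. For the forward map, I would take distinct $H_1, H_2 \in \Gamma \cap \cH$ meeting in a corank-$2$ flat $L := H_1 \wedge H_2$: since $H_1 \vee H_2 = E$, one computes $\rk H_1 + \rk H_2 = 2(r-1) = (r-2) + r = \rk L + \rk(H_1 \vee H_2)$, so $(H_1, H_2)$ is a modular pair, axiom (2) forces $L \in \Gamma$, and axiom (1) then places every hyperplane $H'' \supseteq L$ in $\Gamma \cap \cH$. This shows $\Gamma \cap \cH$ is a linear subclass. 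For the inverse, I would set
\[
\Gamma_\cL \ := \ \{ F \in \Lambda \mid H \in \cL \text{ for every hyperplane } H \supseteq F \},
\]
which is an upper set by construction, contains $E$ vacuously (so is non-empty), and satisfies $\Gamma_\cL \cap \cH = \cL$ tautologically (the only hyperplane above $H$ is $H$ itself).

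The key geometric input for the remaining verifications is the following decomposition lemma: \emph{every flat $F$ of corank $k \geq 2$ can be written as a modular intersection $F = G \wedge H$ with $\rk G, \rk H > \rk F$}. I would construct such a pair by fixing any hyperplane $H \supseteq F$, choosing $e \in E \setminus H$, and setting $G := \langle F \cup \{e\}\rangle_M$; a semimodularity computation then yields $\rk G = \rk F + 1$, $G \vee H = E$, and $G \wedge H = F$, with the rank identity confirming modularity of the pair. Using this, injectivity reduces to $\Gamma = \Gamma_{\Gamma \cap \cH}$: the inclusion $\subseteq$ follows from axiom (1), while the reverse inclusion goes by induction on corank. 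The corank $\leq 1$ cases are immediate (using non-emptiness of $\Gamma$ when $F = E$), and for corank $\geq 2$ the decomposition produces $G, H \in \Gamma_{\Gamma \cap \cH}$ (every hyperplane above $G$ or $H$ also lies above $F$, hence in $\cL$), so the inductive hypothesis places $G, H \in \Gamma$ and axiom (2) forces $F = G \wedge H \in \Gamma$.

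What remains for surjectivity is to show that $\Gamma_\cL$ is itself a modular cut, and the main obstacle is the modular intersection axiom for $\Gamma_\cL$. The plan is to run the same inductive template: given a modular pair $F_1, F_2 \in \Gamma_\cL$ and a hyperplane $H \supseteq F_1 \wedge F_2$, one argues $H \in \cL$ by reducing to the base case where both $F_i$ are hyperplanes meeting in a corank-$2$ flat, which is \emph{precisely} the defining condition of a linear subclass; the reduction itself proceeds via the decomposition lemma applied to $F_1$ and $F_2$. A cleaner alternative would bypass the modular intersection check altogether by constructing the single-element extension $\hat M = M +_\cL a$ directly from $\cL$---the linear subclass axiom is exactly what guarantees that the proposed rank function satisfies the matroid axioms---and then invoking \autoref{prop: modular cuts and one element extensions} to extract the modular cut corresponding to $\hat M$, which by construction recovers $\Gamma_\cL$.
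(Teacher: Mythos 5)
The paper does not actually prove this proposition---it cites \cite[Theorem 10.5]{Crapo-Rota} and \cite[Exercise 7.2.6]{Oxley11}---so there is no paper proof to compare against. Evaluating your proof on its own merits: the forward direction, the construction of $\Gamma_\cL$, the decomposition lemma, and the injectivity argument $\Gamma = \Gamma_{\Gamma\cap\cH}$ are all correct and cleanly organized. The problem is the surjectivity step, where the argument as stated does not go through.

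The gap is that the ``same inductive template'' does not transfer symmetrically. In the injectivity argument, you decompose the \emph{target} flat $F$ as a modular intersection $G\wedge H$ with $G,H \in \Gamma_{\Gamma\cap\cH}$, and then invoke the modular cut axiom of $\Gamma$ to conclude $F\in\Gamma$: the decomposition feeds directly into the axiom you are \emph{using}. In the surjectivity argument, you are trying to \emph{establish} the modular intersection axiom for $\Gamma_\cL$, so you are handed an arbitrary modular pair $(F_1,F_2)$; decomposing $F_1 = G\wedge K$ via your lemma produces flats $G, K$ \emph{above} $F_1$, and neither $(G, F_2)$ nor $(K, F_2)$ need be a modular pair, nor need their meets with $F_2$ recover $F_1\wedge F_2$. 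So the decomposition lemma applied to $F_1$ or $F_2$ does not in itself supply a reduction, and ``reducing to the base case where both $F_i$ are hyperplanes'' is not a step but the whole difficulty. Your ``cleaner alternative'' via single-element extensions also does not bypass this: proving that $M +_\cL a$ satisfies the matroid axioms is exactly Crapo's theorem and is essentially equivalent in content to showing $\Gamma_\cL$ is a modular cut, so the difficulty is relocated rather than avoided.

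A way to close the gap within your framework: first prove the special case where one of the two flats is a hyperplane, say $K\in\cL$ and $F_1\in\Gamma_\cL$ with $(F_1,K)$ modular and $F_1\not\subseteq K$. Given a hyperplane $H\supseteq F_1\cap K$ distinct from $K$ and not above $F_1$, one checks that $H\cap F_1 = K\cap F_1$, sets $L:=H\cap K$, shows $L\vee F_1$ is a hyperplane above $F_1$ (hence in $\cL$) distinct from $K$, and then the linear subclass axiom applied to $K$ and $L\vee F_1$ (both containing $L$) forces $H\in\cL$. For the general case, pick a hyperplane $K\supseteq F_1$ with $K\not\supseteq F_2$ (possible unless $F_2\subseteq F_1$), observe that $(K,F_2)$ is automatically a modular pair, so $F':= K\wedge F_2\in\Gamma_\cL$ by the special case; then use the interval-isomorphism property of modular pairs (that $X\mapsto X\vee F_1$ is a rank-preserving isomorphism $[F_1\wedge F_2, F_2]\to[F_1, F_1\vee F_2]$) to see that $(F_1,F')$ is again a modular pair with $F_1\wedge F' = F_1\wedge F_2$, and induct on $\rk F_2 - \rk(F_1\wedge F_2)$. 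This needs either the interval-isomorphism fact or an equivalent direct rank computation---an ingredient your sketch does not supply.
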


One ingredient in the proof of \autoref{prop: linear subclasses correspond to modular cuts} is the following result (cf.~\cite[Corollary 7.2.5]{Oxley11}), which also plays a crucial role in the proofs of \autoref{thm: the foundation is generated by cross ratios} and \autoref{thm: fundamental presentation of the foundation}:

\begin{lemma} \label{lemma: hyperplanes of a deletion}
Let $M$ be a matroid on $E$, let $a \in E$ be an element that is not a coloop, and let $M' = M \backslash a$. Let $F$ be a flat of rank $k$ of $M$. Then exactly one of the following holds:
\begin{enumerate}
    \item $a \not\in F$. \emph{(}In this case, $F$ is also a rank $k$ flat of $M'$.\emph{)}
    \item $a \in F$ and $F-a$ is a rank $k$ flat of $M'$.
    \item $a \in F$ and $F-a$ is a rank $k-1$ flat of $M'$.
\end{enumerate}
\end{lemma}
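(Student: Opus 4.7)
\bigskip

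\noindent\textbf{Proof proposal.} The plan is to split into cases according to whether $a\in F$ or not, and, when $a\in F$, according to whether removing $a$ from $F$ drops the rank. The three alternatives are then visibly mutually exclusive and exhaustive, so the content is to verify in each case that the set described is indeed a flat of $M'$ of the claimed rank. The key underlying observation I will use repeatedly is that, since $a$ is not a coloop, $\rk(M')=\rk(M)$, and for any $S\subseteq E-a$ the rank of $S$ in $M'$ equals its rank in $M$, so that $\cl_{M'}(S)=\cl_M(S)\cap(E-a)$.

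First I would dispose of the case $a\notin F$. Then $F\subseteq E-a$, and $\rk_{M'}(F)=\rk_M(F)=k$. If $x\in(E-a)-F$, then $x\notin\cl_M(F)=F$, so $\rk_M(F\cup x)>\rk_M(F)$, hence $\rk_{M'}(F\cup x)>\rk_{M'}(F)$; this shows $\cl_{M'}(F)=F$, yielding case (1). Next I assume $a\in F$ and set $F':=F-a$. Since removing one element can drop the rank by at most $1$, $\rk_M(F')\in\{k-1,k\}$, and these two subcases will correspond to cases (2) and (3).

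If $\rk_M(F')=k$, then $\cl_M(F')$ is a flat of $M$ of rank $k$ containing $F'$; since $\cl_M(F')\subseteq\cl_M(F)=F$ and both have rank $k$, one has $\cl_M(F')=F$. Therefore $\cl_{M'}(F')=\cl_M(F')\cap(E-a)=F-a=F'$, and $\rk_{M'}(F')=\rk_M(F')=k$, which is case (2). If instead $\rk_M(F')=k-1$, then $a\notin\cl_M(F')$ (otherwise $\rk_M(F'\cup a)=\rk_M(F')=k-1\neq k$), so $\cl_M(F')\subseteq F-a=F'$; combined with $F'\subseteq\cl_M(F')$, this gives that $F'$ is a flat of $M$ of rank $k-1$. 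Applying the reasoning of case (1) to the flat $F'$ of $M$ (which does not contain $a$) shows that $F'$ is a flat of $M'$ of rank $k-1$, which is case (3).

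The main (mild) obstacle is simply the bookkeeping between closure operators in $M$ and in $M'$, and in particular carefully ruling out the pathological possibility in case (3) that $\cl_M(F')$ is strictly larger than $F'$; the argument above resolves this by noting that $\cl_M(F')\subseteq F$ and does not contain $a$. No deeper input is needed.
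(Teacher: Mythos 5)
Your proof is correct and complete. The paper itself does not prove this lemma; it cites \cite[Corollary 7.2.5]{Oxley11}, so there is no internal proof to compare against. Your argument is the natural one: use that $\rk_{M'}(S)=\rk_M(S)$ for all $S\subseteq E-a$ and that $\cl_{M'}(S)=\cl_M(S)\cap(E-a)$, then split on whether $a\in F$ and, if so, whether $\rk_M(F-a)$ equals $k$ or $k-1$; the verification that $F-a$ is a flat of $M$ (and hence of $M'$) in the rank-$(k-1)$ subcase is handled cleanly by noting $a\notin\cl_M(F-a)$ and $\cl_M(F-a)\subseteq F$. One small remark: as you observe implicitly, your proof never actually invokes the hypothesis that $a$ is not a coloop --- the trichotomy holds without it (if $a$ is a coloop, case (2) is simply vacuous). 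The paper includes that hypothesis because it is the relevant setting elsewhere (it guarantees $\rk M'=\rk M$), but it is not needed for this statement.
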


\subsection{The path theorem}
\label{subsection: path theorem}

Let $M$ be a connected matroid of rank $r$ on the ground set $E$. In this section, we recall the statement and proof of \emph{Tutte's path theorem} (called the \emph{Fundamental Theorem of Linear Subclasses} in~\cite{Tutte65}). The proof presented here is more or less the same as Tutte's original proof in~\cite[Theorem 5.1]{Tutte58a}, but is written in the more modern language of lattices of flats, in contrast to Tutte's lattices of unions of circuits (of the dual matroid $M^\ast$). The proofs of all relevant propositions can be found in~\autoref{appendix:flats}. 

\begin{df}
A flat $F$ is {\em indecomposable} if the contraction $M / F$ is connected. Otherwise, $F$ is {\em decomposable}. 
\end{df}

\begin{ex}\label{ex:conn-line}
 Since we assume that $M$ is connected, the smallest flat $\gen{\emptyset}$ is indecomposable. The ground set $E$, as well as all hyperplanes $H \in \cH$, are indecomposable. As a consequence of~\autoref{lemma:connected-contraction}, a corank $2$ flat $L \in \Lambda^{(2)}$ is indecomposable if and only if it is contained in at least $3$ hyperplanes.
\end{ex}

\begin{df}
 A {\em Tutte path in $M$} is a finite sequence $\gamma = (H_1, \cdots, H_k)$ of one or more hyperplanes of $M$, not necessarily all distinct, such that any two consecutive terms are distinct hyperplanes of $M$ intersecting in an indecomposable corank $2$ flat. If all terms of a Tutte path $\gamma$ contain a flat $F$, then we say $\gamma$ is {\em on} the flat $F$. 
\end{df}

\begin{thm}[Tutte's path theorem~\cite{Tutte58a}]\label{thm: Tutte's path theorem}
 Let $\Gamma$ be a modular cut of a connected matroid $M$, and let $F \ne E$ be an indecomposable flat. Suppose $X$ and $Y$ are two hyperplanes containing $F$, and suppose $X, Y \notin \Gamma$. Then, there exists a Tutte path $\gamma$ on $F$ from $X$ to $Y$ such that no term of $\gamma$ belongs to $\Gamma$. 
\end{thm}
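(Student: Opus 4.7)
My plan is to reduce to the bottom-flat case $F = \gen{\emptyset}$ by contraction, and then to induct on $\rk M$, building the desired path edge-by-edge. Since $F$ is indecomposable, the contraction $N := M/F$ is connected, and the assignment $G \mapsto G/F$ gives a bijection from the interval $[F, E] \subset \Lambda_M$ onto $\Lambda_N$ that preserves coranks, sends hyperplanes containing $F$ to hyperplanes of $N$, and (since $N/(L/F) \cong M/L$) sends indecomposable corank-$2$ flats above $F$ to indecomposable corank-$2$ flats of $N$. Thus Tutte paths on $F$ in $M$ correspond bijectively to Tutte paths in $N$. A direct rank calculation shows that $\Gamma' := \{G/F \mid G \in \Gamma,\, F \subseteq G\}$ is a modular cut in $\Lambda_N$, and $X, Y \notin \Gamma$ becomes $X/F, Y/F \notin \Gamma'$. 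So I may assume throughout that $F = \gen{\emptyset}$.

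With this reduction in place, I would induct on $r = \rk M$. When $r \leq 1$ there is nothing to do ($X = Y$). When $r = 2$, hyperplanes are atoms and any two distinct atoms meet in $\gen{\emptyset}$, which is indecomposable by \autoref{ex:conn-line}, so $(X, Y)$ is itself a Tutte path. For $r \geq 3$, I would set $L := X \wedge Y$ and case-split on the corank $c$ of $L$: if $c \leq 1$ then $X = Y$, and if $c = 2$ with $L$ indecomposable then $(X, Y)$ is already a Tutte path. In the remaining ``hard cases'' ($c \geq 3$, or $c = 2$ with $L$ decomposable, where exactly $X$ and $Y$ are the only hyperplanes above $L$), my plan is to produce an intermediate hyperplane $Z \notin \Gamma$ such that $X \wedge Z$ is an indecomposable corank-$2$ flat (giving the first Tutte edge $(X, Z)$) and such that $(Z, Y)$ is strictly ``closer'' than $(X, Y)$ in a sense that triggers the inductive hypothesis---for instance by producing $Z$ from a Tutte path inside the connected deletion $M \backslash a$, for a carefully chosen non-coloop $a \in X \setminus Y$, equipped with the induced modular cut.

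The hard part will be the construction of this intermediate hyperplane $Z$. I will need to (i) select $a$ so that $M \backslash a$ remains connected (using the connectedness of $M$ and the non-coloop hypothesis, with \autoref{lemma: hyperplanes of a deletion} controlling how flats behave under the deletion), (ii) verify that the restriction of $\Gamma$ descends to a genuine modular cut on $M \backslash a$ and that $Z$ can be arranged to lie outside it, (iii) ensure the resulting corank-$2$ meet $X \wedge Z$ really is indecomposable, and (iv) check that the relevant induction parameter strictly decreases. The combinatorial ingredients needed---chiefly the existence of indecomposable corank-$2$ flats in prescribed positions inside a connected matroid---are exactly what the technical results of \autoref{appendix:flats} are designed to supply. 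The delicate orchestration of upward closure, the modular-intersection axiom defining $\Gamma$, the semimodularity of $\Lambda_M$, and the fine structure of indecomposable flats is what will make the argument intricate, even though no single step requires a genuinely surprising new idea.
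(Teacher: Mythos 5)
Your reduction to the case $F = \gen{\emptyset}$ via the contraction $N = M/F$ is correct, and it is a genuinely different opening move from the paper's: the paper never contracts and never works in a smaller matroid. But this reduction is the first place your plan diverges in a way that creates a gap, because it eliminates exactly the degree of freedom that the paper's proof exploits. The paper inducts on $\cork(F)$: for $\cork(F) = c \geq 3$ it finds (via \autoref{prop:conn-path} and \autoref{prop:conn-diamond}) indecomposable flats $U \supseteq V \neq W \supseteq F$ with $\cork(U) = c-2$ and $\cork(V) = \cork(W) = c-1$, replaces $F$ by $V$ or $W$ whenever $Y$ contains one of them, and otherwise applies the Indecomposable Complement Property (\autoref{prop:conn-complement}) to $[Y, U, F]$ to extract an indecomposable corank-$2$ flat $L$ with $F \subseteq L \subseteq Y$ and $L \vee U = E$. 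The hyperplanes $H_1 = L \vee V$ and $H_2 = L \vee W$ are then distinct, form a modular pair meeting in $L \subseteq Y$, and since $Y \notin \Gamma$ the modular-cut axiom forbids both $H_1, H_2 \in \Gamma$. Picking $H_1 \notin \Gamma$, the inductive hypothesis for the corank-$(c-1)$ flat $V$ gives a Tutte path on $V$ from $X$ to $H_1$, and appending $(H_1, Y)$ finishes. None of this survives your reduction, since once $F = \gen{\emptyset}$ the flat $F$ can no longer shrink.

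The replacement you propose for this internal-lattice induction is where the proposal actually breaks. You declare an induction on $r = \rk M$, but your inductive step goes through a single-element deletion $M \backslash a$ with $a$ a non-coloop, and $\rk(M \backslash a) = \rk M$ for any non-coloop $a$. So the parameter you named does not decrease, and the argument as written does not terminate. Reading $r$ as $|E|$ does not rescue it either, because the heart of the step---constructing a hyperplane $Z \notin \Gamma$ with $X \wedge Z$ an indecomposable corank-$2$ flat and with $(Z, Y)$ provably ``closer''---is only gestured at; you supply no analogue of the paper's move of extracting $L$ inside $Y$ via \autoref{prop:conn-complement} and then using the modular-cut axiom on the covering pair $H_1, H_2$ to escape $\Gamma$. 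That step is the actual content of the theorem, and its absence is the genuine gap. If you want to keep the deletion-based route, you should say precisely what decreases, verify that $X$ and $Y$ (or their deletions, per \autoref{lemma: hyperplanes of a deletion}) remain hyperplanes of $M \backslash a$ off the restricted modular cut, and exhibit the mechanism that produces $Z$; otherwise I recommend abandoning the contraction and following the corank-of-$F$ induction, which is shorter and has the decreasing parameter built in.
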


\begin{proof}
 We use induction on the corank of $F$. The cases where $F$ has corank $1$ or $2$ are trivial. Suppose $F$ has corank $c \geqslant 3$. Since $M$ is connected, $\gen{\emptyset}$ is an indecomposable flat. By \autoref{prop:conn-path} and \autoref{prop:conn-diamond}, there exist indecomposable flats $U, V$, and $W$ with $X \supseteq U \supseteq V \ne W \supseteq F$ and $\rk(U) = \rk(V) + 1 = \rk(W) + 1 = \rk(F) + 2$; see~\autoref{fig: subposet structure in the proof of thm: Tutte's path theorem} for an illustration. We assume without loss of generality that $Y$ contains neither $V$ nor $W$; otherwise, we replace $F$ with $V$ or $W$, both of corank $c-1$. Applying~\autoref{prop:conn-complement} to $U \cap Y \supseteq F$ and $U \lor Y = E$, there exists an indecomposable corank $2$ flat $L$ with $F \subseteq L \subseteq Y$ and $U \lor L = E$. By the submodularity of the rank function, $H_1 = L \lor V$ and $H_2 = L \lor W$ are distinct hyperplanes. Since $L \subseteq Y$ has corank 2 in $\Lambda$, $\Gamma$ is a modular cut, and $Y \not\in \Gamma$, at least one of $H_1$ and $H_2$ does not belong to $\Gamma$. Say $H_1 \notin \Gamma$. Since $X$ and $H_1$ contain $V$, which is an indecomposable flat of corank $c - 1$, there exists a Tutte path $\gamma_1$ on $V$ (and hence on $F$) from $X$ to $H_1$ such that no term of $\gamma_1$ belongs to $\Gamma$. Adjoining $Y$ to $\gamma_1$ gives us the desired Tutte path from $X$ to $Y$. 
\end{proof}

\begin{figure}[th]
\centering
\begin{tikzpicture}[x=1cm,y=0.8cm, vertices/.style={draw, fill=black, circle, inner sep=0pt},font=\footnotesize]
	\node (0) at (0, 0){$F$};
	\node (1) at (-1,1){$V$};
	\node (2) at (1, 1){$W$};
	\node (3) at (0, 2){$U$};
	\node (4) at (-1.5,3.5){$X$};
	\node (5) at (0, 3.5){$H_1$};
	\node (6) at (1, 3.5){$Y$};
	\node (7) at (2, 3.5){$H_2$};
	\node (8) at (1, 2.5){$L$};
\foreach \to/\from in {0/1, 0/2, 1/3, 2/3, 5/8, 6/8, 7/8}
\draw [-] (\to)--(\from);
\draw[dotted] (3) -- (4);
\draw[dotted] (1) -- (5);
\draw[dotted] (2) -- (7);
\draw[dotted] (1) -- (4);
\draw[dotted] (0) -- (8);
\end{tikzpicture}
\caption{The subposet structure in \autoref{thm: Tutte's path theorem}}
\label{fig: subposet structure in the proof of thm: Tutte's path theorem}
\end{figure}

\begin{rem}
Tutte's original formulation of~\autoref{thm: Tutte's path theorem} allows $X$ to be an arbitrary hyperplane, not necessarily not in $\Gamma$, and asserts the existence of a Tutte path $\gamma$ from $X$ to $Y$ such that no term of $\gamma$ other than the first belongs to $\Gamma$. We will not require this more general statement.
\end{rem}

Let $\Gamma$ be a modular cut of a matroid $M$, and let $G_{M, \Gamma}$ be the graph whose vertex set is the set of hyperplanes of $M$ not in $\Gamma$, such that two vertices $H_1, H_2$ are adjacent if and only if $H_1 \cap H_2$ is an indecomposable corank $2$ flat. When we apply \autoref{thm: Tutte's path theorem} to proving that universal cross-ratios generate the foundation of a matroid, we shall need only the case where $F = \gen{\emptyset}$ is the minimal flat of $M$. In this case, Tutte's path theorem is equivalent to the following graph-theoretic reformulation:

\begin{cor}\label{cor: graph version of the path theorem}
    If $M$ is connected, then the graph $G_{M, \Gamma}$ as defined above is connected. 
\end{cor}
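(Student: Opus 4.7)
The plan is to derive the corollary immediately from Tutte's path theorem (\autoref{thm: Tutte's path theorem}) by specializing the indecomposable flat $F$ to be the minimum element $\gen{\emptyset}$ of the lattice of flats $\Lambda$. Since $M$ is connected, $\gen{\emptyset}$ is indecomposable (as recorded in \autoref{ex:conn-line}), and every hyperplane of $M$ contains $\gen{\emptyset}$, so the hypotheses of \autoref{thm: Tutte's path theorem} are satisfied for any pair of vertices of $G_{M,\Gamma}$. The trivial corner case $\rk(M) \leq 1$ (where $\gen{\emptyset} = E$ may occur, and $G_{M,\Gamma}$ has at most one vertex) I would dispose of separately at the start.

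Next I would take any two vertices $X, Y$ of $G_{M,\Gamma}$; by definition of the graph, these are hyperplanes of $M$ that do not lie in $\Gamma$. Applying \autoref{thm: Tutte's path theorem} with $F = \gen{\emptyset}$ produces a Tutte path $\gamma = (H_1, \ldots, H_k)$ with $H_1 = X$ and $H_k = Y$ such that no $H_i$ belongs to $\Gamma$. I would then unpack the definition of a Tutte path to conclude: each $H_i$ is a hyperplane not in $\Gamma$, hence a vertex of $G_{M,\Gamma}$, and for each consecutive pair $(H_i, H_{i+1})$, the two hyperplanes are distinct and meet in an indecomposable corank $2$ flat, which is precisely the adjacency condition in $G_{M,\Gamma}$. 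Therefore $\gamma$ is a walk in $G_{M,\Gamma}$ from $X$ to $Y$, proving connectivity.

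There is no real obstacle to overcome here: the entire combinatorial difficulty has been absorbed into \autoref{thm: Tutte's path theorem}, and this corollary is essentially a dictionary translation between the notion of a Tutte path on the minimum flat (avoiding $\Gamma$) and the notion of a walk in $G_{M,\Gamma}$. The only minor subtlety is verifying that the path-theorem hypothesis $F \neq E$ holds after one has handled the degenerate low-rank cases, which is immediate.
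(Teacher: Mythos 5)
Your argument is correct and takes the same route as the paper: apply Tutte's path theorem with $F = \gen{\emptyset}$ (indecomposable since $M$ is connected, and automatically contained in every hyperplane) to produce a Tutte path off $\Gamma$ from $X$ to $Y$, then observe that such a path is precisely a walk in $G_{M,\Gamma}$. The paper's one-line proof makes the same point in different words — that the ``on $F$'' clause of the path theorem is not needed for the graph-theoretic conclusion — and your choice of $F = \gen{\emptyset}$ is exactly what makes that clause vacuous.
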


\begin{proof}
    This is a consequence of~\autoref{thm: Tutte's path theorem}, without requiring all terms of the Tutte path $\gamma$ to contain the flat $F$. 
\end{proof}

\subsection{The homotopy theorem}
\label{subsection: homotopy theorem}

A Tutte path $\gamma = (H_1, \cdots, H_k)$ in a connected matroid $M$ is {\em closed} (or is a \emph{re-entrant path}, in Tutte's original terminology) if $H_1 = H_k$. Let $\Gamma \subseteq \Lambda$ be a modular cut of $M$. If no term of the Tutte path $\gamma$ is in $\Gamma$, then $\gamma$ is {\em off} the modular cut $\Gamma$. 

Loosely speaking, Tutte's homotopy theorem asserts that every closed Tutte path off a fixed modular cut $\Gamma$ can be decomposed into ``short'' closed Tutte paths of a small number of types. We shall see in~\autoref{subsection:extended-homotopy-theorem} that these short closed Tutte paths necessarily occur in ``small'' minors of $M$ belonging to a fixed finite set of isomorphism types (independent of the matroid $M$ itself). 

We make this more precise in the following.

\subsubsection{Tutte constellations}
\label{subsubsection: constellations}

\begin{df}
 A \emph{Tutte constellation} is a pair $\tau=(\Lambda,\Gamma)$ consisting of a geometric lattice $\Lambda$ and a modular cut $\Gamma$ in $\Lambda$. The \emph{type of $\tau$} is the isomorphism type of the simple matroid $M$ with lattice of flats $\Lambda$. Given a Tutte constellation $\tau$, we denote its geometric lattice by $\Lambda_\tau$ and its modular cut by $\Gamma_\tau$. A \emph{subconstellation $\sigma$ of $\tau$} is a pair consisting of an upper sublattice $\Lambda_\sigma$ of $\Lambda$ and the modular cut $\Gamma_\sigma=\Gamma\cap\Lambda_\sigma$. A \emph{closed Tutte path in $\tau$} is a closed Tutte path in $\Lambda$ off $\Gamma$.
\end{df}

We fix a Tutte constellation $\tau=(\Lambda,\Gamma)$ for the rest of this section. In \cite{Tutte58a}, Tutte defines four kinds of so-called \emph{elementary re-entrant paths in $\tau$}, which appear in certain subconstellations.\footnote{Note, however, that elementary paths of the second kind, in Tutte's sense, can appear in two different subconstellations, as explained below.} 

Let $K_{2,3}$ be the complete bipartite graph on $2+3$ vertices; see\ \autoref{fig: graphs}. Let $M(K_{2,3})$ be the associated graphic matroid, which is the triple serial extension of $U_{1,3}$ with serial pairs $(1,4)$, $(2,5)$ and $(3,6)$. 

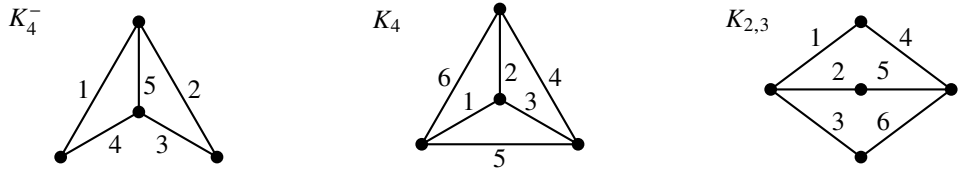
\begin{figure}[t]
\centering
 \begin{tikzpicture}[x=0.6cm,y=0.6cm, font=\footnotesize,decoration={markings,mark=at position 0.6 with {\arrow{latex}}}]
  \node at (-2.5,1.5) {$K_{4}^-$};
  \node at (0,0) {
  \begin{tikzpicture}[thick]
   \draw (0:0) -- node[right=-2pt,pos=0.3] {$5$} (90:2) -- node[left] {$1$} (210:2) -- node[below,pos=0.7] {$4$} (0:0) -- node[below,pos=0.3] {$3$} (330:2) -- node[right] {$2$} (90:2);
   \filldraw (0:0) circle (2pt);  
   \filldraw (90:2) circle (2pt);  
   \filldraw (210:2) circle (2pt);  
   \filldraw (330:2) circle (2pt);  
  \end{tikzpicture}
  };
  \node at (5.5,1.5) {$K_{4}$};
  \node at (8,0) {
  \begin{tikzpicture}[thick]
   \draw (0:0) -- node[right=-2pt,pos=0.3] {$2$} (90:2) -- node[left] {$6$} (210:2) -- node[above=-1pt,pos=0.6] {$1$} (0:0) -- node[above=-1pt,pos=0.4] {$3$} (330:2) -- node[right] {$4$} (90:2);
   \draw (210:2) -- node[below=-2pt] {$5$} (330:2);
   \filldraw (0:0) circle (2pt);  
   \filldraw (90:2) circle (2pt);  
   \filldraw (210:2) circle (2pt);  
   \filldraw (330:2) circle (2pt);  
  \end{tikzpicture}
  };
  \node at (13.5,1.5) {$K_{2,3}$};
  \node at (16,0) {
  \begin{tikzpicture}[thick]
   \draw (-2,0) -- node[above,near end] {$2$} (0,0) -- node[above, near start] {$5$} (2,0) -- node[above] {$4$} (0,1.5) -- node[above] {$1$} (-2,0) -- node[above,near end] {$3$} (0,-1.5) -- node[above,near start] {$6$} (2,0);
   \filldraw (0,0) circle (2pt);  
   \filldraw (-2,0) circle (2pt);  
   \filldraw (2,0) circle (2pt);  
   \filldraw (0,1.5) circle (2pt);  
   \filldraw (0,-1.5) circle (2pt);  
  \end{tikzpicture}
  };
\end{tikzpicture}
 \caption{Some important graphs for the current section and~\autoref{subsection:extended-homotopy-theorem}}
 \label{fig: graphs}
\end{figure}

\medskip\noindent\textbf{First kind.}
Let $\sigma$ be a subconstellation of type $U_{2,2}$ with $\Gamma_\sigma=\{E\}$,
and assume that the bottom element of $\Lambda_{U_{2,2}}$ is indecomposable in $\Lambda$.\footnote{Here, and below, when we consider a subconstellation of $\tau=(\Lambda,\Gamma)$ of type $N$, we identify $\Lambda' = \Lambda_N$ with an upper sublattice of $\Lambda$, and in particular it makes sense to ask whether an element of $\Lambda'$ is indecomposable as an element of $\Lambda$.} 
Let $H_1$ and $H_2$ be the two hyperplanes of $U_{2,2}$. Then the closed Tutte path $(H_1,H_2,H_1)$ is called \emph{elementary of the first kind}. This subconstellation and the elementary Tutte path are illustrated in \autoref{fig: kind1}.
\begin{figure}[t]
\centering
 \begin{tikzpicture}[x=1.0cm,y=0.8cm, font=\footnotesize,decoration={markings,mark=at position 0.6 with {\arrow{latex}}}]
   \node at (-1,2) {$\Lambda_{U_{2,3}}$};  
   \filldraw[fill=green!30!white,draw=green!80!black,rounded corners=2pt] (0.7,2.3) -- (0.7,1.7) -- (1.3,1.7) -- (1.3,2.3) -- cycle;
   \node (0) at (1,0) {$\emptyset$};  
   \node (1) at (0.5,1) {$1$};  
   \node (2) at (1.5,1) {$2$};   
   \node (12) at (1,2) {$12$};  
   \draw (0) to (1);
   \draw (0) to (2);
   \draw (1) to (12);
   \draw (2) to (12);
  \node at (5.5,2) {\scriptsize elementary Tutte path of the first kind};  
  \draw (5,1.08) edge [-,red,thick,postaction={decorate}] (6,1.08);
  \draw (6,0.92) edge [-,red,thick,postaction={decorate}] (5,0.92);
  \draw (4.5,1) -- (6.5,1);
  \filldraw (5,1) circle (2pt);  
  \node at (5,0.5) {$1$};  
  \filldraw (6,1) circle (2pt);  
  \node at (6,0.5) {$2$};  
\end{tikzpicture}
 \caption{The Tutte constellations and elementary Tutte paths of the first kind}
 \label{fig: kind1}
\end{figure}
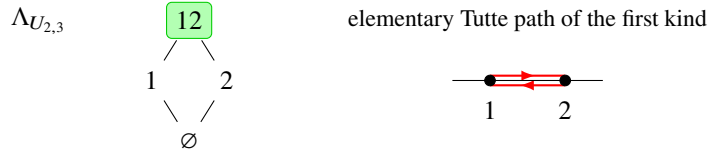

\medskip\noindent\textbf{Second kind.}
There are two different subconstellations which give rise to closed Tutte paths which Tutte refers to as ``elementary of the second kind'':
\begin{enumerate}
    \item[(a)] For every subconstellation $\sigma$ of type $U_{2,3}$ with hyperplanes $H_{1}$, $H_{2}$, and $H_{3}$, all off $\Gamma_\sigma=\{E\}$, the closed Tutte path $(H_{1},H_{2},H_{3},H_{1})$ is called \emph{elementary of the second kind}.
    \item[(b)] For every subconstellation $\sigma$ of type $U_{3,3}$ with hyperplanes $H_{12}$, $H_{13}$, and $H_{23}$, all off $\Gamma_\sigma= \{E \}$, whose three corank $2$ flats are indecomposable in $\Lambda$, the closed Tutte path $(H_{12},H_{13},H_{23},H_{12})$ is also called \emph{elementary of the second kind}. 
\end{enumerate}
Both subconstellations and their corresponding elementary Tutte paths are illustrated in \autoref{fig: kind2}.

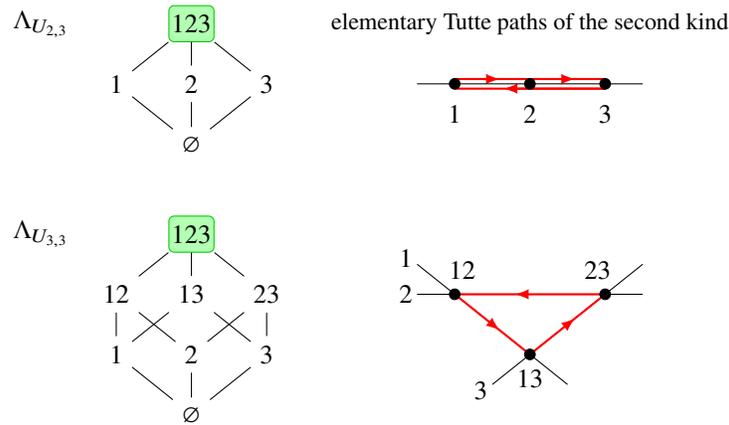
\begin{figure}[t]
 \centering
 \begin{tikzpicture}[x=1.0cm,y=0.8cm, font=\footnotesize,decoration={markings,mark=at position 0.6 with {\arrow{latex}}}]
  \node at (0,4)
  {
   \begin{tikzpicture}
   \clip(-1.5,-0.5) rectangle (2.5,2.5);
   \node at (-1,2) {$\Lambda_{U_{2,3}}$};  
   \filldraw[fill=green!30!white,draw=green!80!black,rounded corners=2pt] (0.7,2.3) -- (0.7,1.7) -- (1.3,1.7) -- (1.3,2.3) -- cycle;
   \node (0) at (1,0) {$\emptyset$};  
   \node (1) at (0,1) {$1$};  
   \node (2) at (1,1) {$2$};  
   \node (3) at (2,1) {$3$};  
   \node (123) at (1,2) {$123$};  
   \draw (0) to (1);
   \draw (0) to (2);
   \draw (0) to (3);
   \draw (1) to (123);
   \draw (2) to (123);
   \draw (3) to (123);
   \end{tikzpicture} 
  };
  \node at (5,5) {\scriptsize elementary Tutte paths of the second kind};  
  \node at (5,4)
  {
   \begin{tikzpicture}
   \clip(4,0) rectangle (8,2);
   \draw (5,1.08) edge [-,red,thick,postaction={decorate}] (6,1.08);
   \draw (6,1.08) edge [-,red,thick,postaction={decorate}] (7,1.08);
   \draw (7,0.92) edge [-latex,red,thick,] (5.65,0.92);
   \draw (7,0.92) edge [-,red,thick,] (5,0.92);
   \draw (4.5,1) -- (7.5,1);
   \filldraw (5,1) circle (2pt);  
   \node at (5,0.5) {$1$};  
   \filldraw (6,1) circle (2pt);  
   \node at (6,0.5) {$2$};  
   \filldraw (7,1) circle (2pt);  
   \node at (7,0.5) {$3$}; 
   \end{tikzpicture} 
  };
  \node at (0,0)
  {
   \begin{tikzpicture}
   \clip(-0.5,-0.5) rectangle (3.5,3.5);
   \node at (0,3) {$\Lambda_{U_{3,3}}$};  
   \filldraw[fill=green!30!white,draw=green!80!black,rounded corners=2pt] (1.7,3.3) -- (1.7,2.7) -- (2.3,2.7) -- (2.3,3.3) -- cycle;
   \node (0) at (2,0) {$\emptyset$};  
   \node (1) at (1,1) {$1$};  
   \node (2) at (2,1) {$2$};  
   \node (3) at (3,1) {$3$};  
   \node (12) at (1,2) {$12$};  
   \node (13) at (2,2) {$13$};  
   \node (23) at (3,2) {$23$};  
   \node (123) at (2,3) {$123$};  
   \draw (0) to (1);
   \draw (0) to (2);
   \draw (0) to (3);
   \draw (1) to (12);
   \draw (1) to (13);
   \draw (2) to (12);
   \draw (2) to (23);
   \draw (3) to (13);
   \draw (3) to (23);
   \draw (12) to (123);
   \draw (13) to (123);
   \draw (23) to (123);
   \end{tikzpicture} 
   };
  \node at (5,0.5)
  {
   \begin{tikzpicture}
   \clip(8,0) rectangle (12,4);
  \draw (8.5,2) -- (11.5,2);
  \draw (8.5,2.5) -- (10.5,0.5);
  \draw (11.5,2.5) -- (9.5,0.5);
  \draw (9,2) edge [-,red,thick,postaction={decorate}] (10,1);
  \draw (10,1) edge [-,red,thick,postaction={decorate}] (11,2);
  \draw (11,2) edge [-,red,thick,postaction={decorate}] (9,2);
  \filldraw (9,2) circle (2pt);  
  \filldraw (10,1) circle (2pt);  
  \filldraw (11,2) circle (2pt);  
  \node at (8.35,2.6) {$1$};  
  \node at (8.35,2) {$2$};  
  \node at (9.35,0.4) {$3$};  
  \node at (9.1,2.4) {$12$};  
  \node at (10,0.6) {$13$};  
  \node at (10.9,2.4) {$23$};  
   \end{tikzpicture} 
   };  
\end{tikzpicture}
 \caption{Tutte constellations and elementary Tutte paths of the second kind}
 \label{fig: kind2}
\end{figure}

\medskip\noindent\textbf{Third kind.}
For every subconstellation $\sigma$ of type $U_{3,4}$ with hyperplanes $H_{12},\dotsc,H_{34}$ and $\Gamma_\sigma=\{H_{23},\ H_{14},\ E\}$, the closed Tutte path $(H_{12},H_{13},H_{34},H_{24},H_{12})$ is called \emph{elementary of the third kind}. This subconstellation and the corresponding Tutte path are illustrated in \autoref{fig: kind3}.

\begin{figure}[t]
 \begin{tikzpicture}[x=1.0cm,y=0.8cm, font=\footnotesize,decoration={markings,mark=at position 0.6 with {\arrow{latex}}}]
  \node at (0,0)
  {
   \begin{tikzpicture}
   \node at (-1,3) {$\Lambda_{U_{3,4}}$};  
   \filldraw[fill=green!20!white,draw=green!80!black,rounded corners=2pt] (2.15,3.35) -- (1.75,2.3) -- (1.75,1.7) -- (3.25,1.7) -- (3.25,2.3) -- (2.85,3.35) -- cycle;
   \node (0) at (2.5,0) {$\emptyset$};  
   \node (1) at (1,1) {$1$};  
   \node (2) at (2,1) {$2$};  
   \node (3) at (3,1) {$3$};  
   \node (4) at (4,1) {$4$};  
   \node (12) at (0,2) {$12$};  
   \node (13) at (1,2) {$13$};  
   \node (23) at (2,2) {$23$};  
   \node (14) at (3,2) {$14$};  
   \node (24) at (4,2) {$24$};  
   \node (34) at (5,2) {$34$};  
   \node (1234) at (2.5,3) {$1234$};  
   \draw (0) to (1);
   \draw (0) to (2);
   \draw (0) to (3);
   \draw (0) to (4);
   \draw (1) to (12);
   \draw (1) to (13);
   \draw (1) to (14);
   \draw (2) to (12);
   \draw (2) to (23);
   \draw (2) to (24);
   \draw (3) to (13);
   \draw (3) to (23);
   \draw (3) to (34);
   \draw (4) to (14);
   \draw (4) to (24);
   \draw (4) to (34);
   \draw (12) to (1234);
   \draw (13) to (1234);
   \draw (14) to (1234);
   \draw (23) to (1234);
   \draw (24) to (1234);
   \draw (34) to (1234);
   \end{tikzpicture} 
  };
  \node at (7,2) {\scriptsize elementary Tutte path of the third kind};  
  \node at (7,0)
  {
   \begin{tikzpicture}[x=0.4cm,y=0.4cm]
    \filldraw[fill=green!20!white,draw=green!80!black,rounded corners=2pt] (0,0) circle (6pt);
    \filldraw[fill=green!20!white,draw=green!80!black,rounded corners=2pt] (10,0) circle (6pt);
    \node at (2.5,4.4) {$1$};  
    \node at (7.5,4.4) {$2$};  
    \node at (10.5,2.1) {$3$};  
    \node at (-0.5,2.1) {$4$};  
    \draw[name path=L1] (12,-1.2) -- (3,4.2);
    \draw[name path=L2] (-2,-1.2) -- (7,4.2);
    \draw[name path=L3] (-3,-0.6) -- (10,2);
    \draw[name path=L4] (13,-0.6) -- (0,2);
    \fill[name intersections={of=L1 and L2,by=12}] (intersection-1) circle (2pt) node[above=2pt] {$12$};
    \fill[name intersections={of=L1 and L3,by=13}] (intersection-1) circle (2pt) node[above=2pt] {$13$};
    \fill[name intersections={of=L1 and L4,by=14}] (intersection-1) circle (2pt) node[below=5pt] {$14$};
    \fill[name intersections={of=L2 and L3,by=23}] (intersection-1) circle (2pt) node[below=5pt] {$23$};
    \fill[name intersections={of=L2 and L4,by=24}] (intersection-1) circle (2pt) node[above=2pt] {$24$};
    \fill[name intersections={of=L3 and L4,by=34}] (intersection-1) circle (2pt) node[below=2pt] {$34$};
    \draw (12) edge [-,red,thick,postaction={decorate}] (13);
    \draw (13) edge [-,red,thick,postaction={decorate}] (34);
    \draw (34) edge [-,red,thick,postaction={decorate}] (24);
    \draw (24) edge [-,red,thick,postaction={decorate}] (12);
    \fill (12) circle (2pt);
    \fill (13) circle (2pt);
    \fill (34) circle (2pt);
    \fill (24) circle (2pt);
   \end{tikzpicture} 
  };
\end{tikzpicture}
 \caption{The Tutte constellation and elementary Tutte path of the third kind}
 \label{fig: kind3}
\end{figure}
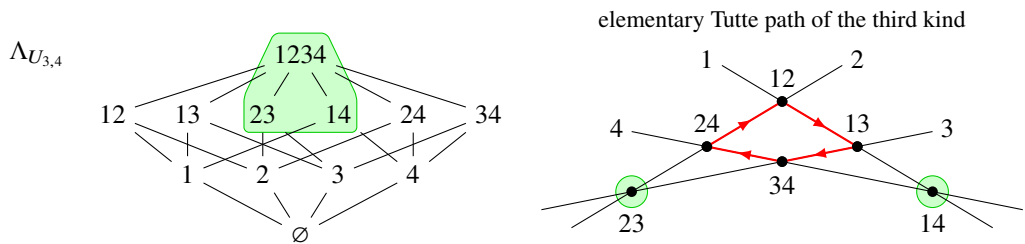

\medskip\noindent\textbf{Fourth kind.}
For every subconstellation $\sigma$ of type $M(K_{2,3})$ with hyperplanes as in \autoref{fig: Tutte constellation of kind4} and $\Gamma_\sigma=\{H_{123},\ H_{156},\ H_{246},\ H_{345}, E\}$ such that $14$, $25$ and $36$ are indecomposable in $\Lambda$, the closed Tutte path $(H_{1245},H_{126},H_{1346},H_{456},H_{1245})$ is called \emph{elementary of the fourth kind}. This elementary Tutte path is illustrated in \autoref{fig: kind4}. 

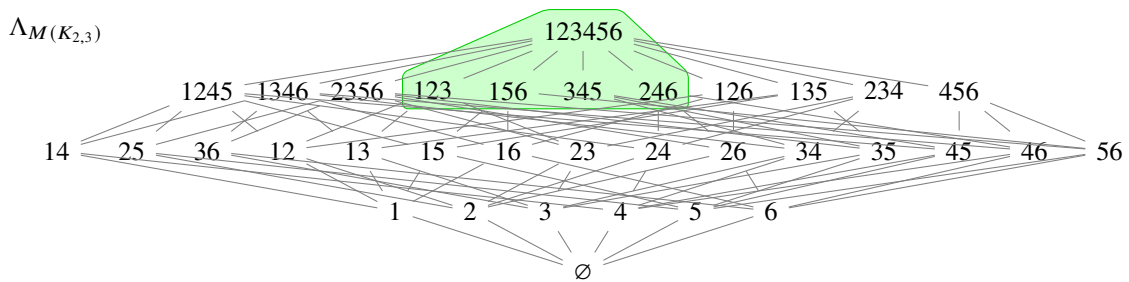
\begin{figure}[t]
 \begin{tikzpicture}[x=1.0cm,y=0.8cm, font=\footnotesize,decoration={markings,mark=at position 0.6 with {\arrow{latex}}}]
   \node at (0,4) {$\Lambda_{M(K_{2,3})}$};  
   \filldraw[fill=green!20!white,draw=green!80!black,rounded corners=2pt] (6.5,4.35) -- (4.6,3.3) -- (4.6,2.7) -- (8.4,2.7) -- (8.4,3.3) -- (7.5,4.35) -- cycle;
   \node (0) at (7,0) {$\emptyset$};  
   \node (1) at (4.5,1) {$1$};  
   \node (2) at (5.5,1) {$2$};  
   \node (3) at (6.5,1) {$3$};  
   \node (4) at (7.5,1) {$4$};  
   \node (5) at (8.5,1) {$5$};  
   \node (6) at (9.5,1) {$6$};  
   \node (14) at (0,2)  {$14$};  
   \node (25) at (1,2)  {$25$};  
   \node (36) at (2,2)  {$36$};  
   \node (12) at (3,2)  {$12$};  
   \node (13) at (4,2)  {$13$};  
   \node (15) at (5,2)  {$15$};  
   \node (16) at (6,2)  {$16$};  
   \node (23) at (7,2)  {$23$};  
   \node (24) at (8,2)  {$24$};  
   \node (26) at (9,2)  {$26$};  
   \node (34) at (10,2) {$34$};  
   \node (35) at (11,2) {$35$};  
   \node (45) at (12,2) {$45$};  
   \node (46) at (13,2) {$46$};  
   \node (56) at (14,2) {$56$};  
   \node (1245) at (2,3) {$1245$};  
   \node (1346) at (3,3) {$1346$};  
   \node (2356) at (4,3) {$2356$};  
   \node (123) at (5,3) {$123$};  
   \node (156) at (6,3) {$156$};  
   \node (345) at (7,3) {$345$};  
   \node (246) at (8,3) {$246$};  
   \node (126) at (9,3) {$126$};  
   \node (135) at (10,3) {$135$};  
   \node (234) at (11,3) {$234$};  
   \node (456) at (12,3) {$456$};  
   \node (123456) at (7,4) {$123456$};  
   \foreach \to/\from in {0/1,0/2,0/3,0/4,0/5,0/6,
1/12,1/13,1/14,1/15,1/16,
2/12,2/23,2/24,2/25,2/26,
3/13,3/23,3/34,3/35,3/36,
4/14,4/24,4/34,4/45,4/46,
5/15,5/25,5/35,5/45,5/56,
6/16,6/26,6/36,6/46,6/56,
14/1245,25/1245,12/1245,15/1245,24/1245,45/1245,
14/1346,36/1346,13/1346,16/1346,34/1346,46/1346,
25/2356,36/2356,23/2356,26/2356,35/2356,56/2356,
12/123,13/123,23/123,
15/156,16/156,56/156,
34/345,35/345,45/345,
24/246,26/246,46/246,
12/126,16/126,26/126,
13/135,15/135,35/135,
23/234,24/234,34/234,
45/456,46/456,56/456,
1245/123456,1346/123456,2356/123456,123/123456,126/123456,135/123456,156/123456,234/123456,345/123456,246/123456,456/123456}
      \draw [-,gray] (\to)--(\from);
\end{tikzpicture}
 \caption{The Tutte constellation of the fourth kind}
 \label{fig: Tutte constellation of kind4}
\end{figure}

\begin{figure}[t]
\begin{tikzpicture}[x=0.6cm,y=0.6cm,font=\scriptsize,decoration={markings,mark=at position 0.3 with {\arrow{latex}}}]
    \coordinate[label=left:1245] (1245) at (0,10);  
    \coordinate[label=right:1346] (1346) at (12,9);    
    \coordinate[label=above:2356] (2356) at (8,17);    
    \coordinate[label=left:34] (34) at (1,7.5);    
    \coordinate[label=below:35] (35) at (3.3,6);    
    \coordinate[label=below:23] (23) at (5.7,5);    
    \coordinate[label=below:46] (46) at (7.8,4);    
    \coordinate[label=below:56] (56) at (9.2,4);    
    \coordinate[label=below:26] (26) at (11.15,4);    
    \coordinate[label=right:45] (45) at (12,4);    
    \coordinate[label=right:24] (24) at (12,6.65);    
    \coordinate[label=right:12] (12) at (12,12.2);    
    \coordinate[label=right:15] (15) at (12,13.7);    
    \coordinate[label=left:16] (16) at (6,15);    
    \coordinate[label=left:13] (13) at (3,13);    
    \fill (1245) circle (2pt);
    \fill (1346) circle (2pt);
    \fill (2356) circle (2pt);
    \path[draw,dashed,bend left=0pt] (1245) to node[below] {$14$} (1346);
    \path[draw,dashed,bend left=30pt] (1245) to node[above] {$25$} (2356);
    \path[draw,dashed,bend left=30pt] (2356) to node[above=5pt] {$36$} (1346);
    \draw[name path=L45] (1245) -- (45);
    \draw[name path=L24] (1245) -- (24);
    \draw[name path=L12] (1245) -- (12);
    \draw[name path=L15] (1245) -- (15);
    \draw[name path=L16] (1346) -- (16);
    \draw[name path=L13] (1346) -- (13);
    \draw[name path=L34] (1346) -- (34);
    \draw[name path=L46] (1346) -- (46);
    \draw[name path=L35] (2356) -- (35);
    \draw[name path=L23] (2356) -- (23);
    \draw[name path=L56] (2356) -- (56);
    \draw[name path=L26] (2356) -- (26);
    \fill[name intersections={of=L15 and L35,by=135}] (intersection-1) circle (2pt) node[xshift=-4pt, yshift=8pt] {$135$};
    \fill[name intersections={of=L15 and L56,by=156}] (intersection-1) circle (2pt) node[xshift=-6pt, yshift=-8.5pt] {$156$};
    \fill[name intersections={of=L12 and L23,by=123}] (intersection-1) circle (2pt) node[xshift=-9pt, yshift=-7pt] {$123$};
    \fill[name intersections={of=L12 and L26,by=126}] (intersection-1) circle (2pt) node[xshift=6pt, yshift=6pt] {$126$};
    \fill[name intersections={of=L24 and L23,by=234}] (intersection-1) circle (2pt) node[xshift=6pt, yshift=-8pt] {$234$};
    \fill[name intersections={of=L24 and L26,by=246}] (intersection-1) circle (2pt) node[xshift=12pt, yshift=2pt] {$246$};
    \fill[name intersections={of=L45 and L35,by=345}] (intersection-1) circle (2pt) node[xshift=4pt, yshift=-10pt] {$345$};
    \fill[name intersections={of=L45 and L56,by=456}] (intersection-1) circle (2pt) node[xshift=-10pt, yshift=-2pt] {$456$};
    \fill[fill=green!20!white,draw=green!80!black,rounded corners=2pt] (156) circle (5pt);
    \fill[fill=green!20!white,draw=green!80!black,rounded corners=2pt] (123) circle (5pt);
    \fill[fill=green!20!white,draw=green!80!black,rounded corners=2pt] (345) circle (5pt);
    \fill[fill=green!20!white,draw=green!80!black,rounded corners=2pt] (246) circle (5pt);
    \draw (0,9.94) edge [-,red,thick,postaction={decorate}] (9.25,11.62);
    \draw (9.3,11.6) edge [-,red,thick,postaction={decorate}] (11.9,9);
    \draw (11.9,8.98) edge [-,red,thick,postaction={decorate}] (9.02,5.55);
    \draw (9,5.57) edge [-,red,thick,postaction={decorate}] (0,10.07);
    \draw[name path=L45] (1245) -- (45);
    \draw[name path=L24] (1245) -- (24);
    \draw[name path=L12] (1245) -- (12);
    \draw[name path=L15] (1245) -- (15);
    \draw[name path=L16] (1346) -- (16);
    \draw[name path=L13] (1346) -- (13);
    \draw[name path=L34] (1346) -- (34);
    \draw[name path=L46] (1346) -- (46);
    \draw[name path=L35] (2356) -- (35);
    \draw[name path=L23] (2356) -- (23);
    \draw[name path=L56] (2356) -- (56);
    \draw[name path=L26] (2356) -- (26);
    \fill (156) circle (2pt);
    \fill (123) circle (2pt);
    \fill (345) circle (2pt);
    \fill (246) circle (2pt);
    \fill (1245) circle (2pt);
    \fill (126) circle (2pt);
    \fill (1346) circle (2pt);
    \fill (456) circle (2pt);
\end{tikzpicture}
 \caption{The elementary Tutte path of the fourth kind}
 \label{fig: kind4}
\end{figure}
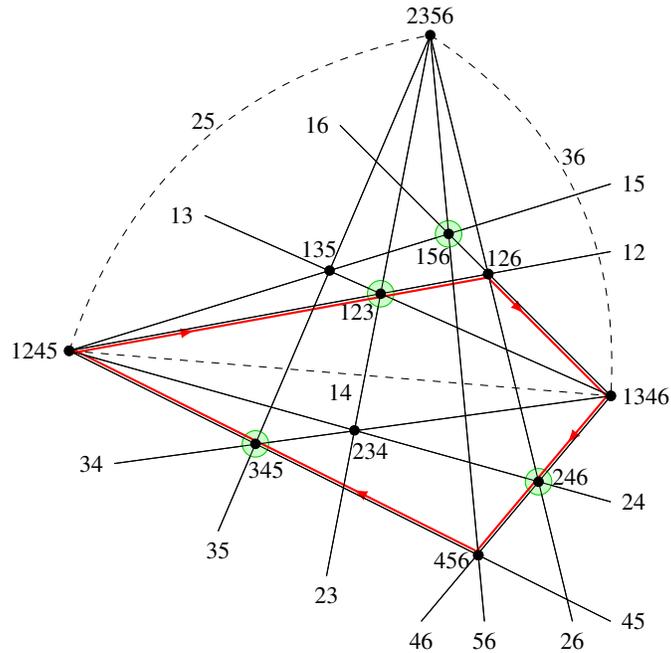

\subsubsection{Statement of the homotopy theorem}

Note that if
\[
\alpha = (H_1, H_2, \dots,H_i, \dots, H_k, H_1) \quad \textrm{and} \quad \beta = (H_i, G_1, G_2,\dots, G_\ell, H_i) 
\]
are both closed Tutte paths off $\Gamma$, then so is
\[
\gamma = (H_1, \dots, H_{i-1}, H_i, G_1, G_2,\dots, G_\ell, H_i, H_{i+1}, \dots, H_k, H_1).
\]
Conversely, if $\beta$ and $\gamma$ are both closed Tutte paths off $\Gamma$, then so is $\alpha$. 

We call the the process of deriving one of these three closed Tutte paths from the other two a {\em deformation}. 

\begin{df}
A closed Tutte path $\gamma$ is {\em null-homotopic} if it can be derived from a (closed) Tutte path with just one term by a finite sequence of deformations using elementary Tutte paths of the four kinds enumerated above. In this case, we say that $\gamma$ is {\em decomposed into elementary Tutte paths}.
\end{df}

\begin{thm}[Tutte's homotopy theorem, original version~\cite{Tutte58a}]\label{thm: Tutte's homotopy theorem}
Let $M$ be a connected matroid and let $(\Lambda,\Gamma)$ be a Tutte constellation of type $M$. Then every closed Tutte path in $(\Lambda,\Gamma)$ can be decomposed into elementary Tutte paths in $(\Lambda,\Gamma)$. 
\end{thm}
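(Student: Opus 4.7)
The plan is to proceed by induction on a suitable complexity measure of the closed Tutte path $\gamma=(H_1,H_2,\dotsc,H_k,H_1)$, for instance the pair $(r,k)$ ordered lexicographically, where $r$ is the rank of $M$ and $k$ the length of $\gamma$. The base cases of small rank can be checked by direct inspection: in rank $\leq 2$ there are no nontrivial closed Tutte paths, while in rank $3$ the lattice is a projective plane and a finite enumeration shows that every closed Tutte path is already a composition of elementary paths of the first and second kinds.

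For the inductive step, my strategy is to \emph{localize} $\gamma$ around the indecomposable corank-$2$ flats $L_i = H_i \cap H_{i+1}$ that appear as the edges of $\gamma$. If two consecutive edges share the same $L_i = L_{i+1}$, then $H_i$, $H_{i+1}$, $H_{i+2}$ all contain a common indecomposable corank-$2$ flat and lie in an upper sublattice of type $U_{2,2}$, $U_{2,3}$, or $U_{3,3}$; an elementary path of the first or second kind then allows me to either cancel the middle hyperplane or replace the pair by a single edge, strictly reducing the length. If no such consecutive coincidence occurs, I use \autoref{thm: Tutte's path theorem} applied inside a contraction $M/F$ (for a carefully chosen small flat $F \subset H_i$) to insert a Tutte subpath joining two non-adjacent hyperplanes of $\gamma$, thereby splitting $\gamma$ into two closed sub-paths, each of strictly smaller length, which are dealt with by induction. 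The key bookkeeping is that the inserted sub-path is only defined up to a choice; the difference between two such choices is a closed Tutte path living on the flat $F$, which by induction on rank (applied to $M/F$, a connected matroid of strictly smaller rank by the indecomposability of $F$) admits an elementary decomposition.

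The role of the four types of elementary paths is now as an exhaustive list of the "atomic" closed Tutte paths that arise in the reduction. Paths of the first and second kind govern the local rank-$2$ and rank-$3$ cancellations just described; the third kind, arising in a $U_{3,4}$-constellation with modular cut $\{H_{23},H_{14},E\}$, is needed precisely when swapping between the two "ways around" a pair of skew lines, and appears whenever two choices of path theorem realization in a corank-$3$ stratum must be reconciled; the fourth kind, arising in the $M(K_{2,3})$-constellation of \autoref{fig: Tutte constellation of kind4}, captures the truly non-obvious identification that occurs when the modular cut contains four hyperplanes whose pairwise intersections reflect the bipartite structure of $K_{2,3}$. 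Once the first three kinds are in hand, one shows that any remaining closed Tutte path supported in a rank-$4$ upper sublattice is either already expressible via the first three kinds or is the fourth-kind elementary path (up to earlier-established equivalences).

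The main obstacle, as anticipated, is controlling the fourth kind. Concretely, after localization and reduction, one is left with closed Tutte paths supported in upper sublattices of rank at most $4$; the combinatorial classification of which such configurations admit "new" closed paths not already decomposable using the first three kinds must be carried out in full, and one has to verify that the $M(K_{2,3})$-configuration is the \emph{only} remaining case — i.e.,\ that no "fifth kind" is ever needed. This casework is the technical heart of the argument and is where the detailed lattice combinatorics of \autoref{appendix: proof of the homotopy theorem} is unavoidable; my plan above sets up the inductive scaffolding, but this final classification is what gives the theorem its precise content.
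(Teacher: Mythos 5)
Your high-level instincts are in the right places---the reduction to small coranks, the role of \autoref{thm: Tutte's path theorem} in reconciling choices of deformation, and the identification of the $M(K_{2,3})$ configuration at corank $4$ as the source of the fourth kind all align with what happens in the paper. But the inductive scaffolding as you have set it up does not close, and this is a genuine gap rather than a deferred technicality.

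First, the complexity measure is wrong. You propose induction on $(r, k)$, where $r = \rk M$ and $k$ is the length of $\gamma$. But the rank of $M$ never changes during the argument (the elementary paths live in $(\Lambda,\Gamma)$ itself, not in a minor), and the ``split'' you describe does not reduce length either: inserting a Tutte subpath $\sigma$ between two non-adjacent terms $H_i$ and $H_j$ produces two closed paths of lengths $(j-i)+|\sigma|$ and $(k-j+i)+|\sigma|$, both of which can exceed $k$, because the path theorem gives no control over $|\sigma|$. The paper's actual induction variable is the corank of the flat $F(\gamma) = \bigcap_i H_i$, which is indecomposable by \autoref{prop:connected-intersection}. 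One picks an indecomposable $F' \supsetneq F(\gamma)$ of corank one less, and among all deformations $\gamma'$ of $\gamma$ one minimizes first the number $u(\gamma')$ of terms of $\gamma'$ \emph{not} containing $F'$, then a secondary invariant $v(\gamma')$; the goal is to force $u = 0$, i.e., to push the whole path onto $F'$, where the corank drops and induction applies. This is a deformation argument, not a splitting argument, and your proposal does not set it up.

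Second, and more fundamentally, your plan entirely omits what the paper (following Tutte) calls the \emph{Special Lemma} (\autoref{theorem: good paths}): a special path $\delta = (W,X,Y,Z,W)$ with $W \cap X \cap Y$ and $Y \cap Z \cap W$ indecomposable corank-$3$ flats and $W \cap Y$ a \emph{decomposable} corank-$2$ flat is null-homotopic. This lemma is the engine behind the cases $v(\gamma')=3$ and $v(\gamma')>3$ in the main induction, and its proof---a separate contradiction argument on minimal corank, ultimately forcing the lattice above $F(\delta)$ to be $\Lambda_{M(K_{2,3})}$ at corank $4$---is where the fourth kind of elementary path actually arises and where essentially all of the lattice combinatorics from \autoref{appendix:flats} is consumed. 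Your ``careful choice of small flat $F$'' and ``verify that $M(K_{2,3})$ is the only remaining case'' gesture at this, but they do not identify the special-path structure or the decomposability of $W\cap Y$ as the relevant obstruction, and without that the rank-$\leq 4$ classification you invoke at the end has no anchor. In short: the reduction to small configurations is not an exhaustive search over all rank-$4$ upper sublattices, but a targeted argument showing that a certain badly-behaved closed path of length four can always be pushed down, with the fourth kind appearing exactly once as the terminal case when this fails.

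So the proposal is not a different correct route; it is an incomplete version of the same route with the two hardest pieces (the right deformation measure, and the Special Lemma) missing.
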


\subsubsection{Outline of the proof of the homotopy theorem}\label{sec:tutte-homotopy-outline}

The technical heart of the proof of \autoref{thm: Tutte's homotopy theorem} is the \emph{Special Lemma} \autoref{theorem: good paths}, which shows that a special class of closed Tutte paths $\delta = (H_1, H_2, H_3, H_4, H_1),$ for which $H_1\cap H_2\cap H_3$ and $H_1\cap H_3\cap H_4$ are indecomposable corank 3 flats and $H_1\cap H_3$ is an indecomposable corank 2 flat, is null-homotopic. The rank of the subconstellation in which these special closed Tutte paths are embedded can be arbitrarily large.

The Special Lemma is proved by contradiction, assuming that $\delta$ is not null-homotopic and that $\delta$ has the least corank among all paths which are not null-homotopic. Extensive casework is used to determine the structure of the lattice below the path $\delta$, based on results from the proof of the path theorem (\autoref{prop:conn-diamond} and \autoref{prop:conn-complement}) as well as the uniqueness of a decomposable corank $2$ flat in an indecomposable rank $3$ matroid (\autoref{theorem: connected corank 3 flats}). The path $\delta$ is in each case deformed to a path $\delta' = \delta\cdots  \delta$ such that every path $\delta$ lies on a flat of smaller corank than $\cork(F).$ This leads to contradiction, because all paths $\delta_i$ are thus null-homotopic. After it is assumed that $\cork (F) = 4,$ (and some additional assumptions) we get that $\delta$ is an elementary Tutte path of the fourth kind, which is the only time  this path is used in the proof of the homotopy theorem.

 Based on the Special Lemma, the general proof proceeds as follows. Given a closed Tutte path $\gamma = (H_1, H_2, \dots , H_k, H_1)$ off $\Gamma$, we perform induction on the corank $c(\gamma)$ of $F(\gamma) := \bigcap_{i = 1}^k H_i$. By~\autoref{prop:connected-intersection}, $F(\gamma)$ is indecomposable.
Thus, there exists an indecomposable flat $F'$ of corank $c(\gamma) - 1$ lying between $F$ and $H_1$. Let $\gamma' = (H_1, G_2, G_3, \dots, G_\ell, H_1)$ be any closed Tutte path off $\Gamma$ such that all terms in $\gamma'$ contain $F(\gamma)$. Write $u(\gamma')$ for the index of the first hyperplane in $\gamma'$ that does not contain $F'$ starting from $H_1,$ and $v(\gamma')$ for the corank of $G_{i-1} \cap G_i \cap G_{i+1}$, where $G_i$ is the first term in $\gamma'$ that does not contain $F'$. We choose $\gamma'$ so that: 
\begin{enumerate}
\item \label{homotopy-proof-1} $\gamma'$ can be derived from $\gamma$ by a finite sequence of deformations using elementary Tutte paths off $\Gamma$.
\item \label{homotopy-proof-2} $u(\gamma')$ attains the minimum among all closed Tutte paths satisfying~(\ref{homotopy-proof-1}).
\item \label{homotopy-proof-3} $v(\gamma')$ attains the minimum among all closed Tutte paths satisfying~(\ref{homotopy-proof-2}).
\end{enumerate}
There are three cases to consider: $v(\gamma') = 2$, $v(\gamma') = 3$, and $v(\gamma') > 3$. The first case is straightforward, and the second and the third cases can be handled using the Special Lemma. The elementary Tutte path of the third kind is used for a single time in the case $v(\gamma') = 3.$ The strategy is similary to the proof of the Special Lemma; one uses tools from \autoref{appendix:flats} to determine the structure of the lattice above $F(\gamma)$ and deforms $\gamma'$ to a path $\gamma''$ which contradicts the choice of $\gamma'.$

 One concludes, in the end, that all three cases are impossible, and we must have $v(\gamma') = 1$ and $u(\gamma') = 0$. The latter condition means that all terms in $\gamma'$ contain $F'$, which is an indecomposable flat of corank $c(\gamma) - 1$. We conclude by induction that $\gamma'$, and hence $\gamma$, is null-homotopic. 

\subsection{The extended homotopy theorem}
\label{subsection:extended-homotopy-theorem}

Recall that the definition of a Tutte path $\gamma$ in a Tutte constellation $\tau$ requires that any two consecutive terms in $\gamma$ intersect in an indecomposable corank $2$ flat. While this property is satisfied with respect to the corresponding subconstellation $\sigma$ for Tutte paths of the third and fourth kind, the corank $2$ flats of subconstellations of type $U_{2,2}$ and $U_{3,3}$ (first and second kind, respectively) are decomposable. 

In this section, we determine certain minimal extensions of both subconstellations which remedy this deficiency, leading to an extended version \autoref{thm:extended-homotopy-theorem} of Tutte's homotopy theorem which is more useful for applications. The extended homotopy theorem will involve
a number of new ``types'' (as opposed to ``kinds'') of elementary Tutte paths.

Note that the modular cut $\Gamma'$ of a subconstellation $(\Lambda',\Gamma')$ of type $N$ defines a single-element extension $\widehat N$ of $N$, whose isomorphism type we call the {\em extended type} of $(\Lambda',\Gamma')$ . We include a description of the extended type $\widehat N$ in the following list of subconstellations.

\medskip\noindent\textbf{Type 1.}
Elementary Tutte paths of type 1 are of the form $(1,2,1)$ in the Tutte subconstellation $(\Lambda',\Gamma')$ of type $N = U_{2,3}$ with $\Gamma'=\{123\}$; see~\autoref{fig: type123}. In this case, $\widehat N = U_{2,4}$.

\medskip\noindent\textbf{Type 2.}
Elementary Tutte paths of type 2 are of the form $(1,2,1)$ in the Tutte subconstellation $(\Lambda',\Gamma')$ of type $N = U_{2,3}$ with $\Gamma'=\{3,\ 123\}$; see~\autoref{fig: type123}. In this case, $\widehat N$ is the parallel extension $\tilde{U}_{2,3}$ of $U_{2,3}$ with parallel elements $3$ and $4$.

\medskip\noindent\textbf{Type 3.}
Elementary Tutte paths of type 3 are of the form $(1,2,3,1)$ in the Tutte subconstellation $(\Lambda', \Gamma')$ of type $N = U_{2,3}$ with $\Gamma' = \{123\}$; see~\autoref{fig: type123}. In this case, $\widehat N = U_{2,4}$.

\begin{figure}[t]
 \begin{tikzpicture}[x=1.0cm,y=0.8cm, font=\footnotesize,decoration={markings,mark=at position 0.6 with {\arrow{latex}}}]
  \node at (0,0)
  {
   \begin{tikzpicture}
   \node at (-0.5,2) {$\Lambda_{U_{2,3}}$};  
   \filldraw[fill=green!10!white,draw=green!80!black,rounded corners=2pt] (0.65,2.35) -- (0.65,1.65) -- (1.25,1.65) -- (1.8,1.1) -- (1.8,0.65) -- (2.2,0.65) -- (2.2,1.25) -- (1.3,2.35) -- cycle;
   \filldraw[fill=green!30!white,draw=green!80!black,rounded corners=2pt] (0.7,2.3) -- (0.7,1.7) -- (1.3,1.7) -- (1.3,2.3) -- cycle;
   \node (0) at (1,0) {$\emptyset$};  
   \node (1) at (0,1) {$1$};  
   \node (2) at (1,1) {$2$};  
   \node (3) at (2,1) {$3$};  
   \node (123) at (1,2) {$123$};  
   \draw (0) to (1);
   \draw (0) to (2);
   \draw (0) to (3);
   \draw (1) to (123);
   \draw (2) to (123);
   \draw (3) to (123);
   \end{tikzpicture} 
  };
  \node at (4,0)
  {
   \begin{tikzpicture}
  \node at (5,2) {\scriptsize Type 1};  
  \draw (5,1.08) edge [-,red,thick,postaction={decorate}] (6,1.08);
  \draw (6,0.92) edge [-,red,thick,postaction={decorate}] (5,0.92);
  \draw (4.5,1) -- (7.5,1);
  \filldraw (5,1) circle (2pt);  
  \node at (5,0.5) {$1$};  
  \filldraw (6,1) circle (2pt);  
  \node at (6,0.5) {$2$};  
  \filldraw (7,1) circle (2pt);  
  \node at (7,0.5) {$3$};  
   \end{tikzpicture} 
  };
  \node at (7.5,0)
  {
   \begin{tikzpicture}
  \node at (5,2) {\scriptsize Type 2};  
  \filldraw[fill=green!20!white,draw=green!80!black,rounded corners=2pt] (7,1) circle (6pt);
  \draw (5,1.08) edge [-,red,thick,postaction={decorate}] (6,1.08);
  \draw (6,0.92) edge [-,red,thick,postaction={decorate}] (5,0.92);
  \draw (4.5,1) -- (7.5,1);
  \filldraw (5,1) circle (2pt);  
  \node at (5,0.5) {$1$};  
  \filldraw (6,1) circle (2pt);  
  \node at (6,0.5) {$2$};  
  \filldraw (7,1) circle (2pt);  
  \node at (7,0.5) {$3$};     
   \end{tikzpicture} 
  };
  \node at (11,0)
  {
   \begin{tikzpicture}
  \node at (10,2) {\scriptsize Type 3};  
  \draw (10,1.08) edge [-,red,thick,postaction={decorate}] (11,1.08);
  \draw (11,1.08) edge [-,red,thick,postaction={decorate}] (12,1.08);
  \draw (12,0.92) edge [-latex,red,thick,] (10.65,0.92);
  \draw (12,0.92) edge [-,red,thick,] (10,0.92);
  \draw (9.5,1) -- (12.5,1);
  \filldraw (10,1) circle (2pt);  
  \node at (10,0.5) {$1$};  
  \filldraw (11,1) circle (2pt);  
  \node at (11,0.5) {$2$};  
  \filldraw (12,1) circle (2pt);  
  \node at (12,0.5) {$3$};  
   \end{tikzpicture} 
  };
\end{tikzpicture}
 \caption{Subconstellations and elementary Tutte paths of types 1--3}
 \label{fig: type123}
\end{figure}
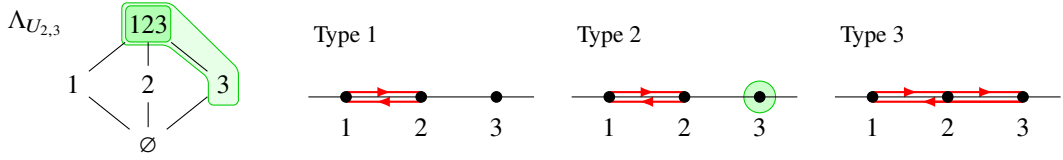

\medskip\noindent\textbf{Type 4.}
Elementary Tutte paths of type 4 are of the form $(12,13,23,12)$ in the Tutte subconstellation $(\Lambda',\Gamma')$ of type $N = U_{3,4}$ with $\Gamma' = \{1234\}$; see~\autoref{fig: type456}. In this case, $\widehat N=U_{3,5}$.
 
\medskip\noindent\textbf{Type 5.}
Elementary Tutte paths of type 5 are of the form $(12,13,23,12)$ in the Tutte subconstellation $(\Lambda',\Gamma')$ of type $N = U_{3,4}$ with $\Gamma' = \{14,1234\}$; see\ \autoref{fig: type456}. In this case, $\widehat N = C_5$, the matroid on $E = \{1,2,3,4,5\}$ whose set of bases is $\binom{E}{3} - \{123\}$.

\medskip\noindent\textbf{Type 6.}
Elementary Tutte paths of type 6 are of the form $(12,13,23,12)$ in the Tutte subconstellation $(\Lambda',\Gamma')$ of type $N=U_{3,4}$ with $\Gamma'=\{4,14,24,34,1234\}$; see\ \autoref{fig: type456}. In this case, $\widehat N$ is the parallel extension $\tilde{U}_{3,4}$ of $N$ with parallel elements $4$ and $5$.

\begin{figure}[t]
\begin{tikzpicture}[x=1.0cm,y=0.8cm, font=\footnotesize,decoration={markings,mark=at position 0.6 with {\arrow{latex}}}]
   \node at (0,0)
  {
   \begin{tikzpicture}
   \node at (-1,3) {$\Lambda_{U_{3,4}}$};  
   \filldraw[fill=green!10!white,draw=green!80!black,rounded corners=2pt] (2,3.4) -- (2,2.6) -- (2.9,1.65) -- (3.9,0.7) -- (4.1,0.7) -- (5.25,1.7) -- (5.25,2.3) -- (3,3.4) -- cycle;
   \filldraw[fill=green!20!white,draw=green!80!black,rounded corners=2pt] (2.05,3.35) -- (2.05,2.65) -- (2.95,1.7) -- (3.25,1.7) -- (3.25,2.3) -- (2.95,3.35) -- cycle;
   \filldraw[fill=green!30!white,draw=green!80!black,rounded corners=2pt] (2.1,3.3) -- (2.1,2.7) -- (2.9,2.7) -- (2.9,3.3) -- cycle;
   \node (0) at (2.5,0) {$\emptyset$};  
   \node (1) at (1,1) {$1$};  
   \node (2) at (2,1) {$2$};  
   \node (3) at (3,1) {$3$};  
   \node (4) at (4,1) {$4$};  
   \node (12) at (0,2) {$12$};  
   \node (13) at (1,2) {$13$};  
   \node (23) at (2,2) {$23$};  
   \node (14) at (3,2) {$14$};  
   \node (24) at (4,2) {$24$};  
   \node (34) at (5,2) {$34$};  
   \node (1234) at (2.5,3) {$1234$};  
   \draw (0) to (1);
   \draw (0) to (2);
   \draw (0) to (3);
   \draw (0) to (4);
   \draw (1) to (12);
   \draw (1) to (13);
   \draw (1) to (14);
   \draw (2) to (12);
   \draw (2) to (23);
   \draw (2) to (24);
   \draw (3) to (13);
   \draw (3) to (23);
   \draw (3) to (34);
   \draw (4) to (14);
   \draw (4) to (24);
   \draw (4) to (34);
   \draw (12) to (1234);
   \draw (13) to (1234);
   \draw (14) to (1234);
   \draw (23) to (1234);
   \draw (24) to (1234);
   \draw (34) to (1234);
   \end{tikzpicture} 
  };
  \node at (7,2) {Types 4--6};  
  \node at (9,0)
  {
   \begin{tikzpicture}[x=0.4cm,y=0.4cm]
    \filldraw[fill=green!10!white,draw=green!80!black,rounded corners=7pt] (1.7,4.3) -- (2.3,5.3) -- (12.8,-1) -- (12.2,-2) -- cycle;
    \filldraw[fill=green!20!white,draw=green!80!black,rounded corners=2pt] (10,0) circle (5.5pt);
    \node at (-0.5,2.1) {$1$};  
    \node at (7.5,4.4) {$2$};  
    \node at (10.5,2.1) {$3$};  
    \node at (2.5,4.5) {$4$};
    \draw[name path=L1] (13,-0.6) -- (0,2);
    \draw[name path=L2] (-2,-1.2) -- (7,4.2);
    \draw[name path=L3] (-3,-0.6) -- (10,2);
    \draw[name path=L4] (12,-1.2) -- (3,4.2);
    \fill[name intersections={of=L1 and L2,by=12}] (intersection-1) circle (2pt) node[above=2pt] {$12$};
    \fill[name intersections={of=L1 and L3,by=13}] (intersection-1) circle (2pt) node[below=2pt] {$13$};
    \fill[name intersections={of=L1 and L4,by=14}] (intersection-1) circle (2pt) node[below=5pt] {$14$};
    \fill[name intersections={of=L2 and L3,by=23}] (intersection-1) circle (2pt) node[below=5pt] {$23$};
    \fill[name intersections={of=L2 and L4,by=24}] (intersection-1) circle (2pt) node[above=4pt] {$24$};
    \fill[name intersections={of=L3 and L4,by=34}] (intersection-1) circle (2pt) node[above=5pt] {$34$};
    \draw (12) edge [-,red,thick,postaction={decorate}] (13);
    \draw (13) edge [-,red,thick,postaction={decorate}] (23);
    \draw (23) edge [-,red,thick,postaction={decorate}] (12);
    \fill (12) circle (2pt);
    \fill (13) circle (2pt);
    \fill (34) circle (2pt);
    \fill (24) circle (2pt);
    \fill (23) circle (2pt);
   \end{tikzpicture} 
  };
\end{tikzpicture}
 \caption{Tutte subconstellation and elementary Tutte path of types 4--6}
 \label{fig: type456}
\end{figure}
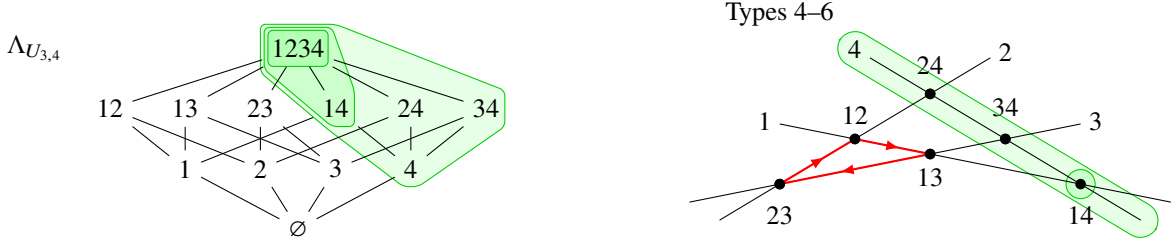

\medskip\noindent\textbf{Type 7.}
Elementary Tutte paths of type 7 are of the form $(12,24,34,13,12)$ in the Tutte subconstellation $(\Lambda',\Gamma')$ of type $N=U_{3,4}$ with $\Gamma'
 = \{23,14,1234\}$; see\ \autoref{fig: type7}. In this case, $\widehat N = M(K_4^-)$.

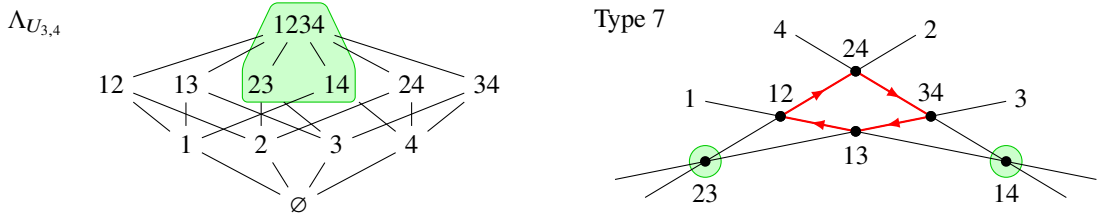
\begin{figure}[t]
\begin{tikzpicture}[x=1.0cm,y=0.8cm, font=\footnotesize,decoration={markings,mark=at position 0.6 with {\arrow{latex}}}]
  \node at (0,0)
  {
   \begin{tikzpicture}
  \node at (-1,3) {$\Lambda_{U_{3,4}}$};  
   \filldraw[fill=green!20!white,draw=green!80!black,rounded corners=2pt] (2.15,3.35) -- (1.75,2.3) -- (1.75,1.7) -- (3.25,1.7) -- (3.25,2.3) -- (2.85,3.35) -- cycle;
   \node (0) at (2.5,0) {$\emptyset$};  
   \node (1) at (1,1) {$1$};  
   \node (2) at (2,1) {$2$};  
   \node (3) at (3,1) {$3$};  
   \node (4) at (4,1) {$4$};  
   \node (12) at (0,2) {$12$};  
   \node (13) at (1,2) {$13$};  
   \node (23) at (2,2) {$23$};  
   \node (14) at (3,2) {$14$};  
   \node (24) at (4,2) {$24$};  
   \node (34) at (5,2) {$34$};  
   \node (1234) at (2.5,3) {$1234$};  
   \draw (0) to (1);
   \draw (0) to (2);
   \draw (0) to (3);
   \draw (0) to (4);
   \draw (1) to (12);
   \draw (1) to (13);
   \draw (1) to (14);
   \draw (2) to (12);
   \draw (2) to (23);
   \draw (2) to (24);
   \draw (3) to (13);
   \draw (3) to (23);
   \draw (3) to (34);
   \draw (4) to (14);
   \draw (4) to (24);
   \draw (4) to (34);
   \draw (12) to (1234);
   \draw (13) to (1234);
   \draw (14) to (1234);
   \draw (23) to (1234);
   \draw (24) to (1234);
   \draw (34) to (1234);
   \end{tikzpicture} 
  };
  \node at (5,1.5) {Type 7};
    \node at (8,0)
  {
   \begin{tikzpicture}[x=0.4cm,y=0.4cm]
    \filldraw[fill=green!20!white,draw=green!80!black,rounded corners=2pt] (0,0) circle (6pt);
    \filldraw[fill=green!20!white,draw=green!80!black,rounded corners=2pt] (10,0) circle (6pt);
    \node at (2.5,4.4) {$4$};  
    \node at (7.5,4.4) {$2$};  
    \node at (10.5,2.1) {$3$};  
    \node at (-0.5,2.1) {$1$};  
    \draw[name path=L4] (12,-1.2) -- (3,4.2);
    \draw[name path=L2] (-2,-1.2) -- (7,4.2);
    \draw[name path=L3] (-3,-0.6) -- (10,2);
    \draw[name path=L1] (13,-0.6) -- (0,2);
    \fill[name intersections={of=L1 and L2,by=12}] (intersection-1) circle (2pt) node[above=2pt] {$12$};
    \fill[name intersections={of=L1 and L3,by=13}] (intersection-1) circle (2pt) node[below=2pt] {$13$};
    \fill[name intersections={of=L1 and L4,by=14}] (intersection-1) circle (2pt) node[below=5pt] {$14$};
    \fill[name intersections={of=L2 and L3,by=23}] (intersection-1) circle (2pt) node[below=5pt] {$23$};
    \fill[name intersections={of=L2 and L4,by=24}] (intersection-1) circle (2pt) node[above=2pt] {$24$};
    \fill[name intersections={of=L3 and L4,by=34}] (intersection-1) circle (2pt) node[above=2pt] {$34$};
    \draw (12) edge [-,red,thick,postaction={decorate}] (24);
    \draw (24) edge [-,red,thick,postaction={decorate}] (34);
    \draw (34) edge [-,red,thick,postaction={decorate}] (13);
    \draw (13) edge [-,red,thick,postaction={decorate}] (12);
    \fill (12) circle (2pt);
    \fill (13) circle (2pt);
    \fill (34) circle (2pt);
    \fill (24) circle (2pt);
   \end{tikzpicture} 
  };
\end{tikzpicture}
 \caption{Tutte subconstellation and elementary Tutte path of type 7}
 \label{fig: type7}
\end{figure}

\medskip\noindent\textbf{Type 8.}
Elementary Tutte paths of type 8 are of the form $(126,135,234,126)$ in the Tutte subconstellation $(\Lambda',\Gamma')$ of type $N = M(K_4)$ with $\Gamma' = \{14,25,36,123456\}$; see\ \autoref{fig: type8}. In this case, $\widehat N=F_7$.

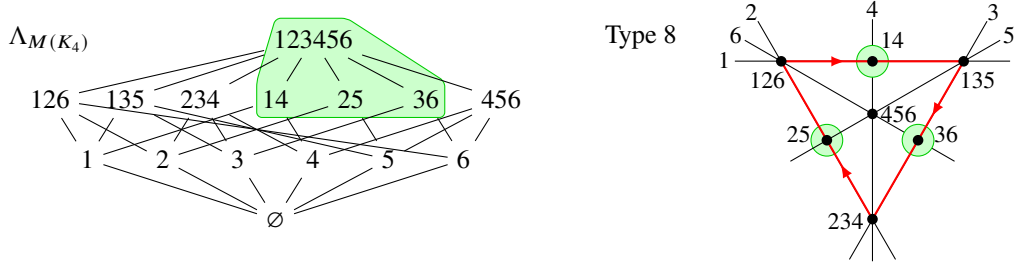
\begin{figure}[t]
\begin{tikzpicture}[x=1.0cm,y=0.8cm, font=\footnotesize,decoration={markings,mark=at position 0.35 with {\arrow{latex}}}]
    \node at (0,0)
  {
   \begin{tikzpicture}
   \node at (-1,3) {$\Lambda_{M(K_4)}$};  
   \filldraw[fill=green!20!white,draw=green!80!black,rounded corners=2pt] (2,3.35) -- (1.75,2.3) -- (1.75,1.7) -- (4.25,1.7) -- (4.25,2.3) -- (2.95,3.35) -- cycle;
   \node (0) at (2,0) {$\emptyset$};  
   \node (1) at (-0.5,1) {$1$};  
   \node (2) at (0.5,1) {$2$};  
   \node (3) at (1.5,1) {$3$};  
   \node (4) at (2.5,1) {$4$};  
   \node (5) at (3.5,1) {$5$};  
   \node (6) at (4.5,1) {$6$};  
   \node (126) at (-1,2) {$126$};  
   \node (135) at (0,2) {$135$};  
   \node (234) at (1,2) {$234$};  
   \node (14) at (2,2) {$14$};  
   \node (25) at (3,2) {$25$};  
   \node (36) at (4,2) {$36$};  
   \node (456) at (5,2) {$456$};  
   \node (123456) at (2.5,3) {$123456$};  
   \foreach \to/\from in {0/1,0/2,0/3,0/4,0/5,0/6,1/126,2/126,6/126,1/135,3/135,5/135,2/234,3/234,4/234,1/14,4/14,2/25,5/25,3/36,6/36,4/456,5/456,6/456,126/123456,135/123456,234/123456,14/123456,25/123456,36/123456,456/123456}
      \draw [-] (\to)--(\from);
   \end{tikzpicture} 
  };
  \node at (5,1.5) {Type 8};
  \node at (8,0)
  {
   \begin{tikzpicture}[x=0.7cm,y=0.7cm, font=\scriptsize]
    \fill (  0:0) circle (2pt);
    \foreach \a in {1,...,3}
    {
     \filldraw[fill=green!20!white,draw=green!80!black,rounded corners=2pt] ( 90+\a*120:1) circle (6pt);
     \draw (30+\a*120:2.8) -- (210+\a*120:1.8);
     \draw (21+\a*120:2.8) -- (159+\a*120:2.8);
     \draw (150+\a*120:2) edge [-,red,thick,postaction={decorate}] (30+\a*120:2);
    };  
    \foreach \a in {1,...,3}
    {
     \fill ( 30+\a*120:2) circle (2pt);
     \fill ( 90+\a*120:1) circle (2pt);
    };  
    \node at (160:3.0) {$1$};
    \node at (150:3.0) {$6$};
    \node at (140:3.0) {$2$};
    \node at ( 90:2.0) {$4$};
    \node at ( 40:3.0) {$3$};
    \node at ( 30:3.0) {$5$};
    \node at ( 18:2.1) {$135$};
    \node at (162:2.1) {$126$};
    \node at (256:2.1) {$234$};
    \node at ( 75:1.45) {$14$};
    \node at (195:1.45) {$25$};
    \node at (-15:1.45) {$36$};
    \node at ( 0:0.5) {$456$};
   \end{tikzpicture} 
  };
\end{tikzpicture}
 \caption{Tutte constellation and elementary path of type 8}
 \label{fig: type8}
\end{figure}
 
\medskip\noindent\textbf{Type 9.}
Elementary Tutte paths of type 9 are of the form $(1245,126,1346,456,1245)$ in the Tutte subconstellation $(\Lambda',\Gamma')$ of type $N=M(K_{2,3})$ with $\Gamma'=\{123,156,246,345,123456\}$ such that the corank $2$ flats $14$, $25$ and $36$ are decomposable in $\Lambda$; see\ \autoref{fig: Tutte constellation of kind4} and \autoref{fig: kind4}. In this case, $\widehat N=F_7^\ast$.

\begin{thm}[Extended homotopy theorem]\label{thm:extended-homotopy-theorem}
 Let $M$ be a connected matroid and let $(\Lambda,\Gamma)$ be a Tutte constellation of type $M$. Then every closed Tutte path in $(\Lambda,\Gamma)$ can be decomposed into elementary Tutte paths in $(\Lambda,\Gamma)$ of types 1--9.
\end{thm}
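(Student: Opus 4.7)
The plan is to deduce the extended homotopy theorem from Tutte's original version (\autoref{thm: Tutte's homotopy theorem}) by showing that every elementary Tutte path of one of the four original kinds can itself be rewritten, via deformations, as a composition of elementary Tutte paths of types 1--9. Inspecting the definitions, kind 2(a) coincides verbatim with type 3 and kind 3 with type 7, so these require no work; the remaining cases are kind 1, kind 2(b), and kind 4.

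For kind 1, the U_{2,2} subconstellation $(H_1,H_2)$ has as its bottom a corank-2 flat $L$ that is indecomposable in $\Lambda$. By \autoref{ex:conn-line}, $L$ is contained in at least three hyperplanes of $\Lambda$, so we may pick a third hyperplane $H_3\supset L$ and enlarge the sublattice to one of type $U_{2,3}$. The resulting closed path $(H_1,H_2,H_1)$ is then elementary of type 1 if $H_3\notin\Gamma$ and of type 2 if $H_3\in\Gamma$, using \autoref{prop: linear subclasses correspond to modular cuts} to check that $\Gamma\cap\Lambda'$ is indeed the required modular cut.

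Kind 2(b) is the main technical case. Here we have a $U_{3,3}$ subconstellation with hyperplanes $H_{12},H_{13},H_{23}$, whose three atoms $\{1\},\{2\},\{3\}$ are each indecomposable in $\Lambda$; hence each atom admits an additional hyperplane of $\Lambda$ passing through it. One then performs a case analysis on the combinatorial pattern formed by these extra hyperplanes together with $\Gamma$: when a single extra hyperplane suffices to complete $H_{12},H_{13},H_{23}$ to an upper sublattice isomorphic to $\Lambda_{U_{3,4}}$, one obtains an elementary path of type 4, 5, or 6 (distinguished by which atoms lie in $\Gamma$); a slightly richer $U_{3,4}$ pattern yields type 7; and when the three extra hyperplanes fit together into an upper sublattice of type $M(K_4)$, the path is elementary of type 8. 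In each subcase one uses the hyperplane-of-a-deletion trichotomy (\autoref{lemma: hyperplanes of a deletion}), together with \autoref{prop: modular cuts and one element extensions}, to verify that the induced modular cut has exactly the shape required by the corresponding type.

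For kind 4 the decomposition is dual: type 9 is obtained from kind 4 by replacing the hypothesis that the corank-2 flats $14,25,36$ are indecomposable in $\Lambda$ with the hypothesis that they are decomposable. Thus if the kind-4 subconstellation already satisfies the decomposability condition, the path is literally of type 9; otherwise, some corank-2 flat (say $14$) admits an extra hyperplane $H'$ of $\Lambda$ through it, and inserting a detour through $H'$ deforms the original path into two shorter closed Tutte paths whose internal corank-2 flats can be arranged (after possibly iterating on $25$ and $36$) to fall under the previously-handled kinds 1--3, and hence under types 1--8. The main obstacle in the whole proof is the case analysis in kind 2(b), where one must ensure that a suitable extra hyperplane exists with the correct relationship to $\Gamma$, and (for the $M(K_4)$ subcase) that the three extra hyperplanes really do close up into an upper sublattice of the requisite type --- this is where the lemmas on indecomposable flats from \autoref{appendix:flats} and the modular-pair property of $\Gamma$ are used most heavily.
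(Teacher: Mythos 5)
Your overall strategy is correct and matches the paper's proof exactly: apply Tutte's original homotopy theorem (\autoref{thm: Tutte's homotopy theorem}) and then show that each of the four original kinds of elementary path decomposes into paths of types 1--9. Your identification of kind 2(a) with type 3 and kind 3 with type 7, and your treatment of kind 1 via adjoining a third hyperplane through the indecomposable bottom flat (giving type 1 if it misses $\Gamma$ and type 2 if it lies in $\Gamma$), also match the paper.

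Two points, however. First, your sketch of kind 2(b) is wrong in one place: you claim that ``a slightly richer $U_{3,4}$ pattern yields type 7,'' but type 7 never arises in the decomposition of kind 2(b). The paper's \autoref{prop:extended-second-kind} decomposes kind 2(b) into types 1--6 and 8 only; the structure giving type 7 (a $U_{3,4}$ with $\Gamma'$ containing \emph{two} hyperplanes meeting in a corank-3 flat of $\Lambda$) comes exclusively from kind 3 and corresponds to a closed path of length 4, not length 3. Moreover, the cases where the three extra hyperplanes $H_1', H_2', H_3'$ meet in a corank-3 flat require nontrivial deformation sequences (see Cases 2a and 2b in the proof of \autoref{prop:extended-second-kind}), not just reclassification of the constellation; your sketch treats this as pure bookkeeping. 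Second, your treatment of kind 4 stems from taking literally what is in fact a typo in the paper: the definition of the fourth kind in \autoref{subsubsection: constellations} says ``$14$, $25$ and $36$ are \emph{indecomposable} in $\Lambda$,'' but as Tutte's original formulation makes clear (\autoref{remark: tutte version of fourth path}: ``$A\cap B$, $B\cap C$, and $A\cap C$ are all decomposable corank 2 flats''), and as the definition of type 9 confirms, the intended condition is \emph{decomposable}. With this correction, kind 4 coincides verbatim with type 9, and no decomposition is needed --- exactly as the paper's one-line proof asserts. Your proposed ``detour through $H'$'' is therefore an unnecessary complication, and it is also not verified: the two terms of the path containing $14$, namely $H_{1245}$ and $H_{1346}$, are not adjacent in the cycle $(H_{1245},H_{126},H_{1346},H_{456},H_{1245})$, so it is not clear how the detour splits the path into simpler closed paths of types 1--8.
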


As an immediate consequence, we obtain the following easier-to-state result, which suffices for our application to the fundamental presentation of the foundation of a matroid.

\begin{cor}[Extended homotopy theorem, simplified version]\label{cor: extended minors of the homotopy theorem}
 Let $M$ be a connected matroid and let $(\Lambda,\Gamma)$ be a Tutte constellation of type $M$. Then every closed Tutte path in $(\Lambda,\Gamma)$ can be decomposed into elementary Tutte paths in $(\Lambda,\Gamma)$ whose extended type belongs to $\{ \tilde{U}_{2,3},U_{2,4},M(K_4^-),\tilde{U}_{3,4},C_5,U_{3,5},F_7,F_7^\ast  \}$.
\end{cor}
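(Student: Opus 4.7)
The plan is to derive this corollary as an essentially immediate consequence of the extended homotopy theorem (\autoref{thm:extended-homotopy-theorem}). Given any closed Tutte path in $(\Lambda,\Gamma)$, I would invoke that theorem to decompose it, via a finite sequence of deformations, into elementary Tutte paths of types 1--9. It then suffices to verify that the extended type $\widehat{N}$ associated to each of these nine types lies in the set $\{\tilde{U}_{2,3}, U_{2,4}, M(K_4^-), \tilde{U}_{3,4}, C_5, U_{3,5}, F_7, F_7^\ast\}$.

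To that end, I would simply read off the extended types recorded in the definitions of types 1--9 above: types 1 and 3 yield $\widehat{N} = U_{2,4}$; type 2 yields $\widehat{N} = \tilde{U}_{2,3}$; type 4 yields $\widehat{N} = U_{3,5}$; type 5 yields $\widehat{N} = C_5$; type 6 yields $\widehat{N} = \tilde{U}_{3,4}$; type 7 yields $\widehat{N} = M(K_4^-)$; type 8 yields $\widehat{N} = F_7$; and type 9 yields $\widehat{N} = F_7^\ast$. The union of these is exactly the set appearing in the statement, so every elementary Tutte path produced by the decomposition has extended type belonging to the required set.

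Since all of the substantive work has already been packaged into \autoref{thm:extended-homotopy-theorem}, there is no genuine obstacle at this stage; the corollary is a pure cataloguing statement, and the only thing to be careful about is not to miss an extended type when enumerating. It is included as a convenient restatement because the list of eight extended types (rather than the finer data of the nine Tutte-constellation types together with their modular cuts) is what is directly needed in the subsequent application to the fundamental presentation of the foundation of a matroid.
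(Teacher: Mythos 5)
Your proof is correct and matches the paper's approach: the corollary is stated there as an immediate consequence of \autoref{thm:extended-homotopy-theorem}, and the only work is exactly the cataloguing of extended types 1--9 that you carry out. Your enumeration is accurate, so nothing further is required.
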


The proof of \autoref{thm:extended-homotopy-theorem} rests on the following result.

\begin{prop}\label{prop:extended-second-kind}
Let $M$ be a connected matroid and let $(\Lambda,\Gamma)$ be a Tutte constellation of type $M$. Let $\gamma = (H_{12}, H_{13}, H_{23}, H_{12})$ be an elementary Tutte path of the second kind with $F = H_{12} \cap H_{13} \cap H_{23}$ being a corank $3$ flat of $M$. Then $\gamma$ can be decomposed into elementary Tutte paths in $(\Lambda, \Gamma)$ of types 1--6 and 8. 
\end{prop}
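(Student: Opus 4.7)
The plan is to work in the rank-$3$ contraction $M/F$. Under contraction, the three hyperplanes $H_{12}, H_{13}, H_{23}$ descend to three lines of $M/F$ meeting pairwise in the three indecomposable corank-$2$ flats $L_1=H_{12}\cap H_{13}$, $L_2=H_{12}\cap H_{23}$, $L_3=H_{13}\cap H_{23}$, now viewed as points of $M/F$. By \autoref{ex:conn-line}, each $L_i$ lies on at least three hyperplanes of $M$ containing $F$, so each corresponding point of $M/F$ lies on at least three lines. I would proceed by case analysis according to whether there is a fourth corank-$2$ flat of $M$ above $F$ that is ``off'' all three of our hyperplanes.

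\emph{Case 1.} There exists a corank-$2$ flat $L_4$ of $M$ with $F\subseteq L_4$ and $L_4\not\subseteq H_{12},H_{13},H_{23}$. Then $L_1,L_2,L_3,L_4$ form four points in general position in $M/F$, and together with the six hyperplanes joining pairs of them they generate an upper sublattice $\Lambda'$ of $\Lambda$ of type $U_{3,4}$ that contains the original $U_{3,3}$ subconstellation. Using the modular cut axioms together with the fact that $L_1,L_2,L_3\notin\Gamma$ (which forces $F\notin\Gamma$ via modularity of the pairs $(L_i,L_j)$), the restricted modular cut $\Gamma\cap\Lambda'$ must take one of the shapes $\{E\}$, $\{H_{i4},E\}$, or $\{L_4,H_{14},H_{24},H_{34},E\}$. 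These are precisely the modular cuts corresponding to types 4, 5, and 6, and in each case $\gamma$ is directly an elementary Tutte path of the appropriate type in $(\Lambda',\Gamma\cap\Lambda')$.

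\emph{Case 2.} No such $L_4$ exists; every corank-$2$ flat of $M$ containing $F$ is contained in at least one of $H_{12},H_{13},H_{23}$. Using the indecomposability of each $L_i$ together with the modular cut axiom (which implies that at most one hyperplane through $L_i$ can lie in $\Gamma$, since $L_i\notin\Gamma$ and any two such hyperplanes form a modular pair), I would produce hyperplanes $H_1',H_2',H_3'$ off $\Gamma$ through $L_1,L_2,L_3$ respectively, whose ``opposite'' intersections with $H_{23},H_{13},H_{12}$ give auxiliary corank-$2$ flats $L_1',L_2',L_3'$. The key structural claim is that in Case 2, the six points $L_1,L_2,L_3,L_1',L_2',L_3'$ together with appropriate hyperplanes generate an $M(K_4)$ upper sublattice $\Lambda''$ of $\Lambda$, and that $\Gamma\cap\Lambda''$ matches the cut required for type 8; the path $\gamma$ is then directly (or, after auxiliary deformations of types 1, 2, or 3, eventually) an elementary Tutte path of type 8.

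The main obstacle will be Case 2, specifically establishing the $M(K_4)$ upper sublattice. The subtle point is showing that the auxiliary flats $L_1',L_2',L_3'$ are collinear in $M/F$, which is what provides the fourth ``triangle'' line of $M(K_4)$. I expect this collinearity to follow from the Case 2 hypothesis combined with the submodularity of the rank function and the forced structure of the modular cut. A sub-case analysis will likely be required depending on whether the initial choices of $H_i'$ yield the required collinearity; if not, type 3 triangle deformations in $U_{2,3}$ subconstellations centered at each $L_i$ should allow one to replace $H_i'$ by a different hyperplane through $L_i$ and iterate until the $M(K_4)$ configuration is realized.
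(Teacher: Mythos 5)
Your Case 1 is correct and essentially matches the paper's, including the clean observation that modularity forces $\Gamma\cap\Lambda'$ to be one of the three listed shapes. However, your Case 2 has a genuine gap, and it occurs at precisely the point where the type 8 elementary path must emerge.

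You assert that you can always ``produce hyperplanes $H_1',H_2',H_3'$ off $\Gamma$ through $L_1,L_2,L_3$.'' This is not always possible, and the obstruction is exactly the $M(K_4)$ situation. If $L_i$ is contained in exactly three hyperplanes (the minimum allowed by indecomposability), two of them are $H_{12}$ and $H_{13}$ (or the appropriate pair from $\gamma$), and the third may well be in $\Gamma$ --- the modular cut axiom only forbids \emph{two} hyperplanes through $L_i$ from lying in $\Gamma$, so one is allowed. In that case there simply is no $H_i'$ off $\Gamma$ to choose. This is not a pathology to rule out; it is the generic situation in the type 8 subconstellation, whose modular cut is $\Gamma'=\{H_1',H_2',H_3',E\}$ with the three diagonal hyperplanes all \emph{in} $\Gamma$. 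By insisting on $H_i'\notin\Gamma$, you make it impossible for your constructed $M(K_4)$ sublattice to carry the type 8 cut, so your Case 2 can never actually produce the type 8 path that the statement requires.

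A second issue: even when you can choose $H_i'$ off $\Gamma$ and land in your Case 2, the six points $L_1,L_2,L_3,L_1',L_2',L_3'$ do \emph{not} in general span an $M(K_4)$ sublattice --- your $L_1',L_2',L_3'$ need not be collinear, and they need not even be the right three points. The paper handles this by a further split on whether the three ``opposite'' hyperplanes $H_{45},H_{46},H_{56}$ (joins of pairs of the $L_{i+3}$) coincide or not; when they are distinct, the path decomposes into types 3--6 via an explicit sequence of deformations, and only when they coincide does one genuinely obtain $M(K_4)$ with the type 8 cut. Your plan of iterating ``type 3 triangle deformations at each $L_i$ until the $M(K_4)$ configuration is realized'' does not address this dichotomy and, as written, has no termination argument. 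The fix is to drop the off-$\Gamma$ requirement on the $H_i'$, split on whether some $H_i'$ is off $\Gamma$ (giving types 1--6 by a short deformation chain) or all $H_i'\in\Gamma$ (where the opposite-line dichotomy above applies), as the paper does.
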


\begin{proof}
Denote by $L_1$ the intersection of $H_{12}$ and $H_{13}$, $L_2$ the intersection of $H_{12}$ and $H_{23}$, and $L_3$ the intersection of $H_{13}$ and $H_{23}$. For each $i = 1, 2, 3$, since $L_i$ is an indecomposable corank $2$ flat of $M$ (by the definition of a Tutte path),
there exists a hyperplane $H_i' \in \cH$ not in $\gamma$ with $L_i \subset H_i'$. Since the join of any two of $L_1, L_2$, and $L_3$ is in $\gamma$, $H_1', H_2'$, and $H_3'$ are necessarily pairwise distinct. Note $H_1' \cap H_2' \cap H_3' \supseteq L_1 \cap L_2 \cap L_3 = F$. In the following, we consider two cases based on the corank of $H_1' \cap H_2' \cap H_3'$:

\medskip\noindent\textbf{Case 1.} Suppose $L = H_1' \cap H_2' \cap H_3'$ is of corank $2$. Then $\{L_1, L_2, L_3, L\}$ determines an upper sublattice of type $N = U_{3,4}$ in which every corank $2$ flat is contained in exactly $3$ hyperplanes. Since $H_1, H_2, H_3 \notin \Gamma$, the only possibilities for $\Gamma' = \Gamma_N \cap \Lambda'$ are $\{E\}$, $\{E, H_i'\}$ for exactly one $i \in \{1, 2, 3\}$, and $\{E, H_1', H_2', H_3', L\}$, which correspond to types 4--6, respectively. (Using the notation from~\autoref{fig: type456}, we have $L = L_4$ and $H_i' = H_{i4}$.)

\medskip\noindent\textbf{Case 2.} If $H_1' \cap H_2' \cap H_3'$ is of corank $3$, then it must equal to $F$. For each $i = 1, 2, 3$, pick $x_{i+3} \in H_i' - L_i$ and form $L_{i+3} = \gen{Fx_{i+3}}$. If at least one of $L_4, L_5$, and $L_6$, say $L = L_4$, is not contained in any hyperplane in $\gamma$, we can replace each $H_i'$ by $L_i \lor L$ and reduce the problem to the case where $L = H_1' \cap H_2' \cap H_3'$ is of corank $2$. Therefore, we can assume that each of $L_4, L_5$ and $L_6$ is contained in one of $H_{12}, H_{13}$, and $H_{23}$. Since $L_1$ is contained in $H_{12}, H_{13}$, and $H_1'$, the corank $2$ flat $L_4$ must be contained only in $H_{23}$; similarly, we have $L_5 \subset H_{13}$ and $L_6 \subset H_{12}$. We write $H_{126}$ for $H_{16}$, $H_{135}$ for $H_{13}$, and $H_{234}$ for $H_{23}$. 

\medskip\noindent\textbf{Case 2a.} If there exists some $H_i'$, say $H_1'$, off $\Gamma$, then we can perform the following deformation of closed paths via elementary Tutte paths:
\[
\begin{split}
\gamma = (H_{126}, H_{135}, H_{234}, H_{126})
& \sim (H_{126}, H_1', H_{135}, H_{234}, H_{126}) \\
& \sim (H_{126}, H_{234}, H_1', H_{135}, H_{234}, H_{126}) \\
& \sim (H_{126}, H_{234}, H_{126}) \\
& \sim (H_{126}). 
\end{split}
\]
In this way, the closed path $\gamma$ is decomposed into elementary Tutte paths $(H_{126}, H_1', H_{135}, H_{126})$, $(H_{126}, H_{234}, H_1',H_{126})$, $(H_{234}, H_1', H_{135}, H_{234})$, and $(H_{126}, H_{234}, H_{126})$ in $(\Lambda, \Gamma)$, which are of types 1--6. 

\medskip\noindent\textbf{Case 2b.} Suppose $\Gamma$ contains $H_1', H_2'$, and $H_3'$. Denote $H_{56} = L_5 \lor L_6$, $H_{46} = L_4 \lor L_6$, and $H_{45} = L_4 \lor L_5$. Clearly, none of $H_{56}, H_{46}$, and $H_{45}$ is in $\Gamma$. 

Assume there exist two distinct members among $H_{56}, H_{46}$, and $H_{45}$; say $H_{56} \ne H_{46}$. Then all three hyperplanes must be pairwise distinct. In this case, the three closed Tutte paths $(H_{46}, H_{234}, H_{45}, H_{46})$, $(H_{45}, H_{135}, H_{56}, H_{45})$, and $(H_{56}, H_{126}, H_{46}, H_{56})$ are elementary of type 3, and the four closed Tutte paths $(H_{56}, H_{45}, H_{46}, H_{56})$, $(H_{126}, H_{56}, H_{135}, H_{126})$, $(H_{135}, H_{45}, H_{234}, H_{135})$, and $(H_{234}, H_{46}, H_{126}, H_{234})$ fall into Case 1, and hence are elementary in $(\Lambda, \Gamma)$ of types 4--6; for an illustration, see~\autoref{fig: ExtendedCase2b}. We perform the following deformation of closed paths via elementary Tutte paths:
\[
\begin{split}
\gamma = (H_{126}, H_{135}, H_{234}, H_{126}) 
& \sim (H_{126}, H_{56}, H_{135}, H_{45}, H_{234}, H_{46}, H_{126})\\
& \sim (H_{126}, H_{56}, H_{45}, H_{46}, H_{126}) \\
& \sim (H_{126}, H_{56}, H_{46}, H_{126}) \\
& \sim (H_{126}). 
\end{split}
\]
Thus, the closed path $\gamma$ is decomposed into elementary Tutte paths in $(\Lambda, \Gamma)$ of types 3--6. 

\begin{figure}[t]
 \begin{tikzpicture}[x=0.9cm,y=0.9cm, font=\footnotesize,decoration={markings,mark=at position 0.6 with {\arrow{latex}}}] 
  \node at (0,-0.28) {$H_{56}$};
  \node at (-1.45, 1.732) {$H_{126}$}; 
  \node at (1.47, 1.732) {$H_{135}$}; 
  \node at (-2.40, 3.474) {$H_{46}$}; 
  \node at (0, 3.754) {$H_{234}$}; 
  \node at (2.40, 3.474) {$H_{45}$};
  \draw (0,0) -- (-2,3.464);
  \draw (0,0) -- (2,3.464);
  \draw (-2,3.464) -- (2,3.464);
  \draw (-1,1.732) edge [-,red,thick,postaction={decorate}] (1, 1.732);
  \draw (1,1.732) edge [-,red,thick,postaction={decorate}] (0, 3.464);
  \draw (0,3.464) edge [-,red,thick,postaction={decorate}] (-1, 1.732);
  \filldraw (0, 0) circle (2pt);
  \filldraw (-1, 1.732) circle (2pt);
  \filldraw (1, 1.732) circle (2pt);
  \filldraw (-2, 3.464) circle (2pt);
  \filldraw (2, 3.464) circle (2pt);
  \filldraw (0, 3.464) circle (2pt);
\end{tikzpicture}
 \caption{Elementary Tutte paths in Case 2b in which $H_{56} \ne H_{46}$}
 \label{fig: ExtendedCase2b}
\end{figure}
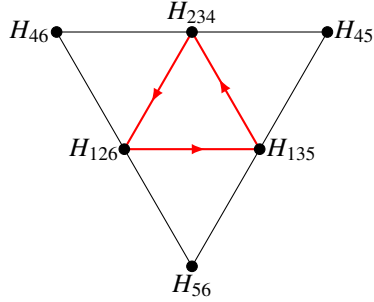

Finally, if $H_{56} = H_{46} = H_{45} =: H_{456}$, then the six corank $2$ flats $L_1, \dots, L_6$ together determine an upper sublattice of type $N = M(K_4)$ with $\Gamma' = \Lambda_N \cap \Gamma = \{E, H_1', H_2', H_3'\}$. Therefore, $\gamma = (H_{126}, H_{135}, H_{234}, H_{126})$ is of type $8$. 
(Adapting the notations from~\autoref{fig: type8}, for each $i = 1, 2, 3$, the hyperplane $H_i'$ is $H_{i, i+3}$.) 
\end{proof}

\begin{proof}[Proof of \autoref{thm:extended-homotopy-theorem}]
 The definitions of elementary Tutte paths in $M$ of the first, third, and fourth kinds force them to be elementary Tutte paths in $(\Lambda, \Gamma)$ of types 1--2, 7, and 9, respectively. Similarly, if $\gamma = (H_1, H_2, H_3, H_1)$ is an elementary Tutte path of the second kind with $H_1 \cap H_2 \cap H_3$ having corank $2$, then it is an elementary Tutte path in $(\Lambda, \Gamma)$ of type 3. ~\autoref{prop:extended-second-kind} verifies that every elementary Tutte path $\gamma = (H_1, H_2, H_3, H_1)$ of the second kind with $H_1 \cap H_2 \cap H_3$ having corank $3$ can be decomposed into elementary Tutte paths in $(\Lambda, \Gamma)$ of types 1--6 and 8. This shows that every elementary Tutte path in $M$ off $\Gamma$, and hence (by~\autoref{thm: Tutte's homotopy theorem}) every closed Tutte path in $(\Lambda,\Gamma)$, can be decomposed into elementary Tutte paths in $(\Lambda, \Gamma)$ of types 1--9. 
\end{proof}

\begin{rem}
 The proof of \autoref{prop:extended-second-kind} classifies, in particular, the minimal extensions of the Tutte constellation $(U_{3,3},\{E\})$ in which the elementary Tutte path of the second kind is indeed a Tutte path (i.e.,\ the corresponding corank $2$ flats are indecomposable) as
 \[
  \big(U_{3,4}, \{E\}\big), \quad \big(U_{3,4}, \{E,H\}\big), \quad \big(U_{3,4}, \{E,H,H',H'',L\}\big), \quad \big(M(K_4), \{E,H,H',H''\}\big),
 \]
 where $H\cap H'\cap H''=L$ in the third case and $H\cap H'\cap H''=\emptyset$ in the fourth case.
\end{rem}


\section{Presentations for the foundation}
\label{section: presentation of the foundation}

The foundation of a matroid belongs to a category of algebraic objects called \emph{pastures}.
In this section, we define pastures and describe their basic properties, we define the foundation of a matroid, and we discuss a couple of important presentations of the foundation by generators and relations.

\subsection{Pastures}
\label{subsection: pastures}

In \cite{Baker-Bowler19}, Baker and Bowler introduced an algebraic category called \emph{tracts}, which generalize fields, as a tool for unifying various definitions in matroid theory such as oriented matroids, valuated matroids, and matroids over partial fields. In particular, they defined two different kinds of matroids over a tract, called \emph{weak} and \emph{strong} matroids. In this paper, we are concerned exclusively with weak matroids, and for weak matroids the closely related category of \emph{pastures}, as defined in \cite{Baker-Lorscheid20}, is more convenient to use than tracts. Like tracts, pastures also generalize fields, and they form a category with various pleasant properties, which allows one to employ arguments from category theory in order to streamline and strengthen various techniques from matroid theory.

\subsubsection{Definitions}
A \emph{pointed group} is a multiplicatively written commutative monoid $P$ with a neutral element $1$ and an absorbing element $0 \neq 1$ such that every nonzero element $a\neq0$ of $P$ is invertible, i.e., $ab=1$ for some $b\in P$. We call $P^\times=P-\{0\}$ the \emph{unit group of $P$}. The symmetric group $S_3$ acts on $P^3$ by permutation of coordinates. We denote the equivalence classes of $\Sym^3(P):=P^3/S_3$ by $a+b+c:=[(a,b,c)]$. We often simply write $a+b$ for $a+b+0$ and $a$ for $a+0+0$. 

A \emph{pasture} is a pointed group $P$ together with a subset $N_P$ of $\Sym^3(P)$, called the \emph{null set of $P$}, which is invariant under multiplication by elements of $P$ and contains, for every $a\in P$, a unique $b\in P$, called the \emph{additive inverse of $a$}, such that $a+b\in N_P$. A \emph{pasture morphism $g\colon P\to Q$} is a multiplicative map between pastures $P$ and $Q$ that preserves $1$ and $0$ such that $g(a)+g(b)+g(c)\in N_Q$ for all $a+b+c\in N_P$. This defines the category $\Pastures$ of all pastures.

We write $-a=b$ for the additive inverse of $a$ and $a-b$ for $a+(-b)$. It follows from the axioms that $(-1)^2=1$, $-a=(-1)\cdot a$ for all $a \in P$, and $a := a+0+0 \in N_P$ if and only if $a=0$. In addition, it follows that a pasture morphism $g\colon P\to Q$ must preserve additive inverses, i.e., $g(-a)=-g(a)$.

\subsubsection{First examples}
\label{subsubsection: first examples of pastures}

Every field $K$ is naturally a pasture with null set $N_K=\{a+b+c\mid a+b+c=0\textrm{ in }K\}$. In fact, this defines a fully faithful embedding $\Fields\to\Pastures$. For example, $\F_2=\{0,1\}$ has the null set $N_{\F_2}=\{0, \ 1+1\}$ and $\F_3=\{0,1,-1\}$ has the null set $\{0,\ 1-1,\ 1+1+1,\ -1-1-1\}$.

The \emph{regular partial field} is the pasture $\Funpm=\{0,1,-1\}$ with the obvious multiplication, together with the null set $N_\Funpm=\{0,\ 1-1\}$. The regular partial field is the initial object of $\Pastures$.

The \emph{Krasner hyperfield} is the pasture $\K=\{0,1\}$ with the obvious multiplication, together with the null set $N_\K=\{0,\ 1+1,\ 1+1+1\}$. Note that $-1=1$ in $\K$. The Krasner hyperfield is the terminal object of $\Pastures$.

The \emph{sign hyperfield} is the pasture $\S=\{0,1,-1\}$ with the obvious multiplication, together with the null set $N_\S=\{0,\ 1-1,\ 1+1-1,\ 1-1-1\}$. The sign map $\sign\colon \R\to\S$ is a morphism of pastures.

The \emph{tropical hyperfield} is the pasture $\T=\R_{\geq0}$ with the obvious multiplication, together with the null set $N_\T=\{a+b+b\mid 0\leq a\leq b\}$. The tautological inclusion $\K\to\T$ is a pasture morphism. For a field $K$, a map $v\colon K\to\T=\R_{\geq0}$ is a pasture morphism if and only if it is a non-Archimedean absolute value.

More generally, $\Pastures$ contains the categories of both partial fields and hyperfields naturally as full subcategories; for details, see~\cite[Section 2.1.5]{Baker-Lorscheid20}.

\subsubsection{Tensor products}
The category $\Pastures$ is complete and cocomplete. In particular, it has a coproduct $P_1\otimes P_2$, called the \emph{tensor product}, which is characterized by the universal property that
\[
 \Hom(P_1\otimes P_2,\ Q) \ = \ \Hom(P_1,\ Q) \ \times \ \Hom(P_2,\ Q)
\]
for every pasture $Q$, functorially in $Q$.
Details on the construction of the tensor product, and of other limits and colimits in the category of pastures, can be found in~\cite{Creech21}.

\subsubsection{Free algebras and quotients}
For $n\geq0$, the \emph{free algebra on $x_1,\dotsc,x_n$} is the pasture
\[
 \Funpm(x_1,\dotsc,x_n) \ := \ \{0\} \ \cup \{\pm x_1^{e_1}\dotsb x_n^{e_n}\mid e_1,\dots,e_n\in\Z\}
\]
with the obvious multiplication, together with the null set $\{a-a\mid a\in\Funpm(x_1,\dotsc,x_n)\}$. It satisfies the universal property
\[
\Hom\big(\Funpm(x_1,\dotsc,x_n),\ Q \big) \ = \ \Maps\big(\{x_1,\dotsc,x_n\},\ Q^\times\big)
\]
for every pasture $Q$, functorially in $Q$. 
If $g : \{x_1,\dotsc,x_n\}\to Q^\times$ is a set-theoretic map, we write $\Phi(g)$ for the corresponding morphism from $\Funpm(x_1,\dotsc,x_n)$ to $Q$.

Let $S\subset\Sym^3\big(\Funpm(x_1,\dotsc,x_n)\big)$, and assume that $S$ does not contain any term of the form $a+0+0$ with $a\neq0$. The \emph{quotient} $\pastgen{\Funpm(x_1,\dotsc,x_n)}S$ of $\Funpm(x_1,\dotsc,x_n)$ by $S$ can be characterized by the universal property 
\begin{multline*}
 \Hom\big(\pastgen{\Funpm(x_1,\dotsc,x_n)}S,\ Q\big) \\
 = \ \bigg\{ g\colon \{x_1,\dotsc,x_n\}\to Q^\times \ \bigg| \ \begin{array}{c} \Phi(g)(a)+\Phi(g)(b)+\Phi(g)(c)\in N_Q\\\textrm{for }a+b+c\in S\end{array}\bigg\}.
\end{multline*}
For details on the construction of $\pastgen{\Funpm(x_1,\dotsc,x_n)}S$, see~\cite[Section 2.1.1]{Baker-Lorscheid20}.

\subsubsection{Further examples}
\label{subsubsection: further examples of pastures}

The construction of quotients of free $\Funpm$-algebras allows us to present pastures in terms of generators and relations. Some examples of importance for this text are the following:
\begin{align*}
 \U   \ &= \ \pastgen{\Funpm(x,y)}{x+y-1}             && \textrm{(the \emph{near regular partial field})} \\
 \D   \ &= \ \pastgen{\Funpm(z)}{z-1-1}               && \textrm{(the \emph{dyadic partial field})} \\
 \H   \ &= \ \pastgen{\Funpm(z)}{z^3+1,\ \ z^2-z+1} && \textrm{(the \emph{hexagonal partial field})} \\
\V \ &= \ \pastgen{\Funpm(x_1,\dotsc,x_5)}{x_i+x_{i-1}x_{i+1}-1\mid i \in \Z/5\Z} && \textrm{(the \emph{$2$-regular partial field})} \\
\end{align*}


\subsection{Matroid representations}
\label{subsection: matroid representations}

Let $M$ be a matroid on $E$ and $\cH$ its collection of hyperplanes. A \emph{modular tuple of hyperplanes} is a tuple $(H_1,\dotsc,H_s)$ of hyperplanes that intersect in a corank $2$ flat $F=H_1\cap\dotsc\cap H_s$.  

Let $P$ be a pasture, let $P^E$ be the set of functions from $E$ to $P$, and let $H$ be a hyperplane of $M$. A \emph{$P$-hyperplane function for $H$} is a map $\varphi_H: E \to P$ such that $\varphi_H(e)=0$ if and only if $e \in H$. A \emph{family of $P$-hyperplane functions for $M$} is a map $\varphi : \cH \to P^E$ such that $\varphi_H := \varphi(H)$ is a $P$-hyperplane function for every $H\in\cH$. A triple $(\varphi_{H_1}, \varphi_{H_2},\varphi_{H_3})$ of $P$-hyperplane functions is \emph{linearly dependent} if there exist $a,b,c\in P$ with $(a,b,c)\neq(0,0,0)$ and $a\varphi_{H_1}(e) + b\varphi_{H_2}(e) + c\varphi_{H_3}(e) \in N_P$ for all $e\in E$.

\begin{df}
 A \emph{\emph{(}weak\emph{)} $P$-hyperplane representation} or, for short, a \emph{$P$-representation of $M$} is a family $\varphi: \cH \to P^E$ of $P$-hyperplane functions for $M$ such that for every modular triple $(H_1,H_2,H_3)$ of distinct hyperplanes, the triple $(\varphi_{H_1},\varphi_{H_2},\varphi_{H_3})$ of $P$-hyperplane functions is linearly dependent.
\end{df}

A pasture morphism $g\colon P\to Q$ defines a push-forward on hyperplane representations: given a $P$-representation $\{\varphi_H \colon E \to P\}_{H \in \cH}$ of $M$, the composition with $g$ defines a $Q$-representation $\{g \circ \varphi_H \colon E \to Q\}_{H \in \cH}$ of $M$.

\medskip

The following result (which is a combination of~\cite[Theorem 3.21]{Baker-Bowler19} and~\cite[Theorem 2.16]{Baker-Lorscheid20}) exhibits the relation between $P$-representations in the sense of this text and the notion of a weak $P$-circuit set.\footnote{We refer the reader to \cite[Definition 3.8]{Baker-Bowler19} for the definition of (weak) $P$-circuits. Note that the theory in \cite{Baker-Bowler19} is developed for \emph{tracts} (whose null sets can contain additive relations with more than three terms) rather than pastures, but the definition of a weak $P$-circuit set also makes sense for pastures, since it only refers to $3$-term relations in the null set. For a concise discussion of the relation between tracts and pastures, see~\cite[Section 2.1.5]{Baker-Lorscheid20}.}

\begin{lemma}\label{lemma:hyperplane-functions-and-circuit-set}
 Let $P$ be a pasture and $M$ a matroid. Suppose $\varphi : \cH \to P^E$ is a family of $P$-hyperplane functions for $M$. Then $\varphi$ is a $P$-representation of $M$ if and only if $\cC = \{a \cdot \varphi_H \mid a \in P^\times, H \in \cH\}$ is a weak $P$-circuit set with underlying matroid $M^\ast$.
\end{lemma}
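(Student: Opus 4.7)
The plan is to verify the lemma by directly translating between the two structures: $P$-hyperplane representations of $M$ on one side and weak $P$-circuit sets of $M^\ast$ on the other. The translation is essentially dictated by two bijections: the bijection $H\mapsto E-H$ between hyperplanes of $M$ and cocircuits of $M$ (i.e.,\ circuits of $M^\ast$), and the correspondence between corank $2$ flats of $M$ and modular pairs of circuits of $M^\ast$.

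First I would handle the support and scaling axioms. By definition of a $P$-hyperplane function, the support of $\varphi_H$ is exactly $E-H$, which is a circuit of $M^\ast$, and every circuit of $M^\ast$ arises this way since $H\mapsto E-H$ is a bijection. For each hyperplane $H$, the $P^\times$-orbit $\{a\varphi_H\mid a\in P^\times\}$ provides precisely one scaling class of vectors in $\cC$ supported on $E-H$. Conversely, given a weak $P$-circuit set $\cC$ for $M^\ast$, one may pick (arbitrarily) a representative $\varphi_H$ of the unique scaling class whose support is $E-H$; this defines a family of $P$-hyperplane functions. So the support/scaling conditions in the definition of a weak $P$-circuit set are equivalent to the definition of a family $\varphi:\cH\to P^E$ of $P$-hyperplane functions, up to choices of representatives.

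Second, I would translate the modular triple condition into the 3-term modular circuit elimination axiom. The key point is that two circuits $C_1=E-H_1$ and $C_2=E-H_2$ of $M^\ast$ form a modular pair iff $r_{M^\ast}(C_1\cup C_2)=|C_1\cup C_2|-2$, which by standard duality is equivalent to $H_1\cap H_2$ being a corank $2$ flat of $M$. The third circuits eliminated in the modular elimination axiom are exactly those of the form $C_3=E-H_3$ with $H_3$ a hyperplane containing the corank $2$ flat $H_1\cap H_2$, and thus the ``modular triples'' of distinct hyperplanes appearing in the definition of a $P$-representation are exactly the triples occurring in the 3-term elimination axiom for $M^\ast$. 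One then checks that the linear dependence condition $a\varphi_{H_1}+b\varphi_{H_2}+c\varphi_{H_3}\in N_P$ pointwise, with $(a,b,c)\neq(0,0,0)$, is exactly the 3-term Plücker-type relation required for a weak circuit set; the fact that all three coefficients must be nonzero follows from incomparability of circuit supports, since $E-H_3\subseteq (E-H_1)\cup(E-H_2)$ would otherwise force strict inclusion between circuits.

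Putting these pieces together, the two structures are interdefinable via the dictionary above. The main obstacle is purely bookkeeping: one must carefully verify that the ``modular'' notion used in the definition of a $P$-representation (corank $2$ intersection of hyperplanes of $M$) matches the modular-pair-of-circuits condition used to state the weak circuit set axioms (rank relation in $M^\ast$), and that the three-term circuit elimination is recovered in exactly the form of the linear dependence in the definition. I expect the forward direction (representation $\Rightarrow$ circuit set) to be immediate once the dictionary is in place, while the reverse direction requires a consistent choice of hyperplane function in each scaling class, but this choice has no effect since the null set $N_P$ and the linear dependence condition are both invariant under rescaling by $P^\times$.
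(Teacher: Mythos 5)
The paper does not give a proof of this lemma at all: it simply cites two external results, namely Theorem 3.21 of Baker--Bowler and Theorem 2.16 of Baker--Lorscheid, which together give a chain of equivalences running through weak Grassmann--Pl\"ucker functions. Your proposal takes a genuinely different, more direct route, unwinding the definition of a weak circuit set for $M^\ast$ and matching its axioms one-by-one against the hyperplane-function definition. This is a perfectly reasonable alternative, and arguably more self-contained than the paper's citation chain. The dictionary you set up is correct: $H\mapsto E-H$ matches hyperplanes of $M$ with circuits of $M^\ast$, the rank computation you give does show that $C_1,C_2$ form a modular pair of circuits in $M^\ast$ if and only if $H_1\cap H_2$ has corank~$2$ in $M$, and the $P^\times$-orbits handle the scaling/incomparability axioms.

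The place where the sketch is too compressed is the comparison between the elimination axiom and the linear-dependence condition. The weak modular elimination axiom is stated pointwise at a fixed $e$: given $X,Y\in\cC$ with modular-pair supports and $e$ such that $X_e=-Y_e\neq 0$, one asks for a single $Z\in\cC\cup\{0\}$ with $Z_e=0$ and $X_f+Y_f-Z_f\in N_P$ for all $f$. The $P$-representation condition instead asks for a single triple $(a,b,c)\in P^\times{}^3$ (you still need to argue that none can vanish; this follows from distinctness of the supports, not literally from the circuit-incomparability axiom as you say --- evaluating the two-term relation at a point in $H_2-H_1$, say, forces $a=0$) making $a\varphi_{H_1}+b\varphi_{H_2}+c\varphi_{H_3}\in N_P$ pointwise, uniformly in $e$. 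Translating one into the other requires an explicit rescaling computation: from a representation triple $(a',b',c')$ you must pass to the particular normalization $X_e=-Y_e$ demanded by the axiom and verify that $Z=-\tfrac{c'a}{a'}\varphi_{H_3}$ with $H_3=\gen{(H_1\cap H_2)e}$ works, and conversely you must check that the $Z$ produced by the elimination axiom for a single $e$ determines coefficients independent of $e$. Neither step is hard, but calling it ``bookkeeping'' undersells the fact that this is exactly where the content lives; without it the proposal reads as a plausibility argument rather than a proof.

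Finally, you should be explicit about which version of the circuit axioms you mean (the paper points to Definition~3.8 of Baker--Bowler for the weak version). The modular elimination axiom in the weak setting is \emph{only} imposed on modular pairs, which is precisely why your correspondence with modular triples of hyperplanes is the right one; if you were silently using the strong axioms the statement would be false in general.
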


Let $M$ be a matroid on $E$, let $\varphi: \cH \to P^E$ be a $P$-representation of $M$ and let $A \subseteq E$. Let $X = E - A$.
We write $\varphi_H|_X$ for the restriction of the function $\varphi_H$ to $X$.
Define $\varphi \backslash A = \{\varphi_H|_{X} \mid H - A \textrm{ is a hyperplane of } M \backslash A\}$.

\begin{prop} \cite[Theorem 3.29]{Baker-Bowler19} \label{prop: induced representation for embedded minor}
Up to multiplying functions by scalars, $\varphi \backslash A$ is a $P$-representation of $M \backslash A$.
\end{prop}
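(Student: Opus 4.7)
The plan is to reduce to the case of a single-element deletion by induction on $|A|$, and then handle that case directly using \autoref{lemma: hyperplanes of a deletion}. The base case $A=\emptyset$ is immediate, so I focus on the single-element step; without loss of generality (and as is the case in the intended applications, where $J$ is coindependent) I assume $a$ is not a coloop of $M$. Set $M' = M \backslash a$ and $X = E - a$.

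From \autoref{lemma: hyperplanes of a deletion} one extracts a canonical lift: for every hyperplane $H'$ of $M'$ there is a unique hyperplane $H$ of $M$ with $H - a = H'$, namely $H = H'$ if $a \notin \gen{H'}_M$ (a ``type (1)'' hyperplane of $M$) and $H = H' \cup \{a\}$ otherwise (``type (2)''); conversely, precisely the type (1) and (2) hyperplanes of $M$ arise this way, while type (3) hyperplanes contribute nothing. For any such $H$, the restriction $\varphi_H|_X$ vanishes exactly on $H \cap X = H - a = H'$ and is nonzero elsewhere on $X$, so it is a valid $P$-hyperplane function for $H'$ in $M'$.

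The crux is the modular-triple condition. Given a modular triple $(H'_1, H'_2, H'_3)$ of distinct hyperplanes of $M'$ intersecting in a corank-$2$ flat $F'$, I would lift via the canonical procedure to a triple $(H_1, H_2, H_3)$ in $M$; these remain distinct, since $H_i = H_j$ would force $H'_i = H'_j$. A short case analysis based on how many $H_i$ contain $a$ shows that $\bigcap_i H_i$ equals $F'$ if at least one $H_i$ is of type (1) and equals $F' \cup \{a\}$ if all three are of type (2). In either case, using the rank comparison in \autoref{lemma: hyperplanes of a deletion} (rank of the resulting flat in $M$ coincides with the rank of $F'$ in $M'$, namely $r(M) - 2$), the intersection is a corank-$2$ flat of $M$. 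Hence $(H_1, H_2, H_3)$ is a modular triple of $M$, so by hypothesis there exist $c_1, c_2, c_3 \in P$, not all zero, with $c_1 \varphi_{H_1}(e) + c_2 \varphi_{H_2}(e) + c_3 \varphi_{H_3}(e) \in N_P$ for every $e \in E$; restricting to $e \in X$ yields the required linear dependence of $(\varphi_{H_1}|_X, \varphi_{H_2}|_X, \varphi_{H_3}|_X)$.

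The main residual subtlety, and the reason for the ``up to multiplying by scalars'' qualifier, is that for $|A| \geq 2$ two distinct hyperplanes $H_1 \neq H_2$ of $M$ can satisfy $H_1 - A = H_2 - A$, producing the same hyperplane of $M \backslash A$ from different $\varphi_{H_i}|_X$; these restrictions must then agree only up to a scalar in $P^\times$. I expect this to be handled by iterating the single-element argument, where at each stage the canonical lift is itself unique and any ambiguity enters only from different iterated paths to the same final hyperplane. I anticipate this bookkeeping to be the main technical obstacle, and I believe the cleanest route is to sidestep it by invoking \autoref{lemma:hyperplane-functions-and-circuit-set} to translate into the language of weak $P$-circuit sets and then using the duality $(M \backslash A)^\ast = M^\ast/A$ together with the standard fact that the contraction of a weak $P$-circuit set is again a weak $P$-circuit set of the contracted matroid, which is exactly the content of Baker--Bowler's original formulation.
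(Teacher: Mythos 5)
The paper gives no proof here; it simply cites \cite[Theorem~3.29]{Baker-Bowler19}, so your argument is genuinely new content rather than a reproduction. For a single non-coloop deletion, your reasoning is sound: the canonical lift $H' \mapsto H$ extracted from \autoref{lemma: hyperplanes of a deletion} is indeed a bijection from the hyperplanes of $M' = M\backslash a$ onto the hyperplanes of $M$ of types~(1) and~(2), since $H = \langle H'\rangle_M$ is forced; the case analysis showing $H_1 \cap H_2 \cap H_3 = F'$ (some type~(1)) or $F' \cup \{a\}$ (all type~(2)) is correct; and in both cases \autoref{lemma: hyperplanes of a deletion} pins the rank at $r-2$ once one rules out the hyperplane case via $H_1 \cap H_2 \subsetneq H_1$. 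Restricting the given linear dependence to $X$ then yields the required dependence, so $\varphi \backslash a$ is literally a $P$-representation of $M'$, with no rescaling needed. The iteration then goes through cleanly whenever $A$ is coindependent, because coindependence of $A$ forces no $a_i$ to be a coloop at any intermediate stage. This gives a self-contained proof, using only the paper's own tools, in exactly the regime the paper actually uses it (embedded minors with $J$ coindependent), whereas the cited Baker--Bowler proof routes through weak circuit sets and the duality $(M\backslash A)^\ast = M^\ast / A$.

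The remaining gap, as you note, is the case where deleting $A$ drops the rank. Two small observations would tighten that part. First, your exclusion of the coloop case is not actually forced on you: when $a$ is a coloop of $M$, one can check that $H' \mapsto H' \cup \{a\}$ is again a well-defined bijection from hyperplanes of $M \backslash a$ to hyperplanes of $M$ containing $a$, and the lifted triple has intersection $F' \cup \{a\}$ of rank $\operatorname{rk}(F')+1 = r-2$, so a modular triple lifts to a modular triple here too. Second, and this is where the ``up to scalars'' genuinely bites: for non-coindependent $A$, different orderings of the single-element deletions produce \emph{different} composed lifts (e.g.\ in $U_{2,3}$ with $A=\{2,3\}$, deleting $2$ then $3$ lifts the empty hyperplane to $\{3\}$, while deleting $3$ then $2$ lifts it to $\{2\}$). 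So your ``any ambiguity enters only from different iterated paths to the same final hyperplane'' is exactly right, and one would need to argue that these different restrictions are rescaling equivalent, which is not automatic from the modular-triple axiom alone. For this general case, the translation via \autoref{lemma:hyperplane-functions-and-circuit-set} to weak $P$-circuit sets and the dual contraction --- as you already propose --- really is the cleanest route and is precisely Baker--Bowler's argument.
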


\subsection{Foundations}
\label{subsection: foundation}

Let $P$ be a pasture. Two families of $P$-hyperplane functions $\varphi = \{\varphi_H\}_{H \in \cH}$ and $\varphi' = \{\varphi'_H\}_{H \in \cH}$ for $M$ are said to be \emph{rescaling equivalent} if there are $(a_H)_{H\in\cH}\in (P^\times)^\cH$ and $(t_e)_{e\in E}\in (P^\times)^E$ such that $\varphi'_H(e) = a_H \cdot t_e \cdot \varphi_H(e)$ for all $H \in \cH$ and $e\in E$. It is easy to see that a family $\varphi$ of $P$-hyperplane functions for $M$ is a $P$-representation if and only all families of $P$-hyperplane functions that are rescaling equivalent to $\varphi$ are $P$-representations. The \emph{realization space of $M$ over $P$} is the set $\upR_M(P)$ of all rescaling equivalence classes of $P$-representations of $M$. 

Note that the realization space $\upR_M(P)$ agrees, up to a canonical identification, with the set of rescaling classes of weak Grassmann-Pl\"ucker functions of $M$ in $P$, as verified in \cite[Remark 2.4]{Baker-Lorscheid-Walsh-Zhang24}. 

A pasture morphism $g: P \to Q$ induces a map $g_\ast : \upR_M(P) \to \upR_M(Q)$ that sends the rescaling class of a $P$-representation $\{\varphi_H\}_{H \in \cH}$ to the rescaling class of the $Q$-representation $\{g \circ \varphi_H\}_{H \in \cH}$. We can thus view $\upR_M$ as a functor $\Pastures$ to $\Sets$.

The following is \cite[Corollary 7.28]{Baker-Lorscheid21b} (cf.\ also \cite[Theorem 4.3]{Baker-Lorscheid20}):

\begin{thm}\label{thm: characterizing property of the foundation}
 The functor $\upR_M(-)$ is represented by a pasture $F_M$ \emph{(}called the \emph{foundation of $M$}\emph{)}, i.e., for every pasture $P$ we have $\upR_M(P)=\Hom(F_M,P)$ functorially in $P$.
\end{thm}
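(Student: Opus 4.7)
The plan is to construct $F_M$ explicitly as a quotient of a free $\Funpm$-algebra, using the hyperplane incidence data of $M$, and then verify the universal property $\upR_M(P) = \Hom(F_M, P)$ by a direct comparison. The generators will be one symbol $x_{H,e}$ for each incidence $(H, e)$ with $H \in \cH$ and $e \in E \setminus H$, representing the universal value $\varphi_H(e)$; these form the free pasture $\Funpm(X)$ with $X = \{x_{H,e}\}$. I would then cut $\Funpm(X)$ down by relations of two kinds: gauge relations fixing the rescaling ambiguity, and modular triple relations encoding the linear dependence conditions from the definition of a $P$-representation.

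First, for the gauge, I would pick a basis $B = \{b_1, \ldots, b_r\}$ of $M$ and declare $x_{H_i, b_i} = 1$ for each cobasic hyperplane $H_i = \langle B \setminus \{b_i\} \rangle$ (absorbing the $a_{H_i}$ rescaling); I would further impose $x_{H_e, e} = 1$ for each $e$ outside this frame, where $H_e$ is a conveniently chosen hyperplane not containing $e$ (absorbing the $t_e$ rescaling). The choices should be made so that every rescaling class of $P$-hyperplane function families admits a unique representative meeting the gauge. Second, for each modular triple of distinct hyperplanes $(H_1, H_2, H_3)$ with $L = H_1 \cap H_2 \cap H_3$, I would translate the existential condition ``there exist $a, b, c \in P$, not all zero, with $a \varphi_{H_1}(e) + b \varphi_{H_2}(e) + c \varphi_{H_3}(e) \in N_P$ for all $e \in E$'' into a finite list of explicit three- and two-term relations in the generators. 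Concretely, I would use pointwise relations at conveniently chosen elements (a reference element $e_* \notin H_1 \cup H_2 \cup H_3$ when one exists, or elements of $H_i \setminus L$ otherwise) to pin down the projective triple $(a : b : c)$, and then write out the remaining pointwise conditions purely in terms of the generators.

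With these relations imposed, the universal property follows by construction. A pasture morphism $g \colon F_M \to P$ yields the family $\varphi_H(e) := g(x_{H,e})$ (with $\varphi_H(e) := 0$ for $e \in H$), which is a $P$-representation because the modular triple relations precisely translate to the required linear dependences; conversely, a $P$-representation can be rescaled into the gauge and read off as a morphism $g \colon F_M \to P$. Functoriality in $P$ is immediate.

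The hard part will be verifying that the gauge is neither under- nor over-determined. Under-determination would mean two distinct rescaling classes share the same normalized representative, while over-determination would mean some rescaling classes admit no normalized representative at all. This is a bookkeeping exercise that depends sensitively on the position of $B$ within the hyperplane incidence matrix of $M$, and is the only place where substantive matroid-theoretic input enters. A secondary subtlety is the modular-triple translation step: one must verify that the finite list of relations produced really is equivalent to the existential condition, in particular handling modular triples where $E = H_1 \cup H_2 \cup H_3$ and no reference element $e_*$ is available. Notably, neither subtlety requires Tutte's homotopy theory; the construction is purely algorithmic and proceeds directly from the matroid's combinatorial data.
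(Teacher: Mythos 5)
Your overall plan is the same as the paper's: index generators $x_{H,e}$ by non-incidences, fix a gauge to eliminate the rescaling ambiguity, impose relations coming from the linear-dependence conditions on modular triples, and read off the universal property. Your remark that the modular-triple conditions (evaluated honestly at all elements of $E$, including those outside $H_1 \cup H_2 \cup H_3$) capture the full content of a $P$-representation is also correct --- the paper packages this as \autoref{thm:iff-modular-triple}, which splits the pointwise conditions into the multiplicative relations (T1) and the additive quadruple relations (T2).

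However, your proposed gauge is under-determined, and this is not a recoverable bookkeeping issue: the count is simply wrong. The rescaling group acting on families of $P$-hyperplane functions is parametrized by $(a_H)_{H \in \cH}$ and $(t_e)_{e \in E}$, and after quotienting by the null-rescalings (one scalar per connected component of $M$), it has $|\cH| + |E| - c$ effective degrees of freedom, where $c$ is the number of connected components. Your gauge imposes $r$ constraints for the cobasic hyperplanes plus $|E| - r$ constraints for elements outside the basis, giving only $|E|$ constraints in total. Whenever $|\cH| > c$ --- that is, for every matroid except disjoint unions of loops and coloops --- this leaves $|\cH| - c$ rescaling degrees of freedom unfixed, so the map $\Hom(F_M, P) \to \upR_M(P)$ would fail to be injective. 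Concretely, for $U_{2,4}$ you get only $4$ gauge equations, but the rescaling group has $4 + 4 - 1 = 7$ degrees of freedom. No clever choice of basis $B$ or of the hyperplanes $H_e$ can close the gap of $|\cH| - c$ constraints. The paper resolves this by using a \emph{spanning forest} of the bipartite hyperplane incidence graph $G_M$: a spanning forest has exactly $|\cH| + |E| - c$ edges, and the cycle-chasing argument of \autoref{lemma:graph-theoretic-well-defined} and \autoref{prop:graph-theoretic-for-definition} shows that fixing the values on a spanning forest gives a unique normalized representative in each rescaling class. Your basis-based gauge corresponds to a strict subset of some spanning forest, not the whole thing; replacing it with a full spanning forest of $G_M$ is the precise fix.
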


An explicit construction of the foundation in terms of Pl\"ucker coordinates can be found in \cite[Definition 4.2]{Baker-Lorscheid21b}. We shall explain another explicit construction of $F_M$ and a proof of~\autoref{thm: characterizing property of the foundation} in \autoref{subsection: hyperplane matrix representation} in terms of hyperplane functions. 

It turns out that the unit group $F_M^\times$ of the foundation is finitely generated, and is canonically isomorphic to the \emph{inner Tutte group} defined by Dress and Wenzel in~\cite{Dress-Wenzel89}. 

There is a canonical bijection between $\upR_{M^\ast}(P)$ and $\upR_{M}(P)$ for every pasture $P$, and thus (cf.~\cite[Theorem 4.7]{Baker-Lorscheid20}):

\medskip

\begin{prop}\label{prop:founadtion of dual}
     The foundation of $M^\ast$ is canonically isomorphic to the foundation of $M$. 
\end{prop}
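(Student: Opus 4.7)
The plan is to recognize that the proposition is essentially a formal consequence of \autoref{thm: characterizing property of the foundation} combined with the natural bijection $\upR_M(P) \cong \upR_{M^\ast}(P)$ mentioned in the sentence immediately preceding the proposition. Since $F_M$ represents the functor $\upR_M : \Pastures \to \Sets$ and $F_{M^\ast}$ represents $\upR_{M^\ast}$, the Yoneda lemma converts any natural isomorphism of set-valued functors $\upR_M \xrightarrow{\sim} \upR_{M^\ast}$ into a canonical isomorphism $F_{M^\ast} \xrightarrow{\sim} F_M$ of the representing pastures. So the real work is to construct and verify the natural bijection on realization spaces.

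To produce this natural bijection I would invoke \autoref{lemma:hyperplane-functions-and-circuit-set}. Applied to both $M$ and $M^\ast$, it identifies $\upR_M(P)$ with rescaling classes of weak $P$-circuit sets whose underlying matroid is $M^\ast$, and $\upR_{M^\ast}(P)$ with rescaling classes of weak $P$-circuit sets whose underlying matroid is $M$. Each side thus records the data of a weak $P$-matroid structure on $E$ (with underlying matroid $M^\ast$, respectively $M$). The Baker--Bowler circuit-cocircuit duality for weak $P$-matroids (\cite[Section 3]{Baker-Bowler19}) supplies a canonical involution between these two classes, exchanging circuits and cocircuits, which yields the desired bijection. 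Naturality in $P$ follows because push-forward along a pasture morphism $g : P \to Q$ preserves $3$-term null-set relations and therefore commutes with the duality.

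The main technical hurdle, though essentially bookkeeping, is to keep the rescaling equivalences and the naturality in $P$ straight across this chain of identifications; the underlying circuit-cocircuit duality is standard input from \cite{Baker-Bowler19}. A cleaner alternative that avoids the double application of the lemma is to use the identification of $\upR_M(P)$ with rescaling classes of weak Grassmann--Pl\"ucker functions from \cite[Remark 2.4]{Baker-Lorscheid-Walsh-Zhang24} and define the bijection in a single step via $\Delta \mapsto \Delta^\ast$, where $\Delta^\ast(B) := \sign(\sigma)\,\Delta(E - B)$ for a basis $B$ of $M^\ast$ and any permutation $\sigma$ of $E$ aligning $E-B$ with $B$. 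One then verifies once that $\Delta^\ast$ is a weak $P$-Grassmann--Pl\"ucker function for $M^\ast$, that the assignment respects rescaling and is natural in $P$, and that it is its own inverse up to the standard sign convention; this yields the proposition directly.
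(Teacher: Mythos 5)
Your proposal is correct and matches the paper's approach: the paper itself records the canonical bijection $\upR_{M^\ast}(P)\cong\upR_M(P)$ as the content of the cited \cite[Theorem 4.7]{Baker-Lorscheid20} and then lets the representability statement \autoref{thm: characterizing property of the foundation} together with Yoneda convert this into a canonical isomorphism of the representing pastures, exactly as you argue. Your two sketches for producing the bijection (circuit--cocircuit duality via \autoref{lemma:hyperplane-functions-and-circuit-set}, or the sign-twisted dual Grassmann--Pl\"ucker function $\Delta^\ast$) are both standard and consistent with the cited source; the explicit description of the resulting isomorphism on universal cross-ratios is what appears later in \autoref{prop: foundation of dual matroid, explicit}.
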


We also have (cf.~\cite[Theorem 5.1]{Baker-Lorscheid-Zhang24}):

\begin{prop}\label{prop:founadtion of direct sum}
  Let $M_1$ and $M_2$ be matroids. Then $F_{M_1 \oplus M_2} \cong F_{M_1} \otimes F_{M_2}$.   
\end{prop}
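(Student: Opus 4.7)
The plan is to apply the Yoneda lemma: by \autoref{thm: characterizing property of the foundation}, $F_{M_1 \oplus M_2}$ represents the functor $\upR_{M_1 \oplus M_2}(-)$, while $F_{M_1} \otimes F_{M_2}$ represents (by the universal property of the tensor product) the product functor $\upR_{M_1}(-) \times \upR_{M_2}(-)$. So it suffices to exhibit, for every pasture $P$, a natural bijection
\[
\Phi_P\colon \upR_{M_1 \oplus M_2}(P) \ \longrightarrow \ \upR_{M_1}(P) \times \upR_{M_2}(P).
\]

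First I would unpack the structure of the direct sum. Writing $M = M_1 \oplus M_2$ on $E = E_1 \sqcup E_2$, the hyperplanes of $M$ are precisely those of the form $H_1 \sqcup E_2$ with $H_1 \in \cH_{M_1}$ or $E_1 \sqcup H_2$ with $H_2 \in \cH_{M_2}$. A rank computation in $M_1 \oplus M_2$ shows that the corank $2$ flats are of three types: $L_1 \sqcup E_2$ with $L_1$ corank $2$ in $M_1$, $E_1 \sqcup L_2$ with $L_2$ corank $2$ in $M_2$, and $H_1 \sqcup H_2$ with $H_i$ a hyperplane of $M_i$. In the third type only the two hyperplanes $H_1 \sqcup E_2$ and $E_1 \sqcup H_2$ contain the flat, so no modular triple of distinct hyperplanes can occur there. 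Therefore every modular triple of distinct hyperplanes of $M$ either lies entirely in $\{H_1 \sqcup E_2 : H_1 \in \cH_{M_1}\}$ and corresponds to a modular triple in $M_1$, or lies entirely in the analogous set for $M_2$.

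Given a $P$-representation $\varphi$ of $M$, I define $\varphi^{(i)}_{H_i} := \varphi_{H_1 \sqcup E_2}|_{E_1}$ (for $i=1$) and analogously for $i=2$; note that $\varphi_{H_1 \sqcup E_2}$ vanishes on all of $E_2$ by the support condition. The analysis of modular triples above shows that $\varphi^{(i)}$ is a $P$-representation of $M_i$, so $\Phi_P([\varphi]) := ([\varphi^{(1)}], [\varphi^{(2)}])$ is well-defined. For surjectivity, given $P$-representations $\psi^{(i)}$ of $M_i$, extend each $\psi^{(i)}_{H_i}$ by zero to a function on $E$; the resulting family satisfies the modular triple condition precisely because every such triple lives inside a single summand.

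The main technical check is that $\Phi_P$ descends to a bijection on rescaling classes and that this is natural in $P$. A rescaling datum for $\varphi$ is a pair $((a_H)_{H \in \cH_M}, (t_e)_{e \in E}) \in (P^\times)^{\cH_M} \times (P^\times)^E$; splitting these along $E = E_1 \sqcup E_2$ and $\cH_M = \cH_{M_1} \sqcup \cH_{M_2}$ gives rescaling data for $\varphi^{(1)}$ and $\varphi^{(2)}$, and conversely any pair of rescaling data on the summands assembles into one on $M$. This yields a bijection of rescaling classes, and the construction is manifestly functorial in $P$. Combining with the representability statement of \autoref{thm: characterizing property of the foundation} and the universal property of the tensor product in $\Pastures$, Yoneda delivers the canonical isomorphism $F_{M_1 \oplus M_2} \cong F_{M_1} \otimes F_{M_2}$. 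The subtlest step is the structural analysis of hyperplanes, corank $2$ flats, and modular triples in $M_1 \oplus M_2$, since everything else follows formally once we know that the hyperplane functions for $M$ decouple cleanly across the two summands.
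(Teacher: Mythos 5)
Your proof is correct. The paper itself does not give an argument for this proposition but merely cites \cite[Theorem 5.1]{Baker-Lorscheid-Zhang24}, so there is no in-text proof to compare against; your Yoneda-lemma framing, using that $F_M$ represents $\upR_M(-)$ and that the tensor product of pastures is a coproduct, is the natural route and fits the functorial spirit of the surrounding section. The key structural observation---that every modular triple of distinct hyperplanes of $M_1 \oplus M_2$ lies entirely in one of the two summands, because a ``mixed'' corank-$2$ flat $H_1 \sqcup H_2$ is contained in only the two hyperplanes $H_1 \sqcup E_2$ and $E_1 \sqcup H_2$---is exactly what makes the hyperplane functions decouple, and you state and use it correctly. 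The splitting of rescaling data along $\cH_{M_1 \oplus M_2} \cong \cH_{M_1} \sqcup \cH_{M_2}$ and $E = E_1 \sqcup E_2$, together with the manifest naturality in $P$, completes the argument.
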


\subsection{Cross-ratios}
\label{subsection: cross ratios}

Let $\Xi_M$ be the set of all tuples $(H_1, H_2, a, b)$ of hyperplanes $H_1, H_2 \in \cH_M$ and elements $a, b \in E - (H_1 \cup H_2)$ such that either $H_1 = H_2$ are the same or $L = H_1 \cap H_2$ is a corank $2$ flat of $M$. Let $\Xi^\diamondsuit_M$ be the subset of all tuples $(H_1,H_2,a,b) \in \Xi_M$ such that $L = H_1 \cap H_2$ is of corank $2$ and $\gen{La}, \gen{Lb}$ are distinct hyperplanes. 

\begin{df}
Let $P$ be a pasture and let $\varphi = \{\varphi_H : E \to P \mid H \in \cH_M\}$ be a family of $P$-hyperplane functions for $M$. The \emph{cross-ratio} $\cross{{H_1}}{{H_2}}{a}{b}{\varphi}$ is defined as
 \[
  \cross{{H_1}}{{H_2}}{a}{b}{\varphi} \ = \ \frac{\varphi_{H_1}(a) \cdot \varphi_{H_2}(b)}{\varphi_{H_1}(b) \cdot  \varphi_{H_2}(a)} \ \in \ P.
 \]
\end{df}

\begin{lemma}\label{lemma: first properties of cross ratios}
 Let $\varphi  =\{\varphi_H : E \to P \mid 
 H \in \cH_M\}$ be a $P$-representation of $M$. Then for all $(H_1, H_2, a, b), (H_1, H_2, a', b') \in \Xi_M$ we have: 
 \begin{enumerate}
  \item If $(H_1,H_2,a,b)\notin\Xi_M^\diamondsuit$, then $\cross{H_1}{H_2}ab\varphi=1$. 
  \item  If $(H_1,H_2,a,b)\in\Xi_M^\diamondsuit$, $\gen{La}=\gen{La'}$, and $\gen{Lb}=\gen{Lb'}$ for $L=H_1\cap H_2$, then
   \[
    \cross{{H_1}}{{H_2}}ab\varphi \ = \ \cross{{H_1}}{{H_2}}{a'}{b'}\varphi.
   \]
 \end{enumerate}
\end{lemma}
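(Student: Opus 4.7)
The plan is to deduce both statements directly from the linear dependence condition in the definition of a $P$-representation, applied to appropriately chosen modular triples of hyperplanes, and then evaluated at elements where one of the three hyperplane functions vanishes.

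For part (1), I would split into two cases. If $H_1 = H_2$, then $\varphi_{H_1}(a)\varphi_{H_2}(b) = \varphi_{H_1}(b)\varphi_{H_2}(a)$ holds tautologically, so the cross-ratio equals $1$. If $H_1 \neq H_2$ and $(H_1,H_2,a,b) \notin \Xi_M^\diamondsuit$, then $L = H_1 \cap H_2$ is a corank $2$ flat but $H_3 := \gen{La} = \gen{Lb}$ is a single hyperplane (necessarily distinct from $H_1, H_2$, since $a \in H_3 \setminus (H_1 \cup H_2)$). Then $(H_1, H_2, H_3)$ is a modular triple of distinct hyperplanes, so by the definition of a $P$-representation there exist $c_1, c_2, c_3 \in P$, not all zero, with $c_1 \varphi_{H_1}(e) + c_2 \varphi_{H_2}(e) + c_3 \varphi_{H_3}(e) \in N_P$ for every $e \in E$. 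Testing this relation on an element of $H_3 \setminus (H_1 \cup H_2)$ and on an element of $H_2 \setminus (H_1 \cup H_3)$ forces $c_1, c_2$ to be nonzero (otherwise one of the other coefficients would be zero as well). Since $\varphi_{H_3}(a) = \varphi_{H_3}(b) = 0$, evaluating at $e = a$ and $e = b$ gives $c_1 \varphi_{H_1}(a) = -c_2 \varphi_{H_2}(a)$ and $c_1 \varphi_{H_1}(b) = -c_2 \varphi_{H_2}(b)$, so the ratio $\varphi_{H_1}/\varphi_{H_2}$ takes the same value at $a$ and $b$, yielding $\cross{H_1}{H_2}{a}{b}{\varphi} = 1$.

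For part (2), set $H_a = \gen{La} = \gen{La'}$ and $H_b = \gen{Lb} = \gen{Lb'}$; by hypothesis these are distinct hyperplanes. Since $a \notin H_1 \cup H_2$ while $a \in H_a$, the hyperplane $H_a$ differs from $H_1$ and $H_2$, and analogously for $H_b$. Apply the modular triple condition to $(H_1, H_2, H_a)$: as in part (1), obtain nonzero $c_1, c_2 \in P$ with $c_1 \varphi_{H_1}(e) = -c_2 \varphi_{H_2}(e)$ for every $e \in H_a$. Specializing to $e = a$ and $e = a'$ gives
\[
\frac{\varphi_{H_1}(a)}{\varphi_{H_2}(a)} \ = \ -\frac{c_2}{c_1} \ = \ \frac{\varphi_{H_1}(a')}{\varphi_{H_2}(a')}.
\]
Applying the same argument to $(H_1, H_2, H_b)$ and the pair $b, b'$ yields the analogous identity. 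Taking the quotient of the two identities produces the desired equality $\cross{H_1}{H_2}{a}{b}{\varphi} = \cross{H_1}{H_2}{a'}{b'}{\varphi}$.

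The only real subtlety, and the step I would treat most carefully, is the verification that the coefficients $c_1, c_2$ in the linear dependence relation are nonzero (so that the division used above is legitimate in the pointed group $P$). This is a short argument using that the three hyperplanes in each modular triple are pairwise distinct together with the defining property of $P$-hyperplane functions ($\varphi_H(e) = 0$ iff $e \in H$) and the fact that $a$ in a pasture lies in $N_P$ as a single term if and only if $a = 0$. Everything else is a bookkeeping exercise with the definition of cross-ratio.
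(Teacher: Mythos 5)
Your proof is correct and takes essentially the same approach as the paper's: part (1) evaluates the modular-triple linear dependence relation for $(H_1, H_2, H_3)$ at the elements $a,b$ where $\varphi_{H_3}$ vanishes, which is exactly what the paper does. The only organizational difference is in part (2), where the paper simply invokes part (1) twice — noting that $(H_1,H_2,a,a')$ and $(H_1,H_2,b,b')$ are degenerate tuples, hence $\cross{H_1}{H_2}{a}{a'}{\varphi}=\cross{H_1}{H_2}{b}{b'}{\varphi}=1$ — and multiplies, whereas you rerun the linear-dependence argument directly on the triples $(H_1,H_2,H_a)$ and $(H_1,H_2,H_b)$; the two routes are the same calculation in light disguise. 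You are also right to flag the nonvanishing of the coefficients $c_1, c_2$, which the paper asserts without comment (it follows from the pairwise distinctness of the three hyperplanes and the defining property of hyperplane functions, exactly as you indicate).
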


\begin{proof}
 The first assertion is clear when $H_1 = H_2$. Now assume $L = H_1 \cap H_2$ is of corank $2$. Write $H_0 = \gen{La} = \gen{Lb}$. Then $(H_0, H_1, H_2)$ forms a modular triple of distinct hyperplanes. Therefore there exist $a, b, c \in P^\times$ with $a\varphi_{H_0}(e) + b\varphi_{H_1}(e) + c\varphi_{H_2}(e) \in N_P$ for all $e\in E$. It follows that 
 \[
\cross{H_1}{H_2}ab\varphi = \frac{\varphi_{H_1}(a) \cdot \varphi_{H_2}(b)}{\varphi_{H_1}(b) \cdot  \varphi_{H_2}(a)} = \frac{\varphi_{H_1}(a)}{\varphi_{H_2}(a)} \cdot \frac{\varphi_{H_2}(b)}{\varphi_{H_1}(b)} = \left( - \frac{c}{b} \right) \cdot \left(- \frac{b}{c} \right) = 1. 
 \]
 
 For the second assertion, since $\gen{La}=\gen{La'}$ and $\gen{Lb}=\gen{Lb'}$, we know from the first part of the lemma that 
 \[
\cross{H_1}{H_2}{a}{a'}{\varphi} = 1 = \cross{H_1}{H_2}{b}{b'}{\varphi} . 
 \] 
 This implies 
 \[
\frac{\varphi_{H_1}(a) \varphi_{H_2}(a')}{\varphi_{H_1}(a) \varphi_{H_2}(a')} = \frac{\varphi_{H_1}(b) \varphi_{H_2}(b')}{\varphi_{H_1}(b) \varphi_{H_2}(b')}. 
 \]
 Rearranging both sides, we obtain 
 \[
\cross{{H_1}}{{H_2}}ab\varphi = \frac{\varphi_{H_1}(a) \varphi_{H_2}(b)}{\varphi_{H_1}(b) \varphi_{H_2}(a)} = \frac{\varphi_{H_1}(a') \varphi_{H_2}(b')}{\varphi_{H_1}(b') \varphi_{H_2}(a')} = \cross{{H_1}}{{H_2}}{a'}{b'}{\varphi}, 
 \]
 which completes the proof. 
\end{proof}

Let $\Theta_M$ be the set of all $4$-tuples of hyperplanes $(H_1, H_2, H_3, H_4)$ of $M$ such that $L = H_1 \cap H_2 \cap H_3 \cap H_4$ is a corank $2$ flat with $L = H_i \cap H_j$ for every $i \in \{1, 2\}$ and $j \in \{3,4\}$. Let $\Theta_M^\diamondsuit$ be the set of all {\em non-degenerate} tuples for which $L = H_1 \cap H_2 = H_3 \cap H_4$ also holds. 

\begin{df}
Let $\varphi = \{\varphi_H : E \to P \mid H \in \cH_M\}$ be a $P$-representation of $M$ and let $(H_1,H_2,H_3,H_4)\in\Theta_M$. Let $L=H_1\cap\dotsc\cap H_4$ and choose $a\in H_3-L$ and $b\in H_4-L$, so that $(H_1,H_2,a,b)\in\Xi_M$. The \emph{cross-ratio} $\cross{H_1}{H_2}{H_3}{H_4}{\varphi}$ is defined as
 \[
  \cross{H_1}{H_2}{H_3}{H_4}{\varphi} \ := \ \cross{{H_1}}{{H_2}}ab\varphi.
 \] 
This is well-defined, independent of the choice of $a$ and $b$, by \autoref{lemma: first properties of cross ratios}.

We call $\cross{H_1}{H_2}{H_3}{H_4}{\varphi}$ is \emph{non-degenerate} if $(H_1,H_2,H_3,H_4) \in \Theta_M^\diamondsuit$; otherwise, it is \emph{degenerate}. Similarly, we say that $\cross{H_1}{H_2}{H_3}{H_4}{\varphi}$ is \emph{non-trivial} if $\cross{H_1}{H_2}{H_3}{H_4}{\varphi} \neq 1$; otherwise, it is \emph{trivial}.
\end{df}

Note that every degenerate cross-ratio is trivial by \autoref{lemma: first properties of cross ratios}. The reverse implication does not hold in general: there are matroids $M$ and $(H_1,H_2,H_3,H_4)\in\Theta_M^\diamondsuit$ such that for every pasture $P$ and every $P$-representation $\varphi$, the non-degenerate cross-ratio $\cross{H_1}{H_2}{H_3}{H_4}{\varphi}$ is trivial; see~\cite[Section A.3.1]{Baker-Lorscheid-Zhang24} for a concrete example. 

Cross-ratios are invariants of rescaling classes and behave functorially with respect pasture morphisms, as the following result shows:

\begin{prop}\label{prop: cross ratios under rescaling}
 Let $\varphi = \{\varphi_H\}$ and $\varphi' = \{\varphi_H'\}$ be two $P$-representations of $M$ that are rescaling equivalent. Then for every $(H_1,H_2,H_3,H_4)\in\Theta_M$, we have
 \[
  \cross{H_1}{H_2}{H_3}{H_4}{\varphi} \ = \ \cross{H_1}{H_2}{H_3}{H_4}{\varphi'}.
 \]
 Moreover, let $g:P\to P'$ be a pasture morphism and $g_\ast(\varphi)$ the push-forward of $\varphi$ along $g$. Then
 \[
  \cross{H_1}{H_2}{H_3}{H_4}{g_\ast(\varphi)} \ = \ g\Big( \cross{H_1}{H_2}{H_3}{H_4}{\varphi} \Big).
 \]
\end{prop}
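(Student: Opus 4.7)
The plan is to prove both assertions by direct substitution into the explicit formula for the cross-ratio, after first choosing representatives $a \in H_3 - L$ and $b \in H_4 - L$ (the same $a, b$ for all four quantities appearing in the proposition) so that $\cross{H_1}{H_2}{H_3}{H_4}{\varphi} = \cross{H_1}{H_2}{a}{b}{\varphi}$ by definition. I would begin with the sanity check that each of the four factors $\varphi_{H_i}(x)$ appearing in the formula is actually a unit of $P$: since $\varphi_H(e) = 0$ if and only if $e \in H$, and since $(H_1,H_2,H_3,H_4) \in \Theta_M$ forces $H_1 \cap H_3 = H_1 \cap H_4 = H_2 \cap H_3 = H_2 \cap H_4 = L$, the assumption $a \notin L$ and $b \notin L$ shows $a, b \notin H_1 \cup H_2$, hence $\varphi_{H_1}(a), \varphi_{H_1}(b), \varphi_{H_2}(a), \varphi_{H_2}(b) \in P^\times$. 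In particular, the cross-ratio itself is a unit, and it makes sense to speak of ratios.

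For the first assertion, I would substitute the defining rescaling relation $\varphi'_H(e) = a_H \cdot t_e \cdot \varphi_H(e)$ into the formula for $\cross{H_1}{H_2}{a}{b}{\varphi'}$. The numerator becomes $(a_{H_1} t_a \varphi_{H_1}(a)) \cdot (a_{H_2} t_b \varphi_{H_2}(b))$ and the denominator becomes $(a_{H_1} t_b \varphi_{H_1}(b)) \cdot (a_{H_2} t_a \varphi_{H_2}(a))$; the factors $a_{H_1}, a_{H_2}, t_a, t_b$ all cancel in pairs (using commutativity of the monoid $P$), leaving exactly $\cross{H_1}{H_2}{a}{b}{\varphi}$. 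For the second assertion, the push-forward is defined by $g_\ast(\varphi)_H(e) = g(\varphi_H(e))$, so that
\[
 \cross{H_1}{H_2}{a}{b}{g_\ast(\varphi)} \ = \ \frac{g(\varphi_{H_1}(a)) \cdot g(\varphi_{H_2}(b))}{g(\varphi_{H_1}(b)) \cdot g(\varphi_{H_2}(a))}.
\]
Since $g$ is a morphism of pointed groups, it is multiplicative and sends $P^\times$ to ${P'}^\times$ (because $ab = 1$ implies $g(a)g(b) = 1$); hence $g$ commutes with the product and the inverse in $P^\times$, so the right-hand side equals $g$ applied to $\cross{H_1}{H_2}{a}{b}{\varphi}$, yielding the claim.

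There is no real obstacle here; the only point requiring any care is keeping track of the fact that the manipulations happen inside $P^\times$ (so that inverses exist and $g$ preserves them), which the first step above handles. Neither Tutte's homotopy theorem nor any of the deeper machinery from earlier sections is needed, and the proof is essentially a formal verification.
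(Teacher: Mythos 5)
Your proof is correct and takes essentially the same route as the paper: choose representatives $a \in H_3 - L$, $b \in H_4 - L$, substitute the rescaling relation $\varphi'_H(e) = a_H t_e \varphi_H(e)$ into the explicit formula and observe the cancellation of the $a_{H_i}$ and $t_e$ factors, and for the push-forward use that $g$ is multiplicative and preserves units. The extra remarks you include about why $a,b \notin H_1 \cup H_2$ (hence all four $\varphi_{H_i}$-values are units) are accurate and are implicit in the paper's exposition; they don't change the structure of the argument.
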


\begin{proof}
Let $(a_H)_{H\in\cH}\in (P^\times)^\cH$ and $(t_e)_{e\in E}\in (P^\times)^E$ be such that $\varphi'_H(e) = a_H \cdot t_e \cdot \varphi_H(e)$ for all $H \in \cH$ and $e\in E$. Take $e \in H_3 - (H_1 \cap H_2)$ and $f \in H_4 - (H_1 \cap H_2)$. Then
\[
\cross{H_1}{H_2}{H_3}{H_4}{\varphi'} = \frac{\varphi'_{H_1}(e) \varphi'_{H_2}(f)}{\varphi'_{H_1}(f) \varphi'_{H_2}(e)} = \frac{a_{H_1}t_e\varphi_{H_1}(e) \cdot a_{H_2}t_f \varphi_{H_2}(f)}{a_{H_1}t_f\varphi_{H_1}(f) \cdot a_{H_2}t_e\varphi_{H_2}(e)} = \cross{H_1}{H_2}{H_3}{H_4}{\varphi}. 
\]

Similarly, since $g: P \to P'$ is a pasture morphism, we have
\[
\cross{H_1}{H_2}{H_3}{H_4}{g_\ast(\varphi)} = \frac{(g \circ \varphi_{H_1})(e) \cdot (g\circ\varphi_{H_2})(f)}{(g\circ\varphi_{H_1})(f) \cdot (g\circ\varphi_{H_2})(e)} = g\Big(\cross{H_1}{H_2}{H_3}{H_4}{\varphi}\Big). 
\]
\end{proof}

A \emph{universal representation of $M$} is a representation $\varphi = \{\varphi_H : E \to F_M \mid H \in \cH\}$ of $M$ over its foundation $F_M$ whose rescaling equivalence class $[\varphi] \in \upR_M(F_M)$ corresponds to the identity morphism $\id: F_M \to F_M$ under the canonical bijection $\upR_M(F_M) = \Hom(F_M, F_M)$. 

\begin{df}
 Let $\varphi_\univ$ be a universal representation of $M$. The \emph{universal cross-ratio} $\cross{H_1}{H_2}{H_3}{H_4}{}$ is defined as
 \[
  \cross{H_1}{H_2}{H_3}{H_4}{} \ = \ \cross{H_1}{H_2}{H_3}{H_4}{\varphi_\univ} \ \in \ F_M.
 \]
 By \autoref{prop: cross ratios under rescaling}, $\cross{H_1}{H_2}{H_3}{H_4}{}$ does not depend on the choice of $\varphi_\univ$. 
\end{df}

\begin{cor}\label{cor: cross ratios as images of the universal cross ratio}
 Let $\varphi$ be a $P$-representation of $M$ and let $g_\varphi: F_M \to P$ be the pasture morphism corresponding to the rescaling class of $\varphi$. Then
 \[
  \cross{H_1}{H_2}{H_3}{H_4}{\varphi} \ = \ g_\varphi\Big(\cross{H_1}{H_2}{H_3}{H_4}{}\Big)
 \]
 for all $(H_1,H_2,H_3,H_4)\in\Theta_M$.
\end{cor}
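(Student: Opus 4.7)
The plan is to chain together the definition of universal cross-ratios, the defining property of the foundation (\autoref{thm: characterizing property of the foundation}), and the two parts of \autoref{prop: cross ratios under rescaling}. Fix a universal representation $\varphi_\univ$ of $M$ over $F_M$. By \autoref{thm: characterizing property of the foundation}, the canonical bijection $\upR_M(P)=\Hom(F_M,P)$ is realized by push-forward along pasture morphisms $F_M\to P$, and by the definition of $g_\varphi$ the rescaling class of $(g_\varphi)_\ast(\varphi_\univ)$ equals the rescaling class of $\varphi$ in $\upR_M(P)$.

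First I would apply the rescaling-invariance half of \autoref{prop: cross ratios under rescaling} to conclude that, for every $(H_1,H_2,H_3,H_4)\in\Theta_M$,
\[
 \cross{H_1}{H_2}{H_3}{H_4}{\varphi} \ = \ \cross{H_1}{H_2}{H_3}{H_4}{(g_\varphi)_\ast(\varphi_\univ)}.
\]
Next I would apply the functoriality half of the same proposition to the pasture morphism $g_\varphi\colon F_M\to P$ and the representation $\varphi_\univ$, obtaining
\[
 \cross{H_1}{H_2}{H_3}{H_4}{(g_\varphi)_\ast(\varphi_\univ)} \ = \ g_\varphi\Big(\cross{H_1}{H_2}{H_3}{H_4}{\varphi_\univ}\Big).
\]
Finally, the defining equality $\cross{H_1}{H_2}{H_3}{H_4}{} = \cross{H_1}{H_2}{H_3}{H_4}{\varphi_\univ}$ (which is independent of the choice of $\varphi_\univ$ by the same proposition) converts the right-hand side into $g_\varphi\bigl(\cross{H_1}{H_2}{H_3}{H_4}{}\bigr)$, giving the claim.

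There is essentially no obstacle here: the corollary is a formal consequence of \autoref{prop: cross ratios under rescaling} together with the universal property of $F_M$. The only small subtlety worth flagging is that $(g_\varphi)_\ast(\varphi_\univ)$ and $\varphi$ agree only up to rescaling equivalence, not on the nose, which is precisely why both parts of \autoref{prop: cross ratios under rescaling} (invariance under rescaling and compatibility with pasture morphisms) are needed in tandem.
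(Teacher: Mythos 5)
Your proof is correct and is essentially the same argument as in the paper: both chain the definition of the universal cross-ratio, the functoriality half of \autoref{prop: cross ratios under rescaling}, and the rescaling-invariance half of the same proposition, using \autoref{thm: characterizing property of the foundation} to identify the rescaling class of $(g_\varphi)_\ast(\varphi_\univ)$ with that of $\varphi$. The only difference is that you read the chain of equalities from left to right while the paper writes it from right to left.
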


\begin{proof}
By \autoref{thm: characterizing property of the foundation} and~\autoref{prop: cross ratios under rescaling}, we have
\[
g_\varphi\Big(\cross{H_1}{H_2}{H_3}{H_4}{}\Big) = g_{\varphi}\Big(\cross{H_1}{H_2}{H_3}{H_4}{\varphi_\univ}\Big) = \cross{H_1}{H_2}{H_3}{H_4}{{g_{\varphi}}_\ast(\varphi_\univ)} = \cross{H_1}{H_2}{H_3}{H_4}{\varphi}. 
\]
\end{proof}

\begin{rem}\label{rem: rescaling invariant elements are in the image of the foundation}
 The proof of \autoref{cor: cross ratios as images of the universal cross ratio} (and \autoref{prop: cross ratios under rescaling}) only uses the property that cross-ratios are invariant under rescaling the $P$-representation $\varphi$. Thus the previous result applies to every expressions $\Pi=\prod_{i\in I}\varphi_{H_i}(a_i)^{\epsilon_i}$ (with $H_i\in\cH$, $a_i\in E$ and $\epsilon_i\in\{\pm1\}$) that is invariant under rescaling. This is, if the degree of $\Pi$ in each $H_i$ and $a_i$ is zero, then $\Pi=\psi(\Pi_\univ)$, where $\psi:F_M\to P$ is the pasture morphism corresponding to $\varphi$ and $\Pi_\univ=\prod_{i\in I}\varphi_{\univ,H_i}(a_i)^{\epsilon_i}$ for a universal $F_M$-representation $\varphi_\univ$.
 
 In fact, Tutte's path theorem implies that every such expression $\Pi$ is a product of cross-ratios (up to a sign); cf.\ \autoref{thm: the foundation is generated by cross ratios}.
\end{rem}

\subsection{A characterization of \texorpdfstring{$P$}{P}-hyperplane representations}
\label{subsection: characterization of P-hyperplane representations}

Our next goal is to describe a presentation for $F_M$ which does not rely on Tutte's homotopy theory and is easy to implement on a computer (indeed, this is the description of $F_M$ used in \cite{Chen-Zhang}). The following result will be central for this:

\begin{thm}\label{thm:iff-modular-triple}
    Let $M$ be a matroid, and let $\varphi = \{\varphi_H\}_{H \in \cH}$ be a family of $P$-hyperplane functions for $M$. Then $\varphi$ is a $P$-representation of $M$ if and only if the following two properties are satisfied: 
    \begin{enumerate}
    \item\label{char2:multiplicative} For every modular triple $(H_1, H_2, H_3)$ of distinct hyperplanes with $L = H_1 \cap H_2 \cap H_3$ and every $e_i \in H_i - L$, we have 
    \[
\frac{\varphi_1(e_2) \cdot \varphi_2(e_3) \cdot \varphi_3(e_1)}{\varphi_1(e_3) \cdot \varphi_2(e_1) \cdot \varphi_3(e_2)} \ = \ -1,
    \]
    where $\varphi_i := \varphi_{H_i}$ is the hyperplane function corresponding to $H_i$.
        \item\label{char3:U24} For every modular quadruple $(H_1, H_2, H_3, H_4)$ of distinct hyperplanes with $L = H_1 \cap H_2 \cap H_3 \cap H_4$ and every $e_i \in H_i - L$, we have
        \[
\cross{H_1}{H_2}{e_3}{e_4}{\varphi} \ + \ \cross{H_1}{H_3}{e_2}{e_4}{\varphi} \ - \ 1 \quad \in \quad N_P.
        \]
    \end{enumerate} 
\end{thm}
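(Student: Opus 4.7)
The plan is a two-way implication. For the forward direction, fix a modular triple $(H_1,H_2,H_3)$ with corank-$2$ intersection $L$, so by the lattice structure $H_i\cap H_j=L$ whenever $i\neq j$; in particular, for any $e_i\in H_i-L$ we have $\varphi_i(e_i)=0$ while $\varphi_j(e_i)\neq 0$ for $j\neq i$. The $P$-representation hypothesis provides $(a_1,a_2,a_3)\neq(0,0,0)$ with $a_1\varphi_1+a_2\varphi_2+a_3\varphi_3\in N_P$ pointwise. The first step is to show $a_1,a_2,a_3\in P^\times$: if $a_1=0$, evaluating at $e_2$ forces $a_3\varphi_3(e_2)\in N_P$, hence $a_3=0$, and then the $e_1$-evaluation forces $a_2=0$, a contradiction. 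Once all $a_i$ are nonzero, evaluating at $e_1,e_2,e_3$ gives three two-term relations in $N_P$; by the uniqueness of additive inverses these read $a_3/a_2=-\varphi_2(e_1)/\varphi_3(e_1)$ and cyclic permutations. Multiplying the three ratios yields $1=-\varphi_1(e_2)\varphi_2(e_3)\varphi_3(e_1)/(\varphi_1(e_3)\varphi_2(e_1)\varphi_3(e_2))$, which is relation~(1).

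For relation~(2), I would use the modular triple $(H_1,H_2,H_3)$ to solve $a_2=-a_1\varphi_1(e_3)/\varphi_2(e_3)$ and $a_3=-a_1\varphi_1(e_2)/\varphi_3(e_2)$. The pointwise dependence, evaluated at the extra element $e_4\in H_4-L$ (where $\varphi_1(e_4),\varphi_2(e_4),\varphi_3(e_4)$ are all nonzero), reads $a_1\varphi_1(e_4)+a_2\varphi_2(e_4)+a_3\varphi_3(e_4)\in N_P$. Substituting for $a_2,a_3$ and dividing through by $a_1\varphi_1(e_4)\in P^\times$ (permissible because $N_P$ is $P$-stable) gives $1-\cross{H_1}{H_2}{e_3}{e_4}{\varphi}-\cross{H_1}{H_3}{e_2}{e_4}{\varphi}\in N_P$, which after multiplication by $-1$ is~(2).

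For the reverse direction, assume (1) and (2); given a modular triple $(H_1,H_2,H_3)$ with $L=H_1\cap H_2\cap H_3$, fix $e_i\in H_i-L$ and set $a_1=1$, $a_2=-\varphi_1(e_3)/\varphi_2(e_3)$, $a_3=-\varphi_1(e_2)/\varphi_3(e_2)$. I would verify that $a_1\varphi_1(e)+a_2\varphi_2(e)+a_3\varphi_3(e)\in N_P$ for every $e\in E$ by cases. The case $e\in L$ is trivial. If $e\in H_i-L$ for some $i\in\{1,2,3\}$, then $\varphi_i(e)=0$ and the claim reduces to showing that the ratio of the remaining two hyperplane functions at $e$ equals the corresponding ratio at $e_i$; this invariance follows by applying~(1) twice (once with $e$ in the $i$-th slot and once with $e_i$ there) and comparing the two expressions, which differ only in the $i$-th position and hence yield the same value for the ratio. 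The crucial case is $e\notin H_1\cup H_2\cup H_3$: then $H_4:=\gen{L\cup\{e\}}_M$ is a hyperplane distinct from $H_1,H_2,H_3$, so $(H_1,H_2,H_3,H_4)$ is a modular quadruple to which~(2) applies with $e_4=e$, and clearing denominators (multiplying by $\varphi_1(e)$) converts the resulting cross-ratio identity into exactly the required three-term relation in $N_P$.

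The main obstacle is essentially bookkeeping: verifying the non-vanishing of the various $\varphi_j(e)$ at each step, handling the uniqueness of additive inverses in the pasture (which forces the two-term evaluations to collapse to explicit equalities), and ensuring that the cross-ratio axiom~(2) really is applicable in the ``generic'' case of the reverse direction. The only genuinely new input beyond bookkeeping is recognizing that~(1) alone is not enough to cover elements $e$ lying outside $H_1\cup H_2\cup H_3$ — this is precisely the role of~(2), which encodes the compatibility of the rank-$1$ dependence when probed by a fourth hyperplane through $L$.
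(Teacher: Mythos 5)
Your proof is correct and follows essentially the same route as the paper: evaluate the pointwise linear dependence at the distinguished elements $e_1,e_2,e_3$ (and at $e_4$ for relation (2)) to derive the multiplicative and additive constraints, and for the converse, explicitly construct the coefficients from two of the $e_i$ and verify membership in $N_P$ case by case, with condition (2) handling elements outside $H_1\cup H_2\cup H_3$. The only difference is cosmetic (you normalize the coefficient of $\varphi_1$ to $1$ whereas the paper normalizes $\varphi_2$), and you are slightly more explicit in the forward direction about arguing that $a_1,a_2,a_3$ are all nonzero rather than merely not all zero --- a step the paper states without justification but which is needed to pass from the definition of linear dependence to having all three coefficients in $P^\times$.
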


\begin{proof}
    Assume first that $\varphi$ is a $P$-representation of $M$. Consider a modular triple $(H_1, H_2, H_3)$ of distinct hyperplanes with $L = H_1 \cap H_2 \cap H_3$ and elements $e_i \in H_i - L$. Since $\varphi$ is a $P$-representation, there exist $a, b, c \in P^\times$ with $a\varphi_1(e) + b\varphi_2(e) + c\varphi_3(e) \in N_P$ for all $e \in E$. Evaluating this linear dependence relation at $e_1, e_2$, and $e_3$, we have 
\[
    b\varphi_2(e_1) + c\varphi_3(e_1), \ 
    a\varphi_1(e_2) + c\varphi_3(e_2), \
    a\varphi_1(e_3) + b\varphi_2(e_3)\ \in \ N_P, 
\]
from which we compute that
\[
\frac{\varphi_1(e_2)\varphi_2(e_3)\varphi_3(e_1)}{\varphi_1(e_3)\varphi_2(e_1)\varphi_3(e_2)} = (-1)^3 \cdot \frac{b}{c} \cdot \frac{c}{a} \cdot \frac{a}{b} = -1. 
\]

If we extend the modular triple $(H_1, H_2, H_3)$ to a modular quadruple $(H_1, H_2, H_3, H_4)$ of distinct hyperplanes with an element $e_4 \in H_4 - L$, then 
\[
a\varphi_1(e_4) + b\varphi_2(e_4) + c\varphi_3(e_4) \in N_P. 
\]
Consequently, 
\[
\frac{\varphi_{1}(e_3) \varphi_{2}(e_4)}{\varphi_{1}(e_4) \varphi_{2}(e_3)} 
 + \frac{\varphi_{1}(e_2) \varphi_{3}(e_4)}{\varphi_{1}(e_4) \varphi_{3}(e_2)} - 1 = -\frac{b\varphi_2(e_4)}{a\varphi_1(e_4)} - \frac{c\varphi_3(e_4)}{a\varphi_1(e_4)} - 1 \in N_P, 
\]
which proves the third property. 

    To show the converse, we consider an arbitrary modular triple $(H_1, H_2, H_3)$ of distinct hyperplanes with $L = H_1 \cap H_2 \cap H_3$. We 
    claim that there exist nonzero $a, c \in P$ such that 
    \[
    a\varphi_1(e) + \varphi_2(e) + c\varphi_3(e) \in N_P
    \]
    for all elements $e \in E$. To see this, pick an arbitrary $e_1 \in H_1 - L$; we know that $c = -\frac{\varphi_2(e_1)}{\varphi_3(e_1)}$. Similarly, fix $e_3 \in H_3 - L$, then $a = -\frac{\varphi_2(e_3)}{\varphi_1(e_3)}$. Therefore, it suffices to show that for every $e \in E$, we have 
    \begin{equation}\label{equa:3-term-hyperplane}
-\frac{\varphi_2(e_3)}{\varphi_1(e_3)} \varphi_1(e) + \varphi_2(e) - \frac{\varphi_2(e_1)}{\varphi_3(e_1)} \varphi_3(e) \in N_P. 
    \end{equation}

    Suppose first that $e \in H_1 - L$. By (\ref{char2:multiplicative}), we have 
    \[
\frac{\varphi_1(e_2) \varphi_2(e_3) \varphi_3(e_1)}{\varphi_1(e_3)\varphi_2(e_1) \varphi_3(e_2)}  \ = -1 \ = \frac{\varphi_1(e_2) \varphi_2(e_3) \varphi_3(e)}{\varphi_1(e_3) \varphi_2(e) \varphi_3(e_2)}, 
    \]
    which implies 
    \[
\frac{\varphi_2(e)\varphi_3(e_1)}{\varphi_2(e_1)\varphi_3(e)} = 1. 
    \]
    Thus, \autoref{equa:3-term-hyperplane} follows. The case when $e \in H_3 - L$ is similar. If $e \in H_2 - L$, then~\autoref{equa:3-term-hyperplane} is equivalent to the multiplicative relation 
    \[
\frac{\varphi_1(e)\varphi_2(e_3)\varphi_3(e_1)}{\varphi_1(e_3)\varphi_2(e_1)\varphi_3(e)} = -1. 
    \]
    Finally, if $e \notin H_1 \cup H_2 \cup H_3$, and if we denote $H_4 = \gen{Le}$, then~\autoref{equa:3-term-hyperplane} follows from (\ref{char3:U24}), which tells us that  
    \[
    \cross{H_2}{H_1}{e_3}{e}{\varphi} + \cross{H_2}{H_3}{e_1}{e}{\varphi} - 1 \in N_P. \qedhere
    \]
\end{proof}

To ease the notation, if $\varphi: \cH \to P^E$ is a family of $P$-hyperplane functions for $M$, $(H_1, H_2, H_3)$ is a modular triple of distinct hyperplanes with $L = H_1 \cap H_2 \cap H_3$, and $e_i \in H_i - L$ for $i = 1,2,3$, we also write 
\[
\tripleratio{H_1}{H_2}{H_3}{e_1}{e_2}{e_3}{\varphi} := 
\frac{\varphi_{H_1}(e_2) \cdot \varphi_{H_2}(e_3) \cdot \varphi_{H_3}(e_1)}{\varphi_{H_1}(e_3) \cdot \varphi_{H_2}(e_1) \cdot \varphi_{H_3}(e_2)}. 
\]

\begin{cor}\label{cor: cross ratios are invariant under row exchange}
 Let $\varphi:\cH\to P^E$ be a $P$-representation and consider $(H_1,H_2,H_3,H_4)\in\Theta_M$ with $L=H_1\cap\dotsc\cap H_4$ and $e_i\in H_i-L$ for $i=1,\dotsc,4$. Then
 \[
  \cross{H_1}{H_2}{e_3}{e_4}{\varphi} \ = \ \cross{H_3}{H_4}{e_1}{e_2}{\varphi}.
 \]
\end{cor}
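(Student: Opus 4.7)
The plan is to derive the identity directly from property (1) of \autoref{thm:iff-modular-triple} applied to two suitable modular triples obtained from the quadruple $(H_1,H_2,H_3,H_4)$.

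First, I would verify that $(H_1,H_2,H_3)$ and $(H_1,H_2,H_4)$ are modular triples with common corank-$2$ intersection $L$: indeed, $L \subseteq H_1\cap H_2\cap H_j \subseteq H_1\cap H_j = L$ for $j=3,4$, so the triple-intersection equals $L$. Assume for the moment that $H_1\neq H_2$ and $H_3\neq H_4$, so both of these triples consist of three pairwise distinct hyperplanes and the cross-ratio $\cross{H_3}{H_4}{e_1}{e_2}{\varphi}$ is well-defined (one checks using $L = H_i\cap H_j$ that $e_1,e_2\notin H_3\cup H_4$). Applying property (1) of \autoref{thm:iff-modular-triple} to these two modular triples with representatives $e_1,e_2,e_3$ and $e_1,e_2,e_4$ respectively yields
\[
\tripleratio{H_1}{H_2}{H_3}{e_1}{e_2}{e_3}{\varphi} \ = \ -1 \ = \ \tripleratio{H_1}{H_2}{H_4}{e_1}{e_2}{e_4}{\varphi}.
\]
Dividing the second identity by the first and rearranging the four surviving factors gives
\[
\frac{\varphi_{H_1}(e_3)\,\varphi_{H_2}(e_4)}{\varphi_{H_1}(e_4)\,\varphi_{H_2}(e_3)} \ = \ \frac{\varphi_{H_3}(e_1)\,\varphi_{H_4}(e_2)}{\varphi_{H_3}(e_2)\,\varphi_{H_4}(e_1)},
\]
which is exactly $\cross{H_1}{H_2}{e_3}{e_4}{\varphi} = \cross{H_3}{H_4}{e_1}{e_2}{\varphi}$.

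The degenerate cases $H_1=H_2$ and $H_3=H_4$ have to be dealt with separately, but they are easy. If $H_1=H_2$, the left-hand side collapses to $1$ by definition; for the right-hand side I would instead use the modular triple $(H_1,H_3,H_4)$ (which is distinct since $H_1\cap H_3 = H_1\cap H_4 = L$ has corank $2$) with representatives $(e_1,e_3,e_4)$ and $(e_2,e_3,e_4)$, and divide the two instances of property (1) to obtain $\cross{H_3}{H_4}{e_1}{e_2}{\varphi}=1$. The case $H_3=H_4$ is handled symmetrically.

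There is no genuine obstacle here: the corollary is a purely algebraic consequence of the three-term multiplicative relations already packaged in \autoref{thm:iff-modular-triple}(1), and the only minor care required is in checking that the modular triples used are triples of \emph{distinct} hyperplanes, which reduces to the small case analysis above. In particular, neither the more subtle property (2) of \autoref{thm:iff-modular-triple} nor any homotopy-theoretic input is needed.
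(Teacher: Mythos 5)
Your proof is correct and takes essentially the same approach as the paper's: both apply property (1) of \autoref{thm:iff-modular-triple} to two modular triples drawn from the quadruple (the paper uses $(H_1,H_2,H_3)$ and $(H_4,H_2,H_1)$ where you use $(H_1,H_2,H_3)$ and $(H_1,H_2,H_4)$, but the resulting cancellation is the same) and then manipulate. Your explicit treatment of the degenerate cases $H_1 = H_2$ and $H_3 = H_4$ --- which the paper's one-line computation silently excludes, since the triple-ratio notation is only defined for \emph{distinct} hyperplanes --- is a welcome bit of extra care, though one could shorten it by simply invoking \autoref{lemma: first properties of cross ratios}(1) directly (if $H_1 = H_2$ then $\gen{Le_1} = \gen{Le_2}$, so $(H_3,H_4,e_1,e_2)\notin\Xi_M^\diamondsuit$ and both sides are $1$).
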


\begin{proof}
 This follows by the direct verification
 \[
  \cross{H_1}{H_2}{e_3}{e_4}{\varphi} \ = \ \cross{H_1}{H_2}{e_3}{e_4}{\varphi} \ \cdot \ \tripleratio{H_1}{H_2}{H_3}{e_1}{e_2}{e_3}{\varphi} \ \cdot \ \tripleratio{H_4}{H_2}{H_1}{e_4}{e_2}{e_1}{\varphi} \ = \ \cross{H_3}{H_4}{e_1}{e_2}{\varphi},
 \]
 using that $\tripleratio{H_1}{H_2}{H_3}{e_1}{e_2}{e_3}{\varphi}=\tripleratio{H_4}{H_2}{H_1}{e_4}{e_2}{e_1}{\varphi}=-1$ by \autoref{thm:iff-modular-triple}.
\end{proof}

As an application to \autoref{thm:iff-modular-triple} , we give another proof for the excluded minor theorem for binary matroids, which was originally proved by Tutte in \cite{Tutte58b}. 

\begin{thm}\label{thm: excluded minors for binary matroids}
 A matroid is binary if and only if it has no minors of type $U_{2,4}$.
\end{thm}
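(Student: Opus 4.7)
The plan is to apply \autoref{thm:iff-modular-triple} directly with $P=\F_2$, using the unique candidate family of hyperplane functions and exploiting the fact that $\F_2$ is so rigid that both axioms of the theorem collapse into easily verifiable (or vacuous) statements.

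For the forward direction, I would observe that $U_{2,4}$ has four distinct parallel classes, whereas $\F_2^2$ contains only three nonzero vectors, so $U_{2,4}$ is not binary. Since the class of binary matroids is minor-closed, any binary matroid is $U_{2,4}$-free.

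For the converse, assume $M$ has no $U_{2,4}$ minor. Since $\F_2^\times = \{1\}$, the only possible $\F_2$-hyperplane function for a hyperplane $H$ is the indicator $\varphi_H(e) := 0$ if $e \in H$ and $\varphi_H(e) := 1$ otherwise. I would check that $\varphi = \{\varphi_H\}_{H \in \cH}$ satisfies the two conditions of \autoref{thm:iff-modular-triple}. Condition (\ref{char3:U24}) is \emph{vacuous} under the hypothesis: a modular quadruple of four distinct hyperplanes through a corank-$2$ flat $L$ would give four distinct parallel classes in $M/L$, so simplifying $M/L$ to a uniform matroid $U_{2,k}$ with $k \geq 4$ would yield a $U_{2,4}$ minor of $M$, a contradiction. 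Condition (\ref{char2:multiplicative}) is straightforward: for a modular triple $(H_1,H_2,H_3)$ of distinct hyperplanes meeting in a corank-$2$ flat $L$, semimodularity together with $L \subseteq H_i \cap H_j \subsetneq H_i$ forces $H_i \cap H_j = L$ for all $i \ne j$; hence $e_j \in H_j - L$ cannot lie in $H_i$, so $\varphi_{H_i}(e_j) = 1$, and the triple ratio equals $1 = -1$ in $\F_2$. Applying \autoref{thm:iff-modular-triple} then yields that $\varphi$ is an $\F_2$-representation, so $M$ is binary.

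The only step requiring a bit of care is the translation from "modular quadruple of distinct hyperplanes" to "$U_{2,4}$ minor'', but this is the standard correspondence between hyperplanes through a corank-$2$ flat $L$ and parallel classes of $M/L$. No appeal to Tutte's homotopy theorem or any foundation-level reasoning is needed; the whole argument rests on \autoref{thm:iff-modular-triple} combined with the triviality of $\F_2^\times$.
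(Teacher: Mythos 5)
Your proof is correct and takes essentially the same route as the paper: apply \autoref{thm:iff-modular-triple} to the unique $\F_2$-indicator family of hyperplane functions, using $1=-1$ in $\F_2$ for the modular-triple condition and vacuity (via absence of $U_{2,4}$ minors) for the modular-quadruple condition. Your version is slightly more explicit in two spots the paper glosses over — that a modular quadruple through a corank-$2$ flat $L$ produces a $U_{2,4}$ minor via $M/L$, and that $H_i \cap H_j = L$ forces $e_j \notin H_i$ so the triple ratio is genuinely $1$ — which is welcome.
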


\begin{proof}
Since $U_{2,4}$ is not binary, but all of its proper minors are, $U_{2,4}$ is an excluded minor for the class of binary matroids. Conversely, if a matroid $M$ is without minors of type $U_{2,4}$, then we claim that the family $\varphi = \{\varphi_H\}$ of $\F_2$-hyperplane functions given by $\varphi_H(e) = 1$ if $e \notin H$ and $\varphi_H(e) = 0$ otherwise is a representation of $M$ over $\F_2$. In fact, since there are no modular quadruples of distinct hyperplanes in $M$, \eqref{char3:U24} in \autoref{thm:iff-modular-triple} holds vacuously. Let $(H_1, H_2, H_3)$ be a modular triple of distinct hyperplanes with $L = H_1 \cap H_2 \cap H_3$ and let $e_i \in H_i - L$. Then 
 \[
\frac{\varphi_{H_1}(e_2) \cdot \varphi_{H_2}(e_3) \cdot \varphi_{H_3}(e_1)}{\varphi_{H_1}(e_3) \cdot \varphi_{H_2}(e_1) \cdot \varphi_{H_3}(e_2)} \ = \ 1 \ = \ -1 \ \in \  \F_2 
    \]
and we conclude that $M$ is representable over $\F_2$. 
\end{proof}

\subsection{An algorithm for computing the foundation via the hyperplane incidence graph}
\label{subsection: hyperplane matrix representation}

Let $M$ be a matroid on $E$ with set of hyperplanes $\cH$. Our goal in this section is to give an explicit construction of the foundation $F_M$ in terms of a certain graph $G_M$ associated to $M$. 

\begin{df}
The {\em hyperplane incidence graph of $M$}\footnote{It would perhaps be more accurate to call $G_M$ the ``hyperplane non-incidence graph'' of $M$ or the cocircuit incidence graph of $M$, but we will abuse terminology here in order to avoid awkwardness in our exposition.}
is the bipartite graph $G=G_M$ with vertex set $\cH \cup E$ such that $H \in \cH$ and $a \in E$ are adjacent if and only if $a \notin H$. 
\end{df}

Note that a family $\varphi$ of $P$-hyperplane functions for $M$ can be viewed as an assignment $\varphi: E(G_M) \to P^\times$ of an element in $P^\times$ to every edge in $G_M$. 

\medskip

Let $G$ be an arbitrary bipartite graph. In this section, all graphs considered will be finite. For ease of notation, the bi-partition of the vertex set of $G$ will still be denoted by $\cH \cup E$, even though $G$ is not necessarily the hyperplane incidence graph of a matroid. 

A {\em maximal spanning forest $F$ of $G$} (or just a {\em spanning forest $F$ of $G$}, for short) is a maximal set of edges of $G$ that contains no cycle. Equivalently, $F$ is a subgraph of $G$ consisting of a spanning tree in each connected component. 

If $G$ is the hyperplane incidence graph $G$ of a matroid $M$, then by~\cite[Proposition 4.1.2]{Oxley11}, there is a one-to-one correspondence between the connected components of $G$ and the connected components of $M$. Therefore, the number of edges in any spanning forest $F$ is $$\# E + \# \cH - \#\{\textrm{connected components of $M$}\}.$$ 

\begin{df}
Let $\Gamma$ be an abelian group, written multiplicatively.\footnote{In the rest of this paper, $\Gamma$ is used to denote a modular cut in the lattice of flats of a matroid, but in this subsection we do not use modular cuts so there should hopefully be no risk of confusion.} Two functions $\varphi, \psi: E(G) \to \Gamma$ are {\em rescaling equivalent} if there exist $(a_H)_{H \in \cH} \in \Gamma^{\cH}$ and $(t_e)_{e \in E} \in \Gamma^E$ such that $\varphi(H,e) = a_H \cdot t_e \cdot \psi(H,e)$ for every edge $(H,e) \in E(G)$. 
\end{df}

\begin{lemma}\label{lemma:graph-theoretic-well-defined}
    Let $G$ be a bipartite graph with vertex set $\cH \cup E$ and let $\Gamma$ be an abelian group. Fix a spanning forest $F$ of $G$. Suppose two functions $\varphi, \psi: E(G) \to \Gamma$ agree on $E(F)$. Then $\varphi$ and $\psi$ are identical if and only if they are rescaling equivalent. 
\end{lemma}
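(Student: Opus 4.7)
The plan is to prove the nontrivial direction by exploiting the tree structure of $F$ on each connected component of $G$, propagating the constraints imposed by the rescaling cocycle $(a_H, t_e)$ along the spanning tree.

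First I would dispose of the trivial direction: if $\varphi = \psi$, then taking $a_H = t_e = 1$ exhibits a rescaling equivalence (so in fact $\varphi$ and $\psi$ are rescaling equivalent in a completely trivial way).

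For the forward direction, I would begin by unpacking what the hypotheses say. Suppose $\varphi(H,e) = a_H \cdot t_e \cdot \psi(H,e)$ for all edges, and suppose $\varphi$ and $\psi$ agree on $E(F)$. Since $\Gamma$ is a group and $\psi(H,e)\in\Gamma$ is invertible, agreement on $E(F)$ is equivalent to the cocycle condition
\[
 a_H \cdot t_e \ = \ 1 \qquad\text{for every edge }(H,e)\in E(F).
\]
The goal becomes showing that this same identity then holds on every edge of $G$, not just on the tree edges.

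Next I would argue component-by-component. Fix a connected component $C$ of $G$ and choose any root vertex; for concreteness, take a hyperplane $H_0$ in $C$ (the argument is symmetric if one must start from $E$). Because $G$ is bipartite with parts $\cH$ and $E$, every path in $F$ starting at $H_0$ alternates between $\cH$-vertices and $E$-vertices. Walking out along such a path $H_0, e_1, H_1, e_2, H_2, \dotsc$ and applying the relation $a_H t_e = 1$ at each successive tree edge yields inductively
\[
 t_{e_i} \ = \ a_{H_0}^{-1}, \qquad a_{H_i} \ = \ a_{H_0}
\]
for all $i$. Since every vertex of $C$ is reachable from $H_0$ by a unique path in the spanning tree restricted to $C$, this determines $a_H$ and $t_e$ throughout $C$ up to the single free parameter $a_{H_0}$, and in particular gives $a_H \cdot t_e = 1$ for any two vertices of $C$.

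Finally, I would close the argument by noting that every edge $(H,e)$ of $G$ has both endpoints in the same connected component $C$, so by the computation above $a_H t_e = 1$ regardless of whether $(H,e) \in E(F)$. Therefore $\varphi(H,e) = a_H t_e \psi(H,e) = \psi(H,e)$ on all of $E(G)$, which is exactly the conclusion. The ``main obstacle'' is really just organizational: one has to be careful that the bipartition is what guarantees that the alternating walks close up correctly on $a$'s and $t$'s, but this is immediate from the fact that $F$ is a subforest of the bipartite graph $G$, so there is no genuine difficulty.
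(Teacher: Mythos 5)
Your proof is correct. Both you and the paper exploit the same underlying fact (the cocycle condition $a_H t_e = 1$ on tree edges propagates along $F$), but the routes differ slightly in organization. The paper works \emph{locally per non-tree edge}: for each $(H_0,e_0) \notin E(F)$, it traces the unique fundamental cycle in $F \cup \{(H_0,e_0)\}$ and reads off $a_{H_0} t_{e_0} = 1$ from the alternating chain of tree-edge constraints around that cycle. You instead work \emph{globally per component}: fixing a root $H_0$ in each component, you propagate outward along the spanning tree to obtain the explicit description $a_H \equiv a_{H_0}$, $t_e \equiv a_{H_0}^{-1}$ on the whole component, from which $a_H t_e = 1$ for \emph{every} edge, tree or not, follows immediately. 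Your version buys a slightly cleaner intermediate statement (the rescaling cocycle is constant on each component, up to one free parameter) that is in effect the content of the next result in the paper, \autoref{prop:graph-theoretic-for-definition}; the paper's version avoids choosing a root and is marginally more economical for the lemma in isolation. Both are complete and rigorous, and your remark that bipartiteness is what makes the alternating bookkeeping of $a$'s and $t$'s unambiguous along tree paths is accurate, if understated — it is also implicitly what makes the paper's cycle argument well-typed.
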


\begin{proof}
We need only prove that $\varphi$ and $\psi$ being rescaling equivalent implies $\varphi$ and $\psi$ being identical, since the other direction is trivial. 

Suppose that there exist $(a_H)_{H \in \cH} \in \Gamma^{\cH}$ and $(t_e)_{e \in E} \in \Gamma^E$ such that $\varphi(H,e) = a_H \cdot t_e \cdot \psi(H,e)$ for every edge $(H,e) \in E(G)$. Since $\varphi$ and $\psi$ agree on $E(F)$, we know $a_H \cdot t_e = 1$ for all $(H,e) \in E(F)$. For $\varphi = \psi$, we are left to show $a_H \cdot t_e = 1$ for all $(H,e) \in E(G) - E(F)$. 

    Let $(H_0, e_0)$ be an edge in $G$ that is not in $F$. Since $F$ is a spanning forest, there exists a cycle $(H_0, e_0, H_1, e_1, \dots, H_k, e_k, H_0)$ in $F \cup (H_0, e_0)$. Because $\varphi$ and $\psi$ agree on $E(F)$, we have $1 = a_{H_1}t_{e_0} = a_{H_1}t_{e_1} = a_{H_2}t_{e_1} = \cdots = a_{H_k}t_{e_k} = a_{H_0}t_{e_k}$; thus $a_{H_0} = a_{H_k}$ and $t_{e_0} = t_{e_1} = \cdots = t_{e_k}$, which gives $a_{H_0}t_{e_0} = a_{H_k}t_{e_k} = 1$ as desired. 
\end{proof}

\begin{df}
Let $\Gamma$ be an abelian group, let $G$ be a bipartite graph, let $S$ be a subset of $E(G)$, and let $\psi': S \to  \Gamma$ be a function. An {\em extension of $\psi'$} is a function $\psi: E(G) \to \Gamma$ with $\psi \vert_{S} = \psi'$. 
\end{df}

\begin{prop}\label{prop:graph-theoretic-for-definition}
    Let $G$ be a bipartite graph with vertex set $\cH \cup E$ and let $\Gamma$ be a multiplicatively-written abelian group. Fix a spanning forest $F$ of $G$ and a function $\psi': E(F) \to  \Gamma$. Then, for every function $\varphi: E(G) \to \Gamma$, there exists a unique extension $\psi: E(G) \to \Gamma$ of $\psi'$ which is rescaling equivalent to $\varphi$. 
\end{prop}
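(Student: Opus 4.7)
For uniqueness, I would apply \autoref{lemma:graph-theoretic-well-defined} directly. If $\psi_1,\psi_2\colon E(G)\to\Gamma$ are two extensions of $\psi'$ that are each rescaling equivalent to $\varphi$, then, since rescaling equivalence is clearly transitive (multiply the rescaling data), $\psi_1$ and $\psi_2$ are rescaling equivalent to each other; as they both agree with $\psi'$ on $E(F)$, the lemma forces $\psi_1=\psi_2$.

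For existence, the task is to construct rescaling data $(a_H)_{H\in\cH}\in\Gamma^{\cH}$ and $(t_e)_{e\in E}\in\Gamma^E$ so that the function
\[
\psi(H,e) \ := \ a_H\cdot t_e\cdot\varphi(H,e)
\]
satisfies $\psi|_{E(F)}=\psi'$. Since $F$ is a spanning forest, it decomposes as a disjoint union of trees $T_1,\dotsc,T_r$ covering all vertices of $G$, and I would construct the rescaling data separately on each tree. Pick a root $v_0$ in a given tree $T_j$ and declare its rescaling factor to be $1$ (i.e.\ $a_{v_0}=1$ if $v_0\in\cH$ and $t_{v_0}=1$ if $v_0\in E$). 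Then traverse $T_j$ outward from $v_0$: whenever we reach an edge $(H,e)\in E(T_j)$ one of whose endpoints (say $H$) already has its factor $a_H$ assigned, we define the factor of the other endpoint by the unique value forcing the edge equation to hold, namely
\[
t_e \ := \ a_H^{-1}\cdot\varphi(H,e)^{-1}\cdot\psi'(H,e),
\]
and symmetrically if it is $e$ whose factor is already known. Because $T_j$ is a tree, each non-root vertex is reached along a unique path from $v_0$, so no consistency conditions arise during the traversal, and the factors $(a_H,t_e)$ are well-defined on each component.

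With $(a_H)$ and $(t_e)$ defined on all of $\cH\cup E$, set $\psi(H,e):=a_H\,t_e\,\varphi(H,e)$ for every edge $(H,e)\in E(G)$. By construction $\psi$ is rescaling equivalent to $\varphi$, and the recursive definition of the rescaling factors along $T_j$ guarantees $\psi(H,e)=\psi'(H,e)$ for every $(H,e)\in E(F)$, so $\psi$ extends $\psi'$ as required. The only conceptual point to be careful about is that the construction must be done independently per tree so that the root choice does not over-constrain the data; the actual work is a routine induction on distance from the root, and I do not expect any serious obstacle.
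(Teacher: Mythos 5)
Your proof is correct and follows essentially the same approach as the paper: both establish existence by recursively propagating rescaling factors outward from a root in each tree of the spanning forest, and both obtain uniqueness from \autoref{lemma:graph-theoretic-well-defined}. Your explicit invocation of transitivity in the uniqueness step is a welcome clarification of a point the paper leaves implicit.
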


\begin{proof}
    We claim that the system of equations
    \[
\varphi(H,e) = a_H \cdot t_e \cdot \psi'(H,e), \ (H, e) \in E(F)
    \]
    has a solution $(a_H)_{H \in \cH} \in \Gamma^{\cH}$ and $(t_e)_{e \in E} \in \Gamma^E$. Assuming the claim, we see that the function $\psi: E(G) \to \Gamma$ defined by $(H,e) \mapsto \frac{\varphi(H,e)}{a_H \cdot t_e}$ is rescaling equivalent to $\varphi$. By~\autoref{lemma:graph-theoretic-well-defined}, $\psi$ is unique. 

    We construct a solution explicitly as follows. Pick a connected component $C$ of $G$ and a vertex $e_0 \in C \cap E$. Set $t_{e_0} = 1$. For every vertex $H$ in $G$ that is adjacent in $F$ to $e_0$, the value of $a_H \in \Gamma$ is determined by the equation $\varphi_H(e_0) = a_H t_{e_0} \psi'_H(e_0)$. Since every vertex in $C$ is connected to $e_0$ via a unique path in $F$, this inductively solves for $a_H$ and $t_e$ for all $H, e \in C$. Repeating the procedure for all connected components of $G$ gives $(a_H)_{H \in \cH} \in \Gamma^{\cH}$ and $(t_e)_{e \in E} \in \Gamma^E$ that solve the original system of equations. 
\end{proof}

\begin{thm}\label{general-bijection-between-equi-classes-and-extensions}
    Let $G$ be a bipartite graph and $\Gamma$ an abelian group. Fix a spanning forest $F$ of $G$ and a function $\psi': E(F) \to \Gamma$. Then there is a bijection 
    \[
        \Phi: \{\textrm{\emph{rescaling equivalence classes of functions $\varphi: E(G) \to \Gamma$}}\} \to \{\textrm{\emph{extensions of $\psi'$}}\}. 
    \]
\end{thm}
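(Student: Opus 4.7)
The plan is to define $\Phi$ using Proposition~\ref{prop:graph-theoretic-for-definition}: given a function $\varphi : E(G) \to \Gamma$, let $\Phi([\varphi])$ be the unique extension $\psi$ of $\psi'$ which is rescaling equivalent to $\varphi$. The entire proof is essentially bookkeeping on top of the two preceding results, with Lemma~\ref{lemma:graph-theoretic-well-defined} handling uniqueness and Proposition~\ref{prop:graph-theoretic-for-definition} handling existence.

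First I would check that $\Phi$ is well-defined on rescaling classes. If $\varphi_1 \sim \varphi_2$, then Proposition~\ref{prop:graph-theoretic-for-definition} produces extensions $\psi_1, \psi_2$ of $\psi'$ with $\psi_i \sim \varphi_i$. Transitivity of the rescaling relation gives $\psi_1 \sim \psi_2$, and since both restrict to $\psi'$ on $E(F)$, Lemma~\ref{lemma:graph-theoretic-well-defined} forces $\psi_1 = \psi_2$. So the rule $[\varphi] \mapsto \psi$ depends only on the class.

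Next I would verify injectivity and surjectivity, both of which are immediate. For injectivity, if $\Phi([\varphi_1]) = \Phi([\varphi_2]) = \psi$, then by construction $\varphi_1 \sim \psi \sim \varphi_2$, hence $[\varphi_1] = [\varphi_2]$. For surjectivity, given any extension $\psi$ of $\psi'$, take $\varphi := \psi$; then $\psi$ is itself an extension of $\psi'$ rescaling equivalent to $\varphi$ (via the trivial rescaling $a_H = t_e = 1$), so by the uniqueness clause of Proposition~\ref{prop:graph-theoretic-for-definition} we have $\Phi([\psi]) = \psi$.

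There is really no hard step here; the main thing to be careful about is invoking Lemma~\ref{lemma:graph-theoretic-well-defined} at the right moment to deduce equality from rescaling equivalence plus agreement on the spanning forest. The role of the lemma is exactly to ensure that the map $\Phi$ is unambiguous, while the proposition guarantees that every rescaling class contains a (unique) representative extending $\psi'$, which automatically yields both well-definedness and surjectivity.
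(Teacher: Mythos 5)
Your proposal is correct and follows essentially the same route as the paper: define $\Phi$ via \autoref{prop:graph-theoretic-for-definition}, use \autoref{lemma:graph-theoretic-well-defined} to get uniqueness (hence well-definedness and, in your version, surjectivity), and deduce injectivity from transitivity of the rescaling relation. Your write-up is in fact somewhat more explicit than the paper's about verifying well-definedness on rescaling classes and about the surjectivity argument, but the underlying argument is identical.
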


\begin{proof}
The extension corresponding to $\varphi:E(G) \to \Gamma$, constructed in~\autoref{prop:graph-theoretic-for-definition}, gives a well-defined surjective map $\Phi$ by~\autoref{lemma:graph-theoretic-well-defined}. If $\Phi(\varphi_1) = \Phi(\varphi_2) = \psi$, then both $\varphi_1$ and $\varphi_2$ are rescaling equivalent to $\psi$. This implies, by \autoref{lemma:graph-theoretic-well-defined}, that $\varphi_1$ and $\varphi_2$ are rescaling equivalent, and hence $\Phi$ is injective, which completes the proof.  
\end{proof}

If the graph $G$ in~\autoref{general-bijection-between-equi-classes-and-extensions} is the hyperplane incidence graph of a matroid $M$, $P$ is a pasture, and we consider only those functions $\varphi:E(G) \to P^\times$ that are $P$-representations of $M$, we get the following algorithm for computing the foundation of $M$:

\begin{enumerate}
    \item Compute the hyperplane incidence graph $G_M$ of $M$.
    \item Choose a spanning forest $F$ of $G_M$. 
    \item Construct a matrix $A = A_M(F)$ with $\#\cH$ rows and $\# E$ columns as follows: if $e \in H$, then the corresponding entry in $A$ is $0$; if $(H, e) \in E(F)$, then the corresponding entry in $A$ is $1$. All remaining entries are left empty. We call $A$ the {\em initial matrix of $M$ with respect to $F$}.
    \item Let $S$ denote the set of all empty entries in the initial matrix. Then the foundation $F_M$ is $F_M = \pastgen{\Funpm(x_i \mid i \in S)}{T}$, where $T$ consists of the two types of relations in~\autoref{thm:iff-modular-triple}. 
\end{enumerate}

Write $x_{H,e}$ for the indeterminate corresponding to an element $(H,e) \in E(G_M)$, with $x_{H,e}=1$ for $(H,e) \in F$. Explicitly, the two types of relations alluded to in (4) are the following:
    
    \begin{enumerate}
    \item[(T1)]\label{rel2:multiplicative} For every modular triple $(H_1, H_2, H_3)$ of distinct hyperplanes  with $L = H_1 \cap H_2 \cap H_3$ and every $a_i \in H_i - L$, we have a relation in $T$ of the form
    \[
\frac{x_{H_1,a_2} x_{H_2,a_3} x_{H_3,a_1}}{x_{H_1,a_3} x_{H_2,a_1} x_{H_3,a_2}} + 1. 
    \]
    \item[(T2)]\label{rel3:U24} For every modular quadruple $(H_1, H_2, H_3, H_4)$ of distinct hyperplanes with $L = H_1 \cap H_2 \cap H_3 \cap H_4$ and every $a_i \in H_i - L$, we have a relation in $T$ of the form
        \[
\frac{x_{H_1,a_3} x_{H_2,a_4}} {x_{H_1,a_4} x_{H_2,a_3}} 
 + \frac{x_{H_1,a_2} x_{H_3,a_4}}{x_{H_1,a_4} x_{H_3,a_2}} - 1.
        \]
    \end{enumerate} 

The validity of the algorithm is implied by the following theorem, which also gives a new proof of the existence of the foundation (\autoref{thm: characterizing property of the foundation}): 

\begin{thm}\label{matroid-bijection-between-equi-classes-and-extensions}
Let $M$ be a matroid with hyperplane incidence graph $G_M$ of $M$. Fix an arbitrary spanning forest $F$ of $G_M$, and let $A$ be the initial matrix of $M$ with respect to $F$. Then for every pasture $P$, there is a bijection 
    \[
        \upR_M(P) \quad \longrightarrow \quad \{\textrm{\emph{pasture morphisms $\pastgen{\Funpm(x_i \mid i \in S)}{T} \to P$}}\} 
    \]
which is functorial in $P$.
\end{thm}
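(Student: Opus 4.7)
The plan is to combine the graph-theoretic bijection of \autoref{general-bijection-between-equi-classes-and-extensions} with the characterization of $P$-representations given in \autoref{thm:iff-modular-triple}, and then invoke the universal property of the quotient of a free $\Funpm$-algebra.

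First, I would apply \autoref{general-bijection-between-equi-classes-and-extensions} to $G=G_M$ with $\Gamma = P^\times$ and the function $\psi' : E(F) \to P^\times$ sending every edge of $F$ to $1$. This produces, in each rescaling equivalence class of maps $E(G_M) \to P^\times$, a canonical representative $\psi$ with $\psi\vert_{E(F)} \equiv 1$. Such a $\psi$ is exactly the data of an assignment of an element of $P^\times$ to each empty entry of the initial matrix $A$, i.e., to each indeterminate $x_i$ with $i \in S$. Translating back and forth between functions $E(G_M) \to P^\times$ and families $\varphi \colon \cH \to P^E$ of $P$\hyph hyperplane functions is achieved by declaring $\varphi_H(e) = 0$ whenever $e \in H$, and $\varphi_H(e) = \psi(H,e) \in P^\times$ otherwise; the defining condition of a hyperplane function ($\varphi_H(e)=0 \iff e \in H$) matches exactly the requirement that $\psi$ take values in $P^\times$. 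Since being a $P$-representation is invariant under rescaling, this descends to a bijection between $\upR_M(P)$ and the set of normalized representatives $\psi$ whose associated hyperplane functions form a $P$-representation.

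The core step is then to interpret \autoref{thm:iff-modular-triple}: the property that $\varphi$ is a $P$-representation is equivalent to conditions (1) and (2) in that theorem, which, when evaluated on the normalized representative $\psi$, become precisely the relations (T1) and (T2) in the indeterminates $x_i$, $i \in S$ (with $x_{H,e}$ replaced by $1$ whenever $(H,e) \in E(F)$, and with no $0$ entries appearing, since for a modular triple or quadruple $(H_1,\dots,H_k)$ with common meet $L$ and $e_i \in H_i - L$ we always have $e_i \notin H_j$ for $i\neq j$, so each entry $(H_j,e_i)$ is a genuine edge of $G_M$). By the universal properties recalled in \autoref{subsection: pastures}, assignments $\{x_i \mid i\in S\} \to P^\times$ satisfying the relations of $T$ correspond bijectively to pasture morphisms $\pastgen{\Funpm(x_i \mid i \in S)}{T} \to P$. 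Composing the two bijections yields the desired map $\upR_M(P) \to \Hom\big(\pastgen{\Funpm(x_i \mid i \in S)}{T},\,P\big)$.

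Functoriality in $P$ is automatic: a pasture morphism $g \colon P \to Q$ postcomposes with a representation on one side and with the homomorphism out of the quotient on the other, and both the normalization step and the encoding of $\psi$ by values $\psi(H,e) \in P^\times$ commute with $g$. The main point requiring care, and the only real obstacle in the proof, is the last verification of the previous paragraph: namely, that under the normalization $\psi\vert_{E(F)} \equiv 1$ the conditions of \autoref{thm:iff-modular-triple} translate term for term into the generators of $T$, with no spurious relations and no missing ones. This is a direct computation, but it must be performed with the ingredients $e_i \in H_i - L$ chosen so that all $6$ (resp.\ $8$) indices $(H_j,e_i)$ indeed lie in $E(G_M)$, which is ensured by the modularity hypothesis.
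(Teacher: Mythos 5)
Your proposal is correct and is exactly the argument the paper has in mind: the paper's proof of this theorem is literally the one-line statement that it ``follows directly from'' \autoref{thm:iff-modular-triple} and \autoref{general-bijection-between-equi-classes-and-extensions}, and your write-up is a faithful unpacking of that — normalize via the spanning forest, translate the representation conditions into the relations (T1), (T2), and invoke the universal property of the quotient. The side verification you flag (that $e_i \notin H_j$ for $i \neq j$ so that all needed matrix entries are genuine edges of $G_M$) is indeed the right thing to check and does hold, since $H_i \cap H_j = L$ for each relevant pair.
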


\begin{proof}
    This follows directly from~\autoref{thm:iff-modular-triple} and~\autoref{general-bijection-between-equi-classes-and-extensions}. 
\end{proof}

\subsection{Examples}

We present some examples illustrating \autoref{matroid-bijection-between-equi-classes-and-extensions}.

\begin{ex}\label{ex:foundation-U24}
    Consider the uniform matroid $U_{2,4}$. After choosing a spanning forest in the hyperplane incidence graph, we have an initial matrix as in~\autoref{fig: U24-initial}.

    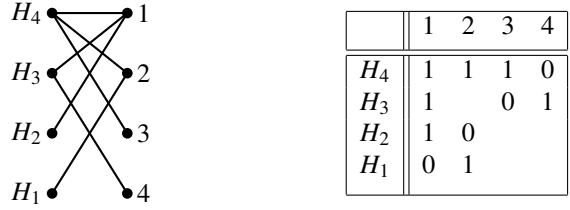
\begin{figure}[t]
\begin{tikzpicture}[x=1.0cm,y=0.8cm, font=\footnotesize,decoration={markings,mark=at position 0.6 with {\arrow{latex}}}]
  \node at (0,0)
  {
  \begin{tikzpicture}[thick]   
   \coordinate[label=left:$H_4$] (H4) at (0,3); 
   \coordinate[label=left:$H_3$] (H3) at (0,2);    
   \coordinate[label=left:$H_2$] (H2) at (0,1); 
   \coordinate[label=left:$H_1$] (H1) at (0,0);    
   \coordinate[label=right:1] (1) at (1,3); 
   \coordinate[label=right:2] (2) at (1,2); 
   \coordinate[label=right:3] (3) at (1,1); 
   \coordinate[label=right:4] (4) at (1,0);
   \fill (H4) circle (2pt);
   \fill (H3) circle (2pt);
   \fill (H2) circle (2pt);
   \fill (H1) circle (2pt);
   \fill (1) circle (2pt);
   \fill (2) circle (2pt);
   \fill (3) circle (2pt);
   \fill (4) circle (2pt);
   \draw (H4) -- (1);
   \draw (H4) -- (2);
   \draw (H4) -- (3);
   \draw (H3) -- (1);
   \draw (H3) -- (4);
   \draw (H2) -- (1);
   \draw (H1) -- (2);
  \end{tikzpicture}};
   \node at (5,0){
 \begin{tabular}{| c || c c c c |} 
 \hline
  & 1 & 2 & 3 & 4 \\ [0.5ex] 
 \hline\hline
 $H_4$ & 1 & 1 & 1 & 0 \\ 
 $H_3$ & 1 &   & 0 & 1 \\
 $H_2$ & 1 & 0 &   &   \\
 $H_1$ & 0 & 1 &   &   \\ [1ex] 
 \hline
 \end{tabular}};
   \end{tikzpicture}
 \caption{A spanning forest for $U_{2,4}$ and the corresponding initial matrix}
 \label{fig: U24-initial}
\end{figure}

By~\autoref{matroid-bijection-between-equi-classes-and-extensions}, every rescaling class of $P$-representations of $U_{2,4}$ corresponds to a unique matrix of the form
\[
A = 
 \begin{pmatrix}
 1 & 1 & 1 & 0 \\ 
 1 & a & 0 & 1 \\
 1 & 0 & b & c \\
 0 & 1 & d & e
 \end{pmatrix}, 
\]
where $a, b, c, d, e \in P^\times$ are chosen so that all relations of the form (T1) and (T2) are satisfied. 

Applying (T2) to the modular quadruple $(H_1, H_2, H_3, H_4)$, we obtain
\[
\frac{dc}{eb} + \frac{1}{ea} - 1 \in N_P. 
\]
Similarly, applying (T1) to the modular triples $(H_1, H_2, H_3)$, $(H_1, H_2, H_4)$, $(H_1, H_3, H_4)$, and $(H_2, H_3, H_4)$, we are forced to have the relations 
\[
\frac{b}{da} = \frac{c}{e} = \frac{d}{e} = \frac{b}{ca} = -1.
\]
Therefore, we obtain an explicit construction of the foundation as 
\[
F_{U_{2,4}} = \pastgen{\Funpm(a,b,c,d,e)}{\frac{dc}{eb} + \frac{1}{ea} - 1, \frac{b}{da} + 1, \frac{c}{e} + 1, \frac{d}{e} + 1, \frac{b}{ca} + 1}. 
\]

Let $x = \frac{cd}{be}$ and $y = \frac{1}{ae}$. There is a pasture isomorphism
\[
F_{U_{2,4}} \to \pastgen{\Funpm(x,y)}{x + y - 1}
\]
defined by  
\[
a \mapsto x^{-1}, \
b \mapsto y^{-1}, \
c \mapsto -xy^{-1}, \
d \mapsto -xy^{-1}, \
e \mapsto xy^{-1}. 
\]
Hence, we also have a presentation of the foundation by the universal cross-ratios $x = \cross{H_1}{H_2}{H_3}{H_4}{}$ and $y = \cross{H_1}{H_3}{H_2}{H_4}{}$ as 
\[
F_{U_{2,4}} = \pastgen{\Funpm(x,y)}{x + y - 1}.
\] 
\end{ex}

\begin{ex}\label{ex:foundation-U25}
    Consider the uniform matroid $U_{2,5}$. After choosing a spanning forest in the hyperplane incidence graph, we have an initial matrix as in~\autoref{fig: U25-initial}.

\begin{figure}[t]
\begin{tikzpicture}[x=1.0cm,y=0.8cm, font=\footnotesize,decoration={markings,mark=at position 0.6 with {\arrow{latex}}}]
  \node at (0,0)
  {
  \begin{tikzpicture}[thick]   
   \coordinate[label=left:$H_5$] (H5) at (0,4); 
   \coordinate[label=left:$H_4$] (H4) at (0,3); 
   \coordinate[label=left:$H_3$] (H3) at (0,2);    
   \coordinate[label=left:$H_2$] (H2) at (0,1); 
   \coordinate[label=left:$H_1$] (H1) at (0,0);    
   \coordinate[label=right:1] (1) at (1,4); 
   \coordinate[label=right:2] (2) at (1,3); 
   \coordinate[label=right:3] (3) at (1,2); 
   \coordinate[label=right:4] (4) at (1,1);
   \coordinate[label=right:5] (5) at (1,0);
   \fill (H5) circle (2pt);
   \fill (H4) circle (2pt);
   \fill (H3) circle (2pt);
   \fill (H2) circle (2pt);
   \fill (H1) circle (2pt);
   \fill (1) circle (2pt);
   \fill (2) circle (2pt);
   \fill (3) circle (2pt);
   \fill (4) circle (2pt);
   \fill (5) circle (2pt);
   \draw (H5) -- (1);
   \draw (H5) -- (2);
   \draw (H5) -- (3);
   \draw (H5) -- (4);
   \draw (H4) -- (1);
   \draw (H4) -- (5); 
   \draw (H3) -- (1);
   \draw (H2) -- (1);
   \draw (H1) -- (2); 
  \end{tikzpicture}};
   \node at (5,0){
 \begin{tabular}{| c || c c c c c |} 
 \hline
  & 1 & 2 & 3 & 4 & 5 \\ [0.5ex] 
 \hline\hline
 $H_5$ & 1 & 1 & 1 & 1 & 0 \\
 $H_4$ & 1 &   &   & 0 & 1 \\ 
 $H_3$ & 1 &   & 0 &   &   \\
 $H_2$ & 1 & 0 &   &   &   \\
 $H_1$ & 0 & 1 &   &   &   \\ [1ex] 
 \hline
 \end{tabular}};
   \end{tikzpicture}
 \caption{A spanning forest for $U_{2,5}$ and the corresponding initial matrix}
 \label{fig: U25-initial}
\end{figure}
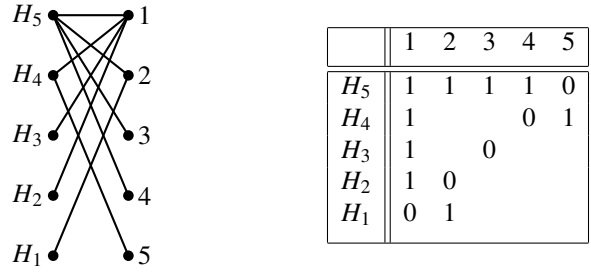

By~\autoref{matroid-bijection-between-equi-classes-and-extensions}, every rescaling class of $P$-representations of $U_{2,5}$ corresponds to a unique matrix of the form
\[
A = \begin{pmatrix}
1 & 1 & 1 & 1 & 0 \\
1 & a & b & 0 & 1 \\
1 & c & 0 & d & e \\
1 & 0 & f & g & h \\
0 & 1 & i & j & k 
\end{pmatrix}, 
\]
where $a, b, c, d, e, f, g, h, i, j, k \in P^\times$ are chosen so that the relations (T1) and (T2) hold.

Our algorithm tells us that the foundation of $U_{2,5}$ is 
\[
F_{U_{2,5}} = \pastgen{\Funpm(a,b,c,d,e,f,g,h,i,j,k)}{T}, 
\]
where $T$ consists of five additive relations
\[
\frac{hd}{ge} + \frac{f}{g} - 1, \
\frac{1}{e} + \frac{dk}{ej} - 1, \
a + \frac{1}{h} - 1, \
\frac{1}{i} + c - 1, \
\frac{jf}{ig} + \frac{b}{ia} - 1 
\]
and ten multiplicative relations 
\[
\frac{f}{ic} = \frac{g}{ja} 
= \frac{h}{k} = \frac{id}{jb}
= \frac{ie}{k} = \frac{j}{k} 
= \frac{fda}{gcb} = \frac{fe}{hc} 
= \frac{g}{ha} = \frac{d}{eb} = -1. 
\]

For $i = 1, 2, 3, 4, 5$, let $x_i$ denote the universal cross-ratio $\cross{H_{i+1}}{H_{i+2}}{H_{i+4}}{H_{i+3}}{}$. 

If we set
\[
x_1 = \frac{hd}{ge}, \
x_2 = \frac{1}{e}, \
x_3 = a, \
x_4 = \frac{1}{i}, \
x_5 = \frac{jf}{ig}, 
\]
then there is a pasture isomorphism
\[
F_{U_{2,5}}
\to
\V = \pastgen{\Funpm(x_1,\dotsc,x_5)}{x_i+x_{i-1}x_{i+1}-1\mid i=1,\dotsc,5}
\]
given by
\begin{align*}
    a \mapsto x_3, \
b \mapsto x_1x_3, \
c \mapsto x_3x_5, \
d \mapsto -x_1x_2^{-1}x_3, \
e \mapsto x_2^{-1}, \
f \mapsto -x_3x_4^{-1}x_5, \\
g \mapsto -x_2^{-1}x_3x_4^{-1}, \
h \mapsto x_2^{-1}x_4^{-1}, \
i \mapsto x_4^{-1}, \
j \mapsto x_2^{-1}x_4^{-1}, \
k \mapsto -x_2^{-1}x_4^{-1}. 
\end{align*}
Thus, once again, we obtain a presentation of the foundation via universal cross-ratios.  
\end{ex}

\begin{ex}\label{ex:foundation-F7}
    Suppose $M$ is the Fano matroid $F_7$. After choosing a spanning forest in the hyperplane incidence graph, we have an initial matrix as in~\autoref{fig: fano-initial}. 
    
\begin{figure}[t]
\begin{tikzpicture}[x=1.0cm,y=0.8cm, font=\footnotesize,decoration={markings,mark=at position 0.6 with {\arrow{latex}}}]
  \node at (0,0)
  {
  \begin{tikzpicture}[thick]
   \coordinate[label=left:567] (567) at (0,6);    
   \coordinate[label=left:347] (347) at (0,5); 
   \coordinate[label=left:245] (245) at (0,4);    
   \coordinate[label=left:236] (236) at (0,3); 
   \coordinate[label=left:146] (146) at (0,2);    
   \coordinate[label=left:135] (135) at (0,1); 
   \coordinate[label=left:127] (127) at (0,0);    
   \coordinate[label=right:1] (1) at (1,6); 
   \coordinate[label=right:2] (2) at (1,5); 
   \coordinate[label=right:3] (3) at (1,4); 
   \coordinate[label=right:4] (4) at (1,3); 
   \coordinate[label=right:5] (5) at (1,2); 
   \coordinate[label=right:6] (6) at (1,1); 
   \coordinate[label=right:7] (7) at (1,0);
   \fill (567) circle (2pt);
   \fill (347) circle (2pt);
   \fill (245) circle (2pt);
   \fill (236) circle (2pt);
   \fill (146) circle (2pt);
   \fill (135) circle (2pt);
   \fill (127) circle (2pt);
   \fill (1) circle (2pt);
   \fill (2) circle (2pt);
   \fill (3) circle (2pt);
   \fill (4) circle (2pt);
   \fill (5) circle (2pt);
   \fill (6) circle (2pt);
   \fill (7) circle (2pt);
   \draw (567) -- (1);
   \draw (567) -- (2);
   \draw (567) -- (3);
   \draw (567) -- (4);
   \draw (347) -- (1);
   \draw (347) -- (5);
   \draw (347) -- (6);
   \draw (245) -- (1);
   \draw (245) -- (7);
   \draw (236) -- (1);
   \draw (146) -- (2);
   \draw (135) -- (2);
   \draw (127) -- (3);
  \end{tikzpicture}};
   \node at (5,0){
 \begin{tabular}{| c || c c c c c c c|} 
 \hline
  & 1 & 2 & 3 & 4 & 5 & 6 & 7 \\ [0.5ex] 
 \hline\hline
 567 & 1 & 1 & 1 & 1 & 0 & 0 & 0 \\ 
 347 & 1 &   & 0 & 0 & 1 & 1 & 0 \\
 245 & 1 & 0 &   & 0 & 0 &   & 1 \\
 236 & 1 & 0 & 0 &   &   & 0 &   \\
 146 & 0 & 1 &   & 0 &   & 0 &   \\
 135 & 0 & 1 & 0 &   & 0 &   &   \\
 127 & 0 & 0 & 1 &   &   &   & 0 \\ [1ex] 
 \hline
 \end{tabular}};
   \end{tikzpicture}
 \caption{A spanning forest for $F_7$ and the corresponding initial matrix}
 \label{fig: fano-initial}
\end{figure}
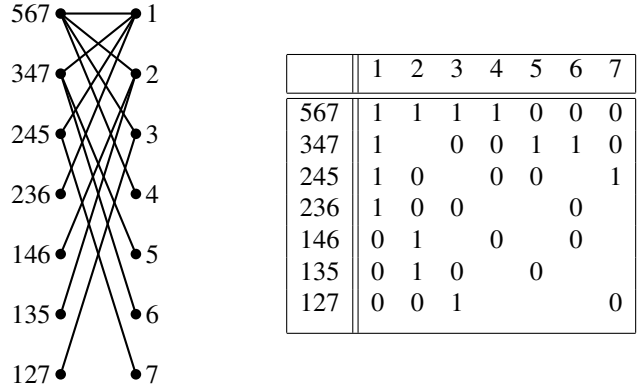

In this case, the only relevant relations are those of the form (T1).
(There are no non-degenerate modular quadruples of hyperplanes in $F_7$.)

\autoref{lemma: first properties of cross ratios} and \autoref{thm:iff-modular-triple} imply that for every ``free variable'' $x_{H,e}$ with $(H,e) \in S$, we have a relation of the form $x_{H,e} = 1$. For example, consider the modular pair of hyperplanes $(H_{567}, H_{347})$ with intersection $L = \{7\}$ and elements $1, 2 \notin H_{567} \cup H_{347}$ with $\gen{L1} = \gen{L2} = H_{127}$. Then \autoref{lemma: first properties of cross ratios} gives
\[
1 = \frac{x_{567,1} x_{347,2}}{x_{567,2} x_{347,1}}. 
\]
Since $x_{567,1} = x_{567,2}=x_{347,1} = 1$, we conclude that $x_{347,2} = 1$. A similar computation shows that all missing entries in the initial matrix must be $1$. By~\autoref{matroid-bijection-between-equi-classes-and-extensions}, there exists at most one morphism $F_M \to P$ for every pasture $P$, which corresponds to the matrix
\[
A = 
\begin{pmatrix}
1 & 1 & 1 & 1 & 0 & 0 & 0 \\ 
1 & 1 & 0 & 0 & 1 & 1 & 0 \\
1 & 0 & 1 & 0 & 0 & 1 & 1 \\
1 & 0 & 0 & 1 & 1 & 0 & 1 \\
0 & 1 & 1 & 0 & 1 & 0 & 1 \\
0 & 1 & 0 & 1 & 0 & 1 & 1 \\
0 & 0 & 1 & 1 & 1 & 1 & 0 
 \end{pmatrix}. 
\]
As a result, the foundation of $F_7$ must either $\Funpm$ (if $-1 \neq 1$ in $F_M$) or $\F_2$ (if $-1 = 1$ in $F_M$). 
Property (T1), applied to an arbitrary modular triple of distinct hyperplanes, implies that $-1 = 1$ in $F_M$, and hence the foundation of $F_7$ must be $\F_2$. 
\end{ex}

\subsection{Universal cross-ratios as generators for the foundation}
\label{subsection: generators for the foundation}

In the previous section, we saw that in three different examples, the foundation is generated by universal cross-ratios. 
(All universal cross-ratios in the Fano matroid are degenerate.) We now show that this is a general phenomenon which holds in every matroid.

\medskip

Let $P$ be a pasture. We say that a subset $S\subset P^\times$ \emph{generates $P$} if $S\cup\{-1\}$ generates $P^\times$ as a group.

\begin{lemma}\label{lemma: generating set corresponds to epimorphism}
 Let $P$ be a pasture and $S\subset P^\times$. Then $S$ generates $P$ if and only if every pasture morphism $g\colon P\to Q$ is uniquely determined by its restriction $g\vert_S\colon S\to Q$ to $S$.
\end{lemma}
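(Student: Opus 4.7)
The plan is to prove the two implications separately. The forward direction is immediate: a pasture morphism $g\colon P\to Q$ is multiplicative, sends $0\mapsto 0$, and preserves additive inverses, so in particular $g(-1)=-1$. Hence if every $a\in P^\times$ can be expressed as $a=\pm\prod_{i}s_i^{\epsilon_i}$ with $s_i\in S$ and $\epsilon_i\in\Z$, then $g(a)=\pm\prod_{i}g(s_i)^{\epsilon_i}$ is determined by $g|_S$ (and $g(0)=0$ handles the remaining element of $P$).

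For the converse I would argue the contrapositive. Assume $H:=\langle S\cup\{-1\}\rangle\subsetneq P^\times$ and fix $a_0\in P^\times\setminus H$. Then $\widetilde H:=H\cup\{0\}$ is naturally a sub-pasture of $P$: it is closed under multiplication, contains $-1$ (and is therefore closed under additive inverses), and inherits the null set $N_{\widetilde H}:=N_P\cap\Sym^3(\widetilde H)$. The task is to produce a pasture $Q$ together with two pasture morphisms $g_1,g_2\colon P\to Q$ agreeing on $\widetilde H\supseteq S$ but differing at $a_0$.

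The natural candidate is $Q:=P\amalg_{\widetilde H}P$, the pushout in the cocomplete category $\Pastures$, equipped with its two canonical morphisms $\iota_1,\iota_2\colon P\to Q$, which by construction satisfy $\iota_1|_{\widetilde H}=\iota_2|_{\widetilde H}$. What remains is to verify $\iota_1(a_0)\neq\iota_2(a_0)$ in $Q$, and by the universal property of the pushout it suffices to exhibit any witness pasture $R$ together with two morphisms $f_1,f_2\colon P\to R$ that agree on $\widetilde H$ and satisfy $f_1(a_0)\neq f_2(a_0)$.

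I would build such a witness explicitly by presentation: let $R=\pastgen{\Funpm(x_a,y_a:a\in P^\times)}{T}$, where the relation set $T$ encodes the multiplicative and additive structure of $P$ separately on the two families $\{x_a\}_{a\in P^\times}$ and $\{y_a\}_{a\in P^\times}$, and imposes the identifications $x_s=y_s$ for $s\in H$; the two morphisms are then $a\mapsto x_a$ and $a\mapsto y_a$. The hard part, and the main technical obstacle, is to verify that no combination of relations in $T$ forces $x_{a_0}=y_{a_0}$. The intuition is that $T$ couples the two families only through identifications on $H$, so any deduction collapsing $x_{a_0}$ and $y_{a_0}$ would have to be derivable from the multiplicative and null-set structure of $P$ applied only to elements of $H$, which is impossible since $a_0\notin H$; making this ``no-mixing'' principle precise, for example via induction on the length of a would-be derivation in $T$, is the crux of the proof.
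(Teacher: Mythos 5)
The forward direction is fine and matches the paper.

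For the converse, you go by contraposition through a pushout $Q := P\amalg_{\widetilde H}P$, whereas the paper argues purely with abelian groups: form $G_S = \langle S\cup\{-1\}\rangle$ and the left-exact sequence $\{1\}\to\Hom(P^\times/G_S,Q^\times)\to\Hom(P^\times,Q^\times)\to\Hom(G_S,Q^\times)$, then observe that the hypothesis forces $\Hom(P^\times/G_S,Q^\times)=0$ for every pasture $Q$, which is only possible if $P^\times/G_S$ is trivial. So the two routes are genuinely different.

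The problem with your route is that it is circular. The witness $R$ you describe — the free $\Funpm$-algebra on two copies $\{x_a\}$, $\{y_a\}$ of $P^\times$, modulo the multiplicative and null-set relations of $P$ on each copy plus the identifications $x_s=y_s$ for $s\in H$ — \emph{is} the pushout $P\amalg_{\widetilde H}P$: that presentation is precisely how one constructs the coproduct over $\widetilde H$ in $\Pastures$. So showing $x_{a_0}\neq y_{a_0}$ in $R$ is literally the same statement as $\iota_1(a_0)\neq\iota_2(a_0)$ in $Q$, and your proposed ``witness'' does not reduce the problem at all. The ``no-mixing'' step you flag as the crux is in fact the entire content of the converse, and an induction on derivation length in a generators-and-relations presentation of a pasture is not easy to control, because the pasture axioms (unique additive inverse, multiplicative invariance of the null set) force a closure that can create unexpected collapses.

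The way out — which is what the paper's group-theoretic phrasing is implicitly encoding — is to produce a \emph{concrete} pasture witnessing the failure, rather than a colimit. Take $A:=P^\times/G_S$ (nontrivial by assumption, and $-1\in G_S$ so $A$ receives $-1\mapsto 1$) and let $Q'$ be the pasture with $Q'^\times=P^\times\times A$, with $-1_{Q'}=(-1,1)$, and with $N_{Q'}$ consisting of $0+0+0$, all $(a,u)+(-a,u)+0$, and all $(a,u)+(b,v)+(c,w)$ with $a,b,c\in P^\times$ and $a+b+c\in N_P$. One checks directly that this is a pasture, and that $a\mapsto(a,1)$ and $a\mapsto(a,\pi(a))$ (where $\pi\colon P^\times\to A$ is the quotient map) are two pasture morphisms $P\to Q'$ that agree on $S\subseteq G_S$ yet differ on $a_0$. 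Once you have this explicit $Q'$ there is no need for the pushout at all: $Q'$ itself is the counterexample $Q$. Note also that even in your own outline, once a witness $R$ with $f_1(a_0)\neq f_2(a_0)$ has been found, the detour through the pushout is redundant — $R$ already serves as the $Q$.
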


\begin{proof}
The forward direction is straightforward. To prove the converse, we consider the subgroup $G_S$ of $P^\times$ generated by $S\cup\{-1\}$ and the exact sequence of groups
\[
G_S \to P^\times \to P^\times/G_S \to \{1\}, 
\]
which induces another exact sequence
\[
\{1\} \to \Hom(P^\times/G_S, Q^\times) \to \Hom(P^\times, Q^\times) \to \Hom(G_S, Q^\times)
\]
for every pasture $Q$. Since every pasture morphism $g\colon P \to Q$ is uniquely determined by $g\vert_S\colon S \to Q$, the map $\Hom(P^\times, Q^\times) \to \Hom(G_S, Q^\times)$ is injective, and hence $\Hom(P^\times/G_S, Q^\times)$ contains only the trivial map for every pasture $Q$. This happens only if $P^\times/G_S$ is the trivial group, and hence $G_S = P^\times$. We conclude that $S$ generates $P$. 
\end{proof}

We now state and prove the main result of this section. The proof makes use, in a crucial way, of Tutte's path theorem (\autoref{thm: Tutte's path theorem}).

\begin{thm}\label{thm: the foundation is generated by cross ratios}
 The foundation $F_M$ of $M$ is generated by the universal cross-ratios of $M$.
\end{thm}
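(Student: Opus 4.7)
The plan is to apply \autoref{lemma: generating set corresponds to epimorphism} to reduce the theorem to the claim that, for every pasture $P$, two $P$-representations $\varphi_1,\varphi_2$ of $M$ with identical cross-ratios are rescaling equivalent. This reformulation uses \autoref{thm: characterizing property of the foundation} together with \autoref{cor: cross ratios as images of the universal cross ratio}.

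We prove this claim by induction on $|E|$. The base cases of small ground set follow from the explicit computations in \autoref{subsection: hyperplane matrix representation} (see for instance \autoref{ex:foundation-U24} and \autoref{ex:foundation-U25}). For the inductive step, \autoref{prop:founadtion of direct sum} handles the disconnected case and \autoref{prop:founadtion of dual} lets us replace $M$ by its dual, so we may assume $M$ is connected and choose $a\in E$ such that $M':=M\setminus a$ is connected. Let $\Gamma\subset\Lambda_{M'}$ denote the modular cut corresponding to this single-element extension (\autoref{prop: modular cuts and one element extensions}). By \autoref{prop: induced representation for embedded minor}, the restrictions $\varphi_i\setminus a$ are $P$-representations of $M'$, and \autoref{lemma: hyperplanes of a deletion} provides a canonical lift of each hyperplane of $M'$ to a hyperplane of $M$; one checks that $\varphi_1\setminus a$ and $\varphi_2\setminus a$ have identical cross-ratios. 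The inductive hypothesis then yields a rescaling equivalence between them, which we extend to a rescaling of $\varphi_2$ on $M$, so that $\varphi_1$ and $\varphi_2$ agree on every hyperplane of $M$ lifted from $M'$ at every coordinate in $E\setminus\{a\}$.

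To match the remaining values $\varphi_{2,H}(a)$ for hyperplanes $H$ of $M$ not containing $a$ (which are precisely the hyperplanes of $M'$ outside $\Gamma$), we consider any edge $(H_1,H_2)$ of the Tutte graph $G_{M',\Gamma}$, i.e.,\ a pair of such hyperplanes whose intersection $L$ is an indecomposable corank $2$ flat of $M'$. A modular cut argument shows that $L\notin\Gamma$, so for a suitable $b\in E\setminus(H_1\cup H_2\cup\gen{L\cup\{a\}}_M)$ the cross-ratio $\cross{H_1}{H_2}{a}{b}{\varphi_i}$ is non-degenerate. Since the $b$-entries already agree between $\varphi_1$ and $\varphi_2$, equating cross-ratios forces $u_{H_1}=u_{H_2}$, where $u_H:=\varphi_{2,H}(a)/\varphi_{1,H}(a)$. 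Invoking \autoref{cor: graph version of the path theorem} (the graph-theoretic form of Tutte's path theorem), the graph $G_{M',\Gamma}$ is connected, so all $u_H$ coincide with a common scalar $u\in P^\times$; a final rescaling on the $a$-column forces $\varphi_{1,H}(a)=\varphi_{2,H}(a)$ for every such $H$.

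The only hyperplanes of $M$ remaining are those of the form $H=L\cup\{a\}$ arising from case~(3) of \autoref{lemma: hyperplanes of a deletion}. A short argument using the modular cut property shows that every hyperplane of $M'$ containing $L$ must lie outside $\Gamma$ (otherwise $L\cup\{a\}$ would fail to be closed in $M$), so $L$ is contained in at least two hyperplanes $H_1,H_2$ of $M$ not containing $a$ on which $\varphi_1$ and $\varphi_2$ already agree. Applying property (T1) of \autoref{thm:iff-modular-triple} to the modular triple $(H,H_1,H_2)$ (with $a\in H-L$) then shows that $\varphi_{2,H}$ and $\varphi_{1,H}$ differ by a single scalar, which we eliminate by one last rescaling. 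The main obstacle of the proof is the third step: Tutte's path theorem is the essential ingredient that converts the pairwise ratio equalities $u_{H_1}=u_{H_2}$ into a global match across all hyperplanes of $M$ not containing $a$, and without its full strength the inductive step cannot close.
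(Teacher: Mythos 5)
Your proof is correct and follows essentially the same strategy as the paper: reduce via \autoref{lemma: generating set corresponds to epimorphism} (together with \autoref{cor: cross ratios as images of the universal cross ratio}) to showing that a $P$-representation of $M$ is determined up to rescaling by its cross-ratios, induct on $|E|$ by deleting a non-coloop $a$ with $M':=M\setminus a$ connected (using duality and \autoref{prop:founadtion of direct sum} to reduce to that situation), and then treat the three cases of \autoref{lemma: hyperplanes of a deletion}, with \autoref{cor: graph version of the path theorem} supplying the connectivity of $G_{M',\Gamma}$ needed to propagate the $a$-column and \autoref{thm:iff-modular-triple}~(T1) handling the new hyperplanes $L\cup\{a\}$. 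Two small expository remarks: the base case is simply $n=1$ (the examples in the hyperplane-matrix section are not a base case for the induction), and in your final step the single modular triple $(H,H_1,H_2)$ only determines $\varphi_H$ on $H_1\cup H_2$, so you must let the second pair of hyperplanes range over all $\gen{Lb}_{M'}$ with $b\in E'-L$ (all of which you have correctly shown lie off $\Gamma$), exactly as the paper does by introducing $H_3=\gen{Lb}_{M'}$.
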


\begin{proof}
By~\autoref{thm: characterizing property of the foundation} and~\autoref{lemma: generating set corresponds to epimorphism}, it suffices to show that, up to rescaling, every $P$-representation $\varphi$ of $M$ is uniquely determined by the values of all cross-ratios. After decomposing $M$ into a direct sum of connected matroids, we can assume (by~\autoref{prop:founadtion of direct sum}) that $M$ is connected. We proceed by induction on the cardinality $n=|E(M)|$ of the ground set. If $n=1$, then $M$ has at most one rescaling equivalence class. If $n>1$, then $M$ has a connected minor on $n-1$ elements (by~\cite[Theorem 4.3.1]{Oxley11}).
After dualizing $M$ if necessary, we can assume (by~\autoref{prop:founadtion of dual}) that $M'=M \minus a$ is connected for some element $a \in E$. 
Let $\cH$ (resp.~$\cH'$) denote the set of hyperplanes of $M$ (resp. $M'$).

We wish to show that it is possible to reconstruct a $P$-representation $\varphi$ of $M$, up to rescaling equivalence, from its cross-ratios. By the induction hypothesis, the rescaling equivalence class $[\varphi']$ of the restriction $\varphi' := \varphi\vert_{M'} = \{\varphi'_{H'}\}_{H'\in\cH'}$ to $M'$ is uniquely determined by its cross-ratios, which are also cross-ratios of $M$. 
It therefore suffices to prove that the rescaling equivalence class of $\varphi$ is uniquely determined by $[\varphi']$ together with the values of all cross-ratios of $M$ which ``involve'' the element $a$. More precisely, we will show that one can reconstruct $[\varphi]$ from $[\varphi']$, all cross-ratios of the form $\cross{H_1}{H_2}{a}{b}{\varphi}$ with $a,b \notin H_1 \cup H_2$, and all cross-ratios of the form $\cross{H_1}{H_2}{b_1}{b_2}{\varphi}$ for which $a \in H_1 \cup H_2$.

Let $H$ be a hyperplane of $M$, for which we wish to reconstruct the values $\varphi_H(e) \in P^\times$ for $e \not\in H$, up to a common rescaling.
Let $\Gamma = \{F \in \Lambda_{M'} \mid a \in \gen{F}_M \}$ be the modular cut in $M'$ consisting of those flats whose closure in $M$ contains $a$. 
We have the following three cases, cf.~\autoref{lemma: hyperplanes of a deletion}.

\medskip\noindent\textbf{Case 1.} 
Suppose $a \in H$ and $H - a$ is a hyperplane of $M'$. In this case, $\varphi_H(a)=0$, and thus $\varphi_H$ is determined by its restriction $\varphi_H\vert_{E'}=\varphi'_{H - a}$ to the ground set $E'=E - a$ of $M'$.

\medskip\noindent\textbf{Case 2.} 
Suppose $a \notin H$. Then $H$ is a hyperplane of $M'$ and there exists a corresponding hyperplane function $\varphi'_H$ for $M'$ in the restriction $\varphi'$. To determine $\varphi_H$, we need only determine $\varphi_H(a)$, since by definition $\varphi_H(e) = \varphi'_H(e)$ for all $e \notin H$ with $e \ne a$. 

Fix a hyperplane $H_0 \in \cH$ with $a \notin H_0$, so that $H_0$ is also a hyperplane of $M'$. Rescaling all $a$-coordinates if necessary, we can assume $\varphi_{H_0}(a) = 1$. By \autoref{thm: Tutte's path theorem} applied to the connected matroid $M'$, there exists a Tutte path $\gamma = (H_0, \dots, H_s = H)$ of hyperplanes in $M'$ such that $H_i \notin \Gamma$, $\cork_{M'}(H_{i-1} \cap H_i) = 2$, and $H_{i-1} \cup H_i \ne E' := E - a$ for each $1 \leqslant i \leqslant s$.
Therefore, it suffices to consider the case where $\cork_{M'}(H_{0} \cap H) = 2$ and $H_{0} \cup H \ne E'$. 

For this, if we pick an arbitrary $b \in E' - (H_0 \cup H)$, then $\varphi_H(a)$ is determined by $\varphi_H(b), \varphi_{H_0}(b)$, and the value of the cross-ratio $\cross{H}{H_0}{a}{b}{\varphi}$ by the formula 
\[
\varphi_H(a) = \cross{H}{H_0}{a}{b}{\varphi} \cdot \frac{\varphi_H(b)}{\varphi_{H_0}(b)} \cdot \varphi_{H_0}(a).
\]
Since $\varphi_H(b) = \varphi'_H(b)$ and $\varphi_{H_0}(b) = \varphi'_{H_0}(b)$ are determined by the restriction $\varphi'$, it follows that $\varphi_H(a)$ is determined by $[\varphi']$ and the values of all cross-ratios of the form $\cross{H_1}{H_2}{a}{b}{\varphi}$ for which $a \notin H_1 \cup H_2$. 

\medskip\noindent\textbf{Case 3.} 
Suppose $a \in H$ and $L := H - a$ is a corank $2$ flat of $M'$. In this case, we need to determine $\varphi_H(b)$ for all $b \in E - H = E' - L$. 

Note that $L$, and all hyperplanes $H \supset L$ in $M'$, are not in $\Gamma$. Let $b_1 \in E'- L$ and let $H_1 = \gen{Lb_1}_{M'}$, which is also a hyperplane of $M$ not containing $a$. Let $H_2$ be a hyperplane of $M'$ such that $L = H_1 \cap H_2$; thus $H_2$ is another hyperplane of $M$ not containing $a$. Rescaling $\varphi_H$ if necessary, we may assume that $\varphi_H(b_1) = 1$. 

If $b \in H_1$, then $\varphi_H(b)$ is determined by $\varphi_{H_2}(b) = \varphi'_{H_2}(b), \varphi_{H_2}(b_1) = \varphi'_{H_2}(b_1)$, and the value of the cross-ratio $\cross{{H_2}}{H}{b_1}{b}{\varphi}$ by the formula 
\[
\varphi_H(b) = \cross{{H_2}}{H}{b_1}{b}{\varphi} \cdot \frac{\varphi_{H_2}(b)}{\varphi_{H_2}(b_1)} \cdot \varphi_H(b_1). 
\]

If $b \in E - (H_1 \cup H) = E' - H_1$, then we consider the hyperplane $H_3 = \gen{Lb}_{M'}$ of $M'$, which can also be considered as a hyperplane of $M$ not containing $a$. Applying~\autoref{thm:iff-modular-triple} to the modular triple $(H_1, H_3, H)$ of distinct hyperplanes in $M$, we obtain the equation
\[
\frac{\varphi_{H_1}(b) \varphi_{H_3}(a) \varphi_H(b_1)}{\varphi_{H_1}(a) \varphi_{H_3}(b_1) \varphi_H(b)} = -1. 
\]
By Case 2, all terms in this equation, except for $\varphi_H(b)$, are uniquely determined by $[\varphi']$ and the values of the cross-ratios of the form $\cross{H_1}{H_2}{a}{b}{\varphi}$ for which $a \notin H_1 \cup H_2$; therefore, so is $\varphi_H(b)$, which is given by the formula 
\[
\varphi_H(b) = -\frac{\varphi_{H_1}(b) \varphi_{H_3}(a) \varphi_H(b_1)}{\varphi_{H_1}(a) \varphi_{H_3}(b_1)}.
\]

We conclude that the hyperplane function $\varphi_H$ is uniquely determined by $[\varphi']$ and the values of all cross-ratios of the form $\cross{H_1}{H_2}{a}{b}{\varphi}$ for which $a \notin H_1 \cup H_2$ and $\cross{H_1}{H_2}{b_1}{b_2}{\varphi}$ for which $a \in H_1 \cup H_2$.
\end{proof}

From this we recover Theorems 7.32 and 7.35 in~\cite{Baker-Lorscheid21b}.

\begin{thm}\label{thm: foundations of regular and binary matroids}
 A matroid is binary if and only if its foundation is either $\Funpm$ or $\F_2$. It is regular if and only if its foundation is $F_M = \Funpm$. 
\end{thm}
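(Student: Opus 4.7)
My plan is to establish both equivalences by combining the universal property of the foundation (\autoref{thm: characterizing property of the foundation}) with the cross-ratio generation theorem (\autoref{thm: the foundation is generated by cross ratios}) and the excluded minor theorem for binary matroids (\autoref{thm: excluded minors for binary matroids}).

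The ``if'' directions follow directly from the universal property. If $F_M = \Funpm$, then since $\Funpm$ is initial in $\Pastures$, there is a unique pasture morphism $F_M \to P$ for every pasture $P$, yielding a $P$-representation of $M$. In particular, $M$ is representable over every field, hence regular, and in particular binary. If $F_M = \F_2$, the identity morphism $F_M \to \F_2$ certifies that $M$ is binary.

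For the ``only if'' direction in the binary case, the key input is that $M$ has no $U_{2,4}$ minor (by \autoref{thm: excluded minors for binary matroids}). Any tuple $(H_1, H_2, H_3, H_4) \in \Theta_M^\diamondsuit$ provides four pairwise distinct hyperplanes of $M$ containing a common corank-$2$ flat $L$; choosing $a_i \in H_i - L$ for each $i$, the minor $M/L$ restricted to $\{a_1, a_2, a_3, a_4\}$ is a copy of $U_{2,4}$, a contradiction. So $\Theta_M^\diamondsuit = \emptyset$, i.e., every universal cross-ratio of $M$ is degenerate and hence equals $1$ by \autoref{lemma: first properties of cross ratios}. Applying \autoref{thm: the foundation is generated by cross ratios}, $F_M^\times$ is generated by $\{-1\}$, so $|F_M^\times| \leq 2$. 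The main obstacle is that this argument only controls the unit group, while the conclusion requires identifying the entire pasture $F_M$; \emph{a priori}, $N_{F_M}$ could contain further $3$-term relations such as $1 + 1 + 1$ that would distinguish $F_M$ from both $\Funpm$ and $\F_2$. I would resolve this by invoking the explicit presentation in \autoref{matroid-bijection-between-equi-classes-and-extensions}: the absence of modular quadruples of pairwise distinct hyperplanes eliminates all relations of type (T2), while the (T1) relations impose only multiplicative conditions on $F_M^\times$. Hence $N_{F_M} = \{a - a : a \in F_M\}$, which together with $|F_M^\times| \leq 2$ forces $F_M = \Funpm$ if $-1 \neq 1$ in $F_M$, and $F_M = \F_2$ otherwise.

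For the ``only if'' direction in the regular case, I would reduce to the binary case. A regular matroid is in particular binary, so $F_M \in \{\Funpm, \F_2\}$; and since $M$ is also representable over $\F_3$, there must exist a pasture morphism $F_M \to \F_3$. The option $F_M = \F_2$ is excluded because any such morphism would have to send $1 + 1 \in N_{\F_2}$ into $N_{\F_3}$, but $2 \neq 0$ in $\F_3$. Therefore $F_M = \Funpm$.
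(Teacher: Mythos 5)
Your proof is essentially correct and rightly identifies the one non-trivial point: the cross-ratio generation theorem only constrains the unit group $F_M^\times \subseteq \{\pm 1\}$, and one must separately rule out extra three-term null relations such as $1+1+1$. Where you differ from the paper is in how this gap is closed. You appeal to the presentation from \autoref{matroid-bijection-between-equi-classes-and-extensions}, arguing that without modular quadruples all generating relations are of type (T1), which are purely multiplicative, so $N_{F_M} = \{a - a\}$. This works, but it leans on an unstated property of the quotient construction $\pastgen{\Funpm(x_i)}{T}$ (namely that quotienting by relations of the form $z+1$ produces the minimal null set $\{a-a\}$), whose verification is deferred to \cite{Baker-Lorscheid20}; you would need to say why no additional closure occurs. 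The paper takes a shorter and more self-contained route: since $M$ is binary, $F_M$ admits a morphism to $\F_2$, and since $N_{\F_2}$ contains no three-term relation with all terms nonzero, neither can $N_{F_M}$ — any such relation $a+b+c$ with $a,b,c \in F_M^\times$ would map to $1+1+1 \notin N_{\F_2}$. That immediately pins down $N_{F_M} = \{a-a\}$ using only the definition of pasture morphism, with no appeal to the explicit presentation. Your treatment of the regular case is the same as the paper's (using representability over a field of characteristic $\neq 2$ to exclude $\F_2$).
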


\begin{proof}
 Since $\Funpm$ maps into every field, a matroid with foundation $\Funpm$ is regular. Since both $\Funpm$ and $\F_2$ map to $\F_2$, a matroid with foundation $\Funpm$ or $\F_2$ is binary.
 
 If $M$ is binary, then it has no $U_{2,4}$-minor. Therefore, all universal cross-ratios are degenerate and generate the trivial subgroup $\{1\}$ of $F_M^\times$. Consequently, $F_M^\times$ is equal to $\{\pm1\}$ or $\{1\}$. Since $M$ is representable over $\F_2$, the foundation maps to $\F_2$ and therefore the null set of $F_M$ cannot contain any relation with exactly $3$ nonzero terms. The only pastures fitting these criteria are $\Funpm$ and $\F_2$. 
 
 If $M$ is regular, then $M$ is binary. Since $M$ is representable over fields of characteristic different from $2$, its foundation $F_M$ cannot be $\F_2$, which shows that $F_M=\Funpm$.
\end{proof}

\begin{cor}\label{cor: foundation of F7}
 The foundation of the Fano matroid $F_7$ is $\F_2$.
\end{cor}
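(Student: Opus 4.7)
The plan is to combine the characterization of binary and regular matroids via their foundations in \autoref{thm: foundations of regular and binary matroids} with two well-known facts about the Fano matroid $F_7$: namely that $F_7$ is binary (it is realized by the standard representation over $\F_2$), but $F_7$ is not regular (it is not representable over any field of characteristic different from $2$, as Tutte originally showed, and as will be reproved later in the paper via the excluded minor theorem for regular matroids). Since $F_7$ is binary, \autoref{thm: foundations of regular and binary matroids} tells us that $F_{F_7}$ is either $\Funpm$ or $\F_2$. Since $F_7$ is not regular, the same theorem rules out $F_{F_7}=\Funpm$, leaving only the possibility $F_{F_7}=\F_2$.

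Alternatively (and in a more self-contained way), I would simply refer back to the computation already carried out in \autoref{ex:foundation-F7}. There we saw, using \autoref{matroid-bijection-between-equi-classes-and-extensions}, that the hyperplane incidence graph of $F_7$ admits a spanning forest whose associated initial matrix forces every free variable to equal $1$ (because $F_7$ has no non-degenerate cross-ratios, so \autoref{lemma: first properties of cross ratios} collapses every off-diagonal entry to $1$). Hence the unit group of $F_{F_7}$ is contained in $\{\pm 1\}$, so $F_{F_7}$ is either $\Funpm$ or $\F_2$. Applying relation (T1) to any modular triple of distinct hyperplanes of $F_7$ (for example $(H_{567},H_{347},H_{245})$ with intersection $\{7\}$) gives a relation of the form $1+1\in N_{F_{F_7}}$, which forces $-1=1$ in $F_{F_7}$ and therefore $F_{F_7}=\F_2$.

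The argument is essentially immediate from the machinery already developed, so there is no serious obstacle: the only potential wrinkle is making sure one actually exhibits a modular triple of distinct hyperplanes in $F_7$ (which is easy, since every line in $F_7$ lies on exactly three points, so any two intersecting hyperplanes extend to such a triple) in order to close the argument without circularly invoking non-regularity of $F_7$.
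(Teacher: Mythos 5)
Your first argument is correct and essentially the same as the paper's: the paper also deduces from binarity of $F_7$ (via \autoref{thm: foundations of regular and binary matroids}) that $F_{F_7}\in\{\Funpm,\F_2\}$, and then rules out $\Funpm$ on the grounds that $F_7$ is not representable over fields of characteristic $\neq2$; your phrasing via ``$F_7$ is not regular'' is the same logical step. Your second, self-contained route via the initial-matrix computation is exactly the content of \autoref{ex:foundation-F7}, which the paper cites. One small slip: $(H_{567},H_{347},H_{245})$ is \emph{not} a modular triple, since $H_{245}=\{2,4,5\}$ does not contain $7$, so $H_{567}\cap H_{347}\cap H_{245}=\emptyset$ has corank $3$. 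A correct choice of the third hyperplane through $7$ is $H_{127}$, giving the modular triple $(H_{567},H_{347},H_{127})$. Also, the relevant incidence fact is that every \emph{point} of $F_7$ lies on exactly three lines (not that every line carries three points), which is what guarantees that any two intersecting hyperplanes extend to a modular triple; the two statements are both true by self-duality of $F_7$, but only the former is what you need.
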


\begin{proof}
 The Fano matroid is binary, so its foundation is either $\Funpm$ or $\F_2$. Since there is a morphism $\Funpm\to k$ to every field $k$, but no $k$-representation of $F_7$ if the characteristic of $k$ differs from $2$, the foundation of $F_7$ has to be $\F_2$ (cf.~\autoref{ex:foundation-F7}). 
\end{proof}

We now give an explicit isomorphism between $F_{M^\ast}$ and $F_{M}$, following~\cite[Proposition 4.8]{Baker-Lorscheid20}. Let $(H_1, H_2, H_3, H_4) \in \Theta_{M^\ast}$ be a modular quadruple of hyperplanes in $M^\ast$ with $L = H_1 \cap H_2 \cap H_3 \cap H_4$. Choose a set $J \subset L$ that is independent in $M^\ast$ with $\gen{J}_{M^\ast} = L$, and choose $e_i \in H_i - L$. Let $I = E - Je_1e_2e_3e_4$. 

\begin{prop}\label{prop: foundation of dual matroid, explicit}
 The foundation of $M^\ast$ is canonically isomorphic to the foundation of $M$, where the isomorphism $f: F_{M^\ast} \to F_M$ is determined by 
 \[
\cross{H_1}{H_2}{H_3}{H_4}{} = \cross{\gen{Je_1}_{M^\ast}}{\gen{Je_2}_{M^\ast}}{\gen{Je_3}_{M^\ast}}{\gen{Je_4}_{M^\ast}}{}\
\mapsto \
\cross{\gen{Ie_1}_{M}}{\gen{Ie_2}_{M}}{\gen{Ie_3}_{M}}{\gen{Ie_4}_{M}}{}. 
 \]
\end{prop}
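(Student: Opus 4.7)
The existence of the canonical isomorphism $f\colon F_{M^*}\to F_M$ is \autoref{prop:founadtion of dual}, so the content of the proposition is the explicit formula for the action of $f$ on universal cross-ratios. Since $F_{M^*}$ is generated by its universal cross-ratios (\autoref{thm: the foundation is generated by cross ratios}), it suffices to check that for some pasture $P$ and some $P$-representation $\psi$ of $M^*$ corresponding under the natural bijection $R_{M^*}(P)\cong R_M(P)$ to a $P$-representation $\varphi$ of $M$, the identity
\[
\cross{H_1}{H_2}{H_3}{H_4}{\psi} \ = \ \cross{\gen{Ie_1}_M}{\gen{Ie_2}_M}{\gen{Ie_3}_M}{\gen{Ie_4}_M}{\varphi}
\]
holds; one then takes $P=F_{M^*}$ and $\psi$ the universal representation and invokes \autoref{cor: cross ratios as images of the universal cross ratio} on both sides.

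Before verifying the identity, one must confirm that the $K_i:=\gen{Ie_i}_M$ genuinely form a non-degenerate modular quadruple in $M$. Since $J$ is a basis of the corank-$2$ flat $L$ of $M^*$ and the $e_i$ are pairwise independent modulo $\gen{J}_{M^*}$ in the rank-$2$ contraction $M^*/L$, each set $Je_ie_j$ with $i\in\{1,2\}$, $j\in\{3,4\}$ is a basis of $M^*$. Complementing, $Ie_ke_\ell$ is a basis of $M$ for all distinct $k,\ell\in\{1,2,3,4\}$; hence $\gen{I}_M$ has corank $2$ in $M$, the four $K_i$ are distinct hyperplanes containing $\gen{I}_M$, and $\gen{I}_M=K_i\cap K_j$ for all $i\ne j$, so $(K_1,\ldots,K_4)\in\Theta^\diamondsuit_M$.

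To establish the key identity, I pass to Grassmann--Pl\"ucker coordinates. Via the cryptomorphism underlying \autoref{lemma:hyperplane-functions-and-circuit-set}, a rescaling class of $P$-hyperplane representations of $M$ (resp.\ $M^*$) corresponds to a rescaling class of weak Grassmann--Pl\"ucker functions $p^M$ (resp.\ $p^{M^*}$), and the natural bijection $R_{M^*}(P)\cong R_M(P)$ is Pl\"ucker duality $p^{M^*}(B)=\epsilon(B)\,p^M(E-B)$ for a sign $\epsilon(B)$ depending only on a fixed linear ordering of $E$. A direct Cramer-style computation yields, for any non-degenerate modular quadruple $(K_1,\ldots,K_4)$ with common corank-$2$ flat spanned by an independent set $B$ and $e_i\in K_i-\gen{B}_M$, the Pl\"ucker formula
\[
\cross{K_1}{K_2}{K_3}{K_4}{\varphi} \ = \ \frac{p^M(Be_1e_3)\,p^M(Be_2e_4)}{p^M(Be_1e_4)\,p^M(Be_2e_3)}.
\]
Applying this with $B=J$ in $M^*$ and with $B=I$ in $M$, and using $E-(Je_ie_j)=Ie_ke_\ell$ for $\{i,j,k,\ell\}=\{1,2,3,4\}$, Pl\"ucker duality pairs the four factors across the two ratios; the signs $\epsilon$ appear once in numerator and once in denominator on each side and cancel, giving the identity.

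The main technical obstacle is the sign bookkeeping in the Pl\"ucker formula and in Pl\"ucker duality, where one must verify that the four $\epsilon$-factors organize themselves to cancel rather than produce a residual sign. This is straightforward once all orderings are pinned down. A more self-contained alternative, avoiding Pl\"ucker coordinates entirely, is to argue directly via the circuit/cocircuit cryptomorphism of \cite{Baker-Bowler19}: cocircuits of $M^*$ are circuits of $M$, and one translates the hyperplane functions $\psi_{H_i}$ into the corresponding $M$-circuits, re-expresses the left-hand cross-ratio in terms of these circuits, and matches it to $\cross{K_1}{K_2}{K_3}{K_4}{\varphi}$ using the modular-triple relations of \autoref{thm:iff-modular-triple}.
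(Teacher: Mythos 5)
The paper does not actually give a proof of this proposition; it states the result and cites \cite[Proposition 4.8]{Baker-Lorscheid20}, where the argument runs along exactly the lines you propose, namely via weak Grassmann--Pl\"ucker functions and the sign-twisted duality $p^{M^*}(B)=\epsilon(B)\,p^M(E-B)$. So your route is not a ``different'' one --- it is the route the cited reference takes --- and the outline is sound.

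Two points deserve attention. First, your combinatorial step claims that ``$Ie_ke_\ell$ is a basis of $M$ for all distinct $k,\ell$.'' As you note in the preceding sentence, what follows from $(H_1,\dots,H_4)\in\Theta_{M^*}$ is only that $Je_ie_j$ is a basis of $M^*$ for the \emph{cross pairs} $i\in\{1,2\}$, $j\in\{3,4\}$; that $Je_1e_2$ and $Je_3e_4$ are also bases requires the stronger hypothesis $(H_1,\dots,H_4)\in\Theta_{M^*}^{\diamondsuit}$. This is harmless --- the degenerate cross-ratios are trivial by \autoref{lemma: first properties of cross ratios} and do not help generate $F_{M^*}$, and moreover degeneracy is preserved under the transfer (note the pleasant ``swap'': $H_1=H_2\Leftrightarrow Je_1e_2$ is not a basis of $M^*\Leftrightarrow Ie_3e_4$ is not a basis of $M\Leftrightarrow K_3=K_4$) --- but you should state explicitly that you are verifying the formula only on non-degenerate quadruples, which suffices since these generate. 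Second, the sign cancellation you flag is genuine and does hold: with a fixed ordering of $E$ one checks that $\epsilon(Je_1e_3)\,\epsilon(Je_2e_4)=\epsilon(Je_1e_4)\,\epsilon(Je_2e_3)$, essentially because the two products differ by a transposition of $e_3$ and $e_4$ occurring once in each factor. You have left this as an assertion; it would strengthen the proof to include a one-line verification rather than calling it a ``straightforward'' obstacle.

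One small missing ingredient: you invoke a ``Pl\"ucker formula'' $\cross{K_1}{K_2}{K_3}{K_4}{\varphi}=p^M(Ie_1e_3)p^M(Ie_2e_4)/\bigl(p^M(Ie_1e_4)p^M(Ie_2e_3)\bigr)$ without derivation. This is correct and follows from the cocircuit--hyperplane-function dictionary implicit in \autoref{lemma:hyperplane-functions-and-circuit-set} (namely $\varphi_{K_i}(e)=a_i\cdot p^M(I,e_i,e)$ for $e\notin K_i$, with the scalars $a_i$ cancelling in the cross-ratio), but since this dictionary is not spelled out in the present paper you should either derive it or point precisely to where it appears in \cite{Baker-Bowler19} or \cite{Baker-Lorscheid20}.
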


\begin{ex}\label{ex:foundation-F7*}
  By~\autoref{cor: foundation of F7} and~\autoref{prop: foundation of dual matroid, explicit}, the foundation of the dual $F_7^\ast$ of the Fano matroid is $\F_2$. 
\end{ex}

\begin{ex}\label{ex:foundation-U35}
   By~\autoref{ex:foundation-U25} and~\autoref{prop: foundation of dual matroid, explicit}, the foundation of the uniform matroid $U_{3,5}$ is isomorphic to $\V = \pastgen{\Funpm(x_1,\dotsc,x_5)}{x_i+x_{i-1}x_{i+1}-1\mid i=1,\dotsc,5}$, where the isomorphism is given by 
   \[
\cross{H_{i, i+1}}{H_{i, i+2}}{H_{i, i+4}}{H_{i, i+3}}{} \mapsto x_i. 
   \]
   Here, $H_{i,j}$ denotes the hyperplane $\{i, j\}$ in $U_{3,5}$, and all subscripts are read modulo $5$. 
\end{ex}


\subsection{Foundations of upper sublattices and the fundamental presentation}
\label{subsection: fundamental presentation by small minors}

Let $\Lambda$ be a geometric lattice of type $M$ and $\Lambda'$ an upper sublattice of type $M'$ of $\Lambda$. Then the lattice inclusion $\Lambda'\hookrightarrow\Lambda$ restricts to an inclusion $\cH_{M'}\hookrightarrow\cH_M$. Therefore, the restriction of a $P$-representation $\varphi: \cH \to P^E$ of $M$ to $\cH'$ is a $P$-representation of $M'$, where $P$ is an arbitrary pasture. This restriction evidently commutes with rescaling equivalence, and thus defines a map $\upR_{M}(P)\to\upR_{M'}(P)$ between the corresponding realization spaces which is functorial in $P$. By \autoref{thm: characterizing property of the foundation}, the realization space $\upR_M$ is represented by the foundation $F_M$, and by the Yoneda lemma, the functorial map $\upR_M(-)\to\upR_{M'}(-)$ is induced by a pasture morphism $F_{M'}\to F_M$.

The morphism $F_{M'}\to F_M$ maps the universal cross-ratio $\cross{H_1}{H_2}{H_3}{H_4}{}\in F_{M'}$ of $M'$ to the universal cross-ratio $\cross{H_1}{H_2}{H_3}{H_4}{}\in F_{M}$ of $M$. Since the foundation is generated by the universal cross-ratios (by \autoref{thm: the foundation is generated by cross ratios}), this determines the map $F_{M'}\to F_M$. For more details, see~\cite[Proposition 4.9]{Baker-Lorscheid20}.

\begin{prop}\label{prop: foundation of embedded minors}
 Let $M'=M\minor JI$ be an embedded minor of $M$ whose associated lattice inclusion $\Lambda'\hookrightarrow\Lambda$ is a bijection, i.e., $M'$ and $M$ have the same simplification. Then the morphism $F_{M'}\to F_M$ is an isomorphism.
\end{prop}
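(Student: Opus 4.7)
The plan is to use the universal property of the foundation. By \autoref{thm: characterizing property of the foundation} and the Yoneda lemma, it suffices to show that the natural restriction map $\upR_M(P)\to\upR_{M'}(P)$ is a bijection for every pasture $P$, and that the resulting isomorphism $F_{M'}\to F_M$ coincides with the morphism constructed above via universal cross-ratios.

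I would begin by unpacking the combinatorial content of the hypothesis that $\Lambda_{M'}\hookrightarrow\Lambda$ is a bijection. This means every flat of $M$ is of the form $\gen{S}_M$ for some $I\subseteq S\subseteq E-J$; applied to the minimum flat this forces $I=\emptyset$ (since $I$ is independent), and applied to general flats it forces every element of $J$ to be either a loop of $M$ or parallel in $M$ to an element of $E-J$. Hence $M'=M\setminus J$, the matroid $M$ is obtained from $M'$ by adjoining loops and parallel elements, and the bijection $\cH_M\leftrightarrow\cH_{M'}$ is the tautological one $H\mapsto H\cap E(M')$.

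To establish the bijection of realization spaces, for surjectivity I would extend a given $P$-representation $\varphi'$ of $M'$ by setting $\varphi_H(a)=0$ when $a\in J$ is a loop, and $\varphi_H(a)=\varphi'_{H'}(b)$ when $a\in J$ is parallel to some chosen $b\in E(M')$ (where $H'$ is the hyperplane of $M'$ corresponding to $H$). Since loops belong to every hyperplane and parallel elements lie in exactly the same hyperplanes, the new columns of the extended data are either zero or duplicates of existing ones, so every modular linear dependency for $\varphi'$ extends tautologically to one for $\varphi$. For injectivity, given representations $\varphi_1,\varphi_2$ on $M$ whose restrictions to $M'$ are rescaling equivalent, I would first rescale so that they coincide on $E(M')$; on loops they then automatically agree (both being zero), and on a parallel element $a\in J$ parallel to $b\in E(M')$, \autoref{lemma: first properties of cross ratios} applied to the degenerate cross-ratio $\cross{H_1}{H_2}{a}{b}{\varphi_i}=1$ shows that the ratio $\varphi_{i,H}(a)/\varphi_{i,H}(b)$ is independent of $H$, so a single column-rescaling parameter $t_a$ identifies $\varphi_1$ and $\varphi_2$ at $a$. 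Naturality in $P$ is automatic from the construction.

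The main obstacle is the bookkeeping needed to verify that the extension in the surjectivity argument is well-defined up to rescaling equivalence, in particular independent of the chosen partner $b$ in each parallel class of $M$ (which again follows from the same degeneracy argument used for injectivity), and to confirm that the isomorphism $F_{M'}\to F_M$ produced by Yoneda sends each universal cross-ratio $\cross{H_1}{H_2}{H_3}{H_4}{}\in F_{M'}$ to $\cross{H_1}{H_2}{H_3}{H_4}{}\in F_M$ under the lattice bijection $\Lambda_{M'}\cong\Lambda_M$. This last identification follows by tracing through the construction: cross-ratios of the universal representation of $M$ restrict to cross-ratios of its restriction to $M'$, so the induced map on foundations is precisely the morphism described above the proposition.
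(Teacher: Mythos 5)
Your argument takes a genuinely different route from the paper's one-sentence proof, which simply appeals to the fact that the foundation depends only on the lattice of flats (essentially the content of the simplification-invariance the proposition asserts). You instead exhibit the restriction map $\upR_M(P)\to\upR_{M'}(P)$ directly as a bijection, after correctly unpacking the bijectivity hypothesis: $I=\emptyset$, and each element of $J$ is either a loop or parallel to some element of $E'=E-J$. Your extension of a $P$-representation $\varphi'$ of $M'$ to $M$ is well posed, and the check that it satisfies the modular-triple dependency condition is correct, since the new ``columns'' are identically zero or duplicates of existing ones. This is more self-contained than the paper's proof and in effect proves the fact the paper cites.

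However, there is a gap in the well-definedness/injectivity step. You invoke \autoref{lemma: first properties of cross ratios} to conclude that $\varphi_{i,H}(a)/\varphi_{i,H}(b)$ is independent of $H$, but that lemma only yields $\varphi_{i,H_1}(a)/\varphi_{i,H_1}(b)=\varphi_{i,H_2}(a)/\varphi_{i,H_2}(b)$ for pairs with $(H_1,H_2,a,b)\in\Xi_M$, that is, when $H_1\cap H_2$ is a corank $2$ flat. To propagate the equality to an arbitrary pair of hyperplanes avoiding $a$, one must chain through a sequence of hyperplanes intersecting pairwise in indecomposable corank $2$ flats, and this connectivity is furnished by Tutte's path theorem (\autoref{cor: graph version of the path theorem}) applied with the modular cut $\{F\in\Lambda_M\mid a\in F\}$, after first reducing via the direct-sum decomposition to the connected component of $M$ containing $a$ and $b$. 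So the assertion you need is true, but it rests on the path theorem rather than on \autoref{lemma: first properties of cross ratios} alone; that additional input should be made explicit for the argument to be complete.
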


\begin{proof}
 This follows from the fact that the foundation depends only on the lattice of flats of $M$, which equals that of $M'$.
\end{proof}

\begin{ex}\label{ex:foundation-C5}
    Consider the matroid $C_5$ on $E = \{1,2,3,4,5\}$ whose set of bases is $\binom{E}{3} - \{123\}$. The set of hyperplanes of $C_5^\ast$ is $\cH(C_5^\ast) = \{1,2,3,45\}$. Therefore, $C_5^\ast$ has the same lattice of flats as $U_{2,4}$. By~\autoref{ex:foundation-U24},~\autoref{prop: foundation of dual matroid, explicit}, and~\autoref{prop: foundation of embedded minors}, the foundation of $C_5$ is isomorphic to $\pastgen{\Funpm(x,y)}{x + y - 1}$, where $x = \cross{H_{15}}{H_{25}}{H_{35}}{H_{45}}{}$ and $y = \cross{H_{15}}{H_{35}}{H_{25}}{H_{45}}{}$. 
\end{ex}

Let $\cL_M$ be the diagram of all upper sublattices of $\Lambda$ of types $U_{2,4}$, $U_{2,5}$, $U_{3,5}$, $C_5$, $F_7$, and $F_7^\ast$, together with all lattice embeddings. Let $\cF_M$ be the associated diagram of the foundations of these upper sublattices, together with the induced morphisms. 

\begin{thm}[Fundamental presentation]\label{thm: fundamental presentation of the foundation}
 The canonical morphism $\colim {\cF}_M \to F_M$ is an isomorphism.
\end{thm}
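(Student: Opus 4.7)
The plan is to verify the universal property of the colimit: for every pasture $P$, I show that the natural map
\[
 \upR_M(P) \ \longrightarrow \ \lim_{N \in \cL_M} \upR_N(P),
\]
induced by the restriction maps $\upR_M(P) \to \upR_N(P)$, is a bijection. By \autoref{thm: characterizing property of the foundation} and Yoneda, this is equivalent to the canonical morphism $\colim \cF_M \to F_M$ being an isomorphism.

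Injectivity will follow directly from \autoref{thm: the foundation is generated by cross ratios}. Every universal cross-ratio $\cross{H_1}{H_2}{H_3}{H_4}{}$ of $M$ is the image, under the map $F_N \to F_M$, of the universal cross-ratio in $F_N$, where $N$ is the embedded minor of type $U_{2,4}$ corresponding to the upper sublattice $\{H_1 \cap H_2 \cap H_3 \cap H_4,\, H_1, H_2, H_3, H_4,\, E\} \in \cL_M$. Since the universal cross-ratios generate $F_M$, any morphism $F_M \to P$ is determined by its restrictions to the various $F_N$ with $N \in \cL_M$, i.e., by the induced element of $\lim \upR_N(P)$.

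For surjectivity I would proceed by induction on $|E(M)|$, following the blueprint of the proof of \autoref{thm: the foundation is generated by cross ratios}. By \autoref{prop:founadtion of direct sum} and \autoref{prop:founadtion of dual}, it suffices to treat the case where $M$ is connected and $M' := M \minus a$ is connected for some $a \in E$. Given a compatible family $(f_N)_{N\in\cL_M}$, its restriction to sublattices of $\Lambda_{M'}$ yields, by the induction hypothesis, a morphism $F_{M'} \to P$, i.e., a rescaling class of a $P$-representation $\varphi'$ of $M'$. I then extend $\varphi'$ to a $P$-representation $\varphi$ of $M$ by computing the new values $\varphi_H(e)$ according to Cases 1--3 in the proof of \autoref{thm: the foundation is generated by cross ratios}, using the cross-ratios prescribed by $(f_N)$ and transporting them along Tutte paths in the graph $G_{M',\Gamma}$, with $\Gamma$ the modular cut in $\Lambda_{M'}$ corresponding to the single-element extension $M \supset M'$.

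The main obstacle is checking that this construction is independent of the choice of Tutte path. The difference between two paths with the same endpoints is a closed Tutte path in $(\Lambda_{M'},\Gamma)$, and by \autoref{cor: extended minors of the homotopy theorem} any such closed path decomposes into elementary Tutte paths whose extended types belong to $\{\tilde U_{2,3},\, U_{2,4},\, M(K_4^-),\, \tilde U_{3,4},\, C_5,\, U_{3,5},\, F_7,\, F_7^\ast\}$. For each elementary Tutte path, the resulting identity between the two formulas for $\varphi_H(e)$ translates into a cross-ratio relation that holds in the foundation of the extended subconstellation; by \autoref{prop: foundation of embedded minors} each such foundation is either trivial or canonically isomorphic to $F_{N'}$ for some $N' \in \cL_M$, so the compatibility of $(f_N)_{N\in\cL_M}$ forces the required identity to hold in $P$. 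A final verification that the resulting $\varphi$ satisfies both conditions of \autoref{thm:iff-modular-triple} reduces likewise to cross-ratio identities inside $U_{2,4}$- and $U_{2,5}$-sublattices, both of whose foundations lie in $\cL_M$, completing the proof.
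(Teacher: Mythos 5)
Your proposal follows essentially the same approach as the paper's own proof: reduction via Yoneda and the universal property, injectivity from \autoref{thm: the foundation is generated by cross ratios} via $U_{2,4}$-sublattices, surjectivity by induction on $|E|$ through deletion of a non-coloop $a$ with $M'=M\setminus a$ connected, extension of $\varphi'$ along Tutte paths, and use of the extended homotopy theorem (\autoref{cor: extended minors of the homotopy theorem}) to certify path-independence. The only point I would flag as understated is the final verification step: the paper's Step 3 does not reduce merely to $U_{2,4}$- and $U_{2,5}$-sublattices but requires enlarging $\cL_M$ to include all regular minors on up to $5$ elements and a careful case analysis (Cases A3 through D3) tracking which atoms the relevant hyperplanes lie over, and Step 2 in Case D2 also needs the ``rescaling-invariance'' argument of \autoref{rem: rescaling invariant elements are in the image of the foundation} to absorb leftover products of $\varphi'_H(e)$ terms; this bookkeeping is where most of the real work sits, and it is not automatic.
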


\begin{proof}
 By Yoneda's Lemma and \autoref{thm: characterizing property of the foundation}, it suffices to show that the natural map $\Phi_{M,P}: \Hom(F_M, P) \to \Hom(\colim\cF_M, P)$ is a bijection for every pasture $P$. Since $\cL_M$ contains all upper sublattices of type $U_{2,4}$, it follows from \autoref{thm: the foundation is generated by cross ratios} that $f$ is injective. 

 The proof of surjectivity is more complicated and rests on an application of the extended version \autoref{thm:extended-homotopy-theorem} of Tutte's homotopy theorem. We establish this claim by induction on the size $n=\# E$ of $M$.

 The claim is evident for regular matroids: in this case, $F_M=\Funpm$ and $\cF_M$ is empty, i.e., $\colim\cF_M$ is the initial object $\Funpm$. This establishes the base case, since all matroids of size $n\leq3$ are regular.

 Assume that $n\geq4$ and consider a morphism $\psi:\colim\cF_M\to P$. We aim to show that $\psi$ is the image of a morphism $\hat\psi:F_M\to P$ under $\Phi_{M,P}$.
 
 As a first step, we note that we can assume without loss of generality that $M$ is connected, since the result for $M=M_1\oplus M_2$ follows from the fundamental presentations of $F_{M_1}$ and $F_{M_2}$ in terms of the canonical isomorphisms
 \begin{multline*}
  F_{M} \ \simeq \ F_{M_1}\otimes F_{M_2} \ \simeq \ (\colim\cF_{M_1})\otimes(\colim\cF_{M_2}) \ \simeq \ \colim(\cF_{M_1}\sqcup\cF_{M_2}) \ = \ \colim \cF_{M}, 
 \end{multline*}
 where the first isomorphism follows from \autoref{prop:founadtion of direct sum} and the identity $\cF_M=\cF_{M_1}\sqcup\cF_{M_2}$ follows from the fact that every upper sublattice in $\cL_M$ is indecomposable, and thus must belong to either $\cL_{M_1}$ or $\cL_{M_2}$.

 By \cite[Theorem 4.3.1]{Oxley11}, there is an element $a$ of $M$ such that either $M/a$ or $M\minus a$ is connected. Since the collection of upper sublattices over which the colimit is taken is closed under duality (note that the lattice of $C_5^\ast$ equals that of $U_{2,4}$; cf.\ \autoref{ex:foundation-C5}), we may assume that $M'=M\minus a$ is connected and of the same rank as $M$ (i.e., $a$ is not a coloop of $M$).
 
 The lattice $\Lambda_{M'}$ is embedded as an upper sublattice of $\Lambda_M$, which induces a morphism $\iota_{M'}:F_{M'}\to F_M$ as well as an inclusion of the fundamental diagram $\cL_{M'}$ of $M'$ as a subdiagram of $\cL_M$, and thus a morphism $\eta_{M'}:\colim\cF_{M'}\to\colim\cF_M$. Let $\psi'=\psi\circ\eta_{M'}$ be the restriction of $\psi$ to $\colim\cF_{M'}$. The inductive hypothesis applies to $M'$ and identifies $\psi'$ with the image of a morphism $\hat\psi':F_{M'}\to P$. By the universal property of the foundation (\autoref{thm: characterizing property of the foundation}), $\hat\psi'$ corresponds to the rescaling class of a $P$-representation $\varphi':\cH'\to P^{E'}$, where $E'=E-a$ is the ground set of $M'$ and $\cH'$ is the collection of hyperplanes of $M'$.

 To simplify the following arguments, we include the upper sublattices of all regular minors of $M$ on up to $5$ elements in $\cL_M$, which does not change the colimit $\colim\cF_M$ since regular matroids have foundation $\Funpm$, which maps uniquely into any other pasture. In particular, this means that $\cL_M$ contains all geometric lattices with up to $5$ atoms, with the unique exception of type $U_{2,4}\oplus U_{1,1}$.
 
 In the following, we use the notation
 \[
  \cross{H_1}{H_2}{e_3}{e_4}{\psi} \ := \ \cross{H_1}{H_2}{H_3}{H_4}{\psi} \ := \ \psi\circ\iota_\xi\big(\cross{H_1}{H_2}{H_3}{H_4}{}\big)
 \]
 for $\xi=(H_1,H_2,e_3,e_4)\in\Xi_M$, where $H_3=\gen{Le_3}$ and $H_4=\gen{Le_4}$ for $L=H_1\cap H_2$, and where $\iota_\xi:F_{\Lambda_\xi}\to\colim \cF_M$ is the canonical inclusion for the upper sublattice $\Lambda_\xi=\{L,H_1,H_2,H_3,H_4,E\}$ of $\Lambda_M$, which is of type $U_{2,3}$ (if $H_3=H_4$) or type $U_{2,4}$ (if $H_3\neq H_4$). 

 The rest of the proof proceeds in three major steps:
 \begin{enumerate}
  \item[\textbf{Step 1}] By reverse-engineering the proof of \autoref{thm: the foundation is generated by cross ratios}, we extend $\varphi'$ to a map $\varphi:\cH\to P^E$, which satisfies $H=\{e\in E\mid \varphi_H(e)=0\}$ for all hyperplanes $H\in\cH$ of $M$.
  
  \item[\textbf{Step 2}] Even though we do not know at this point that $\varphi$ is a $P$-representation, we can define the cross-ratios
  \[
   \cross{H_1}{H_2}{e_3}{e_4}{\varphi} \ = \ \frac{\varphi_{H_1}(e_3)\cdot\varphi_{H_2}(e_4)}{\varphi_{H_1}(e_4)\cdot\varphi_{H_2}(e_3)} \quad \in \quad P^\times
  \]
  for $(H_1,H_2,e_3,e_4)\in\Xi_M$. In this step, we verify that $\cross{H_1}{H_2}{e_3}{e_4}{\varphi}=\cross{H_1}{H_2}{e_3}{e_4}{\psi}$ for all $(H_1,H_2,e_3,e_4)\in\Xi_M$.
  \item[\textbf{Step 3}] We verify that $\varphi$ is indeed a $P$-representation. By \autoref{thm:iff-modular-triple}, it suffices to show that
  \begin{equation}\label{eq:tripleratio}
   \tripleratio{H_1}{H_2}{H_3}{e_1}{e_2}{e_3}{\varphi} \ = \ \frac{\varphi_{H_1}(e_2)\varphi_{H_2}(e_3)\varphi_{H_3}(e_1)}{\varphi_{H_1}(e_3)\varphi_{H_2}(e_1)\varphi_{H_3}(e_2)} = -1
  \end{equation}
  for every triple of distinct hyperplanes $H_1$, $H_2$, $H_3$ that intersect in a corank $2$ flat $L$ and elements $e_i\in H_i-L$ for $i=1,2,3$. 
  
  Note that the second condition of \autoref{thm:iff-modular-triple} is automatically satisfied since $\cross{H_1}{H_i}{e_j}{e_4}{\varphi}=\cross{H_1}{H_i}{e_j}{e_4}{\psi}$ for $\{i,j\}=\{2,3\}$ by Step 2 and since the corresponding relation holds for cross-ratios of $U_{2,4}$.
 \end{enumerate}
 Once these claims are established, we conclude that $\varphi$ is a $P$-representation whose rescaling class corresponds to a morphism $\hat\psi:F_M\to P$. By \autoref{cor: cross ratios as images of the universal cross ratio}, $\hat\psi$ maps the universal cross-ratios to $\cross{H_1}{H_2}{e_3}{e_4}{\varphi}=\psi\circ\iota_\xi\big(\cross{H_1}{H_2}{H_3}{H_4}{}\big)$, which proves that $\Phi_{M,P}(\hat\psi)=\psi$ and establishes the surjectivity of $\Phi_{M,P}$.

 We will verify each of the steps according to the following ordering of cases: 
 \\[5pt]
 \begin{tabular}{ll}
  \textbf{Case A} & $H_i-a\in\cH'$ for all $i$ and $a\neq e_k$ for all $k$; \\
  \textbf{Case B} & $H_i-a\in\cH'$ for all $i$ and $a=e_k$ for some $k$; \\
  \textbf{Case C} & $H_i-a\notin\cH'$ for some $i$, and $H_i-a\subset H_k$ for all $k$; \\
  \textbf{Case D} & $H_i-a\notin\cH'$ for some $i$, and $H_i-a\not\subset H_k$ for some $k$. \\
 \end{tabular}
 \\
 
 Note that Case D does not occur in Step 1, since $H - a \subseteq H$ holds for every hyperplane $H$. For technical reasons, we will verify A1 through C1, then A2, A3, B2, B3, C2, and C3 (in that order), followed by D3 and then D2.
  
 Before we explain the proof of each case in detail, we point out that Tutte's homotopy theorem (or, more precisely, its ``extended'' version \autoref{thm:extended-homotopy-theorem}) enters the proof only in Case B2, which can therefore be regarded as the deepest part of the entire argument.

 \medskip\noindent\textbf{Step 1.}
 We define $\varphi:\cH\to P^E$ in terms of the values $\varphi_H(e)$. The requirement that the functions $\varphi_H:E\to P$ are hyperplane functions leads to the definition $\varphi_H(e)=0$ for $e\in H$. The equality $H=\{e\in E\mid\varphi_H(e)=0\}$ will follow from the fact that we will define the value of $\varphi_H$ for all $e\notin H$ as an element of $P^\times$. 

 \medskip\noindent\textbf{Case A1.} 
 If $H'=H-a\in\cH'$ and $e\in E'-H'$, then we define $\varphi_H(e)=\varphi'_{H'}(e)$. Note that this guarantees that $\varphi'$ is the restriction of $\varphi$ to $\cH'$ and $E'$.
 
 \medskip\noindent\textbf{Case B1.} 
 If $a\notin H$, then necessarily $H=H-a\in\cH'$. Let $\Gamma=\{F\in\Lambda_{M'}\mid a\in\gen{F}_M\}$ be the modular cut of $M'$ determined by the single-element extension $M$. Let $G=G_{M',\Gamma}$ be the graph whose vertices are hyperplanes $H\in\cH'\setminus\Gamma$ of $M'$ off $\Gamma$ and whose edges $(H,H')$ are pairs of hyperplanes whose intersection $L=H\cap H'$ is an indecomposable flat of corank $2$. As explained in \autoref{cor: graph version of the path theorem}, Tutte's path theorem shows that $G$ is connected.
 
 In order to define $\varphi_H(a)$ in this case, we choose (arbitrarily) a spanning tree $T$ of $G$, a root $H_0$ of $T$, and a value $\varphi_{H_0}(a)\in P^\times$. We define 
 \[
  \varphi_H(a) \ = \ \cross H{H'}ab\psi \ \cdot \ \frac{\varphi_{H}(b)}{\varphi_{H'}(b)} \ \cdot \ \varphi_{H'}(a)
 \]
 recursively over the tree distance from $H_0$, where $(H,H')$ is an edge in $T$ with $H'$ closer to $H_0$ than $H$ and $b=b_{H,H'}\in E'-(H\cup H')$ is chosen arbitrarily. 
 
 \medskip\noindent\textbf{Case C1.} 
 If $L=H-a\notin\cH'$, then $L$ is a flat of $M'$ of corank $2$. We choose (arbitrarily) an element $b_0\in E'-L$ and a value $\varphi_H(b_0)\in P^\times$, as well as a hyperplane $H'\neq H_{b_0}=\gen{Lb_0}$ of $M$ that contains $L$. 
 
 For $b\in H_{b_0}-L$, we define
 \[
  \varphi_{H}(b) \ = \ \frac{\varphi_{H'}(b)}{\varphi_{H'}(b_0)} \ \cdot \ \varphi_{H}(b_0).
 \]
 For $b\in E'-H_{b_0}$ and $H_b=\gen{Lb}$, we define
 \[
  \varphi_{H}(b) \ = \ - \ \frac{\varphi_{H_{b_0}}(b) \cdot \varphi_{H_b}(a)}{\varphi_{H_{b_0}}(a) \cdot \varphi_{H_b}(b_0)} \ \cdot \ \varphi_{H}(b_0).
 \]

 \medskip\noindent\textbf{Steps 2 and 3.}
 We keep the notation from before. This is, when we verify $\cross{H_1}{H_2}{e_3}{e_4}{\varphi}=\cross{H_1}{H_2}{e_3}{e_4}{\psi}$ (Step 2), we assume that $\xi=(H_1,H_2,e_3,e_4)\in\Xi_M$, i.e., $L=H_1\cap H_2$ is a corank $2$ flat of $M$ and $e_3,e_4\notin H_1\cup H_2$, and when we verify that $\tripleratio{H_1}{H_2}{H_3}{e_1}{e_2}{e_3}{\varphi}=-1$ (Step 3), we assume that $L=H_1\cap H_2\cap H_3$ is a corank $2$ flat of $M$ and $e_i\in H_i-L$ for $i=1,2,3$.
 
 \medskip\noindent\textbf{Case A2.} 
 Assume that $H_1'=H_1-a$ and $H_2'=H_2-a$ are in $\cH'$ and $a\notin\{e_3,e_4\}$. Then 
 \[
  \cross{H_1}{H_2}{e_3}{e_4}{\varphi} \ = \ \frac{\varphi'_{H'_1}(e_3)\cdot\varphi'_{H'_2}(e_4)}{\varphi'_{H'_1}(e_4)\cdot\varphi'_{H'_2}(e_3)} 
 \]
 by the definition of $\varphi$. Since $\Lambda_\xi=\{L,H_1,H_2,H_3,H_4,E\}$ is contained in $\Lambda_{M'}$, the lattice of flats of $M'$, the morphism $\iota_\xi:F_{\Lambda_\xi}\to \colim\cF_M$ factors through $\iota'_\xi:F_{\Lambda_\xi}\to\colim\cF_{M'}$. Since $\varphi'$ is a $P$-representation whose rescaling class corresponds to $\psi'$, we have  
 \[
  \frac{\varphi'_{H'_1}(e_3)\cdot\varphi'_{H'_2}(e_4)}{\varphi'_{H'_1}(e_4)\cdot\varphi'_{H'_2}(e_3)} \ = \ \psi'\circ\iota'_\xi\big(\cross{H_1}{H_2}{H_3}{H_4}{}\big) \ = \ \cross{H_1}{H_2}{e_3}{e_4}{\psi}
 \]
 as desired.

 \medskip\noindent\textbf{Case A3.} 
 Assume that $H_i'=H_i-a$ is a hyperplane of $M'$ for $i=1,2,3$ and $a\notin\{e_1,e_2,e_3\}$. Then $\varphi_{H_i}(e_k)=\varphi'_{H_i'}(e_k)$ for all terms appearing in $\tripleratio{H_1}{H_2}{H_3}{e_1}{e_2}{e_3}{\varphi}$, and \eqref{eq:tripleratio} follows from the assumption that $\varphi'$ is a $P$-representation. 
 
 \medskip\noindent\textbf{Case B2.} 
 Assume that $H_1'=H_1-a$ and $H_2'=H_2-a$ are in $\cH'$ and that $a\in\{e_3,e_4\}$. Since $\cross{H_1}{H_2}{a}{e_4}{\varphi}=\cross{H_1}{H_2}{a}{e_4}{\psi}$ implies that
 \[
  \cross{H_1}{H_2}{e_4}{a}{\varphi} \ = \ \crossinv{H_1}{H_2}{a}{e_4}{\varphi} \ = \ \crossinv{H_1}{H_2}{a}{e_4}{\psi} \ = \ \cross{H_1}{H_2}{e_4}{a}{\psi},
 \]
 we can assume without loss of generality that $e_3=a$. Moreover, if we know that $\cross{H_1}{H_2}{a}{b}{\varphi}=\cross{H_1}{H_2}{a}{b}{\psi}$ for some $b\notin H_1\cup H_2\cup\{a\}$, then 
 \begin{multline}\label{eq: cross ratio equality in case B2}
  \cross{H_1}{H_2}{a}{e_4}{\varphi} \ = \ \cross{H_1}{H_2}{a}{b}{\varphi} \ \cdot \ \cross{H_1}{H_2}{b}{e_4}{\varphi} \\ = \ \cross{H_1}{H_2}{a}{b}{\psi} \ \cdot \ \cross{H_1}{H_2}{b}{e_4}{\psi} \ = \ \cross{H_1}{H_2}{a}{e_4}{\psi},
 \end{multline}
 where the first equality follows from the definition of the cross-ratios, the second equality follows from our assumption on $\cross{H_1}{H_2}{a}{b}{\varphi}$ and Case A2, and the third equality holds since the upper sublattice $\Lambda'=\{L,H_1,H_2,\gen{La},\gen{Lb},\gen{Le_4},E\}$ of $\Lambda_M$ is of type $U_{2,3}$, $U_{2,4}$, or $U_{2,5}$, depending on the cardinality of $\{\gen{La},\gen{Lb},\gen{Le_4}\}$, and thus contained $\cL_M$.

 Since $a=e_3\notin H_1\cup H_2$, it follows that $H_1,H_2\in\cH'$ are hyperplanes of $M'$ off $\Gamma$. In other words, $H_1$ and $H_2$ are vertices of the graph $G=G_{M',\Gamma}$ (as defined in the context of \autoref{cor: graph version of the path theorem}) and $(H_1,H_2)$ is an (oriented) edge of $G$ (note that $L$ is indecomposable, since it is contained in at least $3$ hyperplanes $H_1$, $H_2$ and $\gen{La}$).
 
 The equality $\cross{H_1}{H_2}{a}{b}{\varphi}=\cross{H_1}{H_2}{a}{b}{\psi}$ follows from the definition of $\varphi_{H_1}(a)$ if $(H_1,H_2)$ is an edge of the spanning tree $T$ of $G$ for which $H_2$ is closer to the root $H_0$ of $T$ than $H_1$ and $b=b_{H_1,H_2}$ is the element that appears in the definition of $\varphi_{H_1}(a)$. If $(H_1,H_2)$ is an edge of $T$, but $H_1$ is closer to $T_0$ than $H_2$ is, then 
 \[
  \cross{H_1}{H_2}{a}{b}{\varphi} \ = \ \crossinv{H_2}{H_1}{a}{b}{\varphi}  \ = \ \crossinv{H_2}{H_1}{a}{b}{\psi}  \ = \ \cross{H_1}{H_2}{a}{b}{\psi},
 \]
 where the first and third equality follow directly from the definitions and the middle equality follows from the definition of $\varphi_{H_2}(a)$. Together with Equation \eqref{eq: cross ratio equality in case B2}, this establishes $\cross{H_1}{H_2}{e_3}{e_4}{\varphi}=\cross{H_1}{H_2}{e_3}{e_4}{\psi}$ whenever $(H_1,H_2)$ is an edge of $T$.
 
 Every edge $\ell=(H_1,H_2)$ of $G-T$ is part of a cycle $\gamma$ in $T\cup\ell$, since $T$ is a spanning tree. As shorthand, we write ``$\ell\in\gamma$'' for this. The extended version \autoref{thm:extended-homotopy-theorem} of Tutte's homotopy theorem states that $\gamma$ can be decomposed into elementary Tutte paths $\pi_1,\dotsc,\pi_t$ (of types 1--9) by concatenation. We choose a fixed $b_{\ell'}\in E-(H\cup H')$ for every edge $\ell'=(H,H')$ that appears in one of the $\pi_j$ such that $b_{\ell'}=b_{H,H'}$ (as chosen in Case B1) if $\ell'\in T$ and $b_{\ell}=e_4$.
 
 Let $(\Lambda'_j,\Gamma'_j)$ be the subconstellation of $(\Lambda_{M'},\Gamma)$ that pertains to $\pi_j$, and let $\Lambda_j$ be the lattice of flats of the single-element extension $\hat N$ determined by $(\Lambda'_j,\Gamma)$. Then $\Lambda_j$ is in $\cL_M$. In particular, the rescaling class corresponding to $\psi_{\Lambda_j}=\psi\circ\iota_{\Lambda_j}:F_{\Lambda_j}\to P$ is represented by a $P$-representation $\tilde\varphi_j$ of $\Lambda_j$, i.e., $\cross{H}{H'}{e}{e'}{\psi}=\cross{H}{H'}{e}{e'}{\tilde\varphi_j}$ for $H$ and $H'$ in $\Lambda_j$, by \autoref{cor: cross ratios as images of the universal cross ratio}. Since any two $P$-representations representing the rescaling class $\psi\circ\iota_{\Lambda_j'}$ of $\Lambda_j'$ are rescaling equivalent, we can rescale $\tilde\varphi_j$ so that its restriction to $\Lambda_j'$ agrees with the restriction of $\varphi'$ to $\Lambda_j'$, i.e., $\varphi'_H=\tilde\varphi_{j,H}$ for all hyperplanes $H$ in $\Lambda_j'$.

 Since $\gamma$ is the concatenation of $\pi_1,\dotsc,\pi_t$, we have 
 \[
  \prod_{\ell'=(H,H')\in\gamma} \cross{H}{H'}{a}{b_{\ell'}}\psi \ \cdot \ \frac{\varphi'_{H}(b_{\ell'})}{\varphi'_{H'}(b_{\ell'})} \ = \ \prod_{j=1}^t \bigg( \underbrace{\prod_{\ell'=(H,H')\in\pi_j} \cross{H}{H'}{a}{b_{\ell'}}{\tilde\varphi_j} \ \cdot \ \frac{\tilde\varphi_{j,H}(b_{\ell'})}{\tilde\varphi_{j,H'}(b_{\ell'})}}_{= \ 1 \ \text{(since $\pi_j$ is closed)}} \bigg) \ = \ 1.
 \]
 We conclude that
 \begin{align*}
  \cross{H_1}{H_2}{a}{e_4}{\varphi} \ &= \bigg(\prod_{\substack{\ell'=(H,H')\in\gamma\\\ell'\neq(H_1,H_2)}} \ \cross{H}{H'}{a}{b_{\ell'}}{\varphi}\bigg)^{-1} \ \cdot \ {\bigg(\prod_{\ell'=(H,H')\in\gamma} \ \frac{\varphi_{H}(b_{\ell'})}{\varphi_{H'}(b_{\ell'})} \bigg)^{-1}} \\
                                      &= \bigg(\prod_{\substack{\ell'=(H,H')\in\gamma\\\ell'\neq(H_1,H_2)}} \ \cross{H}{H'}{a}{b_{\ell'}}{\psi}\bigg)^{-1} \ \cdot \ {\bigg(\prod_{\ell'=(H,H')\in\gamma} \ \frac{\varphi'_{H}(b_{\ell'})}{\varphi'_{H'}(b_{\ell'})} \bigg)^{-1}} \\
                                      &= \cross{H_1}{H_2}{a}{e_4}{\psi} \ \cdot \ \bigg( \prod_{\ell'=(H,H')\in\gamma} \cross{H}{H'}{a}{b_{\ell'}}\psi \ \cdot \ \frac{\varphi'_{H}(b_{\ell'})}{\varphi'_{H'}(b_{\ell'})} \bigg)^{-1} \\
                                      &= \cross{H_1}{H_2}{a}{e_4}{\psi}
 \end{align*}
 as desired.

\medskip\noindent\textbf{Case B3.} 
 Assume that $H_i'=H_i-a$ is in $\cH'$ for $i=1,2,3$ and that $a\in\{e_1,e_2,e_3\}$, say $e_3=a$. Then there is an element $b\in H_3-\langle La\rangle$ and
 \[
  \tripleratio{H_1}{H_2}{H_3}{e_1}{e_2}{a}{\varphi} \ = \ \tripleratio{H_1}{H_2}{H_3}{e_1}{e_2}{b}{\varphi} \ \cdot \  \cross{H_1}{H_2}{b}{a}{\varphi} \ = \ -1,
 \]
 since $\cross{H_1}{H_2}{b}{a}{\varphi}=\cross{H_1}{H_2}{b}{a}{\psi}=1$ is degenerate. (Here we use Case A2 and apply Case A3 to $\tripleratio{H_1}{H_2}{H_3}{e_1}{e_2}{b}{\varphi}=-1$.)

\medskip\noindent\textbf{Strategy for C2 and D2.}
 Assume that $L_i=H_i-a$ is not a hyperplane of $M'$ for some $i$, say $i=1$. Then $L_1$ is a corank $2$ flat of both $M'$ and $M$. Let $L=H_1\cap H_2$, and choose $e_i\in H_i-L$ for $i=1,2$. Let $H_{1k}=\gen{L_1e_k}_M$ for $k=2,3,4$. Expanding definitions from Case C1 identifies the ratio $\varphi_{H_1}(e_3)/\varphi_{H_1}(e_4)$ with a product of $\varphi_{H_{13}}(a)/\varphi_{H_{14}}(a)$ and terms that do not contain $a$. 
 
 Assuming that there exists an element $e'\in E'-(H_{13}\cup H_{14})$, we can eliminate $a$ using the identity
 \[
  \frac{\varphi_{H_{13}}(a)}{\varphi_{H_{14}}(a)} \ = \ \cross{H_{13}}{H_{14}}{a}{e'}{\varphi} \ \cdot \ \frac{\varphi_{H_{13}}(e')}{\varphi_{H_{14}}(e')},
 \]
 since $\cross{H_{13}}{H_{14}}{a}{e'}{\varphi}=\cross{H_{13}}{H_{14}}{a}{e'}{\psi}$ by Case B2. Analogous reasoning for $H_2$, in case that $H_2-a\notin\cH'$, identifies $\cross{H_{1}}{H_{2}}{e_3}{e_4}{\varphi}$ with a product of cross-ratios from Case B2 and a product $\Pi$ of terms of the form $\varphi'_H(e)^\epsilon$ with $H\in\cH'$, $e\in E'$, and $\epsilon=\pm 1$. One checks easily that the degree of $\Pi$ in each $H\in\cH'$ and $e\in E'$ is $0$. As explained in \autoref{rem: rescaling invariant elements are in the image of the foundation}, this means that $\Pi$ is contained in the image of $F_{M'}\to P$. This allows us to apply \autoref{thm: the foundation is generated by cross ratios} to the smallest upper sublattice $\Lambda'$ that contains all of the relevant hyperplanes. Up to a sign, this identifies $\Pi$ with a product of cross-ratios for $\Lambda'$. These cross-ratios are identified with cross-ratios for $\psi$ by Case A3, and then we need to show that each identification we have made only involves hyperplanes of an upper sublattice in $\cL_M$.
  
\medskip\noindent\textbf{Case C2.}
 Assume that $L_1\subset H_2$. Then $L_1=L=H_1\cap H_2$ and $H_{1k}=H_k$ for $k=3,4$. Since $e'=e_2\in E'-(H_{13}\cup H_{14})$, we can apply the previously explained strategy. Expanding the definition of $\varphi_{H_1}$ involves only hyperplanes that cover $L$, which are $H_1$, $H_2$, $H_3=\gen{Le_3}_M$, $H_4=\gen{Le_4}_M$, and $H_{b_0}$ (as chosen in Case C1), as well as $H'$ in the cases where $H_3=H_{b_0}$ or $H_4=H_{b_0}$. Thus the upper sublattice $\Lambda'$ generated by all these hyperplanes has at most $5$ atoms and is contained in $\cL_M$, which establishes $\Pi=\cross{H_1}{H_2}{e_3}{e_4}{\psi}$ in this case. 
 
\medskip\noindent\textbf{Case C3.}
 Assume that $L_1\subset H_k$ for all $k$. Then $L_1=L=H_1\cap H_2\cap H_3$ and the identity
 \[
  \tripleratio{H_1}{H_{2}}{H_{3}}{a}{e_2}{e_3}{\varphi} \ = \ -1
 \]
 follows from either Case A2 or A3 (depending on whether $b_0\in H_2\cup H_3$ or not).
 
 \medskip
 As a consequence, the restriction of $\varphi$ to the hyperplanes containing $L$ is a $P$-representation of $M/L$, which shows that the defining relations in C1 are satisfied for any choice of $b_0$ and $H'$. This allows us to assume particular choices for $b_0$ and $H'$ in the following.

\medskip\noindent\textbf{Case D3.}
 Assume that $L_1\not\subset H_2$, i.e.,\ $L_1\neq L=H_1\cap H_2$. Let $F = L\cap L_1=L-a$, which is a corank $3$ flat of both $M'$ and $M$. Let $L_i=\gen{Fe_i}_M$ and $H_{ij}=\gen{Fe_ie_j}_M$ for distinct $i,j\in\{1,\dotsc,4\}$, which agrees with the previous definitions of $L_1$, $H_{13}$, and $H_{14}$. 

 If $H_{12}$, $H_{13}$, and $H_{23}$ are pairwise distinct, then
 \[
  \tripleratio{H_1}{H_{2}}{H_{3}}{e_1}{e_2}{e_3}{\varphi} \ = \ \tripleratio{H_1}{H_{12}}{H_{13}}{a}{e_2}{e_3}{\varphi} \ \cdot \ \tripleratio{H_{12}}{H_{2}}{H_{23}}{e_1}{a}{e_3}{\varphi} \ \cdot \ \tripleratio{H_{13}}{H_{23}}{H_{3}}{e_1}{e_2}{a}{\varphi} \ = \ -1,
 \]
 where the first equality follows from expanding definitions and the second follows from Cases B3 and C3, depending on whether $H_i-a$ is in $\cH'$ or not. 
 
 If two of $H_{12}$, $H_{13}$, and $H_{23}$ agree, then $H_{12}=H_{13}=H_{23}$ and there is an element $e_4\in E'-H_{12}$, since $L_i$ is contained in at least $2$ hyperplanes of $M'$. Let $L_4=\gen{Fe_4}_M$ and $H_{i4}=\gen{Fe_ie_4}_M$ for $i=1,2,3$. We may assume that $a \notin H_{i4}$ for all $i$. Then
 \[
  \tripleratio{H_1}{H_{2}}{H_{3}}{e_1}{e_2}{e_3}{\varphi} \ = \ \cross{H_1}{H_{14}}{e_2}{e_3}{\varphi} \cdot \cross{H_2}{H_{24}}{e_3}{e_1}{\varphi} \cdot \cross{H_3}{H_{34}}{e_1}{e_2}{\varphi} \cdot \tripleratio{H_{14}}{H_{24}}{H_{34}}{e_1}{e_2}{e_3}{\varphi} \ = \ -1,
 \]
 where the first equality follows from expanding definitions and the second follows from Case A3 (if $H_i\neq H_{i4}$ for $i=1,2,3$), Case C3 (if $H_i=H_{i4}$ for some $i\in\{1,2,3\}$), Case C2, and the fact that all involved cross-ratios are degenerate and thus equal to $1$. (Note that we cannot have $H_i=H_{i4}$ for more than one $i$ since $e_4\notin Fa=H_i\cap H_j$.)

\medskip\noindent\textbf{Case D2.}
 Let $F$, $L_i$, and $H_{ij}$ be as in Case D2. If $e_3,e_4\notin H_{12}$, then $e_2\in E'-(H_{i3}\cup H_{i4})$ for $i=1,2$, which allows us to apply the general strategy (explained before Case C2) with $e'=e_2$. In this case we're done, since all involved hyperplanes are contained in an upper sublattice in $\cL_M$ (it has at most $5$ atoms and is not of type $U_{2,4}\oplus U_{1,1}$).

 If exactly one of $e_3$ and $e_4$ is contained in $H_{12}$, say $e_3\in H_{12}$, then $H_{12}=H_{13}=H_{23}$ is distinct from $H_{14}$ and $H_{24}$, and $a,e_3\notin H_{14}\cup H_{24}$. Thus all expressions in
 \begin{multline*}
  \cross{H_1}{H_2}{e_3}{e_4}{\varphi} \ = \ \tripleratio{H_1}{H_{13}}{H_{14}}{a}{e_3}{e_4}{\varphi} \ \cdot \ \tripleratio{H_2}{H_{24}}{H_{23}}{a}{e_4}{e_3}{\varphi} \ \cdot \ \cross{H_{14}}{H_{24}}{e_3}{a}{\varphi} \\ = \ (-1)^2 \ \cdot \ \cross{H_{14}}{H_{24}}{e_3}{a}{\psi} \ = \ \cross{H_{1}}{H_{2}}{e_3}{e_4}{\psi}
 \end{multline*}
 are defined, where the first equality follows from expanding the definitions, the second follows from Cases B2, B3, and C3, and the third follows by reading the equation backwards for $\psi$ instead of $\varphi$, which we can do since the lattice $\Lambda'$ generated by $L,L_1,\dotsc,L_4$ over $F$ is in $\cL'$ (it has at most $5$ atoms and is not of type $U_{2,4}\oplus U_{1,1}$).
  
 If $e_3,e_4\in H_{12}$, then $H_{34}=H_{12}\neq H_3$ and thus also $H_3\neq H_4$. Therefore all the quantities in the equation 
 \[
  \cross{H_1}{H_2}{e_3}{e_4}{\varphi} \ = \ \tripleratio{H_1}{H_3}{H_4}{e_1}{e_3}{e_4}{\varphi} \ \cdot \ \tripleratio{H_4}{H_3}{H_2}{e_4}{e_3}{e_2}{\varphi} \ \cdot \ \cross{H_3}{H_4}{e_1}{e_2}{\varphi} \ = \ \cross{H_3}{H_4}{e_1}{e_2}{\psi} 
 \]
 are defined, where the first equality follows at once from expanding the definitions and the second follows by Cases D3 and A2. 
\end{proof}

 
\subsection{Presentation by generators and relations}
\label{subsection: presentation by generators and relations}

Recall that $\Theta_M$ denotes the set of all tuples of hyperplanes $(H_1, H_2, H_3, H_4)$ of $M$ such that $L = H_1 \cap H_2 \cap H_3 \cap H_4 \in \Lambda_M^{(2)}$ and such that $L = H_i \cap H_j$ for every $i \in \{1, 2\}$ and $j \in \{3,4\}$. Similarly, $\Theta_M^\diamondsuit$ denotes the set of all non-degenerate tuples for which $L = H_1 \cap H_2 = H_3 \cap H_4$ also holds. The following result characterizes all relations between universal cross-ratios, which generate $F_M$ by~\autoref{thm: the foundation is generated by cross ratios}, thus yielding a presentation of $F_M$ by generators and relations which is more natural and theoretically useful than the presentation given by \autoref{matroid-bijection-between-equi-classes-and-extensions}.

\begin{thm}\label{thm:relations}
Let $M$ be a matroid with foundation $F_M$. Then 
\[\textstyle
 F_M \cong \pastgen{\F_1^\pm\big(\cross{H_1}{H_2}{H_3}{H_4}{} \mid (H_1, H_2, H_3, H_4) \in \Theta_M\big)}{R_M}, 
\]
where $R_M$ consists of the multiplicative relations
\begin{equation*}
\tag{R$-$}
\label{relation:R-}
    -1 = 1
\end{equation*}
if the Fano matroid $F_7$ or its dual $F_7^\ast$ is a minor of $M$, 
\begin{equation*}
\tag{R$\sigma$}
\label{relation:sigma}
\cross{H_1}{H_2}{H_3}{H_4}{} = 
\cross{H_2}{H_1}{H_4}{H_3}{} = 
\cross{H_3}{H_4}{H_1}{H_2}{} = 
\cross{H_4}{H_3}{H_2}{H_1}{}
\end{equation*}
for all $(H_1, H_2, H_3, H_4) \in \Theta_M^\diamondsuit$, 
\begin{equation*}
\tag{R0}
\label{relation:degenerate}
\cross{H_1}{H_2}{H_3}{H_4}{} = 1
\end{equation*}
for all degenerate $(H_1, H_2, H_3, H_4) \in \Theta_M$, 
\begin{equation*}
\tag{R1}
\label{relation:R1}
\cross{H_1}{H_2}{H_4}{H_3}{} = 
\crossinv{H_1}{H_2}{H_3}{H_4}{}
\end{equation*}
for all $(H_1, H_2, H_3, H_4) \in \Theta_M^\diamondsuit$,  
\begin{equation*}
\tag{R2}
\label{relation:R2}
\cross{H_1}{H_2}{H_3}{H_4}{} \cdot
\cross{H_1}{H_3}{H_4}{H_2}{} \cdot 
\cross{H_1}{H_4}{H_2}{H_3}{} = -1
\end{equation*}
for all $(H_1, H_2, H_3, H_4) \in \Theta_M^\diamondsuit$, 
\begin{equation*}
\tag{R3}
\label{relation:R3}
\cross{H_1}{H_2}{H_3}{H_4}{} \cdot
\cross{H_1}{H_2}{H_4}{H_5}{} \cdot
\cross{H_1}{H_2}{H_5}{H_3}{} = 1
\end{equation*}
for all $(H_1, H_2, H_3, H_4), (H_1, H_2, H_4, H_5), (H_1, H_2, H_5, H_3) \in \Theta_M^\diamondsuit$, 
\begin{equation*}
\tag{R4}
\label{relation:R4}
\cross{H_{13}}{H_{23}}{H_{34}}{H_{35}}{} \ \cdot \
\cross{H_{14}}{H_{24}}{H_{45}}{H_{34}}{} \ \cdot \
\cross{H_{15}}{H_{25}}{H_{35}}{H_{45}}{} \ = \ 1
\end{equation*}
for all $(H_{13}, H_{23}, H_{34}, H_{35}), (H_{14}, H_{24}. H_{45}, H_{34}), (H_{15}, H_{25}, H_{35}, H_{45}) \in \Theta_M$ such that the common intersection of all involved hyperplanes is a corank $3$ flat $L$ and such that $H_{i,j} = \gen{Lij}$ for all involved pairs of indices $ij$, where $1$, $2$, $3$, $4$ and $5$ are suitable atoms of $\Lambda$,
as well as the additive Pl\"ucker relations
\begin{equation*}
\tag{R$+$}
\label{relation:R+}
\cross{H_1}{H_2}{H_3}{H_4}{} + 
\cross{H_1}{H_3}{H_2}{H_4}{} = 1
\end{equation*}
for all $(H_1, H_2, H_3, H_4) \in \Theta_M^\diamondsuit$. 
\end{thm}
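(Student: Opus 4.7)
The plan is to derive this presentation from the fundamental presentation \autoref{thm: fundamental presentation of the foundation}, which identifies $F_M$ with the colimit $\colim\cF_M$ of the foundations of upper sublattices of types $U_{2,4}$, $U_{2,5}$, $U_{3,5}$, $C_5$, $F_7$, and $F_7^\ast$, together with the explicit cross-ratio presentations of these six foundations recorded in \autoref{ex:foundation-U24}--\autoref{ex:foundation-U35}, \autoref{ex:foundation-C5}, \autoref{ex:foundation-F7}, and \autoref{ex:foundation-F7*}. Let $P_M$ denote the pasture defined by the presentation on the right-hand side of the claimed isomorphism. I would construct mutually inverse pasture morphisms $\alpha\colon P_M \to F_M$ and $\beta\colon F_M \to P_M$ directly on generators.

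First, I would define $\alpha$ by sending each generator $\cross{H_1}{H_2}{H_3}{H_4}{}$ of $P_M$ to the corresponding universal cross-ratio in $F_M$. To see this is well-defined, I must check that every relation in $R_M$ holds in $F_M$. The degeneracy relation is \autoref{lemma: first properties of cross ratios}. The symmetry, inverse, triangle, and additive Pl\"ucker relations all involve cross-ratios indexed by hyperplanes from a single $U_{2,4}$-sublattice of $M$, so they are pulled back along the canonical morphism $F_{U_{2,4}} \to F_M$ from relations in $F_{U_{2,4}}\cong \U = \pastgen{\Funpm(x,y)}{x+y-1}$; the additive relation is literally the defining relation of $\U$ after identifying $x$ and $y$ with the appropriate cross-ratios as in \autoref{ex:foundation-U24}. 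The five-term multiplicative relation is pulled back from $F_{U_{2,5}} \cong \V$ after rewriting the defining relations $x_i + x_{i-1}x_{i+1} - 1$ of $\V$ in cross-ratio form using \autoref{ex:foundation-U25}; its dual is pulled back from $F_{U_{3,5}} \cong \V$ via \autoref{prop: foundation of dual matroid, explicit} and \autoref{ex:foundation-U35}. Finally, $-1 = 1$ holds in $F_M$ whenever $F_7$ or $F_7^\ast$ is a minor of $M$, since $F_{F_7} = F_{F_7^\ast} = \F_2$ by \autoref{cor: foundation of F7} and \autoref{ex:foundation-F7*}.

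For the reverse map $\beta$, I would invoke the universal property of the colimit: specifying $\beta\colon F_M \cong \colim\cF_M \to P_M$ amounts to giving a compatible family of pasture morphisms $\beta_N\colon F_N \to P_M$ for each $N \in \cL_M$. Each such $F_N$ admits an explicit cross-ratio presentation from one of the examples above, and its generators are already universal cross-ratios of $M$ under the inclusion $N \hookrightarrow M$. I send each such generator to the corresponding generator of $P_M$. The defining relations of $\U$ and $\V$ are exactly matched by the additive, five-term, and dual five-term relations of $P_M$, so each $\beta_N$ is well-defined; the cases $N = F_7$ and $N = F_7^\ast$ require the relation $-1=1$. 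Compatibility under embeddings $N' \hookrightarrow N$ in $\cL_M$ is automatic, because the corresponding universal cross-ratios are indexed by the same tuples in $\Theta_M$ on both sides. Since $\alpha$ and $\beta$ agree on generators and respect the relations, they are mutually inverse, establishing the claimed isomorphism.

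The hard part will be the bookkeeping needed to rewrite the non-canonical defining relations of $\V$ appearing in \autoref{ex:foundation-U25} and \autoref{ex:foundation-U35} into the canonical cross-ratio form of the statement, and then verifying that the symmetry, inverse, and triangle relations are strong enough to reconcile the many different ways a single universal cross-ratio of $M$ can be indexed when it arises from different $U_{2,4}$-sublattices. These are finite and explicit checks, but must be carried out tuple by tuple to ensure no hidden identifications among cross-ratios are overlooked beyond those already encoded in $R_M$.
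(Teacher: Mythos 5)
Your proposal matches the paper's own proof: both reduce to the fundamental presentation $F_M\cong\colim\cF_M$ and then verify, type by type for the six special minors $U_{2,4},U_{2,5},U_{3,5},C_5,F_7,F_7^\ast$, that the listed relations (\ref{relation:sigma}), (\ref{relation:degenerate}), (\ref{relation:R1})--(\ref{relation:R4}), (\ref{relation:R+}), (\ref{relation:R-}) cut out exactly the foundation of each special minor, using the explicit cross-ratio presentations from the examples. The paper phrases the reduction as ``it suffices to check $F_M'\cong F_M$ for these six types'' rather than explicitly constructing $\alpha$ and $\beta$, but the content and the case-by-case bookkeeping are the same.
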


\begin{proof}
Write $F_M'$ for $\pastgen{\F_1^\pm\big(\cross{H_1}{H_2}{H_3}{H_4}{} \mid (H_1, H_2, H_3, H_4) \in \Theta_M\big)}{R_M}$. In~\autoref{thm: fundamental presentation of the foundation}, we proved that $F_M \cong \colim \cF_M$, where $\cF_M$ is the diagram of the foundations of the upper sublattices of $\Lambda_M$ of types $U_{2,4}, U_{2,5}, U_{3,5}, C_5, F_7$, and $F_7^\ast$, together with all the induced morphisms. 
Hence, we need only verify that $F_M' \cong F_M$ for $M = U_{2,4}, U_{2,5}, U_{3,5}, C_5, F_7$, and $F_7^\ast$. 

By~\autoref{cor: foundation of F7} and~\autoref{ex:foundation-F7*}, the foundation of both $F_7$ and $F_7^\ast$ is $\F_2 \cong \pastgen{\Funpm}{1 + (-1)}$. The presentation generated by all universal cross-ratios follows once we include all degenerate cross-ratios $\cross{H_1}{H_2}{H_3}{H_4}{} = 1$ as well. 

For $M = U_{2,4}$, we computed in~\autoref{ex:foundation-U24} that $F_M \cong \pastgen{\Funpm(x,y)}{x+y-1}$, where $x = \cross{H_1}{H_2}{H_3}{H_4}{}$ and $y = \cross{H_1}{H_3}{H_2}{H_4}{}$. It follows easily from \autoref{thm:iff-modular-triple} that the relations~(\ref{relation:sigma}),~(\ref{relation:R1}) and~(\ref{relation:R2}) hold in $F_M$, and~\autoref{thm:iff-modular-triple} shows that the relation~(\ref{relation:R+}) holds as well. We claim that all other non-degenerate generators from the presentation $F_M'$ can be generated by $x$ and $y$ using relations~(\ref{relation:sigma}),~(\ref{relation:R1}) and~(\ref{relation:R2}). Note that up to the relations~(\ref{relation:sigma}) and~(\ref{relation:R1}), we only need to consider $\cross{H_1}{H_4}{H_2}{H_3}{}$. It is generated by $x$ and $y$ by~(\ref{relation:R2}): we have $\cross{H_1}{H_4}{H_2}{H_3}{} = -x^{-1}y$. Hence, $F_M' \cong F_M$. 

Consider $M = U_{2,5}$ with $M^\ast = U_{3,5}$. Since the relation~(\ref{relation:R4}) in $M^\ast$ is the image of~(\ref{relation:R3}) in $M$ under the canonical isomorphism $F_{M^\ast} \to F_M$, as defined in~\autoref{prop: foundation of dual matroid, explicit}, we shall only provide a proof for $M = U_{2,5}$ and skip the corresponding argument for $U_{3,5}$. A direct computation shows that the relation~(\ref{relation:R3}) holds in $F_M \cong \pastgen{\Funpm(x_1,\dotsc,x_5)}{x_i+x_{i-1}x_{i+1}-1\mid i=1,\dotsc,5}$, where $x_i = \cross{H_{i+1}}{H_{i+2}}{H_{i+4}}{H_{i+3}}{}$. Conversely, we wish to show that every non-degenerate universal cross-ratio from the presentation $F_M'$ can be generated by $x_1, \dotsc, x_5$. For each $i = 1, \dotsc, 5$, using relations~(\ref{relation:sigma}), (\ref{relation:R1}), and (\ref{relation:R2}), we only need to consider $\cross{H_{i+1}}{H_{i+2}}{H_{i+4}}{H_{i+3}}{}$ and $\cross{H_{i+1}}{H_{i+4}}{H_{i+2}}{H_{i+3}}{}$. The former is given by $\cross{H_{i+1}}{H_{i+2}}{H_{i+4}}{H_{i+3}}{} = x_i$. By~(\ref{relation:R3}), we have 
\[
\cross{H_{i+2}}{H_{i+3}}{H_{i+4}}{H_i}{} \cdot
\cross{H_{i+2}}{H_{i+3}}{H_{i}}{H_{i+1}}{} \cdot
\cross{H_{i+2}}{H_{i+3}}{H_{i+1}}{H_{i+4}}{} = 1. 
\]
Hence, we conclude 
\[
\cross{H_{i+1}}{H_{i+4}}{H_{i+2}}{H_{i+3}}{} = 
\cross{H_{i+2}}{H_{i+3}}{H_{i+1}}{H_{i+4}}{} = \crossinv{H_{i+2}}{H_{i+3}}{H_{i+4}}{H_i}{} \cdot \crossinv{H_{i+2}}{H_{i+3}}{H_{i}}{H_{i+1}}{} = 
x_{i-1}x_{i+1}, 
\]
which implies that $F_M' \cong F_M$ for $M = U_{2,5}$. 

To examine the last case $M = C_5$, we recall from~\autoref{ex:foundation-C5} that $F_M \cong \pastgen{\Funpm(x,y)}{x + y - 1}$, where $x = \cross{H_{15}}{H_{25}}{H_{35}}{H_{45}}{}$ and $y = \cross{H_{15}}{H_{35}}{H_{25}}{H_{45}}{}$. The only non-degenerate quadruples of hyperplanes in $C_5$, up to permutations, are $(H_{14}, H_{24}, H_{34}, H_{45})$ and $(H_{15}, H_{25}, H_{35}, H_{45})$. By~(\ref{relation:R4}), we have 
\[
\cross{H_{14}}{H_{24}}{H_{34}}{H_{45}}{} = x, \ 
\cross{H_{14}}{H_{34}}{H_{24}}{H_{45}}{} = y. 
\]
Thus, $F_M' \cong F_M$ for $M = C_5$. 
\end{proof}


\section{Applications}
\label{section: applications}

\subsection{Excluded minors for regular and ternary matroids}

The fundamental presentation $F_M=\colim\cF_M$ of the foundation of a matroid $M$ lies at the heart of several important results in matroid theory. As first consequences, we recover the excluded minor characterizations of regular~\cite{Tutte58b}, binary~\cite{Tutte58b}, and ternary~\cite{Bixby79,Seymour79} matroids (the first of which was the original motivation for Tutte to develop his homotopy theorem).

\autoref{table: matroids of the fundamental representation} displays the types of minors that appear in the fundamental presentation $\cF_M$ and their respective foundations. Note that $U_{2,4}$ is a minor of $C_5$, of $U_{2,5}$, and of $U_{3,5}$.

\begin{table}[htb]
 \caption{Matroids of the fundamental presentation and their foundations}
 \label{table: matroids of the fundamental representation}
 \begin{tabular}{c||c|c|c|c|c|c}
  matroid    & $U_{2,4}$ & $C_5$     & $U_{2,5}$ & $U_{3,5}$ & $F_7$     & $F_7^\ast$ \\
  \hline
  foundation & $\U$      & $\U$      & $\V$      & $\V$      & $\F_2$    & $\F_2$    
 \end{tabular}
\end{table}

The following result was originally proved by Tutte \cite{Tutte58b} using his homotopy theorem, and later reproved by Gerards \cite{Gerards89} without the use of this tool.

\begin{thm}\label{thm: excluded minors for regular matroids}
 A matroid is regular if and only if it has no minors of type $U_{2,4}$, $F_7$, or $F_7^\ast$.
\end{thm}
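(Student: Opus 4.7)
The plan is to reduce the theorem to two structural results that have already been established: the fundamental presentation $F_M \cong \colim \cF_M$ from \autoref{thm: fundamental presentation of the foundation}, and the characterization in \autoref{thm: foundations of regular and binary matroids} that $M$ is regular if and only if $F_M = \Funpm$.

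For the forward direction, I would argue that regularity is preserved under minors, so it suffices to show that none of $U_{2,4}$, $F_7$, $F_7^\ast$ is regular. The matroid $U_{2,4}$ is not binary, hence not regular; and the matroids $F_7$ and $F_7^\ast$ have foundation $\F_2$ (by \autoref{cor: foundation of F7} and \autoref{ex:foundation-F7*}), so they are representable only over fields of characteristic $2$ and in particular not over $\Q$, so they fail to be regular.

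For the converse, the first step is to observe that each of $U_{2,5}$, $U_{3,5}$, and $C_5$ contains $U_{2,4}$ as a minor: deleting any element from $U_{2,5}$ gives $U_{2,4}$; contracting any element of $U_{3,5}$ gives $U_{2,4}$; and a direct check of the bases of $C_5$ (for instance $C_5/4 \cong U_{2,4}$ on $\{1,2,3,5\}$) handles the last case. Consequently, if $M$ has no minor of type $U_{2,4}$, $F_7$, or $F_7^\ast$, then by the scum theorem $M$ admits no embedded minor of any of the six types appearing in $\cL_M$, so $\cL_M$ is empty.

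The final step is then purely formal: the colimit of the empty diagram in $\Pastures$ is the initial object $\Funpm$, so the fundamental presentation yields $F_M \cong \Funpm$, and \autoref{thm: foundations of regular and binary matroids} delivers the regularity of $M$. The main (and essentially only) substantive point is the catalogue of minors among $\{U_{2,4},C_5,U_{2,5},U_{3,5},F_7,F_7^\ast\}$; once this is in place, all the heavy lifting has already been done by Tutte's homotopy theorem via the fundamental presentation, and no further casework is required.
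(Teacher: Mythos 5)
Your proof is correct and follows essentially the same route as the paper: both directions rest on the fundamental presentation $F_M \cong \colim\cF_M$, with the converse reducing to the observation that $\cF_M$ is empty when $M$ has no $U_{2,4}$, $F_7$, or $F_7^\ast$ minor. The only difference is that you spell out the (easy but needed) check that $C_5$, $U_{2,5}$, and $U_{3,5}$ each contain a $U_{2,4}$ minor, a step the paper leaves implicit.
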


\begin{proof}
 Since none of $U_{2,4}$, $F_7$, and $F_7^\ast$ is regular, but all of their proper minors are, each is an excluded minor for the class of regular matroids. Conversely, if a matroid $M$ has no minor of type $U_{2,4}$, $F_7$, or $F_7^\ast$, then its fundamental presentation $\cF_M$ is empty; thus $F_M=\colim\cF_M=\Funpm$. Since $\Funpm$ maps to every field, $M$ is representable over every field and therefore regular. This shows that $U_{2,4}$, $F_7$, and $F_7^\ast$ form a complete set of excluded minors for the class of regular matroids.
\end{proof}

Recall that the excluded minor characterization for binary matroids (\autoref{thm: excluded minors for binary matroids}) can be deduced from \autoref{thm:iff-modular-triple}, which is central for the presentation of foundations via the hyperplane incidence graph. One can also prove \autoref{thm: excluded minors for binary matroids} using the fundamental presentation. We leave the details to the interested reader.

The fundamental presentation $F_M=\colim\cF_M$ also allows us to determine the list of excluded minors for ternary matroids, but the argument is slightly more involved, since $\cF_M$ might have non-trivial ``monodromy'' in this case. We derive the set of excluded minors for ternary matroids from the following more general result.

We say that a matroid is \emph{without large uniform minors} if it does not have any minors of type $U_{2,5}$ or $U_{3,5}$. Examples are binary and ternary matroids.

\begin{thm}[Theorem 5.9 in \cite{Baker-Lorscheid20}]\label{thm: structure theorem for foundations of wlum matroids}
 Let $M$ be a matroid without large uniform minors. Then $F_M\simeq\bigotimes_{i=1}^s F_i$ for some $s\geq0$ and $F_1,\dotsc,F_s\in\{\F_2,\F_3,\H,\D,\U\}$.
\end{thm}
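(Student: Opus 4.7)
The natural starting point is the fundamental presentation $F_M \simeq \colim \cF_M$ from \autoref{thm: fundamental presentation of the foundation}. Since $M$ has no $U_{2,5}$ or $U_{3,5}$ minors, the diagram $\cL_M$ can only contain upper sublattices of types $U_{2,4}$, $C_5$, $F_7$, and $F_7^\ast$. Consulting \autoref{table: matroids of the fundamental representation}, the foundations appearing in $\cF_M$ are therefore only copies of $\U$ (from $U_{2,4}$ and $C_5$ sublattices) and $\F_2$ (from $F_7$ and $F_7^\ast$ sublattices). Moreover, since $C_5$ has the same simplification as a $U_{2,4}$-minor (cf.~\autoref{ex:foundation-C5}), \autoref{prop: foundation of embedded minors} lets us replace each $C_5$-vertex by the corresponding $U_{2,4}$-vertex with which its foundation is canonically identified, so up to isomorphism $\cF_M$ becomes a diagram indexed by the $U_{2,4}$-type upper sublattices and the Fano/dual-Fano upper sublattices, with morphisms recording inclusions of upper sublattices.

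The next step is to decompose $\cF_M$ into connected components. Because colimits of pastures commute with disjoint unions of diagrams into tensor products (cf.\ the isomorphism $\colim(\cF_{M_1}\sqcup \cF_{M_2}) \simeq (\colim \cF_{M_1}) \otimes (\colim \cF_{M_2})$ used in the proof of \autoref{thm: fundamental presentation of the foundation}), it suffices to identify the colimit of each connected component. A Fano-type sublattice cannot be contained in a $U_{2,4}$- or $C_5$-type sublattice (too large), and conversely a $U_{2,4}$-type sublattice cannot be an upper sublattice of $F_7$ or $F_7^\ast$ (since in $F_7$ at most three hyperplanes pass through a given point, and dually for $F_7^\ast$). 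Hence each connected component is either purely Fano or purely $U_{2,4}$/$C_5$. A Fano component consists of one or several copies of $\F_2$ connected by morphisms that must be the unique morphism $\F_2 \to \F_2$ (the identity), so its colimit is $\F_2$.

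The nontrivial content lies in the analysis of a $U_{2,4}/C_5$-component. By \autoref{ex:foundation-U24}, $\U$ is generated as a pasture by the universal cross-ratios of a $U_{2,4}$, and the Möbius group $S_3$ acts by permuting the six cross-ratios $\{x,\, y,\, 1/x,\, 1/y,\, -x/y,\, -y/x\}$. Two distinct $U_{2,4}$-upper sublattices related by a common upper sublattice in $\cL_M$ (necessarily of type $C_5$ in the absence of $U_{2,5}$ and $U_{3,5}$) identify their cross-ratios via a specific element of $S_3$. The plan is to show that the colimit of such a component is the quotient of $\U$ by the subgroup $H \leq S_3$ generated by the ``monodromies'' of all cycles in the component, and to verify that the four possible fixed pastures under subgroups of $S_3$ are exactly $\U$ (trivial subgroup), $\D$ (a transposition, identifying $x$ with a degenerate value $2$), $\H$ (the 3-cycle), and $\F_3$ (the full $S_3$, forcing $-1=1$ and $x=-1$). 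Once this is established, every connected component contributes a factor from $\{\F_2,\F_3,\D,\H,\U\}$, and the overall theorem follows by tensoring.

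The main obstacle is this monodromy computation. One must choose, for each $C_5$ sublattice, an explicit pair of $U_{2,4}$ sub-sublattices and work out the induced morphism on cross-ratios; then, for each closed loop in the connected component, one composes these morphisms to obtain an element $g \in S_3$, and one shows that $g = \mathrm{id}$ is forced in the colimit. The identification of the four fixed-point pastures then reduces to a finite, case-by-case verification inside $\U$, using the presentations of $\F_3$, $\D$, $\H$ given in \autoref{subsubsection: further examples of pastures} together with the explicit morphisms $\U \to \D$ sending $(x,y) \mapsto (2,-1)$ and $\U \to \H$ sending $(x,y) \mapsto (z, 1-z)$ with $z^2 - z + 1 = 0$. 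Once this is done, combining the contributions of all connected components via tensor product yields the desired decomposition $F_M \simeq \bigotimes_{i=1}^s F_i$ with each $F_i \in \{\F_2,\F_3,\H,\D,\U\}$.
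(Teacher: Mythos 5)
Your proposal is correct and follows essentially the same route as the paper's proof sketch: decompose $\cF_M$ into connected components (tensor product), observe that only $\U$ and $\F_2$ appear with $\F_2$ isolated, reduce a $\U$-component (via spanning-tree contraction, equivalently your "monodromy of cycles") to a quotient $\U/H$ for $H\le\Aut(\U)\simeq S_3$, and identify the possible quotients as $\U,\D,\H,\F_3$. One small terminological slip: you should say "quotient pastures $\U/H$," not "fixed pastures" — the colimit identifies an element with its $H$-orbit, it does not pass to the $H$-invariant subalgebra — and the map $\U\to\H$ should send $y\mapsto -z^2$ rather than the informal $1-z$, since subtraction is not a total operation in a pasture.
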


\begin{proof}[Proof sketch]
 The colimit of $\cF_M$ is the tensor product of the colimits of its connected components. Thus it suffices to show that the colimit of each connected component $C$ of the fundamental diagram is one of $\F_2$, $\F_3$, $\H$,\ $\D$ and $\U$. Since $M$ is without large uniform minors, $\cF_M$ consists of copies of $\U$ and $\F_2$ only. The only non-identity morphisms in $\cF_M$ are isomorphisms $\U\to\U$, which are induced by the minor embedding $U_{2,4}\to C_5$. In particular, $\F_2$ is isolated in $\cF_M$, and thus a component with $\F_2$ has colimit $\F_2$.
 
 Thus we are left with the connected components of $\cF_M$ that consist entirely of isomorphisms between copies of $\U$. We can choose a spanning tree of isomorphisms and contract it without changing the colimit, which replaces the connected component $C$ by a diagram consisting of a single object $\U$ together with a set of automorphisms. Thus, the colimit of $C$ is a quotient of $\U$ by a group of automorphisms.
 
 The theorem follows once we have proven that every quotient of $\U$ by a symmetry group (i.e., a group consisting of automorphisms of $\U$) is isomorphic to either $\F_3$, $\H$, $\D$, or $\U$. This follows from elementary considerations, which we outline in the following.
 
 The automorphism group of $\U$ is $\Aut(\U)\simeq S_3$, which can be seen by studying its (simply transitive) action on the six \emph{fundamental elements} $x,\ y,\ y^{-1},\ -xy^{-1},\ -x^{-1}y,\ x^{-1}$ of $\U$, which are those elements $z$ for which there is a $t$ with $z+t-1\in N_\U$. The quotient of $\U$ by a subgroup $H$ of $\Aut(\U)$ can be determined by identifying the generators $x$ and $y$ of $\U$ with their respective images under the action of $H$. This yields $\U/\Aut(\U)\simeq\F_3$, $\U/\gen{\rho}\simeq\H$ if $\ord\rho=3$, $\U/\gen{\sigma}\simeq\D$ if $\ord\sigma=2$, and $\U/\gen{e}\simeq\U$.
\end{proof}

Since the representation theory of a matroid is controlled by its foundation, \autoref{thm: structure theorem for foundations of wlum matroids} has far-reaching consequences for the class of matroids without large uniform minors. We present a few sample results from \cite{Baker-Lorscheid20,Baker-Lorscheid21} in the remainder of this section, starting with the classification of excluded minors for ternary matroids.

The following theorem was originally proved by Bixby and Reid \cite{Bixby79} using Tutte's homotopy theory, and later reproved by Seymour without the use of Tutte's theory \cite{Seymour79}. 

\begin{thm}\label{thm: excluded minors for ternary matroids}
 A matroid is ternary if and only if it has no minors of type $U_{2,5}$, $U_{3,5}$, $F_7$, or $F_7^\ast$.
\end{thm}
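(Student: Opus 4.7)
The plan is to derive this from the structure theorem \autoref{thm: structure theorem for foundations of wlum matroids}. The forward direction is routine: ternary matroids form a minor-closed class, and none of $U_{2,5}$, $U_{3,5}$, $F_7$, $F_7^\ast$ is ternary. By \autoref{cor: foundation of F7} and \autoref{ex:foundation-F7*}, the foundations of $F_7$ and $F_7^\ast$ are both $\F_2$, which admits no pasture morphism to $\F_3$ (since $1+1 \neq 0$ in $\F_3$). For $U_{2,5}$, the quickest argument is that $|\P^1(\F_3)| = 4 < 5$, and by duality $U_{3,5}$ is non-ternary as well; alternatively, one can check directly that the foundation $\V$ admits no pasture morphism to $\F_3$ by solving the defining system $x_i + x_{i-1}x_{i+1} = 1$ in $\F_3^\times = \{\pm 1\}$ modulo $5$.

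For the converse, suppose $M$ has none of the four matroids as a minor. Then $M$ is without large uniform minors, so \autoref{thm: structure theorem for foundations of wlum matroids} yields a decomposition $F_M \simeq \bigotimes_{i=1}^s F_i$ with each $F_i \in \{\F_2, \F_3, \H, \D, \U\}$. Following the proof of the structure theorem, each $F_i$ is the colimit of a connected component of the fundamental diagram $\cF_M$, and such a component contributes a factor of $\F_2$ only if it contains $\F_2$ as an object, since $\F_2$ is isolated in $\cF_M$. By inspection of the foundations of the matroids appearing in the fundamental presentation, the pasture $\F_2$ appears in $\cF_M$ only as the foundation of an upper sublattice of type $F_7$ or $F_7^\ast$, i.e., through an $F_7$- or $F_7^\ast$-minor of $M$. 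Our hypothesis excludes this, whence $F_i \in \{\F_3, \H, \D, \U\}$ for every $i$.

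It remains to exhibit an $\F_3$-representation of $M$, which by \autoref{thm: characterizing property of the foundation} and the universal property of the tensor product reduces to giving a pasture morphism $F_i \to \F_3$ for each $i$. This is elementary: the identity handles $\F_3$; the assignment $z \mapsto -1$ defines morphisms $\D \to \F_3$ and $\H \to \F_3$, since $(-1)-1-1$, $(-1)^3+1$, and $(-1)^2-(-1)+1$ all vanish in $\F_3$; and the assignment $x, y \mapsto -1$ defines $\U \to \F_3$, since $(-1)+(-1)=1$ in $\F_3$. Composing the resulting pasture morphism $F_M \to \F_3$ with a universal representation of $M$ produces the desired $\F_3$-representation.

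The hard part of the argument has already been absorbed into the structure theorem, whose proof rests on the fundamental presentation \autoref{thm: fundamental presentation of the foundation} and hence on Tutte's homotopy theory. Once that is granted, the remaining work---identifying the components of $\cF_M$ that can contribute an $\F_2$ factor and exhibiting explicit morphisms from the finite list of candidate foundations to $\F_3$---is entirely mechanical.
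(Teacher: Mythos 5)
Your proof is correct and follows essentially the same route as the paper: forward direction by minor-closure of the ternary class, converse via the structure theorem for foundations of matroids without large uniform minors, ruling out $\F_2$ factors using the absence of $F_7$ and $F_7^\ast$ minors, and mapping the remaining factors into $\F_3$. You supply a few details the paper leaves implicit (explicit non-ternariness of the four matroids, explicit morphisms $F_i \to \F_3$), but the argument's skeleton is the same.
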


\begin{proof}
 Since none of $U_{2,5}$, $U_{3,5}$, $F_7$, and $F_7^\ast$ is ternary, but all of their proper minors are, each is an excluded minor for the class of ternary matroids.  Conversely, if a matroid $M$ is without minors of type $U_{2,5}$, $U_{3,5}$, $F_7$, or $F_7^\ast$, then it is, by definition, without large uniform minors and thus, by \autoref{thm: structure theorem for foundations of wlum matroids}, $F_M\simeq F_1\otimes\dotsc\otimes F_s$ for certain $F_1,\dotsc,F_s\in\{\F_3,\ \H,\ \D,\ \U\}$. (Note that $\F_2$ does not appear, since $M$ is without $F_7$ and $F_7^\ast$ minors.) Each of $\F_3$, $\H$, $\D$, and $\U$ maps to $\F_3$, and therefore so does the tensor product $F_M\simeq F_1\otimes\dotsc\otimes F_s$. Thus $M$ is ternary and the list of excluded minors is complete.
\end{proof}

\subsection{Realization spaces of ternary matroids over certain finite fields}

The following result was first proved as a special case of \cite[Theorem 5.8]{Baker-Lorscheid21}.

\begin{thm}\label{thm: bijection between realization spaces of ternary matroids}
 Let $M$ be ternary. Then there is a bijection $\upR_M(\F_8)\simeq\upR_M(\F_4)\times\upR_M(\F_5)$.
\end{thm}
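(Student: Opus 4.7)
The plan is to reduce the claim, via the universal property of the foundation and the structure theorem for foundations of matroids without large uniform minors, to a small finite-case check. Applying \autoref{thm: characterizing property of the foundation}, both sides of the desired bijection are Hom sets out of $F_M$: $\upR_M(\F_8) = \Hom(F_M, \F_8)$ and $\upR_M(\F_4) \times \upR_M(\F_5) = \Hom(F_M, \F_4) \times \Hom(F_M, \F_5)$. Since ternary matroids are in particular without large uniform minors, \autoref{thm: structure theorem for foundations of wlum matroids} yields an isomorphism $F_M \simeq \bigotimes_{i=1}^s F_i$ with each $F_i \in \{\F_2, \F_3, \H, \D, \U\}$. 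The ternary hypothesis supplies a pasture morphism $F_M \to \F_3$, and since there is no morphism $\F_2 \to \F_3$ (as $-1=1$ in $\F_2$ but $-1\neq 1$ in $\F_3$), no factor $\F_2$ can occur, so in fact $F_i \in \{\F_3, \H, \D, \U\}$ for every $i$.

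Next, the universal property of the tensor product gives $\Hom(F_M, P) = \prod_{i=1}^s \Hom(F_i, P)$ functorially in $P$. Both sides of the sought bijection therefore decompose into products indexed by $i$, and it suffices to establish a bijection
\[
 \Hom(F, \F_8) \ \simeq \ \Hom(F, \F_4) \times \Hom(F, \F_5)
\]
for each of the four ``atomic'' pastures $F \in \{\F_3, \H, \D, \U\}$ individually. Since all six Hom sets involved are finite, matching cardinalities will suffice.

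Finally, the four cases can be carried out explicitly. For $F = \F_3$, all three Hom sets are empty for characteristic reasons. For $F = \D$, a morphism $\D \to K$ exists iff $2 \neq 0$ in $K$, so $\Hom(\D,\F_8) = \Hom(\D,\F_4) = \emptyset$ and $\Hom(\D,\F_5)$ is a singleton; both sides vanish. For $F = \H$, morphisms $\H \to K$ correspond to primitive sixth roots of unity in $K$; $\F_4$ contains two (its primitive cube roots, which in characteristic $2$ satisfy $z^2 - z + 1 = 0$), while $\F_8$ has none (since $6 \nmid 7 = |\F_8^\times|$) and $\F_5$ has none either (the discriminant of $z^2-z+1$ is $-3 = 2$, a non-square in $\F_5$). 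So both sides are again empty. The only non-vacuous case is $F = \U$, where $\Hom(\U, \F_q) = \{(a, 1-a) \mid a \in \F_q \setminus \{0,1\}\}$ has cardinality $q-2$; the numerical identity $|\Hom(\U,\F_8)| = 6 = 2\cdot 3 = |\Hom(\U,\F_4)| \cdot |\Hom(\U,\F_5)|$ furnishes the required bijection.

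The main (mild) obstacle is the bookkeeping across the four cases, together with the slightly ``miraculous'' arithmetic identity $6 = 2 \cdot 3$ in the $\U$ case; the deep ingredients, namely the fundamental presentation and the structure theorem, have already been invoked as black boxes. Since we are asked for an abstract bijection of sets rather than a canonical one, matching the cardinality $q-2$ in the $\U$ case is sufficient, and no further structural input is needed.
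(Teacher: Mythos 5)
Your argument follows the paper's proof exactly: decompose $F_M$ via the structure theorem for matroids without large uniform minors (excluding $\F_2$ since it has no morphism to $\F_3$), reduce to factors by the universal property of tensor products, and conclude by a case check on $F \in \{\F_3, \H, \D, \U\}$. You also correctly supply the explicit cardinality computations that the paper leaves as ``elementary considerations; we omit the details,'' including the key identity $|\Hom(\U,\F_8)| = 6 = 2\cdot 3$.
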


\begin{proof}[Sketch of proof]
 By \autoref{thm: structure theorem for foundations of wlum matroids}, the foundation of $M$ is of the form $F_M\simeq F_1\otimes\dotsc\otimes F_s$ for certain $F_1,\dotsc,F_s\in\{\F_3,\ \H,\ \D,\ \U\}$ (note that $\F_2$ does not map to $\F_3$). By \autoref{thm: characterizing property of the foundation} and the universal property of the tensor product of pastures, we have
 \[
  \upR_M(\F_q) \ = \ \Hom(F_M,\F_q) \ = \ \prod_{i=1}^s \ \Hom(F_i,\F_q)
 \]
 for every prime power $q$. Since (by elementary considerations; we omit the details) there is a bijection
 \[
  \Hom(F,\F_8) \ \to \Hom(F,\F_4) \ \times \ \Hom(F,\F_5) 
 \]
 for every $F\in\{\F_3,\ \H,\ \D,\ \U\}$, the theorem follows.
\end{proof}

\begin{rem}
In \cite[Theorem 5.8]{Baker-Lorscheid21}, the following more general result is established. 
Let $p_1$ and $p_2$ be prime powers such that $q = (p_1 - 2)(p_2 - 2) + 2$ is a
prime power with $3 \nmid q$. Then for every ternary matroid $M$, there is a bijection
$\upR_M(\F_q)\simeq\upR_M(\F_{p_1})\times\upR_M(\F_{p_2})$.
\end{rem}

\subsection{Orientable matroids without large uniform minors}

The following result, originally proved in \cite[Theorems 6.9 and 6.15]{Baker-Lorscheid20}, furnishes a new proof and generalization of a theorem of Lee and Scobee \cite{Lee-Scobee99}, as well as a new proof and generalization of a special case of a theorem of Ardila--Rincon--Williams \cite{Ardila-Rincon-Williams17}. 

\begin{thm}\label{thm: lifting orientations of wlum matroids}
Let $M$ be without large uniform minors. If $M$ is orientable, then $M$ is representable over $\D$. If $M$ is positively orientable, then $M$ is representable over $\U$.
\end{thm}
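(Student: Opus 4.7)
The proof plan relies on the structure theorem (\autoref{thm: structure theorem for foundations of wlum matroids}) together with the universal property of the foundation (\autoref{thm: characterizing property of the foundation}) and of tensor products of pastures. Writing $F_M \simeq F_1 \otimes \dotsb \otimes F_s$ with each $F_i \in \{\F_2, \F_3, \H, \D, \U\}$, we have
\[
\Hom(F_M, P) \ = \ \prod_{i=1}^s \Hom(F_i, P)
\]
for every pasture $P$, so that the representability of $M$ over $P$ is governed entirely by which of the five ``atomic'' pastures admit a morphism to $P$.

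For the first claim, I would carry out the following direct check of $\Hom(F_i, \S)$. The pastures $\F_2$ and $\F_3$ do not map to $\S$: the former because $-1 = 1$ in $\F_2$ but $-1 \neq 1$ in $\S$, the latter because $1+1+1 \in N_{\F_3}$ while $1+1+1 \notin N_\S$. For $\H$, any morphism would send the generator $z$ to an element of $\S^\times = \{\pm 1\}$ with $z^3 = -1$, forcing $z = -1$; but then the defining relation $z^2-z+1$ would require $1+1+1 \in N_\S$, a contradiction. On the other hand, $\D \to \S$ is obtained by sending $z \mapsto 1$ (since $1+1-1 \in N_\S$), and $\U \to \S$ is obtained by sending $x,y \mapsto 1$. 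Thus orientability of $M$ forces $F_i \in \{\D, \U\}$ for each $i$. Since $\D$ clearly maps to itself and $\U \to \D$ via $x \mapsto z,\, y \mapsto -1$ (using $z + (-1) - 1 \in N_\D$), the tensor product $F_M$ admits a morphism to $\D$, and hence $M$ is representable over $\D$.

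For the second claim, the plan is to identify the pasture $P_+$ whose $P_+$-representations correspond exactly to positive orientations of $M$ (essentially an enrichment of $\S$ encoding the requirement that all Plücker coordinates have the same sign), and to repeat the case-by-case analysis above with $P_+$ in place of $\S$. The expectation is that $\F_2$, $\F_3$, $\H$, and $\D$ are all ruled out by the same kind of characteristic or order obstructions, and that only $\U \to P_+$ admits a morphism. This would force $F_M \simeq \U^{\otimes s}$, and composing with the codiagonal $\U^{\otimes s} \to \U$ (which exists by the universal property of tensor products, applied to the identity on $\U$ in each factor) shows that $M$ is representable over $\U$.

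The main obstacle is the second part: giving a clean pasture-theoretic characterization of ``positively orientable'' and then ruling out the morphisms $F_i \to P_+$ for $F_i \in \{\F_2, \F_3, \H, \D\}$. The first part is essentially a finite computation once the structure theorem is invoked, and the only technical care required is verifying the existence of the morphisms $\D \to \S$ and $\U \to \D$; but the second part requires committing to the right target pasture $P_+$ and a fresh (but structurally identical) calculation. Once the two Hom-set classifications are in place, the statement follows immediately from the product decomposition of $\Hom(F_M, -)$.
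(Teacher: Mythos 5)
Your proof of the first statement is correct and matches the paper's argument: after invoking \autoref{thm: structure theorem for foundations of wlum matroids}, one checks that only $\D$ and $\U$ among the five atomic pastures admit morphisms to $\S$, and that both of these map to $\D$, so $F_M$ maps to $\D$ via the tensor-product universal property. Your explicit verifications (e.g.\ $z \mapsto 1$ gives $\D \to \S$, and $x \mapsto z$, $y \mapsto -1$ gives $\U \to \D$) are the right computations, and the Hom-set decomposition $\Hom(F_M,P) = \prod_i \Hom(F_i,P)$ is exactly the mechanism the paper uses.

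The second statement, however, is where your plan has a genuine gap, and you correctly sense it. You propose to find a pasture $P_+$ such that $P_+$-representations of $M$ correspond exactly to positive orientations, and then rule out $\Hom(F_i, P_+)$ for $F_i \in \{\F_2,\F_3,\H,\D\}$. But positive orientability is not characterized by representability over a single pasture. Representability over any fixed $P$ is an invariant of the foundation $F_M$ alone, whereas positive orientability is the existence of an $\S$-representative in some rescaling class with all Pl\"ucker coordinates of the same sign relative to a chosen ordering of the ground set --- this is a constraint on a representative within a rescaling class, not a morphism into a target pasture. There is no $P_+$ to commit to, so the case-by-case Hom computation you envision has nowhere to start. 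The paper sidesteps this by citing \cite[Lemma 6.14]{Baker-Lorscheid20}, which argues directly that if $M$ is positively orientable then $\D$ cannot occur as a tensor factor of $F_M$ (because a $\D$-factor would force some cross-ratio in the $\S$-representation to be $-1$, contradicting positivity). If you want a self-contained argument, you would need to replace your $P_+$-step by an analysis of how a $\D$-factor of the foundation constrains the signs of cross-ratios in the induced $\S$-representation, rather than by a second Hom-set classification.
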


\begin{proof}[Sketch of proof]
 By \autoref{thm: structure theorem for foundations of wlum matroids}, the foundation of $M$ is of the form $F_M\simeq F_1\otimes\dotsc\otimes F_s$ for certain $F_1,\dotsc,F_s\in\{\F_2,\ \F_3,\ \H,\ \D,\ \U\}$. None of $\F_2$, $\F_3$, or $\H$ maps to $\S$, so if $M$ is orientable then $F_1,\dotsc,F_s\in\{\D,\ \U\}$. Both $\D$ and $\U$ map to $\D$, and therefore so does the tensor product $F_M\simeq F_1\otimes\dotsc\otimes F_s$, which shows that $M$ is representable over $\D$. If $M$ is positively orientable, then $\D$ cannot occur as a factor of $F_M$ (see \cite[Lemma 6.14]{Baker-Lorscheid20}, details omitted). Thus $F_M\simeq \U^{\otimes s}$, which maps to $\U$.
\end{proof}

\subsection{Dressians of matroids without large uniform minors}

The {\em Dressian} $\Dr(M)$ of a matroid $M$ is the set of all valuated matroids (i.e., $\T$-matroids) with underlying matroid $M$. The Pl\"ucker coordinates of the corresponding Grassmann-Pl\"ucker functions realize $\Dr(M)$ as a subset of a tropical projective space and endow $\Dr(M)$ with a topology.

We define the {\em tropical line} $L$ in the tropical plane as the ``$Y$-shaped'' set of all solutions to $x+y+1 \in N_{\T}$, i.e, all points $(a,b)\in(\T^\times)^2$ for which the maximum of $a$, $b$, and $1$ occurs at least twice. 

\begin{thm}\label{thm: Dressians for wlum matroids}
 Let $M$ be without large uniform minors. Then the Dressian $\Dr(M)$ of $M$ is homeomorphic to $\R^n\times [0,\infty)^m\times L^p$ for some $n,m,p\geq0$, where $L$ is the tropical line.
\end{thm}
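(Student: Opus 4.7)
The plan is to combine the structure theorem for foundations of matroids without large uniform minors (\autoref{thm: structure theorem for foundations of wlum matroids}) with a direct calculation of $\Hom(F,\T)$ for each building block $F$, and then to pass from the realization space $\upR_M(\T)=\Hom(F_M,\T)$ to the Dressian $\Dr(M)$ by accounting for the additional freedom of scaling Pl\"ucker coordinates per ground-set element.

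By \autoref{thm: structure theorem for foundations of wlum matroids}, $F_M\simeq\bigotimes_{i=1}^s F_i$ with $F_i\in\{\F_2,\F_3,\H,\D,\U\}$, and the universal property of tensor products combined with \autoref{thm: characterizing property of the foundation} yields
\[
 \upR_M(\T)\ =\ \Hom(F_M,\T)\ =\ \prod_{i=1}^s\Hom(F_i,\T),
\]
an identification which upgrades to a homeomorphism if each factor is equipped with the subspace topology it inherits from $\T^{\text{generators}}$. Using that $-1=1$ in $\T$ and that a triple lies in $N_\T$ iff the maximum of its multiset of entries is attained at least twice, one computes directly that $\Hom(\F_2,\T)$, $\Hom(\F_3,\T)$, and $\Hom(\H,\T)$ are all singletons (the defining relations force the generator to $1$); that $\Hom(\D,\T)\cong(0,1]\cong[0,\infty)$ (via $t\mapsto -\log t$, since $t-1-1\in N_\T$ becomes $t+1+1\in N_\T$, equivalent to $t\leq 1$); and that $\Hom(\U,\T)\cong L$ (essentially by comparing the presentation $x+y-1$ of $\U$ with the defining equation $s+t+1\in N_\T$ of the tropical line). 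Thus $\upR_M(\T)\cong[0,\infty)^m\times L^p$, where $m$ and $p$ count the $\D$- and $\U$-factors in the decomposition of $F_M$.

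To bridge to the Dressian, recall (cf.\ the remark following \autoref{thm: characterizing property of the foundation}) that $\upR_M(\T)$ parametrizes Grassmann--Pl\"ucker functions modulo the full rescaling group $(\T^\times)^E$, whereas $\Dr(M)$ parametrizes them modulo only the diagonal $\T^\times$. There is therefore a canonical surjection $\Dr(M)\to\upR_M(\T)$ whose fibers are torsors for $(\T^\times)^E/\T^\times\cong\R^{|E|-1}$ (for a loopless simple matroid; in general the effective dimension $n$ depends on the number of connected components and parallel classes of $M$). A continuous section can be built by fixing a reference basis $B_0$ of $M$ and normalizing $\mu(B_0)=1$ together with $\mu(B_0-e+e')=$ prescribed values for a spanning set of basis exchanges; this exhibits $\Dr(M)\cong\R^n\times\upR_M(\T)$ as topological spaces and combines with the previous step to give the asserted description.

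The principal obstacle is the last step: verifying that the map $\Dr(M)\to\upR_M(\T)$ is globally a trivial $\R^n$-bundle rather than merely a fiber bundle over each stratum. This should follow from the observation that, in logarithmic (i.e.\ tropical) coordinates, the lineality action is by affine translation and both spaces carry compatible polyhedral structures, but making the global triviality and the homeomorphism precise---particularly at non-generic points where the combinatorial type of a Pl\"ucker vector degenerates---requires a careful compatibility check with the section construction above.
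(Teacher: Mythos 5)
Your approach is essentially the same as the paper's: decompose $F_M$ via \autoref{thm: structure theorem for foundations of wlum matroids}, compute $\Hom(F,\T_0)$ for each building block (a point for $\F_2$, $\F_3$, $\H$; a ray $[0,\infty)$ for $\D$; a tropical line $L$ for $\U$), and split off a lineality $\R^n$ to relate $\Dr(M)$ to $\upR_M(\T_0)$. You perform the factor computation before the lineality split, while the paper does it in the opposite order; this is merely a reordering.

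The ``principal obstacle'' you flag in your last paragraph is not actually an obstacle. In logarithmic coordinates a valuated matroid on $M$ is a point of $\R^N$, where $N$ is the number of bases of $M$, and the rescaling group $(\T^\times)^E/\T^\times$ acts by \emph{translation} along a fixed linear subspace $V\subset\R^N$, namely the image of $\R^E$ modulo the diagonal under $(f_e)_e\mapsto\big(\sum_{e\in B}f_e\big)_B$. For any subset $X$ of a vector space that is invariant under translation by a linear subspace $V$, and for any linear complement $W$ of $V$, the addition map $V\times(X\cap W)\to X$ is a homeomorphism: it is a continuous bijection whose inverse is given by the continuous linear projections onto $V$ and $W$. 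No polyhedral stratification or local-to-global argument is required, so your worry about ``compatibility at non-generic points'' and about triviality holding only stratum by stratum does not arise. What does need a short, routine verification is that $\Dr(M)\cap W$ with its subspace topology agrees with $\upR_M(\T_0)=\Hom(F_M,\T_0)$ with the compact-open topology the paper uses, and that the $\R^n$ appearing is exactly the lineality space; the paper, giving only a proof sketch, also leaves this to the reader.
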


\begin{proof}[Sketch of proof]
 The Dressian $\Dr(M)$ maps to $\upR_M(\T_0)$ by sending valuated matroids to their rescaling classes. The kernel of this map is a real vector space $\R^n$ (called the \emph{lineality space} of $\Dr(M)$), and in fact, $\Dr(M)\simeq \R^n\times\upR_M(\T_0)$. By \autoref{thm: structure theorem for foundations of wlum matroids}, the foundation of $M$ is of the form $F_M\simeq F_1\otimes\dotsc\otimes F_s$ for certain $F_1,\dotsc,F_s\in\{\F_2,\ \F_3,\ \H,\ \D,\ \U\}$, and thus
 \[
  \upR_M(\T_0) \ = \ \Hom(F_M,\T_0) \ \simeq \ \prod_{i=1}^s \ \Hom(F_i,\T_0),
 \]
 where the $\Hom$-sets are topologized with the compact-open topology with respect to the trivial topologies for $F_M$ and the $F_i$ and the natural topology for $\T=\R_{\geq0}$. The factors of the product are homeomorphic to one of
 \[
  \Hom(\F_2,\ \T_0) \ = \ \Hom(\F_3,\ \T_0) \ = \ \Hom(\H,\ \T_0) \ = \ \{ \textrm{point} \},
 \]
 \[
  \Hom(\D,\ \T_0) \ \simeq \ [0,\infty), \qquad \textrm{or} \qquad \Hom(\U,\ \T_0) \ \simeq \ L,
 \]
 which verifies the claim of the theorem.
\end{proof}

The papers \cite{Baker-Lorscheid20} and \cite{Baker-Lorscheid21} contain further applications of the theory of foundations to the representation theory of matroids without large uniform minors. Chen and Zhang developed a Macauley2 package to compute the foundation of a matroid; see~\cite{Chen-Zhang}. The appendix in \cite{Baker-Lorscheid-Zhang24} contains a comprehensive list of interesting foundations that the authors have found with help of this computer software.


\section{Towards higher homotopy theorems}
\label{section: towards higher homotopy theorems}

Let $M$ be a matroid, and let $\tau=(\Lambda,\Gamma)$ be a Tutte constellation of type $M$. We say $\tau$ is {\em indecomposable} if $M$ is a connected matroid; otherwise, it is {\em decomposable}. 

Let $\cX^\tau$ be the poset of all subconstellations of $\tau$, ordered by lattice inclusion. In the following, we define subposets $\cX^\tau_0\subset\cX^\tau_1\subset\cX^\tau_2$ of $\cX^\tau$ that allow for a topological reformulation of Tutte's path theorem and Tutte's homotopy theorem. This formulation has an obvious generalization to the vanishing of higher homotopy groups or, equivalently (by Hurewicz's theorem), the vanishing of certain homology groups.

We will frequently make use of the \emph{order complex} of a poset $\cX$, which is a simplicial complex having $\cX$ as its vertices and all finite chains of $\cX$ as its faces.

\subsection{A topological interpretation of the path theorem}
\label{subsection: topological interpretation of the path theorem}

Let $E$ be the ground set of $M$, which is the top element of $\Lambda$. Consider the following two classes of subconstellations of $\tau$: 

\medskip\noindent\textbf{Class 0.} A \emph{subconstellation of class 0} is a subconstellation $\sigma$ of type $U_{1,1}$ with $\Gamma_\sigma=\{E\}$. 

\medskip\noindent\textbf{Class 1.} A \emph{subconstellation of class 1} is a subconstellation $\sigma$ of type $U_{2,2}$ with $\Gamma_\sigma=\{E\}$, and such that the bottom element of $\Lambda_\sigma$ is an indecomposable flat of $\Lambda$. 

\begin{df}
 The {\em zeroth subposet $\cX^\tau_0$ of $\cX^\tau$} consists of all subconstellations of $\tau$ of class 0. We denote the order complex of $\cX^\tau_0$ by $\Sigma^\tau_0$.
 
 The {\em first subposet $\cX^\tau_1$ of $\cX^\tau$} consists of all subconstellations of $\tau$ of class 0 or 1. We denote the order complex of $\cX^\tau_1$ by $\Sigma^\tau_1$.
\end{df}

The subconstellations of class 0 correspond to the hyperplanes of $\Lambda$ off $\Gamma$. Thus the order complex $\Sigma^\tau_0$ is the (discrete) set of hyperplanes of $\Lambda$ off $\Gamma$. The order complex $\Sigma^\tau_1$ is $1$-dimensional. We illustrate the part of $\Sigma^\tau_1$ that stems from a subconstellation $\sigma$ of class 1 in \autoref{fig: Sigma of U22} (where the dotted line indicates that the bottom of $\Lambda_\sigma$ is indecomposable in $\tau$, i.e.,\ is contained in a third hyperplane of $\tau$ that is not in $\Lambda_\sigma$). Note that after identifying the two class 0 subconstellations $\Lambda_\sigma/1$ and $\Lambda_\sigma/2$ in \autoref{fig: Sigma of U22} with the corresponding hyperplanes $H_1$ and $H_2$, the path between these two subconstellations corresponds exactly to two consecutive entries in a Tutte path $(\dots,H_1,H_{2},\dotsc)$ in $M$ off $\Gamma$.

\begin{figure}[t]
\begin{tikzpicture}[x=1.0cm,y=0.8cm, font=\footnotesize,decoration={markings,mark=at position 0.6 with {\arrow{latex}}}]
   \node at (0,0)
  {
   \begin{tikzpicture}
   \node at (-0.5,2) {$\Lambda_\sigma = \Lambda_{U_{2,2}}$};  
   \filldraw[fill=green!30!white,draw=green!80!black,rounded corners=2pt] (0.7,2.3) -- (0.7,1.7) -- (1.3,1.7) -- (1.3,2.3) -- cycle;
   \node (0) at (1,0) {$\emptyset$};  
   \node (1) at (0.5,1) {$1$};  
   \node (2) at (1.5,1) {$2$};  
   \node (3) at (2.5,1) {$\ $};  
   \node (E) at (1,2) {$E$};  
   \draw (0) to (1);
   \draw (0) to (2);
   \draw[dashed] (0) to (3);
   \draw (1) to (E);
   \draw (2) to (E);
   \end{tikzpicture} 
  };
  \node at (4.5,0)
  {
  \begin{tikzpicture}
   \node at (-1,1) {$\cX^\sigma_1$};  
   \node (0) at (1,-1) {$\Lambda_\sigma$};  
   \node (1) at (0.3,0.5) {$\Lambda_\sigma/1$};  
   \node (2) at (1.7,0.5) {$\Lambda_\sigma/2$};  
   \draw (0) to (1);
   \draw (0) to (2);
  \end{tikzpicture} 
 };
 \node at (10,0)
 {
 \begin{tikzpicture}
  \node at (3,2.5) {$\Sigma^\sigma_1$};  
  \draw (4,2) -- (5,1) -- (6,2);
  \filldraw (4,2) circle (2pt);  
  \node at (3.5,1.5) {$\Lambda_\sigma/1$};  
  \filldraw (5,1) circle (2pt);  
  \node at (5,0.5) {$\Lambda_\sigma$};  
  \filldraw (6,2) circle (2pt);  
  \node at (6.5,1.5) {$\Lambda_\sigma/2$};  
   \end{tikzpicture} 
  };
\end{tikzpicture}
 \caption{Subconstellation of class 1 and its associated poset and order complex}
 \label{fig: Sigma of U22}
\end{figure}
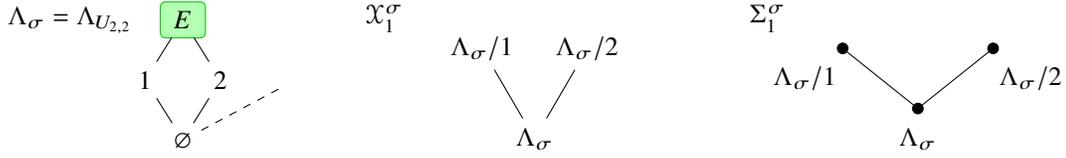 

We denote by $H_i(X,\Z)$ the singular homology of a topological space $X$ with integral coefficients.

\begin{thm}[Topological path theorem]\label{thm: topological path theorem}
 Let $\tau=(\Lambda,\Gamma)$ be an indecomposable Tutte constellation with $\Gamma\neq\Lambda$. Then $H_0(\Sigma^\tau_1,\Z)\simeq\Z$.
\end{thm}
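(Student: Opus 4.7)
The plan is to recognize $\Sigma^\tau_1$ as (a subdivision of) Tutte's graph $G_{M,\Gamma}$ from \autoref{cor: graph version of the path theorem}, whereupon the conclusion follows at once from that corollary.

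First I would unfold the definitions. A class 0 subconstellation has upper sublattice $\{H,E\}$ for a hyperplane $H$ of $\Lambda$, and the constraint $\Gamma_\sigma=\{E\}$ forces $H\notin\Gamma$. A class 1 subconstellation has upper sublattice $\{L,H_1,H_2,E\}$ where $L=H_1\cap H_2$ is a corank $2$ flat which is indecomposable in $\Lambda$, with $L,H_1,H_2\notin\Gamma$. These data are precisely the vertices and edges, respectively, of the graph $G_{M,\Gamma}$.

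Next I would analyze the order complex. Two distinct class 0 subconstellations are never comparable in $\cX^\tau_1$, and neither are two distinct class 1 subconstellations; a class 0 subconstellation lies below a class 1 subconstellation precisely when its hyperplane is one of the two hyperplanes of the latter. Consequently the $1$-simplices of $\Sigma^\tau_1$ are exactly these incidences, there are no simplices of dimension $\geq 2$, and $\Sigma^\tau_1$ is obtained from $G_{M,\Gamma}$ by placing a new vertex in the middle of each edge. In particular $\Sigma^\tau_1$ and $G_{M,\Gamma}$ have the same connected components.

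The hypothesis $\Gamma\neq\Lambda$ guarantees that $\Sigma^\tau_1$ is non-empty: if every hyperplane lay in $\Gamma$, then an induction on corank, using that in a geometric lattice every corank $k$ flat with $k\geq 2$ is the modular intersection of any two distinct corank $(k-1)$ flats covering it (an easy semimodularity computation), would force $\Gamma=\Lambda$. Since $\tau$ is indecomposable, $M$ is connected, so \autoref{cor: graph version of the path theorem} yields that $G_{M,\Gamma}$, and hence $\Sigma^\tau_1$, is connected. A non-empty connected space has $H_0(-,\Z)\cong\Z$, completing the proof. The only mildly subtle step is the non-emptiness check; the rest is a dictionary translation between order complexes and graphs combined with a direct appeal to the corollary, so no genuine new difficulty is expected.
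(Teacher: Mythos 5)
Your proof is correct and takes essentially the same approach as the paper: both identify $\Sigma^\tau_1$ with (a subdivision of) the graph $G_{M,\Gamma}$ and deduce connectedness from Tutte's path theorem (the paper invokes \autoref{thm: Tutte's path theorem} directly, you invoke its graph-theoretic reformulation \autoref{cor: graph version of the path theorem}, which is the same thing). You additionally supply a short inductive justification for the non-emptiness step that the paper merely asserts; that justification is sound.
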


\begin{proof}
 Note that $\Sigma^\tau_1$ is non-empty since $\Gamma\neq\Lambda$. Thus $H_0(\Sigma^\tau_1,\Z)=\Z$ if $\Sigma^\tau_1$ is connected. As a simplicial complex, $\Sigma^\tau_1$ is connected if it is path connected, which we prove in the following.

 Since every subconstellation $\sigma$ of class 1 contains two subconstellations $\eta_1$ and $\eta_2$ of class 0, every class 1 vertex $\sigma$ of $\Sigma^\tau_1$ is connected to exactly two class 0 vertices $\eta_1$ and $\eta_2$ of $\Sigma^\tau_1$ by line segments (see\ \autoref{fig: Sigma of U22}). Thus we only need to show that any two class 0 vertices $\eta$ and $\eta'$ of $\Sigma^\tau_1$ can be connected by a path.
 
 The class 0 subconstellations $\eta$ and $\eta'$ correspond to hyperplanes $H$ and $H'$, respectively, off $\Gamma$. By Tutte's path theorem (\autoref{thm: Tutte's path theorem}), there is a Tutte path $(H_1,\dotsc,H_n)$ off $\Gamma$ with $H_1=H$ and $H_n=H'$. Let $\eta_i$ be the class 0 subconstellation corresponding to $H_i$. By the definition of a Tutte path, $H_i$ and $H_{i+1}$ intersect in an indecomposable corank $2$ flat $F_i$, which corresponds to a class 1 subconstellation $\sigma_i$, which is path-connected to both $\eta_i$ and $\eta_{i+1}$ in $\Sigma^\tau_1$ by our previous observations. This shows that $\eta$ and $\eta'$ can be connected by a path in $\Sigma^\tau_1$, concluding the proof.
\end{proof}

\begin{rem}
By the Hurewicz theorem, $H_0(\Sigma^\tau_1,\Z)\simeq\Z$ is equivalent to $\pi_0(\Sigma^\tau_1)= \{ 0 \}$, i.e., to the statement that $\Sigma^\tau_1$ is path-connected. Since homotopy groups depend, \emph{a priori}, on the choice of a base point, but there is no such canonical choice for a Tutte constellation, it is more natural to consider homology. 
\end{rem}

The following more general version of \autoref{thm: topological path theorem} allows us to relax the hypothesis that $\Lambda$ is indecomposable. Note that every subconstellation $\sigma$ of type $U_{1,1}$ with $\Gamma_\sigma=\{E\}$ corresponds to a point of $\Sigma^\tau_1$ and thus defines a class $[\sigma]$ in $H_0(\Sigma_1,\Z)$. Note further that the lattice of flats of $M=M_1\oplus\dotsb\oplus M_r$ is the product $\Lambda=\Lambda_{M_1}\times\dotsb\times\Lambda_{M_r}$ of the respective lattices of flats of the $M_i$, and that $\Lambda_{M_i}$ can be considered naturally as an upper sublattice of $\Lambda$.

\begin{thm}\label{thm: topological path theorem for direct sums}
 Let $\tau$ be a Tutte constellation of type $M$ and let $M=M_1\oplus\dotsb\oplus M_r$ the decomposition of $M$ into connected components. Assume that there exists some $s \leq r$ such that $\Lambda_{M_i}\not\subset\Gamma_\tau$ if and only if $1 \leq i \leq s$, i.e., suppose that for $i=1,\dotsc,s$, there is a subconstellation $\sigma_i$ of $\tau$ of class 0 with $\Lambda_{\sigma_i}\subset\Lambda_{M_i}$, and for $i = s+1, \dots, r$ we have $\Lambda_{M_i} \subset\Gamma_\tau$. Then $H_0(\Sigma^\tau_1,\Z)\simeq\Z^s$, and the classes $[\sigma_1],\dotsc,[\sigma_s]$ form a basis of $H_0(\Sigma^\tau_1,\Z)$.
\end{thm}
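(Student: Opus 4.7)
The plan is to reduce to the indecomposable case (\autoref{thm: topological path theorem}) by showing that $\Sigma_1^\tau$ decomposes as a disjoint union of the analogous complexes for each connected summand of $M$. For each $i = 1, \ldots, r$, let $\tau_i = (\Lambda_{M_i}, \Gamma_i)$ be the Tutte subconstellation of $\tau$ obtained by restricting to the upper sublattice $\Lambda_{M_i} \subset \Lambda$, with induced modular cut $\Gamma_i = \Gamma \cap \Lambda_{M_i}$. My goal is to establish that
\[
    \cX_1^\tau \ = \ \bigsqcup_{i=1}^s \cX_1^{\tau_i} \qquad \text{and therefore} \qquad \Sigma_1^\tau \ = \ \bigsqcup_{i=1}^s \Sigma_1^{\tau_i}
\]
as posets and topological spaces, respectively, with no comparisons or simplices bridging distinct summands.

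First I would handle class 0 subconstellations. Since $\Lambda = \Lambda_{M_1}\times\dotsb\times\Lambda_{M_r}$ and (co)rank is additive across the factors, every hyperplane of $M$ is of the form $(E_1,\dotsc,E_{j-1},H_j',E_{j+1},\dotsc,E_r)$ for a uniquely determined index $j$ and hyperplane $H_j'$ of $M_j$, and therefore lies in a unique $\Lambda_{M_j}$. By hypothesis, for $j > s$ every flat of $\Lambda_{M_j}$ lies in $\Gamma$, so every hyperplane of $M$ off $\Gamma$ lives in $\Lambda_{M_i}$ for some unique $i \in \{1,\dotsc,s\}$.

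Next I would handle class 1 subconstellations. The key observation is a classification of indecomposable corank $2$ flats in the direct sum: a corank $2$ flat $L = (L_1,\dotsc,L_r)$ of $\Lambda$ satisfies $\sum_i \cork_{M_i}(L_i) = 2$, so either one $L_i$ has corank $2$ in $M_i$ with the rest equal to $E_j$ (giving $M/L = M_i/L_i$), or two coordinates $L_i, L_j$ are hyperplanes of their respective factors with the rest equal to $E_k$ (giving $M/L \cong U_{1,|E_i-L_i|}\oplus U_{1,|E_j-L_j|}$, which is never connected). Indecomposability of $L$ therefore forces the first case, so $L_i$ is an indecomposable corank $2$ flat of $M_i$. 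An analysis as above then shows that the only hyperplanes of $M$ containing $L$ are those of the form $(E_1,\dotsc,H_i',\dotsc,E_r)$ with $H_i' \supset L_i$ a hyperplane of $M_i$, so both atoms of $\Lambda_\sigma$ lie in $\Lambda_{M_i}$ and hence $\sigma \in \cX_1^{\tau_i}$ with $i \leq s$ by the class 0 argument. No comparisons $\eta < \sigma$ in $\cX_1^\tau$ cross summands, since $\eta \subset \sigma$ forces $\eta$ and $\sigma$ to share the same $i$.

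Finally, I would apply \autoref{thm: topological path theorem} to each $\tau_i$ for $i \leq s$: $\tau_i$ is indecomposable because $M_i$ is connected, and $\Gamma_i \neq \Lambda_{M_i}$ is witnessed by the class 0 subconstellation $\sigma_i$. This yields $H_0(\Sigma_1^{\tau_i},\Z) \simeq \Z$. Combining with the disjoint union decomposition above gives
\[
    H_0(\Sigma_1^\tau,\Z) \ \simeq \ \bigoplus_{i=1}^s H_0(\Sigma_1^{\tau_i},\Z) \ \simeq \ \Z^s.
\]
Since $\sigma_i$ is a vertex of the connected component $\Sigma_1^{\tau_i}$, its class $[\sigma_i]$ generates the corresponding copy of $\Z$, so $[\sigma_1],\dotsc,[\sigma_s]$ form a basis as claimed. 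The only nontrivial ingredient is the classification of indecomposable corank $2$ flats in a direct sum; everything else is routine bookkeeping together with the previous theorem.
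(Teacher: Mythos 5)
Your proposal is correct and takes essentially the same approach as the paper: decompose $\Sigma_1^\tau$ as a disjoint union of the $\Sigma_1^{\tau_i}$ by observing that (a) each hyperplane off $\Gamma$ lives in a unique factor $\Lambda_{M_i}$, (b) two hyperplanes from different factors intersect in a decomposable corank $2$ flat, so no class $1$ subconstellation bridges factors, and then apply \autoref{thm: topological path theorem} to each connected component. The paper's proof compresses your corank-$2$ classification into a single sentence and keeps the sum indexed over all $r$ components (with the vanishing $H_0$ of the empty complexes $\Sigma_1^{\tau_i}$ for $i>s$ absorbing the restriction to $s$), but the content is the same; your explicit case split on $\sum_i \cork_{M_i}(L_i)=2$ is a reasonable way to spell out the decomposability claim the paper states without proof.
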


\begin{proof}
Let $\Gamma_i$ be the intersection of $\Gamma$ with $\Lambda_{M_i}$, which defines the subconstellation $\tau_i=(\Lambda_{M_i},\Gamma_i)$ of $\tau$. If $\Gamma_i=\Lambda_{M_i}$, then $\Sigma^{\tau_i}_1$ is empty and thus $H_0(\Sigma^{\tau_i}_1,\Z)=0$. If $\Gamma_i\subsetneq\Lambda_{M_i}$, then $H_0(\Sigma^{\tau_i}_1,\Z)=\Z$ by \autoref{thm: topological path theorem}, and the class of any vertex generates $H_0(\Sigma^{\tau_i}_1,\Z)$.
 
 The set of hyperplanes of $M$ is a disjoint union of the individual sets of hyperplanes of the direct summands $M_i$ (where a hyperplane $H$ of $M_i$ corresponds to the hyperplane $H_i\cup\bigcup_{j\neq i} E(M_j)$), and any two hyperplanes $H$ and $H'$ that belong to different components $M_i$ and $M_j$ intersect in a decomposable corank $2$ flat. This shows that $\Sigma^\tau_1$ is the disjoint union of the order complexes $\Sigma^{\tau_i}_1$, and thus
 \[
  H_0(\Sigma^{\tau}_1,\Z) \ \ = \ \ \bigoplus_{i=1}^r \ H_0(\Sigma^{\tau_i}_1,\Z).
 \]
 as desired.
\end{proof}

\subsection{A topological interpretation of the homotopy theorem}
\label{subsection: topological interpretation of the homotopy theorem}

Consider the following classes of subconstellations of $\tau$:

\medskip\noindent\textbf{Class 2a.} A \emph{subconstellation of class 2a} is a subconstellation $\sigma$ of type $U_{2,3}$ with $\Gamma_\sigma=\{E\}$ (see~\autoref{fig: kind2}). 

\medskip\noindent\textbf{Class 2b.} A \emph{subconstellation of class 2b} is a subconstellation $\sigma$ of type $U_{3,3}$ with $\Gamma_\sigma=\{E\}$ such that each corank $2$ flat of $\sigma$ is indecomposable in $\tau$ (see~\autoref{fig: kind2}). 

\medskip\noindent\textbf{Class 2c.} A \emph{subconstellation of class 2c} is a subconstellation $\sigma$ of type $U_{3,4}$ with $\Gamma_\sigma=\{H_1,\ H_2,\ E\}$, where $H_1$ and $H_2$ are two hyperplanes that intersect in a corank $3$ flat of $\Lambda$ (see~\autoref{fig: kind3}).

\medskip\noindent\textbf{Class 2d.} A \emph{subconstellation of class 2d} is a subconstellation $\sigma$ of type $M(K_{2,3})$ with $\Gamma=\{H_1,\dotsc,H_4,\ E\}$ such that the pairwise intersections $H_i\cap H_j$ (for $i\neq j$) correspond to six corank $3$ flats of $\Lambda$, and such that its three decomposable corank $2$ flats are indecomposable in $\tau$ (see\ \autoref{fig: Tutte constellation of kind4}).

\begin{df}
 The {\em second subposet $\cX^\tau_2$} is the union of $\cX^\tau_1$ with all subconstellations of $\tau$ of classes 2a--2d. We denote the order complex of $\cX^\tau_2$ by $\Sigma^\tau_2$.
\end{df}

Tutte's homotopy theorem (\autoref{thm: Tutte's homotopy theorem}) asserts that we do not need to add any further cells to make the first homology of $\Sigma^\tau_2$ vanish, as made precise in the following result.

\begin{thm}[Topological homotopy theorem]\label{thm: topological homotopy theorem absolute}
 Let $\tau$ be a Tutte constellation. Then $H_1(\Sigma^\tau_2,\Z)=0$.
\end{thm}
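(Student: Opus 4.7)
The plan is to translate the topological statement into a combinatorial one and then invoke Tutte's homotopy theorem (\autoref{thm: Tutte's homotopy theorem}). We may immediately reduce to the case that $\tau$ is indecomposable, since $\Sigma^\tau_2$ splits as a disjoint union of the $\Sigma^{\tau_i}_2$ corresponding to the connected components of $\tau$, mirroring the argument in \autoref{thm: topological path theorem for direct sums}; we may also assume $\Gamma\neq\Lambda$, since otherwise $\Sigma^\tau_2=\emptyset$.

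First I would show that every $1$-cycle $z$ in $\Sigma^\tau_2$ is homologous to a $1$-cycle $z'$ supported on edges of the form $\eta\subset\sigma$ with $\eta$ of class $0$ and $\sigma$ of class $1$ --- that is, on the $1$-skeleton of $\Sigma^\tau_1\subset\Sigma^\tau_2$. The key observation is that for every class $2$ vertex $\kappa$ of $\Sigma^\tau_2$, every class $0$ subconstellation $\eta\subset\kappa$ is contained in some class $1$ subconstellation $\sigma$ with $\eta\subset\sigma\subset\kappa$. This uses that the bottom of $\sigma$ is an indecomposable corank $2$ flat of $\tau$, which in turn follows either from the explicit indecomposability assumption in the definitions of classes 2b and 2d, or from \autoref{ex:conn-line} (since in classes 2a and 2c, the relevant corank $2$ flats lie below at least three hyperplanes within $\kappa$). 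Given such a triangle $\{\eta,\sigma,\kappa\}$ in $\Sigma^\tau_2$, the edge $(\eta,\kappa)$ is homologous to $(\eta,\sigma)+(\sigma,\kappa)$, and iterating resolves every ``shortcut'' edge of type $(\text{class 0},\text{class 2})$ or $(\text{class 1},\text{class 2})$.

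Next, any $1$-cycle on the bipartite subcomplex of class $0$ and class $1$ vertices decomposes as an integer combination of cycles, each of which traces out a closed Tutte path $\gamma=(H_1,\dotsc,H_k,H_1)$ in $\tau$ (in the sense recalled in \autoref{subsection: homotopy theorem}). By Tutte's homotopy theorem, $\gamma$ is a composition of elementary Tutte paths of the four kinds, and by additivity of the boundary map it suffices to prove that each elementary Tutte path bounds a $2$-chain in $\Sigma^\tau_2$. For the first kind, the loop $(H_1,H_2,H_1)$ is identically zero as a chain of oriented edges since the two traversals cancel. For the second, third, and fourth kinds, the elementary loop bounds the ``fan'' $\sum_i\pm\{\eta_i,\sigma_{ij},\kappa\}$ of $2$-simplices over the associated class $2$ vertex $\kappa$ (of class 2a or 2b, class 2c, or class 2d respectively); a direct sign/orientation calculation then shows that the boundary of this fan is exactly the given elementary cycle.

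The main obstacle will be the first step --- verifying uniformly, across the four classes 2a--2d, that every edge $(v,\kappa)$ in $\Sigma^\tau_2$ can be resolved into a path in $\Sigma^\tau_1$ --- since this requires checking indecomposability of the relevant corank $2$ flats case-by-case. A secondary subtlety is the fourth kind: there the configuration of type $M(K_{2,3})$ has rank four, and the supporting fan of $2$-simplices over the class 2d vertex $\kappa$ must be assembled from chains $\eta_i\subset\sigma_{ij}\subset\kappa$ whose intermediate class $1$ subconstellations $\sigma_{ij}$ exist precisely because the three decomposable corank $2$ flats of $M(K_{2,3})$ are required, by definition of class 2d, to be indecomposable in $\tau$. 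Once these verifications are in hand, summing all cancellations expresses $z$ as $\partial c$ for an explicit $2$-chain $c$, proving $H_1(\Sigma^\tau_2,\Z)=0$.
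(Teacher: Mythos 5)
Your proposal is correct and follows essentially the same route as the paper: reduce to the connected case, push any $1$-cycle off the class $2$ vertices onto the $1$-skeleton $\Sigma^\tau_1$ so that it corresponds to an integer combination of closed Tutte paths, invoke Tutte's homotopy theorem, and observe that each elementary Tutte path bounds because it lies in the star of (equivalently, your ``fan'' over) the corresponding class 2a--2d vertex, which is a contractible cone. The only cosmetic difference is that the paper organizes the first reduction as two explicit steps (interleaving class $0$ subconstellations along the chain, then replacing each class 2a--2d vertex by a walk through its fan) rather than an edge-by-edge resolution, but the underlying mechanism is identical.
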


\begin{proof}
 Let $M$ be the type of $\tau$. As a first step, we reduce the problem to the case where $M$ is connected. Let $M=M_1\oplus\dotsb\oplus M_r$ be the decomposition of $M$ into its connected components and let $\Lambda=\Lambda_{M_1}\times\dotsb\times\Lambda_{M_r}$ be the corresponding decomposition of $\Lambda$ into upper sublattices of $\Lambda$. Let $\Gamma_i=\Gamma\cap\Lambda_{M_i}$ and $\tau_i=(\Lambda_{M_i},\Gamma_i)$. Each subconstellation $\sigma$ of $\tau$ of class 2a--2d is contained in some $\tau_i$. (If $\sigma$ is of class 2a, 2c or 2d, this follows from the fact that $\sigma$ is indecomposable, and if $\sigma$ is of class 2b, this follows from the fact that all corank $2$ flats of $\sigma$ are indecomposable.) This shows that $\Sigma^\tau_2$ is the disjoint union of the $\Sigma^{\tau_i}_2$, which reduces the proof to the case that $M$ is connected.

 The first homology group of $\Sigma^\tau_1$ is generated by the classes of closed $1$-chains, and we aim to show that each such class is trivial, i.e., each closed $1$-chain is the boundary of a $2$-chain. A closed $1$-chain is a sequence of oriented $1$-simplices, which can be represented as the sequence $(\sigma_1,\dotsc,\sigma_\ell,\sigma_1)$ of consecutive end vertices $\sigma_i$ of the $1$-simplices in the $1$-chain.
 
 As a first reduction step, we insert subconstellations of class 0 at every second position. Observe that for each $i=1,\dotsc,\ell$, we have either $\sigma_i\subset\sigma_{i+1}$ or $\sigma_{i+1}\subset\sigma_{i}$ (where $\sigma_{\ell+1}=\sigma_1$). If none of $\sigma_i$ and $\sigma_{i+1}$ is of class 0, then we can choose a subconstellation $\sigma_{i+\epsilon}\subset\sigma_i\cap\sigma_{i+1}$ of class 0 and add the boundary $(\sigma_i,\sigma_{i+\epsilon},\sigma_{i+1},\sigma)$ of the $2$-simplex $(\sigma_{i+\epsilon}\subset\sigma_i\subset\sigma_{i+1})$ (resp.\ $(\sigma_{i+\epsilon}\subset\sigma_{i+1}\subset\sigma_{i})$, depending on the containment relation between $\sigma_i$ and $\sigma_{i+1}$) to the $1$-chain, which replaces the $1$-simplex between $\sigma_i$ and $\sigma_{i+1}$ by the sequence of $1$-simplices $(\sigma_i,\sigma_{i+\epsilon},\sigma_{i+1})$. Therefore, we can assume without loss of generality that $\ell$ is even and that $\sigma_i$ is of class 0 for $i$ odd.
 
 As a second reduction step, we replace subconstellations of classes 2a--2d by sequences of subconstellations of classes 0 and 1. Consider, for odd $i$, the sequence $(\sigma_i,\sigma_{i+1},\sigma_{i+2})$, where $\sigma_i$ and $\sigma_{i+2}$ are of class 0 by assumption. If $\sigma_{i+1}$ is of class 2a--2d, then we find a $2$-chain of the form
 \[
  (\sigma_i\subset\sigma_{i+\epsilon}\subset\sigma_{i+1}) \ + \ (\sigma_{i+2\epsilon}\subset\sigma_{i+\epsilon}\subset\sigma_{i+1}) \ + \ \dotsb \ + \ (\sigma_{i+2}\subset\sigma_{i+s\epsilon}\subset\sigma_{i+1}),
 \]
 where the $\sigma_{i+k\epsilon}$ are subconstellations of $\sigma_{i+1}$ of classes 0 or 1 (depending on the parity of $k$). Adding its boundary $(\sigma_{i},\sigma_{i+\epsilon},\dotsc,\sigma_{i+s\epsilon},\sigma_{i+2},\sigma_i)$ to our chain replaces the sequence $(\sigma_i,\sigma_{i+1},\sigma_{i+2})$ by the sequence $(\sigma_{i},\sigma_{i+\epsilon},\dotsc,\sigma_{i+s\epsilon},\sigma_{i+2})$. Thus, we can assume without loss of generality that $\sigma_i$ is of class 1 for $i$ even.
 
 Let $H_1,H_3,\dotsc,H_{\ell-1}$ be the hyperplanes corresponding to the class 0 subconstellations $\sigma_1,\sigma_3,\dotsc,\sigma_{\ell-1}$. Since for even $i$ the corank $2$ flat of $\sigma_i$ is connected and contained in $H_{i-1}$ and $H_{i+1}$, the sequence $(H_1,H_3,\dotsc,H_{\ell-1},H_1)$ is a closed Tutte path. By Tutte's homotopy theorem (\autoref{thm: Tutte's homotopy theorem}), this closed path is null-homotopic, i.e., it can be deformed into a combination of elementary Tutte paths $(H_{k,1},H_{k,3},\dotsc,H_{k,\ell_k},H_{k,1})$, which themselves correspond to $1$-chains $(\sigma_{k,1},\sigma_{k,2},\dotsc,\sigma_{k,\ell_k},\sigma_{k,1})$ in $\Sigma^k_2$ where $\sigma_{k,i}$ is of class 0 for odd $i$ and of class $1$ for even $i$. This means that 
 \[
  \big[(\sigma_{1},\dotsc,\sigma_{\ell},\sigma_{1})\big] \ \ = \ \ \sum_k \ \big[(\sigma_{k,1},\sigma_{k,2},\dotsc,\sigma_{k,\ell_k},\sigma_{k,1})\big]
 \]
 as classes in $H_1(\Sigma^\tau_2,\Z)$. Each of the $1$-chains $(\sigma_{k,1},\sigma_{k,2},\dotsc,\sigma_{k,\ell_k},\sigma_{k,1})$ is contained in a contractible subcomplex $\Sigma^{\sigma_{k,0}}_2$ of $\Sigma^k_2$, where $\sigma_{k,0}$ is a subconstellation of class 2a--2d, which shows that the class of $(\sigma_{k,1},\sigma_{k,2},\dotsc,\sigma_{k,\ell_k},\sigma_{k,1})$ is trivial in $H_1(\Sigma^\tau_2,\Z)$. This shows that the class of $(\sigma_{1},\dotsc,\sigma_{\ell},\sigma_{1})$ is trivial and thus $H_1(\Sigma^\tau_2,\Z)=0$, as claimed.
\end{proof}

\begin{rem}
 The topological versions of Tutte's path and homotopy theorem are equivalent to the original theorems, in the sense that the original theorems can be easily deduced from their topological versions. 
\end{rem}

\begin{rem}\label{rem: subconstellations for topological homotopy theorem}
If we remove any of the classes 2a--2d in the definition of $\Sigma^\tau_2$, then \autoref{thm: topological homotopy theorem absolute} no longer holds. This can be seen as follows. For notational purposes, define $\cX^\tau_{2,0} := \cX^\tau_1$
 
 We begin with the order complex $\Sigma^\sigma_{2,0}$ of a subconstellation $\sigma$ of class 2a, which is illustrated in \autoref{fig: class 2a} and homeomorphic to a $1$-sphere. Thus $H_1(\Sigma^\sigma_{2,0},\Z)=\Z$, which means that we need to include subconstellations of class 2a in order to make the topological homotopy theorem true. We define $\cX^\tau_{2,1}$ as the union of $\cX^\tau_{2,0}$ with all subconstellations of class 2a.

 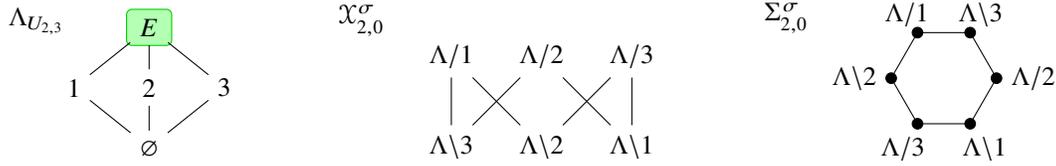
\begin{figure}[t]
\begin{tikzpicture}[x=1.0cm,y=0.8cm, font=\footnotesize,decoration={markings,mark=at position 0.6 with {\arrow{latex}}}]
   \node at (-0.5,0)
  {
   \begin{tikzpicture}
   \node at (-0.5,2.1) {$\Lambda_{U_{2,3}}$};  
   \filldraw[fill=green!30!white,draw=green!80!black,rounded corners=2pt] (0.7,2.3) -- (0.7,1.7) -- (1.3,1.7) -- (1.3,2.3) -- cycle;
   \node (0) at (1,0) {$\emptyset$};  
   \node (1) at (0,1) {$1$};  
   \node (2) at (1,1) {$2$};  
   \node (3) at (2,1) {$3$};  
   \node (E) at (1,2) {$E$};  
   \draw (0) to (1);
   \draw (0) to (2);
   \draw (0) to (3);
   \draw (1) to (E);
   \draw (2) to (E);
   \draw (3) to (E);
   \end{tikzpicture} 
  };
  \node at (4.5,0)
  {
  \begin{tikzpicture}[x=1.2cm,y=0.8cm]
   \node at (-1,2.1) {$\cX^\sigma_{2,0}$};  
   \node (12) at (0,0) {$\Lambda\minus3$};  
   \node (13) at (1,0) {$\Lambda\minus2$};  
   \node (23) at (2,0) {$\Lambda\minus1$};  
   \node (1) at (0,1.5) {$\Lambda/1$};  
   \node (2) at (1,1.5) {$\Lambda/2$};  
   \node (3) at (2,1.5) {$\Lambda/3$};  
   \draw (12) to (1);
   \draw (12) to (2);
   \draw (13) to (1);
   \draw (13) to (3);
   \draw (23) to (2);
   \draw (23) to (3);
  \end{tikzpicture} 
 };
 \node at (10,0)
 {
 \begin{tikzpicture}[x=0.7cm,y=0.7cm]
  \node at (-3,1.15) {$\Sigma^\sigma_{2,0}$};  
  \foreach \a in {1,...,6}
  {
   \filldraw (60*\a:1) circle (2pt);  
   \draw (60*\a:1) -- (60+60*\a:1);
  };
  \node at (0:1.7) {$\Lambda/2$};  
  \node at (60:1.4) {$\Lambda\minus3$};  
  \node at (120:1.4) {$\Lambda/1$};  
  \node at (180:1.7) {$\Lambda\minus2$};  
  \node at (240:1.55) {$\Lambda/3$};  
  \node at (300:1.55) {$\Lambda\minus1$};  
   \end{tikzpicture} 
  };
\end{tikzpicture}
 \caption{$\Sigma^\sigma_{2,0}$ for a subconstellation $\sigma$ of class 2a}
 \label{fig: class 2a}
\end{figure}

Next we consider $\cX^\sigma_{2,1}$ for $\sigma$ of class 2b, whose order complex $\Sigma^\sigma_{2,1}$ is also homeomorphic to a $1$-sphere, as illustrated in \autoref{fig: class 2b}. Thus $H_1(\Sigma^\sigma_{2,1},\Z)=\Z$, which means that we need to include subconstellations of class 2b in order for the topological homotopy theorem to be true. We define $\cX^\tau_{2,2}$ as the union of $\cX^\tau_{2,1}$ with all subconstellations of class 2b.

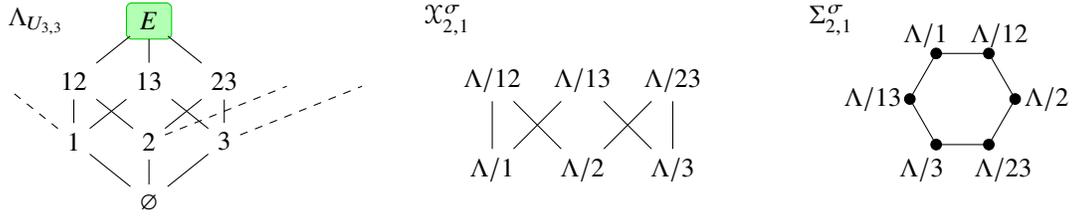
\begin{figure}[t]
\begin{tikzpicture}[x=1.0cm,y=0.8cm, font=\footnotesize,decoration={markings,mark=at position 0.6 with {\arrow{latex}}}]
   \node at (-0.5,0)
  {
   \begin{tikzpicture}
   \node at (-0.5,3) {$\Lambda_{U_{3,3}}$};  
   \filldraw[fill=green!30!white,draw=green!80!black,rounded corners=2pt] (0.7,3.3) -- (0.7,2.7) -- (1.3,2.7) -- (1.3,3.3) -- cycle;
   \node (0) at (1,0) {$\emptyset$};  
   \node (1) at (0,1) {$1$};  
   \node (2) at (1,1) {$2$};  
   \node (3) at (2,1) {$3$};  
   \node (12) at (0,2) {$12$};  
   \node (13) at (1,2) {$13$};  
   \node (23) at (2,2) {$23$};  
   \node (14) at (-1,2) {$\ $};  
   \node (24) at (3,2) {$\ $};  
   \node (34) at (4,2) {$\ $};  
   \node (E) at (1,3) {$E$};  
   \draw (0) to (1);
   \draw (0) to (2);
   \draw (0) to (3);
   \draw (1) to (12);
   \draw (1) to (13);
   \draw (2) to (12);
   \draw (2) to (23);
   \draw (3) to (13);
   \draw (3) to (23);
   \draw[dashed] (1) to (14);
   \draw[dashed] (2) to (24);
   \draw[dashed] (3) to (34);
   \draw (12) to (E);
   \draw (13) to (E);
   \draw (23) to (E);
   \end{tikzpicture} 
  };
  \node at (4.5,0.25)
  {
  \begin{tikzpicture}[x=1.2cm,y=0.8cm]
   \node at (-0.5,3) {$\cX^\sigma_{2,1}$};  
   \node (1) at (0,0.5) {$\Lambda/1$};  
   \node (2) at (1,0.5) {$\Lambda/2$};  
   \node (3) at (2,0.5) {$\Lambda/3$};  
   \node (12) at (0,2) {$\Lambda/{12}$};  
   \node (13) at (1,2) {$\Lambda/{13}$};  
   \node (23) at (2,2) {$\Lambda/{23}$};  
   \draw (12) to (1);
   \draw (12) to (2);
   \draw (13) to (1);
   \draw (13) to (3);
   \draw (23) to (2);
   \draw (23) to (3);
  \end{tikzpicture} 
 };
 \node at (9.5,0.25)
 {
 \begin{tikzpicture}[x=0.7cm,y=0.7cm]
  \node at (-2.5,1.5) {$\Sigma^\sigma_{2,1}$};  
  \foreach \a in {1,...,6}
  {
   \filldraw (60*\a:1) circle (2pt);  
   \draw (60*\a:1) -- (60+60*\a:1);
  };
  \node at (0:1.6) {$\Lambda/2$};  
  \node at (60:1.4) {$\Lambda/{12}$};  
  \node at (120:1.4) {$\Lambda/1$};  
  \node at (180:1.7) {$\Lambda/{13}$};  
  \node at (240:1.55) {$\Lambda/3$};  
  \node at (300:1.55) {$\Lambda/{23}$};   
   \end{tikzpicture} 
  };
\end{tikzpicture}
 \caption{$\Sigma^\sigma_{2,1}$ for a subconstellation $\sigma$ of class 2b}
 \label{fig: class 2b}
\end{figure}

Next we consider $\cX^\sigma_{2,2}$ for $\sigma$ of class 2c, whose order complex $\Sigma^\sigma_{2,2}$ is also homeomorphic to a $1$-sphere, as illustrated in \autoref{fig: class 2c}. Thus $H_1(\Sigma^\sigma_{2,2},\Z)=\Z$, which means that we need to include subconstellations of class 2c in order for the topological homotopy theorem to be true. We define $\cX^\tau_{2,3}$ as the union of $\cX^\tau_{2,2}$ with all subconstellations of class 2c.

\begin{figure}[t]
\begin{tikzpicture}[x=1.0cm,y=0.8cm, font=\footnotesize,decoration={markings,mark=at position 0.6 with {\arrow{latex}}}]
   \node at (0,0)
  {
   \begin{tikzpicture}[x=0.8cm,y=0.8cm]
   \node at (0,3) {$\Lambda_{U_{3,4}}$};  
   \filldraw[fill=green!20!white,draw=green!80!black,rounded corners=2pt] (2.15,3.35) -- (1.75,2.3) -- (1.75,1.7) -- (3.25,1.7) -- (3.25,2.3) -- (2.85,3.35) -- cycle;
   \node (0) at (2.5,0) {$\emptyset$};  
   \node (1) at (1,1) {$1$};  
   \node (2) at (2,1) {$2$};  
   \node (3) at (3,1) {$3$};  
   \node (4) at (4,1) {$4$};  
   \node (12) at (0,2) {$12$};  
   \node (13) at (1,2) {$13$};  
   \node (23) at (2,2) {$23$};  
   \node (14) at (3,2) {$14$};  
   \node (24) at (4,2) {$24$};  
   \node (34) at (5,2) {$34$};  
   \node (1234) at (2.5,3) {$1234$};  
   \draw (0) to (1);
   \draw (0) to (2);
   \draw (0) to (3);
   \draw (0) to (4);
   \draw (1) to (12);
   \draw (1) to (13);
   \draw (1) to (14);
   \draw (2) to (12);
   \draw (2) to (23);
   \draw (2) to (24);
   \draw (3) to (13);
   \draw (3) to (23);
   \draw (3) to (34);
   \draw (4) to (14);
   \draw (4) to (24);
   \draw (4) to (34);
   \draw (12) to (1234);
   \draw (13) to (1234);
   \draw (14) to (1234);
   \draw (23) to (1234);
   \draw (24) to (1234);
   \draw (34) to (1234);
   \end{tikzpicture} 
  };
  \node at (5.4,0.25)
  {
  \begin{tikzpicture}[x=1.2cm,y=0.8cm]
   \node at (-0.3,3) {$\cX^\sigma_{2,2}$};  
   \node (12) at (0,2) {$\Lambda/{12}$};  
   \node (13) at (1,2) {$\Lambda/{13}$};  
   \node (24) at (2,2) {$\Lambda/{24}$};  
   \node (34) at (3,2) {$\Lambda/{34}$};  
   \node (1) at (0,0.5) {$\Lambda\minor 41$};  
   \node (2) at (1,0.5) {$\Lambda\minor32$};  
   \node (3) at (2,0.5) {$\Lambda\minor23$};  
   \node (4) at (3,0.5) {$\Lambda\minor14$};  
   \draw (1) to (12);
   \draw (1) to (13);
   \draw (2) to (12);
   \draw (2) to (24);
   \draw (3) to (13);
   \draw (3) to (34);
   \draw (4) to (24);
   \draw (4) to (34);
  \end{tikzpicture} 
 };
 \node at (10.5,0.1)
 {
 \begin{tikzpicture}[x=0.8cm,y=0.8cm]
  \node at (-2.2,1.5) {$\Sigma^\sigma_{2,2}$};  
  \foreach \a in {1,...,8}
  {
   \filldraw (22.5+45*\a:1) circle (2pt);  
   \draw (-22.5+45*\a:1) -- (22.5+45*\a:1);
  };
  \node at (1.7,0.37) {$\Lambda\minor14$};  
  \node at (0.8,1.3) {$\Lambda/{24}$};  
  \node at (-0.8,1.3) {$\Lambda\minor32$};  
  \node at (-1.6,0.37) {$\Lambda/{12}$};  
  \node at (-1.7,-0.4) {$\Lambda\minor41$};  
  \node at (-0.8,-1.32) {$\Lambda/{13}$};  
  \node at (0.8,-1.32) {$\Lambda\minor23$};  
  \node at (1.6,-0.4) {$\Lambda/{34}$};   
   \end{tikzpicture} 
  };
\end{tikzpicture}
 \caption{$\Sigma^\sigma_{2,2}$ for a subconstellation $\sigma$ of class 2c}
 \label{fig: class 2c}
\end{figure}
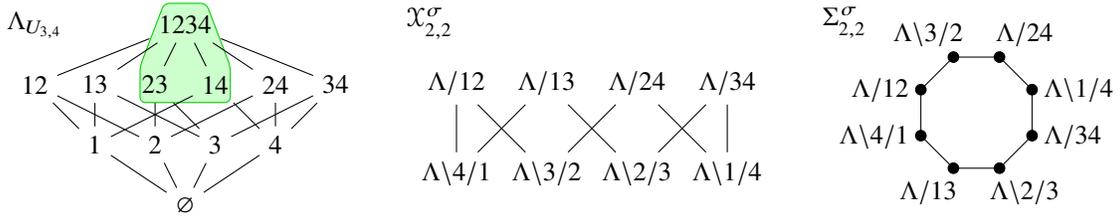

Finally, we consider $\cX^\sigma_{2,3}$ for $\sigma$ of class 2d, as illustrated in \autoref{fig: Tutte constellation of kind4} and \autoref{fig: kind4}. We use the same representation of $\sigma$ in this discussion. The poset $\cX^\sigma_{2,3}$ consists of subconstellations of classes $0$, $1$, and $2c$. In fact, every subconstellation of class $0$ and $1$ is contained in one of the six class $2c$ constellations with respective sublattices
\[
 \Lambda\minor41, \qquad \Lambda\minor52, \qquad \Lambda\minor63, \qquad \Lambda\minor14, \qquad \Lambda\minor25, \qquad \Lambda\minor36;
\]
see\ \autoref{fig: subconstellation Lambda41 in class 2d} for an illustration of the order complex of $\Lambda\minor41$, which is homeomorphic to a closed disc (note that we omit ``$\Lambda\minus$'' for better readability in the illustration of $\Sigma^{\sigma\minor41}_{2,3}$). 

\begin{figure}[t]
\begin{tikzpicture}[x=1.0cm,y=0.8cm, font=\footnotesize,decoration={markings,mark=at position 0.6 with {\arrow{latex}}}]
   \node at (0,0)
  {
   \begin{tikzpicture}[x=0.8cm,y=0.8cm]
   \node at (0.15,3) {$\Lambda\minor41$};  
   \filldraw[fill=green!20!white,draw=green!80!black,rounded corners=2pt] (2.25,3.35) -- (1.55,2.3) -- (1.55,1.7) -- (3.45,1.7) -- (3.45,2.3) -- (2.75,3.35) -- cycle;
   \node (1) at (2.5,0) {$1$};  
   \node (12) at (1,1) {$12$};  
   \node (13) at (2,1) {$13$};  
   \node (15) at (3,1) {$15$};  
   \node (16) at (4,1) {$16$};  
   \node (1245) at (0,2) {$1245$};  
   \node (126) at (1,2) {$126$};  
   \node (123) at (2,2) {$123$};  
   \node (156) at (3,2) {$156$};  
   \node (135) at (4,2) {$135$};  
   \node (1346) at (5,2) {$1346$};  
   \node (E) at (2.5,3) {$E$};  
   \draw (1) to (12);
   \draw (1) to (13);
   \draw (1) to (15);
   \draw (1) to (16);
   \draw (12) to (1245);
   \draw (12) to (126);
   \draw (12) to (123);
   \draw (13) to (123);
   \draw (13) to (135);
   \draw (13) to (1346);
   \draw (15) to (1245);
   \draw (15) to (156);
   \draw (15) to (135);
   \draw (16) to (126);
   \draw (16) to (156);
   \draw (16) to (1346);
   \draw (1245) to (E);
   \draw (126) to (E);
   \draw (123) to (E);
   \draw (156) to (E);
   \draw (135) to (E);
   \draw (1346) to (E);
   \end{tikzpicture} 
  };
  \node at (5.8,0)
  {
  \begin{tikzpicture}[x=1.4cm,y=0.8cm]
   \node at (-0.0,3) {$\cX^{\sigma\minor41}_{2,2}$};  
   \node (12) at (0,2) {$\Lambda/{1245}$};  
   \node (13) at (1,2) {$\Lambda/{135}$};  
   \node (24) at (2,2) {$\Lambda/{126}$};  
   \node (34) at (3,2) {$\Lambda/{1346}$};  
   \node (1) at (0,1) {$\Lambda\minor615$};  
   \node (2) at (1,1) {$\Lambda\minor312$};  
   \node (3) at (2,1) {$\Lambda\minor213$};  
   \node (4) at (3,1) {$\Lambda\minor516$};  
   \node (41) at (1.5,0) {$\Lambda\minor41$};
   \draw (41) to (1);
   \draw (41) to (2);
   \draw (41) to (3);
   \draw (41) to (4);
   \draw (1) to (12);
   \draw (1) to (13);
   \draw (2) to (12);
   \draw (2) to (24);
   \draw (3) to (13);
   \draw (3) to (34);
   \draw (4) to (24);
   \draw (4) to (34);
  \end{tikzpicture} 
 };
 \node at (10.6,0.1)
 {
 \begin{tikzpicture}[x=1.3cm,y=1.3cm,font=\bf]
  \node at (-1.4,0.85) {$\Sigma^{\sigma\minor41}_{2,2}$};  
  \filldraw[fill=blue!20!white,draw=blue!40!white] (22.5:1) -- (67.5:1) -- (112.5:1) -- (157.5:1) -- (202.5:1) -- (247.5:1) -- (292.5:1) -- (337.5:1) -- cycle;
  \foreach \a in {1,...,8}
  {
   \draw[draw=blue!40!white] (0:0) -- (22.5+45*\a:1);
   \draw[draw=blue!40!white]  (-22.5+45*\a:1) -- (22.5+45*\a:1);
  };
  \node at (0,0) {4/1};  
  \node at (22.5:1) {5/16};  
  \node at (67.5:1) {/126}; 
  \node at (112.5:1) {3/12};
  \node at (157.5:1) {/1245};
  \node at (202.5:1) {6/15};
  \node at (247.5:1) {/135};
  \node at (292.5:1) {2/13};
  \node at (337.5:1) {/1346};
   \end{tikzpicture} 
  };
\end{tikzpicture}
\caption{$\Sigma^{\sigma\minor41}_{2,3}$ for a subconstellation $\sigma$ of class 2d}
 \label{fig: subconstellation Lambda41 in class 2d}
\end{figure}

The order complex $\Sigma^\sigma_{2,3}$ is the union of the six discs corresponding to the six class $2c$ subconstellations, and is homeomorphic to a closed disc whose boundary points are identified with their antipodes, as illustrated in \autoref{fig: class 2d}. Thus $\Sigma^\sigma_{2,3}$ is a real projective plane and $H_1(\Sigma^\sigma_{2,2},\Z)=\Z/2\Z$. This means that we need to include subconstellations of class 2d in order for the homotopy theorem to be true. 

\begin{figure}[t]
\begin{tikzpicture}[x=1.3cm,y=1.3cm,font=\bf,line width=1pt]
  \filldraw[fill=blue!20!white,draw=blue!40!white] (0:3) -- (60:3) -- (120:3) -- (180:3) -- (240:3) -- (300:3) -- cycle;
  \foreach \a in {1,...,6}
  {
   \draw[draw=blue!40!white] (0:0) -- (60*\a:3);
   \draw[draw=blue!40!white]  (60*\a:1) -- (60+60*\a:1);
   \draw[draw=blue!40!white]  (60*\a:2) -- (60+60*\a:2);
   \draw[draw=blue!40!white]  (60*\a:3) -- (60+60*\a:3);
   \draw[draw=blue!40!white]  (60*\a:2) -- (30+60*\a:2.598);
   \draw[draw=blue!40!white]  (30+60*\a:1.732) -- (30+60*\a:2.598);
   \draw[draw=blue!40!white]  (60+60*\a:2) -- (30+60*\a:2.598);
  };
  \foreach \a in {1,...,3}
  {
   \draw[draw=blue!40!white]  (120*\a:2) -- (120+120*\a:2);
   \draw[draw=blue!40!white]  (60+120*\a:1) -- (30+120*\a:1.732);
   \draw[draw=blue!40!white]  (60+120*\a:1) -- (90+120*\a:1.732);
  }; 
  \node at (0:0) {/126};  
  \node at (0:1) {3/12};  
  \node at (60:1) {5/2};  
  \node at (120:1) {4/26};  
  \node at (180:1) {1/6};  
  \node at (240:1) {5/16};  
  \node at (300:1) {4/1};  
  \node at (  0:2) {/1245};  
  \node at ( 60:2) {/234};  
  \node at (120:2) {/2356};  
  \node at (180:2) {/456};  
  \node at (240:2) {/1346};  
  \node at (300:2) {/135};  
  \node at (  0:3) {3/45};  
  \node at ( 60:3) {5/34};  
  \node at (120:3) {4/35};  
  \node at (180:3) {3/45};  
  \node at (240:3) {5/34};  
  \node at (300:3) {4/35};  
  \node at ( 30:1.732) {3/24};  
  \node at ( 90:1.732) {4/23};  
  \node at (150:1.732) {1/56};  
  \node at (210:1.732) {2/46};  
  \node at (270:1.732) {2/13};  
  \node at (330:1.732) {6/15};  
  \node at ( 30:2.598) {1/4};  
  \node at ( 90:2.598) {6/3};  
  \node at (150:2.598) {2/5};  
  \node at (210:2.598) {1/4};  
  \node at (270:2.598) {6/3};  
  \node at (330:2.598) {2/5};  
\end{tikzpicture}
 \caption{$\Sigma^\sigma_{2,3}$ for a subconstellation $\sigma$ of class 2d}
 \label{fig: class 2d}
\end{figure}
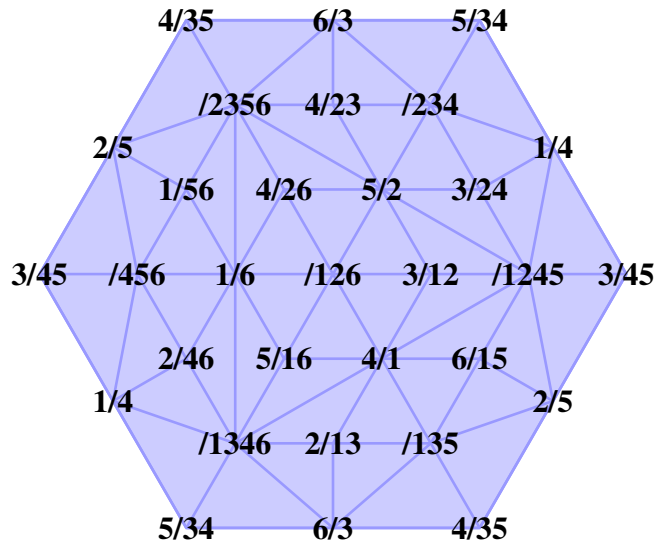
\end{rem}

\subsection{Towards a second homotopy theorem}
\label{subsection: towards a second homotopy theorem}

From a high level point of view (and deliberately oversimplifying matters somewhat), one might say that Tutte's proof of the homotopy theorem consists of finding an upper bound on the size of the subconstellations needed in the definition of $\cX_2^\tau$, together with an exhaustive search for all necessary subconstellations up to this bound, as described in \autoref{subsection: topological interpretation of the homotopy theorem}. 

Establishing an upper bound on the size of subconstellations which must be included in $\cX_2^\tau$ is the more difficult part of the theorem, and Tutte's argument involves some ingenious ideas. At the time of writing, we do not know of a corresponding upper bound for the size of subconstellations which must be included in $\cX_3^\tau$ in order for a conjectural second homotopy theorem to be true. Indeed, we do not know if such an upper bound exists at all. Nevertheless, we can search for classes of subconstellations which would need to be included by testing whether the second homology of order complexes of various Tutte constellations vanishes or not. We make this process explicit in the following, and exhibit a few first such necessary subconstellations.

A \emph{marked constellation} is a Tutte constellation $\sigma=(\Lambda,\Gamma)$, together with a collection $\Theta$ of decomposable corank $2$ flats of $\Lambda-\Gamma$. If $\sigma$ appears as a subconstellation of a Tutte constellation $\tau$, then we require that $\Theta$ consists of precisely those decomposable corank $2$ flats of $\Lambda-\Gamma$ that are indecomposable in $\tau$. 

By abuse of notation, we use the same symbol $\sigma$ for a marked constellation $(\Lambda,\Gamma,\Theta)$. The \emph{isomorphism class of $\sigma$} is the class of all marked constellations $\sigma'=(\Lambda',\Gamma',\Theta')$ for which there is a lattice isomorphism $\Lambda\simeq\Lambda'$ that restricts to bijections $\Gamma\to\Gamma'$ and $\Theta\to\Theta'$. A \emph{class of subconstellations} is an isomorphism class of marked constellations. This gives a precise meaning to the notion of subconstellations of classes 0, 1, and 2a--2d.

Such classes of subconstellations are partially ordered by the relation $\sigma\leq\tau$ if $\sigma$ is isomorphic to a marked subconstellation of $\tau$ such that $\Theta_\sigma$ consists of exactly those decomposable corank $2$ flats in $\Lambda_\sigma-\Gamma_\sigma$ that are either in $\Theta_\tau$ or indecomposable in $\tau$. 
This allows us to search recursively over the poset of classes of subconstellations for those classes that are necessary for the second homotopy theorem to hold. 

Namely, we begin with the list $\cL_3^{\sigma_0}$ of classes 0, 1, 2a, and 2b,\footnote{First experimental data suggests that the classes 2c and 2d behave like exceptional cases and are better omitted. A more profound explanation for why we have to omit the classes 2c and 2d awaits further investigation.}, where $\sigma_{0}=\big(\{\emptyset\},\{\emptyset\},\emptyset\big)$ is the trivial marked constellation. Given a marked constellation $\tau$ such that $\cL_3^\sigma$ is defined for all marked subconstellations $\sigma$ of $\tau$, we define $\cL_3^{<\tau}$ as the union of all $\cL_3^\sigma$ with $[\sigma]<[\tau]$. Let $\cX^{<\tau}_3$ be the poset of all marked subconstellations of $\tau$ whose class is in $\cL_3^{<\tau}$, and let $\Sigma^{<\tau}_3$ be its order complex. If $H_2(\Sigma^{<\tau}_3,\Z)=0$, then we define $\cL_3^\tau=\cL_3^{<\tau}$; otherwise, we define $\cL_3^\tau=\cL_3^{<\tau}\cup\{[\tau]\}$. We denote by $\cL_3$ the union of all $\cL^\tau_3$, where $\tau$ varies over all classes of subconstellations.

In particular, if we consider a Tutte constellation $\tau$ as a trivially marked constellation (i.e., $\Theta=\emptyset$), then this definition yields a poset $\cX^\tau_3$ and the associated order complex $\Sigma^\tau_3$. By construction, we have $H_2(\Sigma^\tau_3,\Z)=0$ for all $\tau$, which could be regarded as a ``second homotopy theorem.''

But of course, such a second homotopy theorem would only be useful if we could describe the list $\cL_3$ explicitly. In so far, we pose the following tantalizing, but perhaps difficult, problem:

\begin{problem} \label{question:second homotopy conjecture}
 Is $\cL_3$ finite? If so, can we find an explicit description of $\cL_3$ and/or a marked constellation $\tau$ such that $\cL_3=\cL_3^\tau$? 
\end{problem}

Of course, if an affirmative answer to \autoref{question:second homotopy conjecture} is established, then one could also ask the analogous questions about $\cL_k$ for $k \geq 4$.

\subsection{First subconstellations for a conjectural second homotopy theorem}
\label{subsection: first subconstellations for the second homotopy theorem}

An easy case analysis shows that the classes 0, 1, 2a, and 2b are the only classes in $\cL_3$ whose lattice $\Lambda$ has fewer than $4$ atoms. The first new classes have lattices with $4$ atoms and are, besides class 2c, the following:

\medskip\noindent\textbf{Class 3a.} 
A \emph{subconstellation of class 3a} is a subconstellation $\sigma$ of type $U_{2,4}$ with $\Gamma_\sigma=\{E\}$ and $\Theta=\emptyset$. 

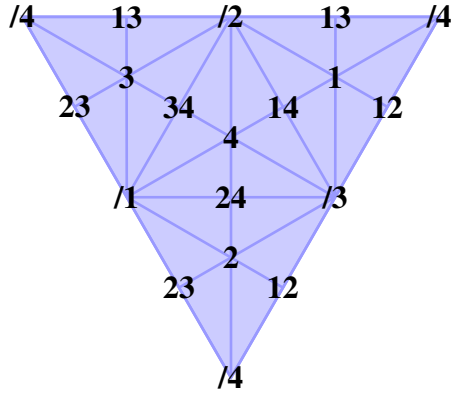
\begin{figure}[t]
\begin{tikzpicture}[x=0.8cm,y=0.8cm,font=\bf,line width=1pt]
  \filldraw[fill=blue!20!white,draw=blue!40!white] (30:4) -- (150:4) -- (270:4) -- cycle;
  \foreach \a in {1,...,3}
  {
   \draw[draw=blue!40!white] (30+120*\a:4) -- (150+120*\a:4);
   \draw[draw=blue!40!white] (30+120*\a:4) -- (210+120*\a:2);
   \draw[draw=blue!40!white]  (90+120*\a:2) -- (169+120*\a:2.646);
   \draw[draw=blue!40!white]  (11+120*\a:2.646) -- (90+120*\a:2);
   \draw[draw=blue!40!white]  (90+120*\a:2) -- (210+120*\a:2);
  };
  \node at (  0:0) {4};  
  \node at ( 90:2) {/2};  
  \node at (210:2) {/1};  
  \node at (330:2) {/3};  
  \node at ( 30:1) {14};  
  \node at (150:1) {34};  
  \node at (270:1) {24};  
  \node at ( 30:2) {1};  
  \node at (150:2) {3};  
  \node at (270:2) {2};  
  \node at ( 11:2.646) {12};  
  \node at ( 49:2.646) {13};  
  \node at (131:2.646) {13};  
  \node at (169:2.646) {23};  
  \node at (251:2.646) {23};  
  \node at (289:2.646) {12};  
  \node at ( 30:4) {/4};  
  \node at (150:4) {/4};  
  \node at (270:4) {/4};  
\end{tikzpicture}
 \caption{The order complex $\Sigma^{<\sigma}_3$ for $\sigma$ of class 3a}
 \label{fig: class 3a}
\end{figure}

\medskip\noindent\textbf{Class 3b.} 
A \emph{subconstellation of class 3b} is a subconstellation $\sigma$ of type $U_{2,3}\oplus U_{1,1}$ with $\Gamma_\sigma=\{E\}$ and $\Theta=\{1,2,3\}$.

\begin{figure}[t]
\begin{tikzpicture}[x=0.8cm,y=0.8cm,font=\bf,line width=1pt]
  \filldraw[fill=blue!20!white,draw=blue!40!white] (30:4) -- (150:4) -- (270:4) -- cycle;
  \foreach \a in {1,...,3}
  {
   \draw[draw=blue!40!white] (30+120*\a:4) -- (150+120*\a:4);
   \draw[draw=blue!40!white] (30+120*\a:4) -- (210+120*\a:2);
   \draw[draw=blue!40!white]  (90+120*\a:2) -- (169+120*\a:2.646);
   \draw[draw=blue!40!white]  (11+120*\a:2.646) -- (90+120*\a:2);
   \draw[draw=blue!40!white]  (90+120*\a:2) -- (210+120*\a:2);
  };
  \node at (  0:0) {/4};  
  \node at ( 90:2) {34};  
  \node at (210:2) {24};  
  \node at (330:2) {14};  
  \node at ( 30:1) {2/4};  
  \node at (150:1) {1/4};  
  \node at (270:1) {3/4};  
  \node at ( 30:2) {2};  
  \node at (150:2) {1};  
  \node at (270:2) {3};  
  \node at ( 11:2.646) {/1};  
  \node at ( 49:2.646) {/3};  
  \node at (131:2.646) {/3};  
  \node at (169:2.646) {/2};  
  \node at (251:2.646) {/2};  
  \node at (289:2.646) {/1};  
  \node at ( 30:4) {123};  
  \node at (150:4) {123};  
  \node at (270:4) {123};  
\end{tikzpicture}
 \caption{The order complex $\Sigma^{<\sigma}_3$ for $\sigma$ of class 3b}
 \label{fig: class 3b}
\end{figure}
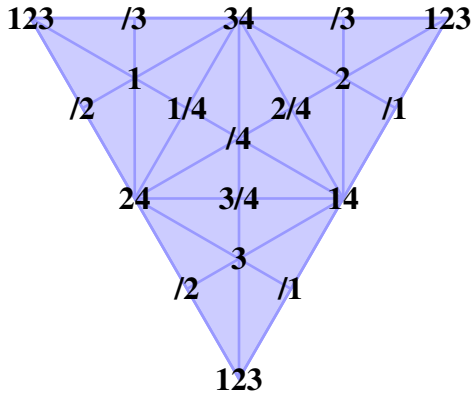

\medskip\noindent\textbf{Class 3c.} 
A \emph{subconstellation of class 3c} is a subconstellation $\sigma$ of type $U_{3,4}$ with $\Gamma_\sigma=\{E\}$ and $\Theta=\emptyset$. 

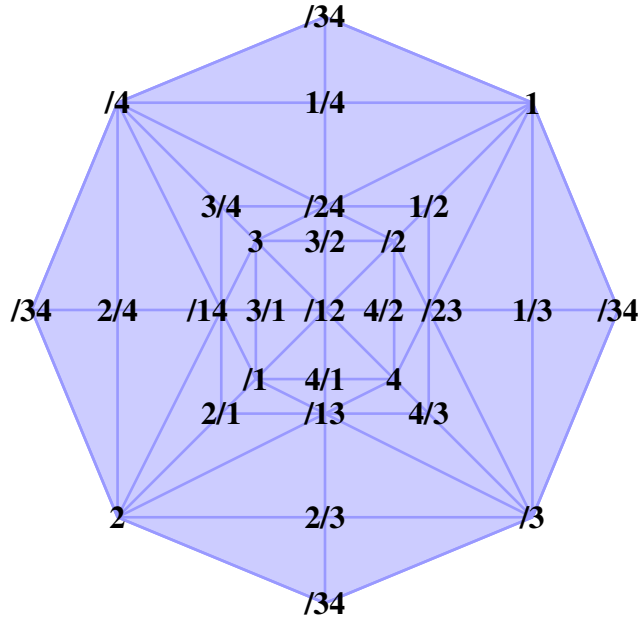
\begin{figure}[t]
\begin{tikzpicture}[x=1.3cm,y=1.3cm,font=\bf,line width=1pt]
  \filldraw[fill=blue!20!white,draw=blue!40!white] (0:3) -- (45:3) -- (90:3) -- (135:3) -- (180:3) -- (225:3) -- (270:3) -- (315:3) -- cycle;
  \foreach \a in {1,...,2}
  {
  };
  \foreach \a in {1,...,4}
  {
   \draw[draw=blue!40!white] (0+45*\a:3) -- (180+45*\a:3);
   \draw[draw=blue!40!white] (45+90*\a:3) -- (135+90*\a:3);
   \draw[draw=blue!40!white] (45+90*\a:3) -- (90+90*\a:3) -- (135+90*\a:3);
   \draw[draw=blue!40!white] (45+90*\a:1.5) -- (135+90*\a:1.5);
   \draw[draw=blue!40!white] (45+90*\a:1) -- (135+90*\a:1);
   \draw[draw=blue!40!white] (45+90*\a:1) -- (135+90*\a:3);
   \draw[draw=blue!40!white] (45+90*\a:3) -- (135+90*\a:1);
  };
  \node at (0:0) {/12};  
  \node at (  0:0.6) {4/2};  
  \node at ( 90:0.707) {3/2};  
  \node at (180:0.6) {3/1};  
  \node at (270:0.707) {4/1};  
  \node at ( 45:1) {/2};  
  \node at (135:1) {3};  
  \node at (225:1) {/1};  
  \node at (315:1) {4};  
  \node at (  0:1.2) {/23};  
  \node at ( 90:1.061) {/24};  
  \node at (180:1.2) {/14};  
  \node at (270:1.061) {/13};  
  \node at ( 45:1.5) {1/2};  
  \node at (135:1.5) {3/4};  
  \node at (225:1.5) {2/1};  
  \node at (315:1.5) {4/3};  
  \node at (  0:2.121) {1/3};  
  \node at ( 90:2.121) {1/4};  
  \node at (180:2.121) {2/4};  
  \node at (270:2.121) {2/3};  
  \node at (  0:3) {/34};  
  \node at ( 45:3) {1};  
  \node at ( 90:3) {/34};  
  \node at (135:3) {/4};  
  \node at (180:3) {/34};  
  \node at (225:3) {2};  
  \node at (270:3) {/34};  
  \node at (315:3) {/3};  
\end{tikzpicture}
 \caption{The order complex $\Sigma^{<\sigma}_3$ for $\sigma$ of class 3c}
 \label{fig: class 3c}
\end{figure}

\medskip\noindent\textbf{Class 3d.} 
A \emph{subconstellation of class 3d} is a subconstellation $\sigma$ of type $U_{4,4}$ with $\Gamma_\sigma=\{E\}$ for which all corank $2$ flats are in $\Theta$. 

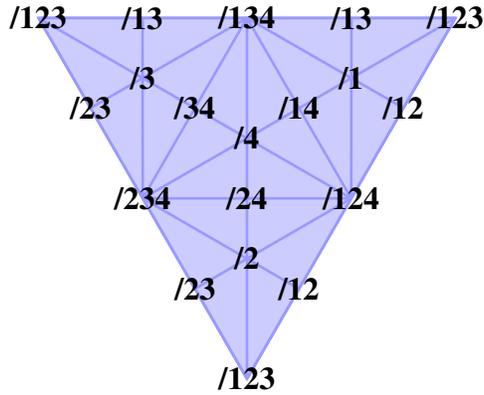
\begin{figure}[t]
\begin{tikzpicture}[x=0.8cm,y=0.8cm,font=\bf,line width=1pt]
  \filldraw[fill=blue!20!white,draw=blue!40!white] (30:4) -- (150:4) -- (270:4) -- cycle;
  \foreach \a in {1,...,3}
  {
   \draw[draw=blue!40!white] (30+120*\a:4) -- (150+120*\a:4);
   \draw[draw=blue!40!white] (30+120*\a:4) -- (210+120*\a:2);
   \draw[draw=blue!40!white]  (90+120*\a:2) -- (169+120*\a:2.646);
   \draw[draw=blue!40!white]  (11+120*\a:2.646) -- (90+120*\a:2);
   \draw[draw=blue!40!white]  (90+120*\a:2) -- (210+120*\a:2);
  };
  \node at (  0:0) {/4};  
  \node at ( 90:2) {/134};  
  \node at (210:2) {/234};  
  \node at (330:2) {/124};  
  \node at ( 30:1) {/14};  
  \node at (150:1) {/34};  
  \node at (270:1) {/24};  
  \node at ( 30:2) {/1};  
  \node at (150:2) {/3};  
  \node at (270:2) {/2};  
  \node at ( 11:2.646) {/12};  
  \node at ( 49:2.646) {/13};  
  \node at (131:2.646) {/13};  
  \node at (169:2.646) {/23};  
  \node at (251:2.646) {/23};  
  \node at (289:2.646) {/12};  
  \node at ( 30:4) {/123};  
  \node at (150:4) {/123};  
  \node at (270:4) {/123};  
\end{tikzpicture}
 \caption{The order complex $\Sigma^{<\sigma}_3$ for $\sigma$ of class 3d}
 \label{fig: class 3d}
\end{figure}

\medskip
For a marked constellation $\sigma$ of any of these classes, the order complex $\Sigma^{<\sigma}_3$ is homeomorphic to a $2$-sphere, as illustrated in \autoref{fig: class 3a}, \autoref{fig: class 3b}, \autoref{fig: class 3c}, and \autoref{fig: class 3d}, where we once again omit ``$\Lambda\minus$'' from the notation of the subconstellations for better readability. Thus $H_2(\Sigma^{<\sigma}_3,\Z)=\Z$ in all three cases, which shows that all four classes 3a--3d belong to $\cL_3$. It shows, moreover, that we cannot omit any cell from $\Sigma^{<\sigma}_3$ in each case. 

There are no other marked constellations on $4$ elements in $\cL_3$, with the exception of class $2c$, which enters the list because of a non-vanishing homology.


\appendix


\section{Lemmas that enter the proof of the path theorem}\label{appendix:flats}

In this section, we provide the proofs of all statements that enter the proof of the path theorem \autoref{thm: Tutte's path theorem}.

Recall that a flat $F$ is called {\em indecomposable} if the contraction $M / F$ is connected, and {\em decomposable} otherwise.
The following lemma gives a set-theoretic description of decomposable flats. 

\begin{lemma}\label{lemma:connected-contraction}
A flat $F$ of $M$ is decomposable if and only if it can be written as $F = X_1 \cap X_2$ with $X_1 \cup X_2 = E$, such that neither $X_1$ nor $X_2$ equals $F$, and such that for every hyperplane $H \supseteq F$, either $H \supseteq X_1$ or $H \supseteq X_2$. 
\end{lemma}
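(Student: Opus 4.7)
The plan is to reduce both directions to the following standard characterization of matroid connectivity: if $N$ is a matroid on $E(N) = Y_1 \sqcup Y_2$ with both $Y_i$ non-empty, then $N = N|_{Y_1} \oplus N|_{Y_2}$ if and only if every hyperplane of $N$ contains $Y_1$ or $Y_2$. This is equivalent, by taking complements, to the statement that every cocircuit of $N$ is contained in $Y_1$ or $Y_2$, which is the usual definition of disconnectedness (cf.~\cite[Section 4.1]{Oxley11}).

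I will apply this fact to $N = M/F$, using the bijection between hyperplanes of $M/F$ and hyperplanes of $M$ containing $F$ via $H \mapsto H - F$ (so that containing $Y_i \subseteq E - F$ in $M/F$ corresponds precisely to containing $X_i = Y_i \cup F$ in $M$).

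For the forward direction, if $F$ is decomposable, $M/F$ is disconnected; pick a witnessing partition $E - F = Y_1 \sqcup Y_2$ with both $Y_i$ non-empty and set $X_i := Y_i \cup F$. Then directly $X_1 \cap X_2 = F$, $X_1 \cup X_2 = E$, $X_i \neq F$ (since $Y_i \neq \emptyset$), and the hyperplane condition on $M/F$ translates to the required condition on $M$. For the reverse direction, given $X_1$ and $X_2$ as in the statement, set $Y_i := X_i - F$; the conditions $X_1 \cap X_2 = F$, $X_1 \cup X_2 = E$, and $X_i \neq F$ translate to $Y_1 \sqcup Y_2 = E - F$ with both $Y_i$ non-empty, and the hyperplane hypothesis translates to: every hyperplane of $M/F$ contains $Y_1$ or $Y_2$. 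The standard characterization then yields $M/F = (M/F)|_{Y_1} \oplus (M/F)|_{Y_2}$, so $F$ is decomposable.

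The only non-routine step is the standard characterization itself, but this is a one-line consequence of cocircuit--hyperplane duality together with the definition of a direct sum; no deeper obstacle is expected.
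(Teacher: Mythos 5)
Your proof is correct and takes essentially the same approach as the paper: both rest on the lattice isomorphism identifying hyperplanes of $M/F$ with hyperplanes of $M$ containing $F$, combined with the hyperplane/cocircuit characterization of matroid disconnectedness. Your version packages both directions via a single invocation of that characterization, whereas the paper argues the forward direction directly from the direct-sum structure and the reverse direction by exhibiting elements $a \in X_1 - F$, $b \in X_2 - F$ avoided by no hyperplane of $M/F$; these are the same ideas expressed slightly differently.
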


\begin{proof}
If the contraction $M/F \cong M_1 \oplus M_2$ is connected, where the ground sets of $M_1$ and $M_2$ are $E_1$ and $E_2$, respectively, then the set of hyperplanes of $M/F$ is $\{H_1 \cup E_2 \mid H_1 \in \cH(M_1)\} \cup \{E_1 \cup H_2 \mid H_2 \in \cH(M_2)\} = \{X \subseteq E - F \mid X \cup F \in \cH(M)\}$. When we take $X_1$ to be $E_1 \cup F$ and $X_2$ to be $E_2 \cup F$, every hyperplane $H \supseteq F$ contains a hyperplane of $M/F$, and therefore must contain either $E_1$ or $E_2$. 

Conversely, assume that we can write $F$ as the intersection of $X_1$ and $X_2$ with all conditions of the lemma satisfied. Take two elements $a \in X_1 - F$ and $b \in X_2 - F$. Every hyperplane $H \supseteq F$ contains either $a$ or $b$. Therefore, there is no hyperplane of $M/F$ that avoids both $a$ and $b$; we conclude that $M/F$ is decomposable. 
\end{proof}

\begin{rem} \label{rem:connected-contraction}
If $F$ is decomposable, we call sets $X_1$ and $X_2$ as in \autoref{lemma:connected-contraction} a \emph{separation} of $F$.
\end{rem}

\begin{prop}\label{prop:connected-intersection}
If $F_1$ and $F_2$ are indecomposable flats with $F_1 \cup F_2 \ne E$, then $F_1 \cap F_2$ is an indecomposable flat. 
\end{prop}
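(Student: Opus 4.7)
The plan is to argue by contradiction using the set-theoretic characterization of decomposable flats provided by \autoref{lemma:connected-contraction}. I would suppose that $F := F_1 \cap F_2$ is decomposable and produce a separation $F = X_1 \cap X_2$ with $X_1 \cup X_2 = E$, $X_i \ne F$, and every hyperplane containing $F$ containing either $X_1$ or $X_2$. The key reformulation is that such a separation corresponds precisely to a direct sum decomposition $M/F = M_1 \oplus M_2$ where $M_i$ has ground set $X_i - F$.

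Next I would analyze how the flats $F_1 - F$ and $F_2 - F$ sit inside this direct sum. Since a flat of $M_1 \oplus M_2$ is a disjoint union of a flat of $M_1$ and a flat of $M_2$, I would write $F_i - F = A_i \sqcup B_i$ with $A_i \subseteq X_1 - F$ and $B_i \subseteq X_2 - F$. The hypothesis $F_1 \cap F_2 = F$ translates to $A_1 \cap A_2 = \emptyset = B_1 \cap B_2$.

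Finally, I would invoke the assumed indecomposability of the $F_i$. Since $M/F_i = (M/F)/(F_i - F) = (M_1/A_i) \oplus (M_2/B_i)$ must be connected, one of the two summands is forced to have empty ground set, i.e., either $A_i = X_1 - F$ or $B_i = X_2 - F$. A short case analysis then yields the desired contradiction: if for instance $A_1 = X_1 - F$, then disjointness forces $A_2 = \emptyset$, and since $X_1 \ne F$ (so $M_1$ has nonempty ground set) the indecomposability of $F_2$ forces $B_2 = X_2 - F$. Hence $F_1 \supseteq X_1$ and $F_2 \supseteq X_2$, giving $F_1 \cup F_2 \supseteq X_1 \cup X_2 = E$ and contradicting $F_1 \cup F_2 \ne E$. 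The symmetric case $B_1 = X_2 - F$ is handled identically.

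The main obstacle, if any, lies in the dictionary between the set-theoretic separation of \autoref{lemma:connected-contraction} and the actual direct sum decomposition of $M/F$, together with the fact that flats of a direct sum split cleanly into components. Since this dictionary is essentially what \autoref{lemma:connected-contraction} establishes, there is no serious difficulty beyond careful bookkeeping.
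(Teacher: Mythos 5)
Your proof is correct and takes a genuinely different route from the paper's. The paper gives a direct argument: to show $M/F$ is connected it produces, for any two elements $a,b \notin F$, a hyperplane of $M/F$ avoiding both. When $a,b$ both avoid $F_1$ (or $F_2$), such a hyperplane is lifted from $M/F_1$ (or $M/F_2$); in the remaining ``crossing'' case $a \in F_1 - F_2$, $b \in F_2 - F_1$, one picks $e \notin F_1 \cup F_2$ (here the hypothesis $F_1 \cup F_2 \ne E$ enters), obtains cocircuits of $M/F$ through $\{a,e\}$ and through $\{b,e\}$, and applies circuit transitivity in the dual to get one through $\{a,b\}$. Your argument instead proceeds by contradiction through the direct-sum structure: you split $M/F = M_1 \oplus M_2$ via \autoref{lemma:connected-contraction}, decompose each $F_i - F$ along the two blocks, and derive $F_1 \cup F_2 = E$ from indecomposability of the $F_i$ and disjointness of $F_1 - F$ and $F_2 - F$. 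Both are sound. The paper's version is a bit more elementary (it invokes only the circuit-transitivity characterization of connectivity) and localizes the use of the hypothesis cleanly in the choice of $e$; your version is more structural and conceptually transparent, but relies on the standard (and worth stating) facts that flats of a direct sum split blockwise and that contraction commutes with direct sums, and a complete write-up should make the $2 \times 2$ case analysis on which block each $F_i - F$ saturates fully explicit --- the diagonal cases are killed by disjointness, the off-diagonal ones give $F_1 \cup F_2 = E$.
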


\begin{proof}
Given distinct elements $a, b \notin F$, we wish to find a hyperplane in $M/F$ avoiding both. If $a, b \notin F_1$, then since $M/F_1$ is connected, there exists a hyperplane $H' \in \cH(H/F_1)$ avoiding both $a$ and $b$. Passing to $M/F$, we obtain a desired hyperplane $H = H' \cup (F_1 - F_2)$ of $M/F$ that does not contain $a$ or $b$. The case where $a, b \notin F_2$ is similar. 

The remaining case is when both $a \in F_1 - F_2$ and $b \in F_2 - F_1$ hold. Take an element $e \notin F_1 \cup F_2$. By the argument above, $M/F$ has a hyperplane $H_1$ that avoids $a$ and $e$, and a hyperplane $H_2$ that avoids $b$ and $e$. This implies that in the dual matroid $(M/F)^\ast$, there is a circuit $C_1$ containing $a$ and $e$, and a circuit $C_2$ containing $b$ and $e$. From this, we deduce the existence of a third circuit $C$ containing $a$ and $b$~\cite[Proposition 4.1.2]{Oxley11}, which gives a hyperplane in $M/F$ avoiding $a$ and $b$. 
\end{proof}

\begin{lemma}\label{lemma:flat-complement}
Let $S \supseteq T$ be flats of the matroid $M$. Then there exists a flat $U$ of $M$ such that $U \supseteq T$, $U \cap S = T$, $U \lor S = E$, and $\cork(U) = \rk(S) - \rk(T)$. 
\end{lemma}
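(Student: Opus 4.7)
My plan is to construct $U$ explicitly by choosing nested bases. The approach is a classical modular-pair-by-complementation argument, familiar in geometric lattice theory.

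First I would pick a basis $B_T$ of $T$, extend it (using the matroid augmentation axiom) to a basis $B_S = B_T \sqcup B'$ of $S$, and then extend this further to a basis $B = B_T \sqcup B' \sqcup B''$ of $E$. I then define
\[
 U \ = \ \gen{B_T \cup B''}_M.
\]
The containment $U \supseteq T$ is immediate since $T = \gen{B_T}_M \subseteq \gen{B_T \cup B''}_M = U$. Moreover $B_T \cup B''$ is independent (as a subset of the basis $B$), so
\[
 \rk(U) \ = \ |B_T| + |B''| \ = \ \rk(T) + \bigl(\rk(M) - \rk(S)\bigr),
\]
which gives $\cork(U) = \rk(M) - \rk(U) = \rk(S) - \rk(T)$, as desired.

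For the join, observe that $U \cup S \supseteq B_T \cup B' \cup B'' = B$, so $U \lor S = \gen{B}_M = E$. The only subtle point is the intersection $U \cap S = T$. The rank computation already arranged the pair $(U,S)$ to saturate the submodularity inequality: by the semimodularity of the lattice of flats,
\[
 \rk(U \cap S) \ \leq \ \rk(U) + \rk(S) - \rk(U \lor S) \ = \ \bigl(\rk(T) + \rk(M) - \rk(S)\bigr) + \rk(S) - \rk(M) \ = \ \rk(T).
\]
On the other hand $U \cap S$ is a flat containing $T$, so $\rk(U \cap S) \geq \rk(T)$, forcing equality of both ranks and (since $T$ is itself a flat) equality $U \cap S = T$.

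There is no real obstacle here; the whole argument is just the standard ``complement a flat by extending bases'' trick, and the rank bookkeeping automatically produces a modular pair. The only thing to be careful about is invoking the fact that a flat containing $T$ and having the same rank as $T$ must equal $T$, which is immediate from the definition of a flat together with the fact that $\rk$ is constant on the interval $[T, \cl(T)] = \{T\}$.
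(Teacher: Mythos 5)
Your proof is correct and follows essentially the same approach as the paper's: both construct $U$ as the closure of $T$ together with a set of elements that completes a basis of $S$ to a basis of $E$ (you via a nested basis extension, the paper by adjoining one element at a time outside the growing copy of $S$), and both then verify $U \cap S = T$ by the same rank/submodularity bookkeeping.
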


\begin{proof}
Write $S_0 = S$ and $T_0 = T$. For $i = 1, 2, \dots, \cork(S)$, take an element $a_i \in E - S_{i-1}$, and form the two larger flats $S_i = \gen{S_{i-1}a_i}$ and $T_i = \gen{T_{i-1}a_i}$. Since $S \supseteq T$, we know that $\rk(S_i) - \rk(S) = \rk(T_i) - \rk(T) = i$, and hence $S_{\cork(S)} = E$. Consider $U := T_{\cork(S)}$. Then $U \supseteq T$, $U \lor S = T \lor \{a_1, \dots, a_{\cork(S)}\} \lor S = T \lor S_{\cork(S)} = E$, and $\cork(U) = \rk(M) - \rk(U) = \rk(M) - (\rk(T) + \cork(S)) = \rk(S) - \rk(T)$. The last equality implies that $\rk(U \cap S) \leqslant \rk(U) + \rk(S) - \rk(U \cap S) = \rk(T)$. Since $U \cap S \supseteq T$, we also have $U \cap S = T$. 
\end{proof}

\begin{rem}
    The construction in the proof of \autoref{lemma:flat-complement} shows that the lattice of flats of a matroid is {\em relatively complemented}~\cite[Exercise 1.7.7]{Oxley11}. It also shows that we can always choose a relative complement $U$ of $S$ with respect to $T$ such that $(S, U)$ is a modular pair of flats intersecting in $T$. 
\end{rem}

\begin{prop}\label{prop:conn-path}
Let $S \supset T$ be indecomposable flats. Then there exists an indecomposable flat $U$ such that $S \supseteq U \supseteq T$ and $\rk(U) = \rk(S) - 1$. 
\end{prop}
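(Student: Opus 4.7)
The plan is to realize $U$ as the intersection $H \cap S$ of $S$ with a cleverly chosen indecomposable hyperplane $H$ of $M$ containing $T$, and then to invoke \autoref{prop:connected-intersection} to deduce indecomposability of $U$. If $\rk S - \rk T = 1$ I take $U = T$; so assume henceforth $\rk S - \rk T \geq 2$. The first observation is that every hyperplane $H$ of $M$ containing $T$ is automatically indecomposable: the contraction $N = M/T$ is connected because $T$ is indecomposable, and the hyperplanes of $M$ containing $T$ correspond bijectively to the hyperplanes of $N$; by \autoref{ex:conn-line} applied to $N$, every such hyperplane of $N$ is indecomposable, i.e., $M/H = N/(H-T)$ is connected. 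In the degenerate case $S = E$ this already finishes the proof: any hyperplane $H \supseteq T$ (one exists because $T$ is a proper flat) has rank $\rk M - 1 = \rk S - 1$, so $U := H$ works.

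Assume now $S \subsetneq E$, and pick $a \in E - S$ and $b \in S - T$, both nonempty by our assumptions. These are distinct elements of the ground set $E - T$ of $N$. Since $N$ is connected, so is $N^*$, and hence $a$ and $b$ lie in a common circuit of $N^*$, equivalently a common cocircuit $C^*$ of $N$; its complement $H' := (E - T) - C^*$ is a hyperplane of $N$ avoiding both $a$ and $b$. Lifting, $H := H' \cup T$ is an indecomposable hyperplane of $M$ with $T \subseteq H$, $a \notin H$, and $b \notin H$. Set $U := H \cap S$. Since $b \in S - H$ we have $S \not\subseteq H$, hence $H \lor S = E$; submodularity then gives $\rk U = (\rk M - 1) + \rk S - \rk M = \rk S - 1$, and by construction $T \subseteq U \subseteq S$. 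Finally, $a \in E - (H \cup S)$ witnesses $H \cup S \neq E$, so \autoref{prop:connected-intersection} applied to the indecomposable flats $H$ and $S$ yields that $U = H \cap S$ is indecomposable, as required.

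The main ingredient beyond the results already in the appendix is the cocircuit-connectivity argument used to locate $H$, which relies only on the classical equivalence of connectedness of a matroid with that of its dual. The only subtlety is the need to handle $S = E$ separately, since no element $a \in E - S$ is then available; otherwise the argument proceeds uniformly.
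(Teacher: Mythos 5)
Your argument has a genuine gap in the rank computation. After constructing the hyperplane $H$ (containing $T$, avoiding both $a \in E - S$ and $b \in S - T$), you set $U := H \cap S$ and claim that submodularity ``gives'' $\rk U = (\rk M - 1) + \rk S - \rk M = \rk S - 1$. But submodularity is an inequality, not an equality: it only gives $\rk U \leq \rk S - 1$. Equality would hold if $(H, S)$ were a modular pair, but geometric lattices are not modular in general, and nothing in your construction forces $H$ and $S$ to be modular. A concrete failure: take $M = U_{3,6}$ on $E = \{1,\dots,6\}$, $S = \{1,2\}$ (a hyperplane, so indecomposable), $T = \emptyset$, $a = 3$, $b = 1$. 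The set $C^\ast = \{1,2,3,4\}$ is a cocircuit of $M/T = M$ containing $a$ and $b$, and your construction would then produce $H = \{5,6\}$, whence $U = H \cap S = \emptyset$ has rank $0$, not $\rk S - 1 = 1$.

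The issue is that \emph{some} hyperplane avoiding $a$ and $b$ works, but not \emph{every} such hyperplane does, and the cocircuit argument gives you no control over which one you get. The paper's proof handles exactly this point: it locates the same family of admissible hyperplanes (those $H \supseteq T$ with $S - H \neq \emptyset$ and $E - (S \cup H) \neq \emptyset$, via \autoref{lemma:connected-contraction} applied to the separation $S, T \cup (E-S)$ of $T$), but then \emph{chooses $H$ so that $|H \cap S|$ is maximal} and shows by contradiction (using \autoref{lemma:flat-complement}) that this maximal choice must achieve $\rk(H \cap S) = \rk S - 1$. Your proof can be repaired by importing that extremal argument, but as written it establishes only $T \subseteq U \subseteq S$ with $\rk U \leq \rk S - 1$ and $U$ indecomposable, which is weaker than the Proposition. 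Your framing via the cocircuit of $M/T$ and the reduction to \autoref{prop:connected-intersection} for indecomposability are correct; only the rank equality is unsupported.
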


\begin{proof}
Since $T = S \cap (T \cup (E - S))$ is indecomposable, \autoref{lemma:connected-contraction} implies that the set $X$ of hyperplanes $H \supseteq T$ containing distinct elements $s_H \in S - H$ and $t_H \in (T \cup (E - S)) - H = E - (S \cup H)$ is non-empty. Choose $H$ so that the quantity $|H \cap S|$ attains its maximum among all $H \in X$. We claim that $U := H \cap S$ is the desired indecomposable flat. 

By \autoref{prop:connected-intersection}, $U$ is indecomposable. We need to show that $\rk(U) = \rk(S) - 1$. Suppose for the sake of contradiction that $\rk(U) \leqslant \rk(S) - 2$. Then since $s_H \in S - H = S - U$, we have $\rk(\gen{Us_H}) = \rk(U) + 1 \leqslant \rk(S) - 1$. Let $U' := \gen{Us_H}$ (see\ \autoref{fig: subposet structure on the proof of prop:conn-path} for an illustration). By~\autoref{lemma:flat-complement}, there exists a hyperplane $H' \supseteq U' \supseteq T$ with $H' \lor S = E$ and hence $S - H' \ne \emptyset$. Then $(T \cup (E-S)) - H' = E - (S \cup H') \ne \emptyset$, since otherwise we would have $H' \supseteq (H \cap (E - S)) \cup (H \cap S) = H$ and $s_H \in H' - H$, a contradiction. 
Thus $H' \in X$. On the other hand, we have $|H' \cap S| \geqslant |H' \cap U'| \geqslant |U| + 1 = |H \cap S| + 1$, contradicting the maximality of $|H \cap S|$.
\end{proof}

\begin{figure}[t]
\centering
\begin{tikzpicture}[scale=1, vertices/.style={draw, fill=black, circle, inner sep=0pt},font=\footnotesize]
	\node (1) at (0, 1){$T$};
	\node (2) at (0, 2){$U$};
	\node (3) at (0, 3){$U'$};
	\node (4) at (0, 3.8){$S$};
	\node (5) at (1, 4.8){$H'$};
	\node (6) at (2, 4.8){$H$};
	
\foreach \to/\from in {2/3}
\draw [-] (\to)--(\from);
\draw[dotted] (1) -- (2);
\draw[dotted] (3) -- (4);
\draw[dotted] (3) -- (5);
\draw[dotted] (1) -- (6);
\draw[dotted] (2) -- (6);
\end{tikzpicture}
\caption{Subposet structure in \autoref{prop:conn-path}}
\label{fig: subposet structure on the proof of prop:conn-path}
\end{figure}

\begin{cor}[Indecomposable Chain Property]\label{cor:conn-chain}
    Let $S \supset T$ be indecomposable flats. Then there exists a chain of indecomposable flats $S = U_0 \supset U_1 \supset \cdots \supset U_k = T$ such that $\rk(U_i) - \rk(U_{i+1}) = 1$ for all $i = 0, 1, \dots, k-1$. 
\end{cor}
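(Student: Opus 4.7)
The plan is to obtain the chain by iterated application of \autoref{prop:conn-path}. The proposition provides, given indecomposable flats $S' \supset T'$, an indecomposable flat $U'$ sandwiched between them with rank exactly one less than $\rk(S')$, i.e., it lets us descend by a single rank step while staying both indecomposable and above $T'$. So the strategy is simply to repeat this until the rank matches that of $T$.

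First I would set $U_0 := S$. Supposing $U_i$ has been constructed as an indecomposable flat with $S \supseteq U_i \supseteq T$ and $\rk(U_i) = \rk(S) - i > \rk(T)$, I would apply \autoref{prop:conn-path} to the pair $U_i \supset T$ (both are indecomposable by construction and hypothesis) to produce an indecomposable flat $U_{i+1}$ with $U_i \supseteq U_{i+1} \supseteq T$ and $\rk(U_{i+1}) = \rk(U_i) - 1 = \rk(S) - (i+1)$. The inclusion $S \supseteq U_{i+1}$ is automatic from $S \supseteq U_i$. This produces the required consecutive rank-drop property $\rk(U_i) - \rk(U_{i+1}) = 1$ at each step.

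The termination step: after $k := \rk(S) - \rk(T)$ iterations, $U_k$ is an indecomposable flat with $U_k \supseteq T$ and $\rk(U_k) = \rk(T)$, so $U_k = T$ (a flat contained in another flat of the same rank must coincide with it). This yields the desired chain $S = U_0 \supset U_1 \supset \cdots \supset U_k = T$.

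I do not expect a genuine obstacle here; the entire content is packaged in \autoref{prop:conn-path}, and the corollary is just an induction on the rank difference. The only minor bookkeeping point is to confirm at each stage that both endpoints of the pair fed into the proposition are indecomposable (the lower one is $T$, indecomposable by hypothesis; the upper one is the $U_i$ just produced, indecomposable by the preceding application), which the construction automatically guarantees.
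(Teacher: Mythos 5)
Your proposal is correct and follows exactly the paper's route: iterated application of \autoref{prop:conn-path} to the pair $U_i \supset T$, descending one rank at a time until the rank of $T$ is reached. The paper states this in a single sentence; your version merely spells out the induction and termination bookkeeping.
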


\begin{proof}
    Applying \autoref{prop:conn-path} repeatedly to flats $U_i \supset T$ gives the chain of indecomposable flats $S = U_0 \supset U_1 \supset \cdots \supset U_k = T$. 
\end{proof}

\begin{prop}[Indecomposable Diamond Property]\label{prop:conn-diamond}
Let $S \supseteq T$ be indecomposable flats with $\rk(S) = \rk(T) + 2$. Then there exist indecomposable flats $S \supseteq U \ne V \supseteq T$ with $\rk(U) = \rk(V) = \rk(S) - 1$. 
\end{prop}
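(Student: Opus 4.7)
The plan is to derive \autoref{prop:conn-diamond} from the following refinement of \autoref{prop:conn-path}: \emph{for every $e\in S-T$, there exists an indecomposable flat $U_e$ with $T\subseteq U_e\subseteq S$, $\rk(U_e)=\rk(S)-1$, and $e\notin U_e$.} Once this refinement is available, the diamond property follows immediately: apply \autoref{prop:conn-path} to obtain an indecomposable flat $U$ with $T\subseteq U\subseteq S$ and $\rk(U)=\rk(S)-1$, pick any $e\in U-T$ (nonempty because $\rk(U)>\rk(T)$), and apply the refinement to produce $V:=U_e$; since $e\in U\setminus V$, we have $U\neq V$.

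To prove the refinement I would imitate the argument for \autoref{prop:conn-path} while carrying the extra constraint ``$e\notin H$'' throughout. The case $S=E$ is handled directly: $T$ is then an indecomposable corank-$2$ flat, so by \autoref{ex:conn-line} it is contained in at least three hyperplanes of $M$, which correspond to at least three distinct parallel classes of the connected rank-$2$ matroid $M/T$. At most one is the parallel class of $e$, so at least two candidate hyperplanes avoid $e$ and are automatically indecomposable.

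For $S\neq E$, I would replace the set $X$ of hyperplanes used in the proof of \autoref{prop:conn-path} by its refinement $X_e=\{H\in X : e\notin H\}$ and rerun the maximality argument inside $X_e$. Nonemptiness of $X_e$ uses that $M/T$ is connected (equivalent to $T$ being indecomposable), hence so is $(M/T)^\ast$, together with the observation that neither $e$ nor any chosen $b\in E-S$ can be a coloop of $M/T$; indeed, a coloop of $M/T$ would be a one-element direct summand, contradicting connectedness. Thus there exists a cocircuit of $M/T$ containing both $e$ and $b$, and its complementary hyperplane lifts to an element of $X_e$. Choosing $H\in X_e$ maximizing $|H\cap S|$ and setting $U_e:=H\cap S$, the indecomposability of $U_e$ follows from \autoref{prop:connected-intersection} exactly as in the original proof, and $e\notin U_e$ is automatic.

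The technical heart of the argument, and the main obstacle, is the rank computation $\rk(U_e)=\rk(S)-1$. Assuming for contradiction that $\rk(U_e)\leq\rk(S)-2$, one has $\rk(S/U_e)\geq 2$ in $M/U_e$, so among the elements of $S-U_e=S-H$ there is one, say $s$, lying in a different parallel class of $M/U_e$ than $e$; this guarantees that $U':=\langle U_es\rangle$ strictly enlarges $U_e$ while still not containing $e$. Applying \autoref{lemma:flat-complement} to $S\supseteq U'$ produces a flat $W\supseteq U'$ with $W\cap S=U'$ and $W\vee S=E$, and $e\notin W$ follows automatically from $e\in S\setminus U'$. The final extension step---producing a hyperplane $H'\supseteq W$ with $e\notin H'$---uses that $e$ is not a coloop of $M$ (hence not of $M/W$), so $M/W$ admits a cocircuit through $e$. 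Exactly as in the proof of \autoref{prop:conn-path}, one then verifies that $H'\in X_e$ (the only nontrivial condition, $H'\cup S\neq E$, follows from the observation that otherwise $H'\supseteq H$ and hence $H'=H$, contradicting $s\in H'\setminus H$) and $|H'\cap S|\geq|U'|>|U_e|$, violating maximality. The key point throughout is verifying that every enlargement one performs remains compatible with the constraint $e\notin H$.
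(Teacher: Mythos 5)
Your proposal is correct and takes a genuinely different route from the paper. The paper first obtains $U$ from \autoref{prop:conn-path}, sets $W=\gen{Ta}$ for some $a\in S-U$, and in the remaining case (where $W$ is decomposable) directly constructs $V$ as $S\cap H$ for a hyperplane $H$ built from a corank $2$ flat $L$ supplied by \autoref{lemma:flat-complement}, invoking \autoref{prop:connected-intersection} and \autoref{lemma:connected-contraction} along the way. You instead formulate a pointed refinement of \autoref{prop:conn-path}---the indecomposable flat of rank $\rk(S)-1$ between $T$ and $S$ can be chosen to avoid any prescribed $e\in S-T$---and observe that this immediately produces two distinct such flats. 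The refinement is in fact logically equivalent to the diamond property (two distinct such flats meet in $T$ by a rank count, so every $e\in S-T$ is missed by one of them), and you prove it by rerunning the maximality argument of \autoref{prop:conn-path} inside the subclass $X_e$ of hyperplanes avoiding $e$. Your route is somewhat longer, since it essentially re-proves the path lemma with a rider, but it is conceptually cleaner and avoids the paper's ad hoc configuration of auxiliary hyperplanes.

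One small correction: in the final extension step, the condition needed for a hyperplane $H'\supseteq W$ with $e\notin H'$ to exist is that $e$ not be a \emph{loop} of $M/W$ (rather than a coloop), and this holds simply because $e\notin W$ and $W$ is a flat. Your subsequent verification that $H'\in X_e$, including the condition $H'\cup S\neq E$, then goes through exactly as written without needing the no-coloop hypothesis. (The no-coloop observation is correctly deployed in your nonemptiness argument for $X_e$, where it ensures that $e$ and $b$ lie in a common cocircuit of $M/T$.)
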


\begin{proof}
We first choose $U$ as in \autoref{prop:conn-path}. Pick $a \in S - U$ and write $W = \gen{Ta}$, which is another flat of the same rank as $U$ satisfying $S \supseteq W \supseteq T$. By \autoref{lemma:flat-complement}, there exists a corank $2$ flat $L$ such that $L \supseteq T$, $L \cap S = T$, and $L \lor S = E$. We deduce that $U \cap L = W \cap L = T$. Consider the hyperplanes $H_1 = U \lor L$ and $H_2 = W \lor L$. We have $S \cap H_2 = W$, and we may assume without loss of generality that $W$ is decomposable. \autoref{prop:connected-intersection} implies that $S \cup H_2 = E$. 

We claim that $U \cup H_2 \ne E$. Otherwise, by \autoref{lemma:connected-contraction} applied to the indecomposable flat $T = U \cap H_2$, there exists a hyperplane $H_2' \supseteq T$ such that $U - H_2' \ne \emptyset$ and $H_2 - H_2' \ne \emptyset$. Therefore, we have $U \supsetneq U \cap H_2' \supsetneq U \cap H_2 = T$, a contradiction. Similarly, we have $S \cup H_1 \ne E$ from the indecomposability of $U = S \cap H_1$. 

Pick $b \notin U \cup H_2$ and $c \notin S \cup H_1$. 
Consider $V = \gen{Tb}$. Since $b \notin H_2$ and $S \cup H_2 = E$, $V$ must be contained in $S$. By the submodularity of the rank function, $H = V \lor L$ is a hyperplane, and must be distinct from $H_1$ and $H_2$, so $L$ is indecomposable. Now since $S \cup H_1 \supseteq S \cup L = S \cup (H \cap H_2) = S \cup H$ (the last equality is given by $S \cup H_2 = E$), and $c \notin S \cup H_1$, by \autoref{prop:connected-intersection}, $V = S \cap H$ must be indecomposable, which completes the proof. See \autoref{fig: subposet structure in the proof of prop:conn-diamond} for an illustration.
\end{proof}

\begin{figure}[t]
\centering
\begin{tikzpicture}[scale=1, vertices/.style={draw, fill=black, circle, inner sep=0pt},font=\footnotesize]
	\node (1) at (0, 0){$T$};
	\node (2) at (-1, 1){$U$};
	\node (3) at (0, 1){$W$};
	\node (4) at (1, 1){$V$};
	\node (5) at (0, 2){$S$};
	\node (6) at (3.5, 3){$L$};
        \node (7) at (2.5, 4){$H_1$};
        \node (8) at (3.5, 4){$H_2$};
        \node (9) at (4.5, 4){$H$};
	
\foreach \to/\from in {1/2, 1/3, 1/4, 2/5, 3/5, 4/5, 6/7, 6/8, 6/9}
\draw [-] (\to)--(\from);
\draw[dotted] (1) -- (2);
\draw[dotted] (3) -- (8);
\draw[dotted] (1) to[out=10,in=-120] (6);
\draw[dotted] (2) -- (6);
\draw[dotted] (4) to[out=10,in=-90] (9);
\end{tikzpicture}
\caption{Subposet structure in \autoref{prop:conn-diamond}}
\label{fig: subposet structure in the proof of prop:conn-diamond}
\end{figure}

\begin{prop}[Indecomposable Complement Property]\label{prop:conn-complement}
Let $S$, $T$, and $U$ be flats. Assume that $S$ and $T$ are indecomposable, $S \cap U \supseteq T$, and $S \lor U = E$. Then there exists an indecomposable flat $R$ such that $S \supseteq R \supseteq T$, $R \lor U = E$, and $\cork(R) = \rk(U) - \rk(T)$. 
\end{prop}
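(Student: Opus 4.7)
The plan is to prove the proposition by induction on $\rk(S)-\rk(T)$.

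\emph{Base case.} If $\rk(S)=\rk(T)$, the containment $S\supseteq T$ forces $S=T$; the hypothesis $S\cap U\supseteq T=S$ then gives $T\subseteq U$, and combined with $S\lor U=T\lor U=E$ we conclude $U=E$. Setting $R:=T$ satisfies every requirement: $R$ is indecomposable by hypothesis, $S\supseteq R\supseteq T$ trivially, $R\lor U=T\lor E=E$, and $\cork(R)=\rk(E)-\rk(T)=\rk(U)-\rk(T)$.

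\emph{Inductive step.} Assume $\rk(S)>\rk(T)$ and the result is known for smaller rank differences. Submodularity yields
\[
\rk(S)+\rk(U)\ \geq\ \rk(S\lor U)+\rk(S\cap U)\ \geq\ \rk(E)+\rk(T),
\]
so $\cork(S)\leq\rk(U)-\rk(T)$. If equality holds, then $(S,U)$ is a modular pair with $S\cap U=T$, and $R:=S$ satisfies all the requirements. Otherwise $\cork(S)<\rk(U)-\rk(T)$, i.e.,\ $\rk(S)+\rk(U)>\rk(E)+\rk(T)$. In this ``non-modular'' case the plan is to produce an indecomposable flat $S'$ satisfying $S\supsetneq S'\supseteq T$, $\rk(S')=\rk(S)-1$, and $S'\lor U=E$. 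Applying the inductive hypothesis to the triple $(S',T,U)$—whose hypotheses carry over, since $S',T$ are indecomposable, $S'\cap U\supseteq T$ as $T\subseteq U$, and $S'\lor U=E$ by construction—yields an indecomposable flat $R\subseteq S'\subseteq S$ with the required properties.

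\emph{Construction of $S'$.} The flat $S'$ is constructed by refining the hyperplane-selection argument in the proof of \autoref{prop:conn-path}. Consider the collection
\[
X^\star\ =\ \big\{\, H\in\cH_M \,:\, T\subseteq H,\ S-H\neq\emptyset,\ E-(S\cup H)\neq\emptyset,\ U\not\subseteq H\,\big\},
\]
where, if in addition $S\cap U\supsetneq T$, one strengthens the last condition to $(S\cap U)\not\subseteq H$. Among $H\in X^\star$, choose one maximizing $|H\cap S|$. The argument of \autoref{prop:conn-path}—using \autoref{lemma:flat-complement} to derive a contradiction from the assumption $\rk(H\cap S)\leq\rk(S)-2$—shows that $S':=H\cap S$ has rank $\rk(S)-1$; the indecomposability of $S'$ then follows from \autoref{prop:connected-intersection} because $H$ and $S$ are both indecomposable and $H\cup S\neq E$. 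When $S\cap U\supsetneq T$, any $x\in (S\cap U)-H$ lies in $(S-S')\cap U$, so $E=S\lor U=\gen{S'\cup\{x\}\cup U}=S'\lor U$; when $S\cap U=T$, one verifies $S'\lor U=E$ by a rank count that exploits the non-modularity inequality $\rk(S)+\rk(U)>\rk(E)+\rk(T)$ together with $U\not\subseteq H$.

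\emph{The main obstacle} is to prove that $X^\star\neq\emptyset$ under the hypothesis $\cork(S)<\rk(U)-\rk(T)$. The subcase $S\cap U\supsetneq T$ reduces, via the indecomposability of $T$ and the characterization in \autoref{lemma:connected-contraction}, to producing a hyperplane of $M$ containing $T$ but avoiding a fixed element $x\in (S\cap U)-T$, which is a variant of the argument already present in \autoref{prop:conn-path}. The genuinely delicate subcase is $S\cap U=T$ with $(S,U)$ non-modular: here the non-modularity gap $\rk(S)+\rk(U)-\rk(E)-\rk(T)\geq 1$ is the only thing preventing every hyperplane through $T$ from containing either $S$, $U$, or $E-S$; the hard part is translating this inequality into the decomposability of $T$ (which would contradict the hypothesis) in order to obtain, via \autoref{lemma:connected-contraction}, a hyperplane in $X^\star$. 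This step is where the full strength of the assumption that both $S$ and $T$ are indecomposable will be essential, together with the Indecomposable Diamond Property (\autoref{prop:conn-diamond}) to navigate would-be obstructions in the upper sublattice above $T$.
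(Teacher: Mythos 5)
Your plan runs an induction dual to the paper's: you shrink $S$ one rank at a time while preserving the spanning condition $S'\lor U = E$, whereas the paper inducts on $\cork(U)$ and instead grows $T$ by finding an indecomposable flat $W\supsetneq T$ of rank $\rk(T)+1$ with $W\subseteq S$, $W\not\subseteq U$, then passes to the triple $[S,\,U\lor W,\,W]$. These two directions are genuinely different, and the paper's direction is markedly easier: it only needs $W\not\subseteq U$, a single escape at the bottom of the lattice that drops out directly from \autoref{cor:conn-chain} and \autoref{prop:conn-diamond} (if the minimal such $W$ had rank $\geq\rk(T)+2$, the diamond property would produce two rank-$(\rk W-1)$ indecomposable flats $V_1\neq V_2$ below it, both forced into $U$ by minimality, yet $V_1\lor V_2 = W\subseteq U$, a contradiction). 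Your direction needs $S'\lor U = E$, i.e.\ that $S'$ escape \emph{every} hyperplane containing $U$, which is a much stronger requirement.

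That requirement is exactly where your argument has a genuine gap, and it is not just a matter of writing out details. Two distinct problems: (i) You do not show $X^\star\neq\emptyset$; you explicitly label this the ``main obstacle'' and describe the $S\cap U = T$ non-modular subcase as ``genuinely delicate'' and sketch only where the contradiction ``would'' come from. (ii) Even granting $X^\star\neq\emptyset$, the maximality argument you borrow from \autoref{prop:conn-path} does not transfer: that argument, when $\rk(H\cap S)\leq\rk(S)-2$, invokes \autoref{lemma:flat-complement} to produce a new hyperplane $H'$ with $|H'\cap S|>|H\cap S|$ and $H'$ in the \emph{original} collection $X$; nothing in that construction guarantees $U\not\subseteq H'$ (or $(S\cap U)\not\subseteq H'$), so $H'$ need not lie in your $X^\star$ and the maximality of $H$ is not contradicted. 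Note also that in the smallest nontrivial instance (e.g.\ $\rk(S)-\rk(T)=2$ with $\cork(S)=\rk(U)-\rk(T)-1$), your desired $S'$ \emph{is} the flat $R$ that the proposition asserts to exist; so in that case the ``construction of $S'$'' is literally the whole content of the proposition, not a simpler sub-lemma. To salvage your approach, you would need a self-contained proof that an indecomposable $S'\subseteq S$ of rank $\rk(S)-1$ containing $T$ with $S'\lor U = E$ always exists in the non-modular case; as written, this is asserted but not established.
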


\begin{proof}
We use induction on the corank of $U$. The case where $U = E$ is trivial, since we can always choose $R = T$. Now suppose that the flat $U$ has corank $c \geqslant 1$. Let $W$ be an indecomposable flat of the least possible rank such that $S \supseteq W \supseteq T$ and $W \not\subseteq U$. Then $\rk(W) = \rk(T) + 1$ by \autoref{cor:conn-chain} and \autoref{prop:conn-diamond}. By the submodularity of the rank function, $\rk(U \lor W) - \rk(U) \leqslant \rk(W) - \rk(T) = 1$. However, $U \lor W$ properly contains $U$, and therefore the corank of $U \lor W$ is $c-1$. By induction, there exists an indecomposable flat $R$ such that $S \supseteq R \supseteq W \supsetneq T$,  $R \lor (U \lor W)= E$, and $\cork(R) = \rk(U \lor W) - \rk(W) = (\rk(U) + 1) - (\rk(T) + 1) = \rk(U) - \rk(T)$. 
\end{proof}


\section{Proof of the homotopy theorem (by Ju\v s Kocutar)}\label{appendix: proof of the homotopy theorem}

In this section, we present Tutte's proof of the homotopy in the language developed in this paper. In particular, we replace circuits by hyperplanes, in contrast to Tutte's original account. We also explain some details which Tutte, with his condensed style, leaves out. The proof follows the outline from \autoref{sec:tutte-homotopy-outline}. When there is a statement which exactly corresponds to a statement from Tutte's paper, we give a reference to it.

\subsection{Preliminaries}\label{subsubsection: preliminaries}

We use $[S,U,T]$ to denote a triple of flats $S, U$, and $T$ satisfying the three assumptions in \autoref{prop:conn-complement}, i.e., $S$ and $T$ are indecomposable, $S \cap U \supseteq T$, and $S \lor U = E$. When we say that a flat $G_1$ is \textit{above} or \textit{below} a flat $G_2$, we mean $G_1 \supseteq G_2$ or $G_1 \subseteq G_2$, respectively.

\begin{lemma}\cite[(4.2)]{Tutte58a}\label{theorem: disconnected corank2 on connected corank 3}
        Let $L$ be a decomposable corank 2 flat, and let $S$ be an indecomposable flat with  $L \supsetneq S$. Then there exists an indecomposable corank 3 flat $P$ with $L \supsetneq P\supseteq S$. 
\end{lemma}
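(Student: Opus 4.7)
My plan is to proceed by strong induction on $\cork(S) - 3 \geq 0$, aiming for $P = S$ in the base case $\cork(S) = 3$ and otherwise exhibiting the corank-$3$ flat $P$ via an intersection argument in $L$.

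For the inductive step with $\cork(S) \geq 4$, I first use \autoref{lemma:connected-contraction} to fix a separation $L = X_1 \cap X_2$, $E = X_1 \cup X_2$, where the hyperplanes $X_1, X_2$ are the only ones above $L$ and are both indecomposable by \autoref{ex:conn-line}. Applying the indecomposable chain property \autoref{cor:conn-chain} to each pair $X_i \supseteq S$ produces indecomposable corank-$2$ flats $F_i \subseteq X_i$ above $S$; these differ from $L$ since $L$ is decomposable. The intersection $F_1 \cap F_2$ automatically lies in $X_1 \cap X_2 = L$ and contains $S$, and by modularity its rank is either $r-3$ (when $\rk(F_1 \vee F_2) = r-1$) or $r-4$ (when $\rk(F_1 \vee F_2) = r$).

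I will argue that with appropriate choices of $F_1$ and $F_2$ (and, if necessary, by iterating the construction), one can arrange $F_1 \cup F_2 \neq E$ as sets, so that \autoref{prop:connected-intersection} forces $F_1 \cap F_2$ to be indecomposable. Then in the case $\rk(F_1 \vee F_2) = r-1$, the flat $P := F_1 \cap F_2$ is the desired indecomposable corank-$3$ flat in $L$ above $S$; in the case $\rk(F_1 \vee F_2) = r$, the flat $F_1 \cap F_2$ is an indecomposable flat in $L$ of rank $r-4$, which strictly contains $S$ whenever $\rk(S) < r - 4$ and hence serves as a replacement for $S$ in a further application of the induction (this strictly decreases $\cork(S)$).

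The main obstacle is the boundary case $\rk(S) = r - 4$ in which every allowable pair $(F_1, F_2)$ either has $F_1 \cup F_2 = E$ (blocking the use of \autoref{prop:connected-intersection}) or yields $F_1 \cap F_2 = S$ (providing no new flat). To handle this I plan to use \autoref{prop:conn-diamond} applied to $X_i$ together with the rank-$(r-3)$ entry of the indecomposable chain to generate multiple candidate pairs $(F_1, F_2)$, and to combine them with \autoref{prop:conn-complement} applied to carefully chosen triples $[X_i, U, S]$ so as to either force some pair into the favorable configuration $\rk(F_1 \vee F_2) = r - 1$, or else derive a contradiction from the assumed degeneracy using the connectedness of $M$ (through $S$'s indecomposability) and the two-hyperplane structure above $L$ (from $L$'s decomposability). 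The subtle technical point is verifying $F_1 \cup F_2 \neq E$ as sets rather than just as flats, which can fail when $X_1 - L$ has small rank in $M$; in that edge case, $F_1$ can be selected in the orthogonal direction inside $X_1$ to avoid containing $X_1 - L$ entirely.
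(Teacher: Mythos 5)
Your approach is genuinely different from the paper's: instead of building $P$ bottom-up by intersecting corank-$2$ flats, the paper takes $P$ to be an indecomposable flat of \emph{minimal} corank with $L \supsetneq P \supseteq S$ and derives a contradiction from $\cork(P) > 3$ using \autoref{prop:conn-complement}, \autoref{prop:conn-diamond}, and \autoref{theorem: cap of points is disconnected.}. Unfortunately your outline has several real gaps. First, the rank computation ``by modularity its rank is either $r-3$\ldots\ or $r-4$'' silently assumes $(F_1,F_2)$ is a modular pair. The semimodular inequality only gives $\rk(F_1\cap F_2) \leq \rk(F_1)+\rk(F_2)-\rk(F_1\vee F_2)$, so $\rk(F_1\cap F_2)$ can drop below $r-4$, in which case neither branch of your dichotomy applies and the induction stalls. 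Second, the condition $F_1\cup F_2\neq E$, which you need in order to invoke \autoref{prop:connected-intersection}, is not established: since $F_i\subseteq X_i$ and $X_1\cup X_2 = E$, $X_1\cap X_2 = L$, the failure mode is exactly $X_1-L\subseteq F_1$ together with $X_2-L\subseteq F_2$, and nothing in your argument prevents this; the phrase ``select $F_1$ in the orthogonal direction inside $X_1$'' is not a proof.

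Third, and most significantly, the boundary case $\rk(S)=r-4$ is precisely where the hard work has to happen, and your sketch only names a toolbox (\autoref{prop:conn-diamond}, \autoref{prop:conn-complement}, ``derive a contradiction from the assumed degeneracy'') without exhibiting how any combination of these produces either a favorable pair $(F_1,F_2)$ or a contradiction. In the paper's argument, the analogous hard core is the claim that any two hyperplanes $H_1\supset X_1$, $H_2\supset X_2$ above $P\vee a$ intersect in a decomposable corank-$2$ flat (forcing $\{H_1,H_2\}=\{X,Y\}$ and hence $P\vee a = L$), and that claim requires its own delicate argument via \autoref{theorem: cap of points is disconnected.}. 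I would encourage you either to fill in the boundary case with an explicit construction or contradiction, or to adopt the paper's minimality framing, which is better adapted to exploiting the structural constraint that $L$ has exactly two hyperplanes above it.
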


\begin{proof}
Let $P$ be an indecomposable flat of minimal corank such that $L \supsetneq P \supseteq S$. Assume for the sake of contradiction that $\cork(P)>3.$

Denote the two distinct hyperplanes above $L$ by $X$ and $Y.$ We first assume, for the sake of contradiction, that there exists a decomposable corank 2 flat $L' \neq L$ such that $X \supsetneq L' \supsetneq P.$ By \autoref{prop:conn-complement} applied to $[Y, L', P]$, there exists an indecomposable flat $U$ such that $\cork(U) = \rk(L')-\rk(P) = \cork(P)-2,$ $U \lor L' = E$, and $Y \supseteq U.$ By \autoref{prop:conn-diamond}, there exist indecomposable flats $V$ and $W$ such that $V \lor W = U$ and $V \cap W = P.$ We also observe that $V\cap L' = P$, since otherwise $$\rk(P)+1 = \rk(V)>\rk(V\cap L')>\rk(P),$$ a contradiction. 
Using the submodular inequality, we get $$\rk(V\lor L') \leq \rk(V)+\rk(L')-\rk(V\cap L') = \rk(L')+1,$$ showing that $V\lor L'$ and $W \lor L'$ are distinct hyperplanes (since $V = (V\lor L') \cap U$ and $W = (W\lor L') \cap U$). Assume without loss of generality that $V \lor L' = X.$ Then $V$ is an indecomposable flat below $X$ and $Y$, implying that $L \supseteq V$ and $\cork(V) = \cork(P)-1,$ which contradicts the minimality of $P.$

Therefore, $L$ is the only decomposable corank 2 flat above $P.$ Pick $a \in L - P$. The flat $P \lor a$ is a decomposable corank $\cork(P)-1$ flat, by the minimality of $P$, and $L$ is the unique decomposable corank 2 flat above $P \lor a$ as well. Let $\{X_1,X_2\}$ be a separation of $P\lor a.$ Without loss of generality we may assume that $X \supseteq X_1$ and $Y \supseteq X_2.$ 

Let $H_1,H_2$ be hyperplanes above $P\lor a$ such that $H_i \supset X_i.$ 
We claim that $H_1 \cap H_2$ is a decomposable corank 2 flat. To see this let $H_3$ be a third hyperplane above $H_1 \cap H_2.$ Without loss of generality assume that $H_3 \supset X_1.$ Then if $b \in H_1-H_2$, we have $b \notin X_2$, so $b \in X_1$, implying $X_1 \supset H_1 - H_2.$
Therefore $H_3 \supset (H_1\cap H_2) \cup X_1 \supset (H_1\cap H_2) \cup( H_1 - H_2) = H_1$, a contradiction. Hence $H_1 \cap H_2$ is a decomposable corank 2 flat by \autoref{theorem: cap of points is disconnected.}.

Thus, for any hyperplanes $H_1,H_2$ above $P \lor a$ such that $H_i \supset X_i$, $H_1\cap H_2$ is a decomposable corank 2 flat on $P \lor a$. But the unique such flat is the decomposable corank 2 flat $X \cap Y$, implying that the flats $H_1,H_2$ are either $X$ or $Y.$ Therefore the only hyperplanes above $P\lor a$ are $X$ and $Y$, implying that $P\lor a = X \cap Y = L$ and $\cork(P) = 3$, a contradiction. 
\end{proof}

\begin{lemma}\cite[(4.3) and (4.4)]{Tutte58a}\label{theorem: connected corank 3 flats}
    Let $P$ be an indecomposable corank 3 flat and $L \supsetneq P$ a decomposable corank $2$ flat. Let $X$ and $Y$ be the two distinct hyperplanes containing $L$. Then for every other hyperplane $Z$ containing $P$, the only corank 2 flats between $Z$ and $P$ are $Z\cap X$ and $Z\cap Y$, which are both indecomposable. As a consequence, $L$ is the only decomposable corank $2$ flat containing $P$. 
\end{lemma}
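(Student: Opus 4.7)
The plan is to reduce to a geometric question about the connected rank $3$ matroid $N := M/P$. Writing $\overline F$ for the image of a flat $F \supseteq P$, corank $2$ flats of $M$ above $P$ become the points of $N$ and hyperplanes of $M$ above $P$ become its lines, with (in)decomposability preserved under this correspondence. The decomposability of $L$ combined with $X, Y$ being the only hyperplanes of $M$ above $L$ forces $X \cup Y = E$ (the separation of $L$ must be $\{X, Y\}$ itself) and says exactly that the only two lines of $N$ through $\overline L$ are $\overline X$ and $\overline Y$. Since $Z \ne X, Y$, we have $L \not\subseteq Z$, i.e., $\overline L \notin \overline Z$.

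First I would check that $Z \cap X$ and $Z \cap Y$ are actually corank $2$ flats (rather than collapsing to $P$). From $X \cup Y = E$ one has $Z = (Z \cap X) \cup (Z \cap Y)$, so $(Z \cap X) \vee (Z \cap Y) = Z$, while $(Z \cap X) \cap (Z \cap Y) = Z \cap L = P$ because $L$ covers $P$ and $L \not\subseteq Z$. The submodular inequality then yields $\rk(Z \cap X) + \rk(Z \cap Y) \geq \rk Z + \rk P = 2r - 4$, which combined with $\rk(Z \cap X), \rk(Z \cap Y) \leq r - 2$ pins both ranks at $r - 2$. No third corank $2$ flat $F$ between $P$ and $Z$ can exist, for then $\overline F$ would be a point of $\overline Z$ distinct from $\overline{Z \cap X}, \overline{Z \cap Y}$, while the line $\overline L \vee \overline F$ would pass through $\overline L$ and hence equal $\overline X$ or $\overline Y$, forcing $\overline F$ into $\overline X \cap \overline Z$ or $\overline Y \cap \overline Z$—a contradiction.

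The main obstacle will be the indecomposability of $Z \cap X$ and $Z \cap Y$, which I would deduce from the stronger statement that $\overline L$ is the \emph{unique} decomposable point of $N$. Suppose for contradiction that some $\overline Q \ne \overline L$ is also decomposable, so that only two lines pass through $\overline Q$. One of them is $\overline L \vee \overline Q$, which—being a line through $\overline L$—coincides with $\overline X$ or $\overline Y$; say $\overline X$, and call the other line through $\overline Q$ by $\overline V$. For every point $\overline R \in \overline Y \setminus \{\overline L\}$, the line $\overline Q \vee \overline R$ passes through $\overline Q$ and cannot be $\overline X$ (because $\overline R \notin \overline X$, as $\overline X \cap \overline Y = \overline L \ne \overline R$), so it must be $\overline V$; hence $\overline R \in \overline V$. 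A symmetric analysis then forces $\overline Y = \{\overline L, \overline R\}$ and $\overline V = \{\overline Q, \overline R\}$ each to contain only two points, and every point of $N$ other than $\overline R$ lies on $\overline X$. But then $\overline X$ is the unique line of $N$ with $\geq 3$ points, so every $3$- or $4$-element dependent subset of $N$ is contained in $\overline X$ and avoids $\overline R$; hence $\overline R$ is in no circuit, making it a coloop of $N$ and contradicting connectedness. Since $\overline{Z \cap X}, \overline{Z \cap Y} \ne \overline L$, both are indecomposable.

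The final consequence is then immediate: any decomposable corank $2$ flat $F$ above $P$ has exactly two hyperplanes $H_1, H_2$ above it (since $M/F$ is a disconnected rank $2$ matroid), both containing $P$. If $\{H_1, H_2\} = \{X, Y\}$ then $F = X \cap Y = L$; otherwise some $H_i$ equals a hyperplane $Z \ne X, Y$, and then $F = H_1 \cap H_2$ would be one of the indecomposable corank $2$ flats between $P$ and $Z$ identified above, contradicting the decomposability of $F$.
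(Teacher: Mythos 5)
Your proof is correct, but it takes a genuinely different route from the paper for the indecomposability step, and also reverses the logical order. The paper first shows $Z \cap X$ and $Z \cap Y$ are the only corank 2 flats between $P$ and $Z$ (same submodularity argument you use), then immediately deduces their indecomposability by invoking \autoref{prop:conn-diamond} (which supplies two distinct indecomposable intermediate flats, forcing both candidates to be indecomposable), and only then derives the uniqueness of $L$. You instead prove the uniqueness of $L$ directly, via a self-contained geometric argument inside the rank 3 contraction $M/P$ (a hypothetical second decomposable point $\overline Q$ forces a coloop, contradicting connectedness of $M/P$), and then get the indecomposability of $Z\cap X$ and $Z\cap Y$ as a corollary. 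Your route avoids \autoref{prop:conn-diamond} altogether, at the cost of a longer case analysis; the paper's version is shorter given that \autoref{prop:conn-diamond} is already available. Your final paragraph is logically redundant with the preceding one (you already established uniqueness of $\overline L$), though harmless.

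One sentence in your coloop argument is stated imprecisely: ``every $3$- or $4$-element dependent subset of $N$ is contained in $\overline X$'' is literally false --- in a rank $3$ matroid every $4$-element subset is dependent, and a $4$-circuit spans rank $3$ so cannot lie on a line. What you actually need, and what does hold, is that no circuit of $M/P$ (in the simplification) contains $\overline R$: $3$-circuits lie on the unique $\geq 3$-point line $\overline X$, which misses $\overline R$; and a $4$-circuit through $\overline R$ would force the remaining three points, all lying on the rank $2$ flat $\overline X$, to be independent, which is impossible. That correct version yields the coloop and the contradiction, so the idea is sound; only the intermediate claim needs tightening.
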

\begin{proof}

Let $Z$ be a hyperplane above $P$ distinct from $X$ or $Y$, and let $L'$ be a corank 2 flat such that $Z \supsetneq L' \supsetneq P.$ By the submodular inequality, the flat $L \lor L'$ is a hyperplane and hence is equal to either $X$ or $Y.$ Therefore, the only corank 2 flats between $Z$ and $P$ are $X \cap Z$ and $Y \cap Z$. By \autoref{prop:conn-diamond}, $X\cap Z$ and $Y \cap Z$ must be indecomposable. 

Let $L'' \ne L$ be any other corank 2 flat above $P$. Then $L'' \lor L$ is a hyperplane, so is equal to either $X$ or $Y$. But there is also a hyperplane $Z \supseteq L'' \supseteq P$ which is equal to neither $X$ nor $Y.$ We conclude that $L''$ is indecomposable. 
\end{proof}

   Let $\gamma = (H_0, \ldots, H_k)$ be a Tutte path. We denote by $F(\gamma)$ the flat $H_0 \cap \cdots \cap H_k.$ By the corank and the rank of $\gamma$, we mean $\cork(F(\gamma))$ and $\rk (F(\gamma))$, respectively.

\begin{lemma}
\label{theorem: carrier of path connected}
The flat $F(\gamma)$ is indecomposable for every Tutte path $\gamma = (H_0, \ldots, H_k)$.
\end{lemma}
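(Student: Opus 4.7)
The plan is to proceed by induction on the length $k$ of the Tutte path $\gamma = (H_0, \ldots, H_k)$. The base case $k=1$ is immediate: by the definition of a Tutte path, $H_0 \cap H_1$ is an indecomposable corank $2$ flat. (The degenerate case $k=0$ just says that a single hyperplane is indecomposable, which is recorded in \autoref{ex:conn-line}.)

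For the inductive step, let $\gamma' = (H_0, \ldots, H_{k-1})$, so the inductive hypothesis gives that $F(\gamma')$ is indecomposable. The key rewriting is
\[
 F(\gamma) \;=\; F(\gamma') \,\cap\, H_k \;=\; F(\gamma') \,\cap\, (H_{k-1} \cap H_k),
\]
where the second equality uses $F(\gamma') \subseteq H_{k-1}$. Both flats on the right-hand side are indecomposable: the first by the inductive hypothesis, the second because $(H_{k-1}, H_k)$ are consecutive terms of a Tutte path. I then apply \autoref{prop:connected-intersection} to conclude that $F(\gamma)$ is indecomposable.

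To invoke \autoref{prop:connected-intersection} I need to check that $F(\gamma') \cup (H_{k-1}\cap H_k) \neq E$; but this is automatic, since both sets are contained in the hyperplane $H_{k-1}$, so their set-theoretic union lies in $H_{k-1}$ and is therefore a proper subset of $E$.

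The whole argument is essentially a one-line application of \autoref{prop:connected-intersection}, so I do not expect any serious obstacle. The only (very minor) point worth noticing is that one should pair $F(\gamma')$ with the \emph{intersection} $H_{k-1}\cap H_k$, rather than with $H_k$ itself: this is what simultaneously guarantees both the indecomposability of the second factor and the set-theoretic containment in $H_{k-1}$ needed to exclude the case $F(\gamma') \cup H_k = E$.
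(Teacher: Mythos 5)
Your proof is correct and takes essentially the same route as the paper's: an induction in which $\autoref{prop:connected-intersection}$ supplies the inductive step. The paper pairs $F(\gamma')$ with $H_k$ itself (using $F(\gamma')\cup H_k \subseteq H_{k-1}\cup H_k \neq E$, which holds by $\autoref{lemma:connected-contraction}$ since $H_{k-1}\cap H_k$ is indecomposable), whereas you pair it with $H_{k-1}\cap H_k$; both choices are valid and yield the same conclusion, so your closing remark that one ``should'' pair with the intersection overstates the point, but this is cosmetic.
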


\begin{proof}
    Because $H_i\cap H_{i+1}$ is indecomposable for all $i$, we have $H_i \cup H_{i+1} \neq E$ by \autoref{lemma:connected-contraction}. Define, for all $0 \leq i \leq k$, the flat $F_i = \bigcap_{j = 0}^{i}H_j.$ The flat $F_1 = H_0 \cap H_1$ is indecomposable by hypothesis, and it follows from \autoref{prop:connected-intersection} that $F_2$ is indecomposable. Indeed, $F_2 = F_1 \cap H_2$ with $F_1$ and $H_1$ indecomposable, and $H_2 \cap F_1 \subseteq H_2 \cup H_1 \neq E.$ It follows by similar reasoning that $F_k = F(\gamma)$ is indecomposable.
\end{proof}

\begin{lemma}\label{theorem: cap of points is disconnected.}
    Let $H_1$ and $ H_2$ be hyperplanes such that $H_1 \cap H_2$ is a decomposable flat. Then $\cork (H_1 \cap H_2) =2.$
\end{lemma}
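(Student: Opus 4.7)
The plan is to reduce to a short case analysis inside the contraction $M/F$, where $F := H_1 \cap H_2$. First I would observe that $H_1 \neq H_2$: if the two hyperplanes coincide, then $F = H_1$ is itself a hyperplane, and by \autoref{ex:conn-line} every hyperplane is indecomposable, contradicting the hypothesis. Hence I may assume $H_1$ and $H_2$ are distinct.

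Next I would pass to the contraction $N := M/F$. Since $H_1, H_2 \supseteq F$, the sets $\bar H_1 := H_1 - F$ and $\bar H_2 := H_2 - F$ are distinct hyperplanes of $N$, and the identity $F = H_1 \cap H_2$ translates to $\bar H_1 \cap \bar H_2 = \emptyset$ in $N$. Note that $N$ has no loops, since any loop of $N$ would be an element $e \in E - F$ with $e \in \gen{F}_M = F$, which is impossible. The decomposability of $F$ means exactly that $N = N_1 \oplus N_2$ non-trivially, with both ground sets $E_1, E_2$ non-empty; since $N$ is loop-free so are $N_1$ and $N_2$, and every hyperplane of $N$ has the form $H' \cup E_2$ for some hyperplane $H'$ of $N_1$, or $E_1 \cup H''$ for some hyperplane $H''$ of $N_2$.

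Finally I would carry out the case analysis on where $\bar H_1$ and $\bar H_2$ sit in the direct sum. If both arise from $N_1$-hyperplanes, their intersection contains the non-empty set $E_2$, contradicting $\bar H_1 \cap \bar H_2 = \emptyset$; the case when both arise from $N_2$-hyperplanes is symmetric. The only remaining possibility is that, after relabeling, $\bar H_1 = H' \cup E_2$ and $\bar H_2 = E_1 \cup H''$, so that $\bar H_1 \cap \bar H_2 = H' \cup H''$. The condition $H' \cup H'' = \emptyset$ forces $H' = H'' = \emptyset$, and in a loop-free matroid $\emptyset$ can be a hyperplane only when the matroid has rank $1$. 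Thus $\rk(N_1) = \rk(N_2) = 1$, so $\rk(N) = 2$, i.e.\ $\cork(F) = 2$. The main subtlety of the plan is being careful about the correspondence between hyperplanes of $M$ containing $F$ and hyperplanes of $M/F$, and ruling out loops in the contraction; once those bookkeeping points are handled, the case analysis itself is essentially immediate.
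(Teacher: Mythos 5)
Your proof is correct, and it takes a genuinely different route from the paper's. The paper invokes the ``separation'' apparatus of \autoref{lemma:connected-contraction} / \autoref{rem:connected-contraction}: it picks a separation $\{X_1,X_2\}$ of $F = H_1 \cap H_2$ with $H_i \supseteq X_i$, shows $H_i = X_i$, and concludes that $H_1$ and $H_2$ are the \emph{only} hyperplanes above $F$; the corank-$2$ conclusion is then read off the geometric lattice structure (a flat of corank $\geq 3$ lies below at least three hyperplanes). You instead work explicitly in the contraction $N = M/F$, use $F$ flat to rule out loops, write $N = N_1 \oplus N_2$ nontrivially, classify hyperplanes of a direct sum, and reduce $\bar H_1 \cap \bar H_2 = \emptyset$ to $\rk(N_1) = \rk(N_2) = 1$, whence $\rk(N) = 2$. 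The two arguments are morally the same (the separation language is just the contraction decomposition packaged as subsets of $E$), but yours has the small advantage of computing $\rk(N)$ directly as a sum of ranks, rather than needing the auxiliary fact about how many hyperplanes must sit above a flat of given corank; the paper's version has the advantage of staying entirely inside $\Lambda_M$, which is the setting it uses throughout \autoref{appendix:flats}. Both are complete; your handling of the degenerate case $H_1 = H_2$ (via the indecomposability of hyperplanes, \autoref{ex:conn-line}) is also a clean touch that the paper only handles implicitly.
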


\begin{proof}
 Let $\{X_1, X_2\}$ be a separation for $H_1 \cap H_2$ such that $H_i \supseteq X_i$ (if both $H_1$ and $H_2$ contain the same set $X_j$ of the separation, then
$H_1 \cap H_2 = X_j$, implying that $X_i = E$, which is a contradiction). Assume, for the sake of contradiction, that there exists $a\in H_1-X_1$. Then $a \in X_2 - X_1$, implying that $a \in H_2.$ Therefore $a \in (H_1 \cap H_2) - X_1$, which is impossible. It follows that $X_i = H_i,$ implying that the only hyperplanes above $H_1 \cap H_2 $ are $H_1$ and $H_2$, because any such hyperplane is either equal to $X_1$ or $X_2$. It follows that $\cork(H_1 \cap H_2) =2$, since the only flats above $H_1\cap H_2$ are $H_1\cap H_2,$ $H_1$, $H_2$, and $E$. 
\end{proof}

\subsection{Statement of the homotopy theorem}\label{subsubsection: statement of homotopy theorem}

We recall the statement of the Tutte's homotopy theorem.

\begin{thm}\cite[(6.1), Tutte's Homotopy theorem]{Tutte58a} \label{theorem: Tutte's homotopy theorem}
Let $M$ be a matroid and let $\Gamma$ be a modular cut in $M$. Then every closed Tutte path off $\Gamma$ is null-homotopic.
\end{thm}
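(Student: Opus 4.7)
The plan is to follow exactly the two-layer strategy outlined in \autoref{sec:tutte-homotopy-outline}: first establish the Special Lemma, which handles a distinguished class of length-four closed Tutte paths, and then deduce the general theorem by induction on the corank of the intersection flat $F(\gamma) = \bigcap_i H_i$, using the Special Lemma as the engine in the hardest subcase.

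For the Special Lemma, the claim is that every closed Tutte path $\delta = (H_1, H_2, H_3, H_4, H_1)$ for which $H_1 \cap H_2 \cap H_3$ and $H_1 \cap H_3 \cap H_4$ are indecomposable corank-$3$ flats and $H_1 \cap H_3$ is an indecomposable corank-$2$ flat is null-homotopic. I would argue by contradiction: among all non null-homotopic paths of this form, pick one whose base flat $F = F(\delta)$ has minimal corank $c$. The lattice structure below $F$ is then analysed exhaustively using \autoref{prop:conn-diamond}, \autoref{prop:conn-complement}, and, crucially, \autoref{theorem: connected corank 3 flats} (the uniqueness of a decomposable corank-$2$ flat above an indecomposable corank-$3$ flat). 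In each configuration, one either deforms $\delta$ via elementary Tutte paths into a concatenation $\delta_1 \cdots \delta_s$ with each $\delta_j$ lying on a flat of corank strictly less than $c$ (contradicting minimality, since each $\delta_j$ is then null-homotopic), or, after additional reductions in the base corank-$4$ case, one recognizes $\delta$ itself as an elementary Tutte path of the \emph{fourth} kind. This last identification is the one and only point in the entire argument where paths of the fourth kind are needed.

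For the main proof, given a closed Tutte path $\gamma = (H_1,\dots,H_k,H_1)$ off $\Gamma$, I would induct on the corank $c(\gamma)$ of $F(\gamma)$, which is indecomposable by \autoref{theorem: carrier of path connected}. The base cases $c(\gamma) \leq 2$ are immediate: a closed Tutte path sharing a corank-$1$ or corank-$2$ flat is trivially built from elementary paths of the first and second kinds. For the inductive step, \autoref{prop:conn-path} furnishes an indecomposable flat $F'$ with $H_1 \supseteq F' \supseteq F(\gamma)$ and $\cork(F') = c(\gamma) - 1$. Among all closed Tutte paths $\gamma'$ obtainable from $\gamma$ by a finite sequence of deformations through elementary Tutte paths off $\Gamma$, all of whose terms contain $F(\gamma)$, select one minimizing $u(\gamma')$ (the position of the first term not containing $F'$) and then $v(\gamma')$ (the corank of the triple intersection at that position). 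If $u(\gamma') = 0$, then every term of $\gamma'$ contains $F'$, and the induction hypothesis applied at $F'$ finishes the argument.

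The heart of the inductive step, and the main obstacle, is to prove that $u(\gamma') = 0$ by ruling out $v(\gamma') \geq 2$. I would split into three cases. When $v(\gamma') = 2$, a direct deformation using an elementary path of the first or second kind at the offending position produces a competitor $\gamma''$ with strictly smaller $u$, contradicting minimality. When $v(\gamma') = 3$, the structural analysis around $F(\gamma)$ (using \autoref{prop:conn-diamond} and \autoref{prop:conn-complement}) together with elementary paths of the \emph{third} kind, whose unique usage this is, yields a similar contradiction. When $v(\gamma') > 3$, the configuration at the offending index produces precisely the hypotheses of the Special Lemma inside a suitable corank-$3$ sublattice above $F(\gamma)$, so the Special Lemma lets us deform $\gamma'$ into $\gamma''$ with strictly smaller $v$, again contradicting minimality. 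Consequently $v(\gamma') = 1$ and $u(\gamma') = 0$, completing the induction. The two nontrivial obstacles are the structural case analysis inside the proof of the Special Lemma and the careful bookkeeping needed to confirm that each deformation in the three-case analysis remains off $\Gamma$; both rely systematically on the lemmas collected in \autoref{appendix:flats}.
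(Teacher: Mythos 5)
Your proposal reproduces the paper's two-layer architecture faithfully: the Special Lemma for length-four closed Tutte paths with a distinguished corank-$2$ intersection, followed by an induction on $\cork F(\gamma)$ using the minimization of $u$ and then $v$, with the three-case analysis $v=2, v=3, v>3$. That is exactly the scheme carried out in \autoref{appendix: proof of the homotopy theorem}.

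There is, however, one substantive point you should be careful about. You state the Special Lemma's hypothesis as ``$H_1 \cap H_3$ is an indecomposable corank-$2$ flat.'' This matches the summary in \autoref{sec:tutte-homotopy-outline}, but it contradicts the definition actually used in the detailed proof (see the definition of a special path in \autoref{theorem: lemma for homotopy theorem}), which requires $W \cap Y$ to be \emph{decomposable}. The distinction is not cosmetic: the entire machinery of the Special Lemma --- type (a) and type (b) transversals, \autoref{theorem: connected corank 3 flats}, the bound $\cork F(\delta) \leq 4$, the eventual identification with an elementary path of the fourth kind --- all hinges on $W \cap Y$ being decomposable. When $W \cap Y$ is \emph{indecomposable}, the path theorem (\autoref{thm: Tutte's path theorem}) applies directly and the closed path is decomposed by elementary paths of the first and second kinds without any further effort; this is precisely the easy subcase handled at the start of 2.2.1 in the appendix. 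So with your stated hypothesis, the lemma is a much weaker fact that does not serve its intended role in the case analysis of the main induction. If you follow the paper, correct the hypothesis to ``decomposable.''

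Two smaller inaccuracies: (i) you assign the Special Lemma only to the $v(\gamma') > 3$ case, but it is also invoked in the $v(\gamma') = 3$ case (sub-case 2.2.1 of the appendix, when the auxiliary corank-$2$ flat $L$ is decomposable), while the elementary path of the third kind appears in sub-case 2.2.2; (ii) the phrase ``inside a suitable corank-$3$ sublattice above $F(\gamma)$'' for the $v > 3$ step does not describe what happens --- the constructed special path $(T', U, X_i, X_{i-1}, T')$ has base flat of corank up to $n+1$, and what is actually exhibited is a decomposable corank-$2$ flat sitting above two indecomposable corank-$3$ flats, exactly the structure the Special Lemma demands.
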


\begin{rem}\label{remark: tutte version of fourth path}
    The elementary path of the fourth kind is described in a different way in \cite{Tutte58a} than in \autoref{subsubsection: constellations}. We summarize Tutte's original point of view as follows. The starting point is a corank 4 flat $D$ above which there are three hyperplanes $A,$ $B$, and $C$ such that $A\cap B,$ $B\cap C$, and $A \cap C$ are all decomposable corank 2 flats. Above $D$ there are exactly six indecomposable corank 3 flats, such that each decomposable corank 2 flat as described above lies above exactly two corank 3 flats. The flats $A,$ $B$, and $C$ are not in $\Gamma$, and there are exactly two members of $\Gamma$ above each of the six indecomposable corank 3 flats. We define a path of the form $\delta = (A,X,B,Y,A),$ where $X$ and $Y$ lie above distinct indecomposable corank 3 flats below $A\cap B,$ to be an elementary path of the fourth kind with respect to $\Gamma.$ 

An explicit description of all flats generated as joins of the six indecomposable corank 3 flats is given in \cite{Tutte58a}, and one can check that it gives the same lattice as the lattice of flats of $M(K_{2,3})$, which is what we used in \autoref{subsubsection: constellations}. Our proof of the homotopy theorem will use Tutte's original description of elementary paths of the fourth kind, following \cite{Tutte58a}.
\end{rem}

\subsection{The special lemma}\label{subsubsection: special lemma}

For the remainder of \autoref{appendix: proof of the homotopy theorem}, we closely follow the proof in \cite{Tutte58a}, except for the fact that we  replace circuits with hyperplanes of the dual matroid. 

The homotopy theorem is true for any path $\gamma$ with $\cork (F(\gamma)) = 1$. We assume that \autoref{theorem: Tutte's homotopy theorem} is true for all closed Tutte paths $\gamma$ with $\cork(F(\gamma))\leq n.$ In \autoref{subsection: final proof}, we prove \autoref{theorem: Tutte's homotopy theorem} by contradiction following a minimal counterexample. 
In this section, we prove that a certain special type of Tutte path is null-homotopic.

\begin{df}\label{theorem: lemma for homotopy theorem}
Let $M$ be a matroid and let $\Gamma$ be a modular cut in $M$. A \textit{special path} is a Tutte path $\delta=(W,X,Y,Z,W)$ off $\Gamma$ such that $W\cap X \cap Y$ and $Y\cap Z \cap W$ are indecomposable corank 3 flats, and such that $W\cap Y $ is a decomposable corank 2 flat. 
\end{df}

The following can be found in \cite[pp. 153-154]{Tutte58a} or~\cite[Section 6.3]{Tutte65}.

\begin{lemma}\cite[Lemma]{Tutte58a} \label{theorem: good paths}
Let $M$ be a matroid with modular cut $\Gamma,$ and assume that $n \geq 3$. Then any special path $\delta$ with $\cork(F(\delta)) = n+1$ is null-homotopic.
\end{lemma}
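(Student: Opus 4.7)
The plan is to argue by contradiction. Suppose the lemma fails and let $\delta = (W,X,Y,Z,W)$ be a special path with $\cork(F(\delta)) = n+1$ minimal among non-null-homotopic special paths, where $F := F(\delta) = W\cap X \cap Y \cap Z$. Set $A = W \cap X \cap Y$, $B = Y \cap Z \cap W$, and $L = W \cap Y$; by the definition of a special path, $A$ and $B$ are indecomposable corank $3$ flats and $L$ is a decomposable corank $2$ flat, with $A, B \supseteq F$ and $A, B \subseteq L$. Applying \autoref{theorem: connected corank 3 flats} to $A$ and separately to $B$, the flat $L$ is the unique decomposable corank $2$ flat above $A$ (resp. above $B$), and $W$, $Y$ are the only hyperplanes above $L$; in particular $W \cap X$, $X \cap Y$, $Y \cap Z$, and $Z \cap W$ are all indecomposable corank $2$ flats. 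Let $\{S_1, S_2\}$ be the separation of $L$ provided by \autoref{lemma:connected-contraction}, with $W \supseteq S_1$ and $Y \supseteq S_2$; this decomposition will control which hyperplanes above $F$ contain which ``side'' of $L$.

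The main strategy in the generic case $\cork(F) \geq 5$ is to insert new hyperplanes into $\delta$ so as to express it as a composition of closed Tutte paths off $\Gamma$, each lying on a flat of corank strictly less than $\cork(F)$. Concretely, one produces an indecomposable flat $F'$ strictly between $F$ and $L$ by iterating \autoref{prop:conn-diamond} and \autoref{prop:conn-complement} applied to suitable pairs (such as $[A, F]$ and $[B, F]$) and to triples $[L, L', F]$ involving indecomposable corank $2$ flats $L' \neq L$ above $F$. The modular cut axioms, together with $W, X, Y, Z \notin \Gamma$, guarantee that the bridging hyperplanes $H$ joining $W, X, Y, Z$ through $F'$ can be chosen off $\Gamma$. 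Each resulting sub-path is either an ordinary closed Tutte path on an indecomposable flat of strictly smaller corank (null-homotopic by the induction hypothesis) or itself a special path of strictly smaller corank (null-homotopic by the minimality of $\delta$). This deformation expresses $\delta$ as a concatenation of null-homotopic closed paths, contradicting the assumption on $\delta$.

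The key obstacle is the base case $\cork(F) = 4$, where there is no room to choose an intermediate flat $F'$ strictly between $F$ and $L$, and the above reduction breaks down. The plan here is to show that the local lattice above $F$ is forced to match the fourth-kind Tutte constellation described in \autoref{remark: tutte version of fourth path}. Repeated applications of \autoref{theorem: connected corank 3 flats}, \autoref{prop:conn-diamond}, and \autoref{prop:conn-complement}, combined with the hypothesis that every hyperplane of $\delta$ is off $\Gamma$, produce exactly six indecomposable corank $3$ flats above $F$, three pairwise decomposable corank $2$ flats (of which $L$ is one), and four distinguished hyperplanes lying in $\Gamma$ that play the role of $H_{123}, H_{156}, H_{246}, H_{345}$ in \autoref{fig: Tutte constellation of kind4}. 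Once this configuration is established, $\delta$ is recognized as an elementary Tutte path of the fourth kind and is therefore null-homotopic by definition, again contradicting the choice of $\delta$.

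The hardest part of the argument will be this last recognition step: verifying through careful casework that when $\cork(F) = 4$ the modular cut $\Gamma$ must contain exactly the four prescribed hyperplanes and no others among those above $F$. This requires showing that any other candidate hyperplane above $F$ either cannot be placed in $\Gamma$ without violating the modular cut closure under modular intersections, or, if it is forced to lie outside $\Gamma$, coincides with one of $W, X, Y, Z$. This is the unique place in the entire proof of \autoref{theorem: Tutte's homotopy theorem} where elementary paths of the fourth kind are invoked, reflecting the genuine combinatorial subtlety of the corank $4$ configuration.
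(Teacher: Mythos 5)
Your high-level skeleton — contradiction, a generic reduction for large corank, recognition of the fourth-kind $M(K_{2,3})$ configuration when $\cork(F(\delta)) = 4$ — matches the shape of the paper's argument, but it omits the one structural lemma on which the whole proof pivots and mischaracterizes what remains to be ruled out in the corank-$4$ case.

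The paper's proof is built around \emph{type (a) transversals} (indecomposable corank-$n$ flats above $F(\delta)$ not below both $W$ and $Y$) and \emph{type (b) transversals} (indecomposable corank-$(n-1)$ flats above $F(\delta)$ below neither $W$ nor $Y$), each type (b) transversal $B$ having two \emph{poles} $B \lor F_1$ and $B \lor F_2$, which are hyperplanes. The decisive claim, proved in \autoref{theorem: tn-1 poles}, is that at least one pole of every type (b) transversal must lie in $\Gamma$: if both were off $\Gamma$, Tutte's path theorem would supply a Tutte path $\epsilon$ between them, and one could decompose $\delta$ as a concatenation of closed Tutte paths on the two type (a) transversals $B\cap W$ and $B\cap Y$, each of corank $n$, yielding null-homotopy by the outer induction hypothesis. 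Your proposal instead asserts that "the modular cut axioms, together with $W, X, Y, Z \notin \Gamma$, guarantee that the bridging hyperplanes... can be chosen off $\Gamma$." That is not a consequence of the modular cut axioms; it requires the pole lemma together with the transversal machinery ($A$, $B$, $B'$, $T = B \lor B'$, the hyperplane $I$, the flat $G$) constructed in the intermediate lemmas. Without the pole lemma, the $\cork \geq 5$ reduction has nothing to hang on.

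The corank-$4$ case is also more delicate than you suggest. The lattice above $F(\delta)$ is not automatically forced to match the fourth-kind constellation. After showing that each indecomposable corank-$3$ flat above $F(\delta)$ has exactly two members of $\Gamma$ above it (\autoref{theorem: two members of gama above 3 -transversal}, \autoref{theorem: two member of gamma above fi}), the proof splits: either $F_1$, $F_2$, and four type (a) transversals exhaust the indecomposable corank-$3$ flats above $F(\delta)$ (in which case $\delta$ is elementary of the fourth kind), or there is an additional indecomposable corank-$3$ flat $F_3$ below both $W$ and $Y$. In the latter subcase the contradiction is not obtained by "$\Gamma$-membership casework" as you propose; instead one forms new special paths $(W,X,Y,X_1'',W)$ and $(Y,Z,W,X_1'',Y)$ with $F_3$ replacing $F_2$, shows each is null-homotopic because there exists a type (b) transversal with both poles off $\Gamma$ (violating \autoref{theorem: tn-1 poles}), and concatenates. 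This subcase is essential and is missing from your proposal.

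A smaller point: your minimal counterexample is taken "minimal among non-null-homotopic special paths," but the paper's inductive hypothesis is the homotopy theorem for all closed Tutte paths of corank $\leq n$, not just special ones. The decompositions in the generic case produce closed Tutte paths that are not special; their null-homotopy comes from the outer induction, so the minimality framing would need to be recast accordingly.
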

\begin{rem} 
When we apply \autoref{theorem: good paths}, we do not need to manually prove the homotopy theorem for paths of corank 2 or 3 beforehand. Rather, when we apply it, the special paths under consideration will have corank at least 4. Therefore, once we encounter it in the proof, we will already be under the assumption that $\cork (F(\epsilon))\geq 4$ for the minimal non-null-homotopic path $\epsilon$, so the assumptions of \autoref{theorem: good paths} will be satisfied.
\end{rem}

We split the proof of \autoref{theorem: good paths} into a number of cases, resulting in easier lemmas through which the structure of the lattice above $F(\delta)$ will be determined. The strategy is always the same: we decompose $\delta \sim \delta_1\cdots \delta_k$ into closed Tutte path $\delta_i$ with $\cork(F(\delta_i))\leq n$. By assumption, each $\delta_i$ is null-homotopic, and therefore $\delta$ is null-homotopic as well. 

\medskip

The setting of all lemmas in this section is that we have a special path $\delta = (W,X,Y,Z,W)$ of corank $n+1$ with $F_1 = W\cap X \cap Y$ and $F_2 = Y\cap Z \cap W$, and we assume that $n \geq 3.$ We denote any trivial path, by which we mean a path having only one term, by 0. We assume for the sake of contradiction that $\delta$ is {\bf not} null-homotopic. 

\begin{lemma}\label{theorem: QQ'}
If $\delta'= (W,X',Y, Z', W)$ is a Tutte path with $X'\supsetneq F_1$ and $Z' \supsetneq F_2,$ then $\delta' \sim \delta.$ 
\end{lemma}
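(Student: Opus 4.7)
The plan is to reduce the two substitutions $X\mapsto X'$ and $Z\mapsto Z'$ to the induction hypothesis by introducing ``exchange cycles'' of corank $3$. First I would handle the substitution of $X$: consider the four-term closed path $\gamma_1=(W,X,Y,X',W)$. All four of its consecutive intersections $W\cap X$, $X\cap Y$, $Y\cap X'$, and $X'\cap W$ are already known to be indecomposable corank $2$ flats (they appear as consecutive intersections in $\delta$ or $\delta'$), and the consecutive terms are pairwise distinct, so $\gamma_1$ is a closed Tutte path off $\Gamma$. Since $X'\supseteq F_1=W\cap X\cap Y$, we have $F(\gamma_1)=F_1$, of corank $3\leq n$, so by induction $\gamma_1$ is null-homotopic.

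I would then use this null-homotopy to perform the substitution explicitly. Insert $\gamma_1$ into $(W,X',Y,Z,W)$ at the basepoint $W$; the result is the equivalent path $(W,X,Y,X',W,X',Y,Z,W)$. The subpaths $(X',W,X')$ and $(Y,X',Y)$ appearing in this sequence are both elementary Tutte paths of the first kind, since $W\cap X'$ and $Y\cap X'$ are indecomposable corank $2$ flats. Removing them in succession produces first $(W,X,Y,X',Y,Z,W)$ and then $\delta=(W,X,Y,Z,W)$. This establishes $\delta\sim(W,X',Y,Z,W)$.

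The substitution of $Z$ is handled by the symmetric argument: the closed path $\gamma_2=(Y,Z,W,Z',Y)$ is similarly a Tutte path off $\Gamma$ with $F(\gamma_2)=F_2$ of corank $3$, hence null-homotopic by induction. Inserting $\gamma_2$ into $\delta'=(W,X',Y,Z',W)$ at $Y$ yields $(W,X',Y,Z,W,Z',Y,Z',W)$, and removing the resulting first-kind subpaths $(Z',Y,Z')$ and $(W,Z',W)$ gives $(W,X',Y,Z,W)$, so $\delta'\sim(W,X',Y,Z,W)$. Chaining the two steps yields $\delta\sim\delta'$.

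I do not foresee any real obstacle. The crux is the observation that $\gamma_1$ and $\gamma_2$ are genuine Tutte paths off $\Gamma$ of corank exactly $3$; this follows immediately from $\delta$ and $\delta'$ being Tutte paths off $\Gamma$ together with the hypotheses $X'\supseteq F_1$ and $Z'\supseteq F_2$. Everything after that amounts to bookkeeping with elementary first-kind deformations.
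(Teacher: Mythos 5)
Your proof is correct and takes essentially the same approach as the paper: both arguments exploit the induction hypothesis on the two corank-$3$ closed Tutte paths (on $F_1$ and on $F_2$, respectively) that exchange $X$ for $X'$ and $Z$ for $Z'$, differing only in that the paper's version is phrased as a single concatenation while you carry out the two exchanges one at a time with the intermediate path $(W,X',Y,Z,W)$ and spell out the first-kind cancellations explicitly.
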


\begin{proof}
Note that
    $$\delta'\sim (W,X',Y)(Y, X, W)(W, X, Y)(Y, Z, W)(W, Z, Y)(Y, Z', W),$$
    where $(W,X',Y)(Y, X, W)$ and $(W, Z, Y)(Y, Z', W)$ are closed Tutte paths on $F_1$ and $F_2$, respectively. Because $\cork(F_1) = \cork(F_2) = 3 \leq n$, the paths $(W,X',Y)(Y, X, W)$ and $(W, Z, Y)(Y, Z', W)$ are null-homotopic by assumption. 
\end{proof}

We now define two special types of flats above $F(\delta)$ of coranks $n$ and $n-1$, respectively, which will later serve as flats of smaller corank on which the paths $\delta_i$ will lie.

\begin{df}\label{definition: Tn}
    A \textit{type (a) transversal} is an indecomposable flat $A$ of corank $n$ above $F(\delta)$ for which either $A \subseteq W$ or $A\subseteq Y$ fails to hold.
\end{df}

\begin{lemma}\label{theorem: TnlorF1}
    If $A$ is a type (a) transversal, then $A\lor F_1$ and $A \lor F_2$ are indecomposable corank 2 flats.
\end{lemma}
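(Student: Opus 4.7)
My plan is to combine a straightforward rank computation with \autoref{theorem: connected corank 3 flats}. The key observation up front is that $F_1 = W \cap X \cap Y$ and $F_2 = Y \cap Z \cap W$ are both contained in $W \cap Y$, so any flat lying below $F_1$ or $F_2$ lies below both $W$ and $Y$. Since a type (a) transversal $A$ fails to be below one of $W$ or $Y$, it follows immediately that $A \not\subseteq F_1$ and $A \not\subseteq F_2$. This tiny remark will be the lever for everything that follows.

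First, I would show that $A \lor F_i$ has corank $2$ for $i = 1,2$. Writing $r$ for the rank of $M$, we have $\rk(A) = r-n$, $\rk(F_i) = r-3$, and $\rk(F(\delta)) = r-n-1$. Both $A$ and $F_i$ sit above $F(\delta)$, so $A \cap F_i \supseteq F(\delta)$ and $\rk(A \cap F_i) \geq r-n-1$. If $\rk(A\cap F_i)=r-n$, then $A \subseteq F_i \subseteq W\cap Y$, contradicting the transversal property; hence $A \cap F_i = F(\delta)$. Submodularity then gives $\rk(A \lor F_i) \leq (r-n)+(r-3)-(r-n-1) = r-2$, while $A \not\subseteq F_i$ forces $A \lor F_i \supsetneq F_i$, so $\rk(A\lor F_i)\geq r-2$. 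Equality yields corank $2$.

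Second, I would use \autoref{theorem: connected corank 3 flats} to obtain indecomposability. The flat $W \cap Y$ has corank $2$ and contains both $F_1$ and $F_2$, which are indecomposable of corank $3$; moreover $W \cap Y$ is decomposable by the hypothesis that $\delta$ is a special path. By \autoref{theorem: connected corank 3 flats}, $W \cap Y$ is then the \emph{only} decomposable corank $2$ flat above $F_1$ and likewise the only one above $F_2$. If $A \lor F_i = W \cap Y$ for some $i$, then $A \subseteq W$ and $A \subseteq Y$, again contradicting the transversal property. Therefore $A \lor F_i \neq W \cap Y$ for $i=1,2$, and both are indecomposable.

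I do not expect any real obstacle here; the proof is essentially a bookkeeping argument where the transversal property is applied once to force $A \cap F_i = F(\delta)$ (for the rank count) and once more to exclude the unique decomposable corank $2$ flat above each $F_i$ (for indecomposability).
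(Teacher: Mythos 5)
Your proof is correct and follows essentially the same route as the paper: rule out $A \subseteq F_i$ using the transversal property, conclude $A \cap F_i = F(\delta)$, apply submodularity to pin down the corank of $A \lor F_i$, and invoke \autoref{theorem: connected corank 3 flats} for indecomposability. Your version is actually slightly more explicit than the paper's — the paper simply states ``It follows from \autoref{theorem: connected corank 3 flats} that $A\lor F_1$ is indecomposable'' without spelling out that $W\cap Y$ is the unique decomposable corank $2$ flat above $F_i$ and that $A\lor F_i = W\cap Y$ would force $A\subseteq W\cap Y$, contradicting the transversal property — so the added detail is a genuine improvement in readability.
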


\begin{proof}
        By symmetry, it suffices to prove the lemma for $A \lor F_1$. Observe that $\rk (   A \lor F_1)>\rk (   F_1) = \rk (   E)-3$; otherwise $A \subseteq F_1$, which contradicts the definition of $A$, 
        since it would then be contained in both $W$ and $Y.$ 
        By the same reasoning, we find that $A \cap F_1$ is a proper subset of $A,$ hence $A \cap F_1 = F(\delta).$ 
        The submodular inequality now implies 
   \begin{equation*}
   \begin{split}
    \rk (   A\lor F_1) &\leq \rk (   A) + \rk (   F_1) - \rk (   A \cap F_1) \\
     &= \rk (   E)-n + \rk (   E)-3 - ( \rk (   E)-(n+1)) \\    
     &=\rk (   E) -2,
   \end{split} 
   \end{equation*}
   and therefore $A \lor F_1$ is a corank 2 flat. It follows from \autoref{theorem: connected corank 3 flats} that $A \lor F_1$ is indecomposable. 
   \end{proof}
    
\begin{df}\label{definition: Tn-1}
    A \textit{type (b) transversal} is an indecomposable flat $B$ of corank $n-1$ above $F(\delta)$ for which both $B \not\subseteq W$ and $B\not\subseteq Y$ hold. 
\end{df}

\begin{lemma}\label{theorem: Tn-1lorF1}
    If $B$ is a type (b) transversal, then $B\lor F_1$ and $B \lor F_2$ are hyperplanes. 
\end{lemma}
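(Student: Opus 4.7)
By symmetry under swapping $X$ and $Z$ in the path $\delta = (W,X,Y,Z,W)$, it suffices to prove that $B \lor F_1$ is a hyperplane. The plan is to pin down the rank of $B \lor F_1$ exactly, by combining a submodular upper bound with a constructive lower bound. Submodularity yields
\[
\rk(B \lor F_1) \ \leq \ \rk(B) + \rk(F_1) - \rk(B \cap F_1) \ = \ 2\rk(E) - (n+2) - \rk(B \cap F_1),
\]
so the upper bound $\rk(B \lor F_1) \leq \rk(E) - 1$ reduces to the claim $B \cap F_1 = F(\delta)$.

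The critical input, distinguishing this case from the corank-$n$ analysis of \autoref{theorem: TnlorF1}, is the decomposability of $W \cap Y$. Since $W \cap Y$ has corank $2$ and is contained in exactly two hyperplanes (namely $W$ and $Y$), applying \autoref{lemma:connected-contraction} furnishes a separation $\{X_1, X_2\}$ with $W \supseteq X_1$ and $Y \supseteq X_2$, and consequently the set-theoretic identity $W \cup Y = E$. Pick $w \in B - W$ and $y \in B - Y$, which exist because $B$ is a type (b) transversal; the identity $W \cup Y = E$ forces $w \in Y$ and $y \in W$. Then $y \in B - Y$ gives $\rk(B \cap Y) \leq \rk(B) - 1$, and $w \in (B \cap Y) - W$ further gives $\rk(B \cap W \cap Y) \leq \rk(B \cap Y) - 1 \leq \rk(B) - 2 = \rk(F(\delta))$. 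Combined with $F(\delta) \subseteq B \cap W \cap Y$, this yields $B \cap W \cap Y = F(\delta)$, and since $F_1 \subseteq W \cap Y$ forces $B \cap F_1 \subseteq B \cap W \cap Y$, the desired equality $B \cap F_1 = F(\delta)$ follows.

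For the matching lower bound, the same $w$ and $y$ serve as witnesses. Since $F_1 \subseteq W \cap Y$, neither $w$ nor $y$ lies in $F_1$, so the flats $L_w := \gen{F_1 \cup \{w\}}$ and $L_y := \gen{F_1 \cup \{y\}}$ each have corank exactly $2$, and both are contained in $B \lor F_1$. They are distinct, because $L_w \subseteq Y$ but $w \in L_w - W$, whereas $L_y \subseteq W$ but $y \in L_y - Y$. Since distinct flats of the same rank in a geometric lattice have a join strictly above each of them, $\rk(L_w \lor L_y) \geq \rk(L_w) + 1 = \rk(E) - 1$, hence $\rk(B \lor F_1) \geq \rk(E) - 1$. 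Combined with the upper bound, $B \lor F_1$ is a hyperplane; the argument for $B \lor F_2$ is identical with $Z$ replacing $X$.

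The main conceptual step, and the point I expect to be the main obstacle on first reading, is recognizing that the decomposability of $W \cap Y$ must be exploited via the set-theoretic identity $W \cup Y = E$. Without this identity one cannot force $w \in Y$ and $y \in W$, and in consequence both the rank collapse producing the upper bound and the distinctness of $L_w, L_y$ producing the lower bound fail. This is precisely why the lemma hinges on the ``special'' hypothesis built into the definition of a special path, and why its proof does not reduce to a direct adaptation of the type (a) analysis carried out in \autoref{theorem: TnlorF1}.
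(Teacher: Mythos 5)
Your proof is correct, and the nontrivial half of it takes a genuinely different route from the paper. Both arguments obtain the upper bound $\rk(B\lor F_1)\leq\rk(E)-1$ by the same submodularity calculation (and one should note that this step only uses the trivial inclusion $F(\delta)\subseteq B\cap F_1$, not the full equality $B\cap F_1=F(\delta)$, so your framing that the upper bound ``reduces to'' the equality overstates what is required --- though you do prove it anyway). Where the proofs part ways is in ruling out corank $2$. The paper argues by contradiction: if $B\lor F_1$ had corank $2$, \autoref{theorem: connected corank 3 flats} forces every corank $2$ flat above $F_1$ and below a third hyperplane to be $Z\cap W$ or $Z\cap Y$, contradicting $B\not\subseteq W$, $B\not\subseteq Y$. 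You instead construct two distinct corank $2$ flats $L_w,L_y$ contained in $B\lor F_1$, using the set-theoretic identity $W\cup Y=E$ to force $w\in Y$, $y\in W$, and then observe that the join of two distinct flats of the same rank pushes the rank of $B\lor F_1$ up to $\rk(E)-1$. Both approaches are powered by the decomposability of $W\cap Y$, but you funnel it through the elementary observation that the two hyperplanes of a decomposable corank-$2$ flat cover $E$ (\autoref{lemma:connected-contraction}), rather than through the structural \autoref{theorem: connected corank 3 flats}. Your version is more constructive and somewhat more self-contained; the paper's version is shorter once \autoref{theorem: connected corank 3 flats} is in hand, and reuses that lemma which is also needed later in the proof of the Special Lemma.
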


\begin{proof}

     It suffices to prove the result for $B \lor F_1$. By an application of submodular inequality analogous to the proof of \autoref{theorem: TnlorF1}, we find that $\rk (   B\lor F_1) \leq \rk (   E)-1,$ and since $T_n \not \subseteq F_1$ we have $\rk (   E)-3 < \rk (   B\lor F_1).$ Assume, for the sake of contradiction, that $\rk (   B\lor F_1) = \rk (   E)-2.$ It follows from \autoref{theorem: connected corank 3 flats} that $B\lor F_1$ is an indecomposable corank 2 flat, and that if $H'$ is any hyperplane above $B\lor F_1$, $H' \cap W$ and $H' \cap Y$ are the only corank 2 flats above $F_1$ and below $H'$. This is a contradiction, since $B$ is not below $W$ or $Y.$ We conclude that $\rk (   B\lor F_1) = \rk (   E)-1$. 
\end{proof}

\begin{df}
    If $B$ is a type (b) transversal, we call the flats $B\lor F_1$ and $B\lor F_2$ the \textit{poles} of $B.$
\end{df}

\begin{lemma}\label{theorem: tn-1 poles}
    Let $B$ be a type (b) transversal. Then at least one of its poles is in $\Gamma.$
\end{lemma}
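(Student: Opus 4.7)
My plan is to argue by contradiction, assuming both poles $X' := B \lor F_1$ and $Z' := B \lor F_2$ lie off $\Gamma$, and deducing that $\delta$ is null-homotopic—contradicting the standing assumption throughout the proof of the Special Lemma. The first step is to replace $\delta$ by $\delta' := (W, X', Y, Z', W)$. To verify $\delta'$ is a valid Tutte path, I will observe that the type (b) conditions $B \not\subseteq W, Y$ force $X' \neq W, Y$ and $Z' \neq W, Y$, so each of $W \cap X'$, $X' \cap Y$, $Y \cap Z'$, $Z' \cap W$ is a genuine corank $2$ flat above $F_1$ or $F_2$. By \autoref{theorem: connected corank 3 flats} the unique decomposable corank $2$ flat above $F_1$ (resp.\ $F_2$) is $W \cap Y$, and by \autoref{theorem: cap of points is disconnected.} any hyperplane containing $W \cap Y$ equals $W$ or $Y$; these two facts together rule out any of the four intersections coinciding with $W \cap Y$, so all four are indecomposable. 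Since the four relevant hyperplanes also lie off $\Gamma$, \autoref{theorem: QQ'} gives $\delta \sim \delta'$.

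Next, I will apply Tutte's path theorem (\autoref{thm: Tutte's path theorem}) to the indecomposable flat $B$ with modular cut $\Gamma$ to obtain a Tutte path $(X' = H_1, H_2, \ldots, H_k = Z')$ on $B$ whose every term is off $\Gamma$. The heart of the argument is a structural observation: since $W \cap Y$ is decomposable of corank $2$, \autoref{theorem: cap of points is disconnected.} shows that $W$ and $Y$ are the only hyperplanes above it and that $\{W, Y\}$ is a separation of $W \cap Y$, so $W \cup Y = E$ set-theoretically. Therefore $B \subseteq W \cup Y$, giving $B = (B \cap W) \cup (B \cap Y)$. If $B \cap W$ equaled $F(\delta)$, this identity would force $B \subseteq F(\delta) \cup Y = Y$, contradicting type (b); so $B \cap W \supsetneq F(\delta)$, and symmetrically $B \cap Y \supsetneq F(\delta)$.

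These strict containments let me introduce the two closed Tutte paths
\[
\gamma := (W, X', H_2, \ldots, H_{k-1}, Z', W) \quad \text{and} \quad \alpha' := (X', Y, Z', H_{k-1}, \ldots, H_2, X'),
\]
both off $\Gamma$. Since every $H_i$ and both of $X', Z'$ lie above $B$, one verifies $F(\gamma) \supseteq W \cap B \supsetneq F(\delta)$ and $F(\alpha') \supseteq Y \cap B \supsetneq F(\delta)$, so $\cork(F(\gamma)), \cork(F(\alpha')) \leq n$; minimality then makes both null-homotopic. I will conclude by deforming $\delta'$ into $\gamma$: inserting the null-homotopic loop $\alpha'^{-1}$ at the $X'$-position of $\delta'$ produces
\[
(W, X', H_2, \ldots, H_{k-1}, Z', Y, X', Y, Z', W),
\]
which contains the short closed sub-loops $(Y, X', Y)$ and $(Z', Y, Z')$, each a Tutte path of corank $2$ and hence null-homotopic by the inductive hypothesis. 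Successively removing them yields exactly $\gamma$, so $\delta \sim \delta' \sim \gamma \sim 0$, the desired contradiction.

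The main obstacle is the structural dichotomy $B \cap W, B \cap Y \supsetneq F(\delta)$: without \emph{both} strict containments, one of $\gamma$ and $\alpha'$ could still have corank $n+1$ and fall outside the inductive range, breaking the argument. The decomposability of $W \cap Y$ is precisely what provides the set-theoretic cover $W \cup Y = E$ that makes both inequalities automatic once $B$ is neither contained in $W$ nor in $Y$, explaining why this lemma applies to precisely the \emph{type (b)} transversals.
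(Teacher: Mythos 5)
Your proof is correct and follows essentially the same strategy as the paper: establish that $\delta' = (W, X', Y, Z', W)$ is a Tutte path, invoke Tutte's path theorem to get a Tutte path $\epsilon$ from $X'$ to $Z'$ on $B$ off $\Gamma$, then observe that $(W, X')\epsilon(Z', W)$ and $(X', Y, Z')\epsilon^{-1}$ both lie on flats strictly above $F(\delta)$ and so have corank at most $n$, making $\delta$ null-homotopic. Your deformation step (insert $\alpha'^{-1}$, strip off the two corank-$2$ loops) is just a more verbose way of carrying out the chain $\delta' = (W,X')(X',Y,Z')(Z',W) \sim (W,X')\epsilon(Z',W) \sim 0$ that the paper writes.

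The one genuine departure is how you show $B \cap W$ and $B \cap Y$ strictly contain $F(\delta)$. The paper applies \autoref{prop:conn-diamond} to decompose $B = G_1 \lor G_2$ over $F(\delta)$ and then identifies $G_1 = B \cap W$, $G_2 = B \cap Y$ as type (a) transversals; you instead extract $W \cup Y = E$ from the decomposability of $W \cap Y$, so that $B = (B\cap W) \cup (B\cap Y)$ forces both pieces to be proper supersets of $F(\delta)$. Your route is shorter and gives exactly what this lemma needs; the paper's route yields the extra fact that $B \cap W$ and $B \cap Y$ are type (a) transversals, which it reuses (via a forward reference) in \autoref{theorem: W cap T is disconnected}, so it is not wasted effort in context.

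One small point of rigor: the sentence ``force $X' \neq W, Y$ \ldots, so each of $W\cap X'$, \ldots is a genuine corank $2$ flat'' does not quite follow as stated — two distinct hyperplanes can in principle meet in a flat of corank greater than $2$. What makes this work here is precisely the first assertion of \autoref{theorem: connected corank 3 flats}: since $F_1$ is an indecomposable corank $3$ flat with the decomposable corank $2$ flat $W\cap Y$ above it, for any hyperplane $Z \supseteq F_1$ with $Z \neq W, Y$ the only corank $2$ flats between $Z$ and $F_1$ are $Z \cap W$ and $Z \cap Y$, and these do have corank $2$. You cite that lemma in the next clause for indecomposability, but you should also cite it for the corank claim, which is where the real content lies.
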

  \begin{proof}

    Let the poles of $B$ containing $F_1$ and $F_2$ be $X'$ and $ Z'$, respectively; we know they exist by \autoref{theorem: Tn-1lorF1}. Using \autoref{theorem: carrier of path connected}, we know that $F(\delta)$ is indecomposable. By \autoref{prop:conn-diamond}, there exist indecomposable flats $G_1$ and  $G_2$ of corank $n$ such that $F(\delta) = G_1 \cap G_2$ and $B = G_1 \lor G_2.$ If one of the flats $G_1$ and $G_2$ is contained in both $W$ and $Y$, then the other one has to be contained in neither $W$ nor $Y$, otherwise $B$ would be contained in $W$ or $Y$ which is a contradiction. But if one of the two flats $G_1$ and $G_2$, say $G_1$, is contained in neither $W$ nor $Y,$ we get $$
    \rk (E)-(n+1)=\rk (   G_1 \cap Y \cap W)<\rk (   G_1 \cap Y)<\rk (   G_1) = \rk (E)-n,
    $$ which is also a contradiction. Hence, we may assume without loss of generality that $G_1$ is contained in $W$ and that $G_2$ is contained in $Y.$ Thus, the flats $G_1$ and $G_2$ are both type (a) transversals and are equal to $B\cap W$ and $B\cap Y$, respectively. 
    
    Hence we know that $G_i\lor F_j$ for $1\leq i,j \leq 2$ are $W \cap X',$ $ W \cap Z',$ $ Y \cap X',$ and $ Y \cap Z'$, and they are all indecomposable corank 2 flats, because the unique decomposable corank 2 flat above both $F_1$ and $F_2$ is $W\cap Y$ by \autoref{theorem: connected corank 3 flats}. The path $(W, X', Y, Z', W)$ is thus a Tutte path. 
    
  Assume, for the sake of contradiction, that $X'\notin \Gamma$ and $Z'\notin \Gamma$. Because the poles $X'$ and $Z'$ lie above an indecomposable flat $B$, there exists a Tutte path $\epsilon$ off $\Gamma$ from $X'$ to $Z'$ by \autoref{thm: Tutte's path theorem}. Hence there exists a Tutte path on $B\cap W$ ($B\cap W$ is a type (a) transversal so indecomposable) given by $(W, X')\epsilon (Z', W)$ and a Tutte path $(X', Y, Z')\epsilon^{-1}$ on $B\cap Y$ ($B\cap Y$ is a type (a) transversal thus indecomposable). Because the coranks of these paths are at most $n$, we know that they are null-homotopic, and hence  
    \[
    \delta\sim (W, X', Y, Z', W) \sim (W, X')(X', Y, Z')(Z', W) \sim 
(W, X')\epsilon (Z', W)   \sim 0 \]
    is also null-homotopic, a contradiction. Therefore, for any type (b) transversal, at least one of its poles has to be in $\Gamma.$  
 \end{proof}

\begin{lemma}
    There is a type (a) transversal $A$ not above $Y$ which is the intersection of two type (b) transversals $B$ and $B'$ such that $B\lor F_1 =X'\notin \Gamma,$ $B \lor F_2 = U_2 \in \Gamma,$ $B'\lor F_1 =U_1\in \Gamma,$ and $B' \lor F_2 = Z' \notin \Gamma.$
\end{lemma}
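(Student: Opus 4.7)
The proof will construct the configuration in two stages, leveraging the standing hypothesis that $\delta$ is a minimal non-null-homotopic path (so the inductive hypothesis applies to every closed Tutte path of strictly smaller corank). The preceding lemmas (\autoref{theorem: Tn-1lorF1}, \autoref{theorem: tn-1 poles}, \autoref{theorem: QQ'}) provide exactly the structural rigidity needed to force the asserted pattern of poles.

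\textbf{Stage 1: producing $A$.} First I will produce the type (a) transversal $A$ with $A \not\subseteq Y$. By \autoref{theorem: carrier of path connected}, $F(\delta)$ is indecomposable, and $W$ is an indecomposable flat with $W \cap Y \supseteq F(\delta)$ and $W \lor Y = E$ (the latter because $W \cap Y$ has corank $2$). Applying \autoref{prop:conn-complement} to the triple $[W, Y, F(\delta)]$ yields an indecomposable flat $R$ with $W \supseteq R \supseteq F(\delta)$, $R \lor Y = E$, and $\cork(R) = \rk(Y) - \rk(F(\delta)) = \cork(F(\delta)) - 1 = n$. This $R$ is an indecomposable corank $n$ flat inside $W$ but not inside $Y$, hence a type (a) transversal; I set $A := R$.

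\textbf{Stage 2: producing $B$ and $B'$ above $A$.} Now I analyse type (b) transversals sitting above $A$. Since $A$ is indecomposable of corank $n$ and $E$ is indecomposable, \autoref{prop:conn-path} produces indecomposable flats of corank $n-1$ above $A$, and \autoref{prop:conn-diamond} applied to a suitable indecomposable corank $n-2$ flat above $A$ produces two such flats. A flat $B \supseteq A$ of corank $n-1$ fails to be a type (b) transversal iff $B \subseteq W$ (we already have $B \not\subseteq Y$ since $A \not\subseteq Y$); the number of indecomposable corank $n-1$ flats above $A$ that are contained in $W$ is at most one (namely, $A$ together with one additional element inside $W$), so one gets type (b) transversals in abundance. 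For any such type (b) transversal $B$, \autoref{theorem: Tn-1lorF1} gives that $X' := B \lor F_1$ and $U_2 := B \lor F_2$ are hyperplanes, and \autoref{theorem: tn-1 poles} says at least one of $\{X', U_2\}$ lies in $\Gamma$. I claim one can choose $B$ so that $X' \notin \Gamma$ and (therefore) $U_2 \in \Gamma$; symmetrically one can choose $B'$ so that $Z' := B' \lor F_2 \notin \Gamma$ and $U_1 := B' \lor F_1 \in \Gamma$.

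The claim is proved by contradiction. Suppose every type (b) transversal $B$ above $A$ has $B \lor F_1 \in \Gamma$. Then in particular for any type (b) transversal $B$ above $A$, one obtains a Tutte path $\delta' = (W, B \lor F_1, Y, B \lor F_2, W)$ passing through hyperplanes $X', Z'$ one of which is in $\Gamma$; but $\delta \sim \delta'$ by \autoref{theorem: QQ'}, so $\delta'$ is also non-null-homotopic. On the other hand, every type (b) transversal $B$ above $A$ gives rise, via the indecomposable corank $2$ flats $B \lor F_1$ and $B \lor F_2$ sitting above $F_1$ and $F_2$ respectively, to closed Tutte subpaths of corank $\leq n$ which are null-homotopic by the inductive hypothesis; concatenating these deformations with $\delta'$ forces $\delta \sim 0$, a contradiction. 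A completely symmetric argument (exchanging the roles of $F_1$ and $F_2$, and of $X$ and $Z$) produces $B'$.

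\textbf{Main obstacle.} The delicate point, and the real content of the lemma, is the last contradiction argument: showing that a uniform pole pattern (all type (b) transversals above $A$ having their $F_1$-pole in $\Gamma$) is incompatible with the non-null-homotopy of $\delta$. This requires threading \autoref{theorem: QQ'} together with the inductive hypothesis carefully, and tracking which replacement hyperplanes $X', Z'$ remain off $\Gamma$. Once this is done, the existence of both pole configurations (with a \emph{common} lower bound $A$) follows, because any single type (b) transversal above $A$ must exhibit one of the two dual configurations, and the counting via \autoref{prop:conn-diamond} gives at least two such transversals which, by the above dichotomy, exhibit both configurations.
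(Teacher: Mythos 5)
Stage 1 of your proposal matches the paper's proof: both apply \autoref{prop:conn-complement} to $[W, Y, F(\delta)]$ to produce the type (a) transversal $A \subseteq W$, $A \not\subseteq Y$.

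Stage 2, however, contains a genuine gap, and the approach differs from the paper's in a way that does not survive scrutiny. The paper does not count type (b) transversals above $A$ and argue by contradiction. Instead it observes that $L_1 = A \lor F_1$ is contained in $W$ (since both $A$ and $F_1$ are), hence $W \notin \Gamma$ is one hyperplane above $L_1$; by modularity, at most one hyperplane above $L_1$ lies in $\Gamma$ (two would force $L_1 \in \Gamma$, hence $W \in \Gamma$); and since $L_1$ is indecomposable there are at least three hyperplanes above it, so some $X' \neq W$ off $\Gamma$ exists. Then \autoref{prop:conn-complement} applied to $[X', W, A]$ produces $B$ with $B \lor F_1 = X' \notin \Gamma$, and \autoref{theorem: tn-1 poles} gives $B \lor F_2 \in \Gamma$; the argument for $B'$ is symmetric. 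Your proposal never exploits the key fact that $L_1, L_2 \subseteq W$ and so misses the direct route entirely.

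The contradiction argument you propose instead does not work as stated. First, if every type (b) transversal above $A$ had $B \lor F_1 \in \Gamma$, the path $\delta' = (W, B \lor F_1, Y, B \lor F_2, W)$ would have a term in $\Gamma$, so it would not be a Tutte path off $\Gamma$, and \autoref{theorem: QQ'} cannot be invoked — it explicitly requires $\delta'$ to be a Tutte path. Second, your counting assertion that ``the number of indecomposable corank $n-1$ flats above $A$ that are contained in $W$ is at most one'' is unjustified and, for general matroids, false. Third, the phrase ``concatenating these deformations with $\delta'$ forces $\delta \sim 0$'' is not a precise argument and suppresses exactly the difficulty that makes this lemma nontrivial. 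The paper's direct construction avoids all three issues.
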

\begin{proof}
    
 Using \autoref{prop:conn-complement} applied to $[W, Y, F(\delta)]$, we obtain an indecomposable flat $A$ of corank $n$ such that $W \supseteq A \supsetneq F(\delta)$ but $Y \not \supseteq A.$ In particular, $A$ is a type (a) transversal, and hence $A \lor F_i = L_i$ are indecomposable corank 2 flats for $i = 1,2$ by \autoref{theorem: TnlorF1}. There exists a hyperplane $X'\supsetneq L_1$ such that $X'\neq W$ and $X'\notin \Gamma$. (The latter assertion follows because, as a join of $A$ and $F_1$, both of which are below $W$, $L_1$ must also be below $W$.) Hence, there can be at most one member of $\Gamma$ above $L_1$ (since otherwise two such hyperplanes would form a modular pair with the intersection $L_1$, resulting in $L_1 \in \Gamma$ and hence $W \in \Gamma$, a contradiction).
Finally, since $\Gamma$ is indecomposable, we know that there are at least three hyperplanes above it, thus we can pick the desired $X'.$

 Applying \autoref{prop:conn-complement} to $[X', W, A]$, there is a type (b) transversal $B$ not above $W$ such that $X'\supseteq B \supsetneq A.$ Because its pole $B \lor F_1 = X'$ is not in $\Gamma$, we know that the other pole $U_2  = B \lor F_2$ has to be in $\Gamma$ by \autoref{theorem: tn-1 poles}. Similarly, there exists a hyperplane $Z'$ above $L_2$ which is not in $\Gamma,$ because $L_2$ is indecomposable and it is below $W\notin \Gamma.$ Applying \autoref{prop:conn-complement} to $[Z', W, A]$, we get a type (b) transversal $B' \supseteq A$ such that $Z' = B'\lor F_2 \notin \Gamma$ and $U_1 = B'\lor F_1 \in \Gamma.$
 \end{proof}
 
\begin{lemma}\label{theorem: corank of T is n-2}
    Let $B$ and $B'$ be as above, and let $T = B\lor B'$. Then $\cork(T) = n-2.$
\end{lemma}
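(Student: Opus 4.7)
The plan is to prove the two inequalities $\cork(T) \leq n-2$ and $\cork(T) \geq n-2$ separately, and the argument should be short and purely rank-theoretic. The key observation I would lead with is that $B$ and $B'$ are necessarily distinct flats. Indeed, if $B = B'$ then their first poles would coincide, giving $X' = B\lor F_1 = B'\lor F_1 = U_1$, contradicting $X'\notin\Gamma$ together with $U_1\in\Gamma$. This distinctness is the only nontrivial input; everything else is submodularity.

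For the upper bound on $\rk(T)$ (i.e.\ the lower bound $\cork(T)\geq n-2$), I would apply the submodular inequality
\[
\rk(B\lor B') + \rk(B\cap B') \ \leq \ \rk(B) + \rk(B').
\]
Since both $B$ and $B'$ contain the type (a) transversal $A$, we have $B\cap B' \supseteq A$, hence $\rk(B\cap B')\geq \rk(A) = \rk(E)-n$. Together with $\rk(B)=\rk(B')=\rk(E)-(n-1)$, this yields $\rk(T)\leq \rk(E)-n+2$, i.e.\ $\cork(T)\geq n-2$.

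For the matching lower bound on $\rk(T)$, I would use that $B\neq B'$ and both are contained in $T$. Since $T\supsetneq B$ is a proper containment of flats, $\rk(T)\geq \rk(B)+1 = \rk(E)-n+2$, which gives $\cork(T)\leq n-2$.

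Combining the two bounds yields $\cork(T)=n-2$, as claimed. The only ``obstacle'' — if it can be called that — is making sure that the distinctness of $B$ and $B'$ is established cleanly from the pole data $X'\notin\Gamma$, $U_1\in\Gamma$, $U_2\in\Gamma$, $Z'\notin\Gamma$; once this is in hand, the two-line submodular computation closes the argument.
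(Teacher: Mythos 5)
Your proof is correct and follows essentially the same route as the paper: submodularity bounds $\rk(T)$ from above, and properness of the containment $B \subsetneq T$ bounds it from below. The one thing you do more carefully than the paper is to spell out why $B \neq B'$ (via the pole data), a step the paper leaves implicit in the phrase ``contains both $B$ and $B'$ as proper subsets''; that is a reasonable piece of bookkeeping to make explicit.
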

\begin{proof} By the submodular inequality, we have $\rk (   T)\leq \rk (   B)+1$. Because $B \lor B'$ contains both $B$ and $B'$ as proper subsets, it follows that $B \lor B'$ is of corank $n-2.$ 
\end{proof}

 \begin{lemma}
 Let $T$ be as above, and let $\cS$ be the set of all hyperplanes above $T$ that are not in $\Gamma.$ Then $\cS$ is nonempty.
 \end{lemma}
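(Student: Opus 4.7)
The approach is proof by contradiction: assume that every hyperplane above $T$ lies in $\Gamma$, and derive that $\delta$ is null-homotopic, contradicting the standing assumption. The key observation is that neither $X'$ nor $Z'$ can contain $T$ under this assumption, since $X' \supseteq T$ would make $X'$ a hyperplane above $T$, forcing $X' \in \Gamma$ against the construction of $X'$; similarly for $Z'$. Yet both $X'$ and $Z'$ contain the indecomposable corank-$n$ flat $A$, so by \autoref{thm: Tutte's path theorem} applied to $A$ there is a Tutte path $\epsilon = (X' = H_0, H_1, \dots, H_k = Z')$ on $A$ off $\Gamma$.

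The plan is then to deform $\delta$ in two stages, ending with a closed Tutte path on $A$. In the first stage I would replace $X$ by $X'$ and $Z$ by $Z'$ using null-homotopic closed Tutte paths on $F_1$ and $F_2$. Since $F_1$ has corank $3$ and $n \geq 3$, any closed Tutte path on $F_1$ is null-homotopic by the inductive hypothesis; the path $(W,X,Y,X',W)$ is such a path, as all of its consecutive intersections are indecomposable corank 2 flats, the only delicate one being $X'\cap Y$, which is indecomposable because by \autoref{theorem: connected corank 3 flats} the unique decomposable corank 2 flat above $F_1$ is $W \cap Y$, and $X' \cap Y \ne W \cap Y$ (else $X'$ would be a hyperplane above $W \cap Y$, which forces $X' \in \{W, Y\}$, contradicting the construction). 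This gives $(W,X,Y) \sim (W,X',Y)$, and the analogous deformation on $F_2$ produces $\delta \sim (W,X',Y,Z',W)$. In the second stage I would splice in the path $\epsilon$ and eliminate $Y$ via a sequence of elementary Tutte paths around the indecomposable corank 2 flats $L_1 = A \lor F_1 = W\cap X'$, $L_2 = A \lor F_2 = W \cap Z'$, $X' \cap Y$, and $Y \cap Z'$, arriving at the closed Tutte path $(W, X', H_1, \dots, H_{k-1}, Z', W)$, every term of which contains $A$. As this lies on $A$ (of corank $n$), it is null-homotopic by the inductive hypothesis, and chasing back yields that $\delta$ is null-homotopic, a contradiction.

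The main obstacle will be carrying out the second stage rigorously: since $Y$ does not contain $A$, each elementary deformation eliminating $Y$ must itself be a valid Tutte path lying on a flat of corank at most $n$. Justifying this requires further local analysis of the corank 2 intersections $X' \cap Y$, $Y \cap Z'$, and $X' \cap Z'$, verifying their indecomposability and their relative position above $A$ in the lattice. This case analysis, though technical, is the price to be paid for the otherwise clean reduction to \autoref{thm: Tutte's path theorem} on $A$.
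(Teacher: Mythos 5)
Your proposal diverges completely from the paper's approach, which is a short algebraic argument involving no path deformations. In the paper's proof: if $\cS = \emptyset$ then every hyperplane above $T$ lies in $\Gamma$, whence $T \in \Gamma$ (a nonempty modular cut contains any flat all of whose hyperplanes it contains, by repeated modular-pair closure). Since $U_1 \not\supseteq B$ gives $U_1 \not\supseteq T$, the pair $(U_1, T)$ is modular with both members in $\Gamma$, so $U_1 \cap T = B' \in \Gamma$, and therefore $Z' = B' \lor F_2 \in \Gamma$, contradicting the construction of $Z'$. Your first stage (deforming $\delta$ to $(W, X', Y, Z', W)$) is correct but is already \autoref{theorem: QQ'} in the paper.

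The gap you acknowledge in your second stage is not a technicality; it is the heart of the entire Special Lemma, and it cannot be closed by ``further local analysis of corank 2 intersections.'' To pass from $(W, X', Y, Z', W)$ to $(W, X')\epsilon(Z', W)$ you need the closed Tutte path $\zeta = (X', Y, Z')\epsilon^{-1}$ to be null-homotopic, and the only tool available is the inductive hypothesis, which requires $\cork F(\zeta) \leq n$. But $Y$ does not contain $A$, so $F(\zeta)$ is only guaranteed to contain $A \cap Y$, which has corank $n+1$; indeed $F(\zeta) \supseteq F(\delta)$ and nothing rules out $F(\zeta) = F(\delta)$. Elementary paths of the first and second kind around the corank $2$ flats you list can only shuffle hyperplanes locally; they do not raise the common intersection out of corank $n+1$. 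There is also a telling structural symptom: apart from the observation that $X', Z' \not\supseteq T$ (which you establish but never use), your argument nowhere invokes the hypothesis $\cS = \emptyset$. If your deformation worked as sketched, it would prove every special path null-homotopic unconditionally, rendering the Special Lemma and its elaborate proof superfluous --- which they manifestly are not. The paper sidesteps all of this by working entirely inside the modular cut.
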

 \begin{proof}
 Assume, for the sake of contradiction, that $\cS = \emptyset$. Then $T \in \Gamma.$ Since $U_1 \not\supseteq B$, we have $U_1 \not\supseteq T$, and therefore $(U_1, T)$ forms a modular pair, which implies that $U_1 \cap T = B' \in \Gamma.$ Thus $Z'\supseteq B'$ is in $\Gamma$, contradicting the definition of $Z'$. 
\end{proof}

\begin{lemma}\label{theorem: Y cap T is disconnected} Let $\cS$ be as above, and let $I \in \cS.$ Then $Y \cap I$ is a decomposable corank 2 flat.
    
\end{lemma}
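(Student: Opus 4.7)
The plan is to prove the statement in two stages, proceeding by contradiction for the decomposability step.

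\textbf{Stage 1: $Y \cap I$ has corank $2$.} Since $T \supseteq B$ and $B \not\subseteq Y$ (by the very definition of a type (b) transversal, cf.\ \autoref{definition: Tn-1}), we have $I \supseteq T \not\subseteq Y$, so $I$ and $Y$ are distinct hyperplanes. Distinct hyperplanes satisfy $I \vee Y = E$, so equality in the submodular inequality gives $\cork(Y \cap I) = 2$.

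\textbf{Stage 2: $Y \cap I$ is decomposable.} The plan here is to argue by contradiction: I would assume that $Y \cap I$ is indecomposable and use this to derive a contradiction with the standing assumption that $\delta$ is not null-homotopic. Since $Y$ is a term of $\delta$ and $I \in \cS$, both $Y$ and $I$ are hyperplanes off $\Gamma$. Applying \autoref{thm: Tutte's path theorem} to the (by assumption) indecomposable corank $2$ flat $Y \cap I$ and the modular cut $\Gamma$, I would obtain a Tutte path $\pi = (Y = H_0, H_1, \ldots, H_m = I)$ on $Y \cap I$ with every term off $\Gamma$. The idea is to splice $\pi$ and its reverse into $\delta$ at the occurrences of $Y$, and then use auxiliary Tutte paths on the indecomposable corank $3$ flats $F_1, F_2$ and on the hyperplanes $X', Z' \notin \Gamma$ (the poles of $B$ and $B'$ that avoid $\Gamma$) to rewrite the resulting path as a finite product of closed Tutte paths off $\Gamma$, each supported on an indecomposable flat of corank at most $n$. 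By the inductive hypothesis, each such sub-path is null-homotopic, hence so is $\delta$ --- contradicting the standing assumption.

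\textbf{Main obstacle.} The hardest part is orchestrating the deformation so that every closed sub-path genuinely has corank at most $n$ and avoids $\Gamma$ throughout. This will rest on the indecomposability of the flats $T \cap W$ and $T \cap Y$ (both of corank $n-1$), which follows from \autoref{prop:connected-intersection} once one verifies the indecomposability of $T = B \vee B'$ (using that $B, B'$ are indecomposable and both sit above the indecomposable type (a) transversal $A$), together with a case analysis exploiting the asymmetry $X', Z' \notin \Gamma$ versus $U_1, U_2 \in \Gamma$. The most delicate point is ensuring that each $H_i$ on $\pi$ can be bridged to $W$ through an indecomposable flat of corank at most $n$ without crossing $\Gamma$, for which one uses the existence of such bridges guaranteed by Tutte's path theorem applied inside the sublattice above $T \cap Y$.
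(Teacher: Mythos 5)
Your high-level strategy matches the paper's: argue by contradiction, apply the path theorem to the (assumed indecomposable) flat $Y \cap I$ to get a Tutte path $\epsilon_0$ from $Y$ to $I$, and then deform the special path $\delta$ (equivalently $\delta' = (W,X',Y,Z',W)$, via \autoref{theorem: QQ'}) into a composite of null-homotopic closed sub-paths. However, there are two problems with the execution.

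First, Stage 1 is incorrect as stated. Two distinct hyperplanes need not intersect in a corank-$2$ flat: in $U_{3,4}$ the hyperplanes $\{1,2\}$ and $\{3,4\}$ meet in $\emptyset$, which has corank $3$. Semimodularity only gives you $\cork(Y \cap I) \geq 2$; the equality you invoke is the modular-pair condition, which is not automatic. In fact, the corank-$2$ assertion of the lemma is a \emph{consequence} of decomposability via \autoref{theorem: cap of points is disconnected.}, which is exactly how the paper derives it, so Stage 1 is both wrong and unnecessary.

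Second, and more seriously, your ``Main obstacle'' paragraph points toward the wrong flats. The paper's argument does not involve $T\cap W$, $T\cap Y$, the indecomposability of $T = B\lor B'$, or a fan decomposition bridging each $H_i$ of $\epsilon_0$ to $W$. Instead one takes, via the path theorem, auxiliary Tutte paths $\epsilon_1$ from $X'$ to $I$ \emph{on $B$} and $\epsilon_2$ from $Z'$ to $I$ \emph{on $B'$} (possible because $I \supseteq T \supseteq B, B'$ and $X', Z', I \notin \Gamma$). The three closed Tutte paths
\[
(X',Y)\,\epsilon_0\,\epsilon_1^{-1}, \qquad (Y,Z')\,\epsilon_2\,\epsilon_0^{-1}, \qquad (W,X')\,\epsilon_1\,\epsilon_2^{-1}\,(Z',W)
\]
lie on $B\cap Y$, $B'\cap Y$, and $A$, respectively, which are indecomposable of corank exactly $n$ (the first two are type (a) transversals, as observed in the proof of \autoref{theorem: tn-1 poles}); hence all three are null-homotopic by the inductive hypothesis. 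Substituting the first two into $\delta' = (W,X')(X',Y)(Y,Z')(Z',W)$ produces the third, so $\delta \sim 0$. This identification of the three corank-$n$ support flats is the concrete step your plan leaves unspecified, and a fan through $W$ over the terms of $\epsilon_0$ would produce triangles whose support flats you cannot bound in corank.
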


\begin{proof}
By \autoref{theorem: cap of points is disconnected.}, it suffices to show that $Y \cap I$ is decomposable. Assume for the sake of contradiction that $Y \cap I$ is indecomposable. Then there exists a Tutte path $\epsilon_0$ from $Y$ to $I$ by the path theorem. The path theorem also shows the existence of Tutte paths $\epsilon_1$ from $X'$ to $I$ on $B$ and $\epsilon_2$ from $Z'$ to $I$ on $B'.$ Notice that the Tutte paths $(X', Y)\epsilon_0\epsilon^{-1}$ and $(Y, Z')\epsilon_2\epsilon_0^{-1}$ are on $B \cap Y$ and $B'\cap Y$, respectively, which are both type (a) transversals (this is a consequence of the fact that $B$ is a type (b) transversal, as described in the proof of \autoref{theorem: tn-1 poles}). Thus, the closed Tutte paths $(X', Y)\epsilon_0\epsilon_1^{-1}$ and $(Y, Z')\epsilon_2\epsilon_0^{-1}$ are null-homotopic, and we have 
 \begin{equation*}
 \begin{split}
 \delta \sim \delta' &= (W, X', Y, Z', W)\\
&\sim (W,X')(X', Y)(Y, Z')(Z', W) \\
&\sim (W,X')\epsilon_1\epsilon_0^{-1}\epsilon_0\epsilon_2^{-1}(Z', W) \\
&\sim (W,X')\epsilon_1\epsilon_2^{-1}(Z', W) \\
&\sim 0.
  \end{split}
  \end{equation*}
This is a contradiction; hence $Y \cap I$ is decomposable. 
\end{proof}

\begin{lemma}\label{theorem: W cap T is disconnected} Let $I \in \cS$. Then $W \cap I$ is a decomposable corank 2 flat. 
\end{lemma}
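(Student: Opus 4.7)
The plan is to argue by contradiction, closely mirroring the proof of \autoref{theorem: Y cap T is disconnected} with the roles of $Y$ and $W$ interchanged. Assume that $W\cap I$ is indecomposable. By \autoref{thm: Tutte's path theorem} this yields a Tutte path $\eta_0$ from $W$ to $I$ off $\Gamma$, which complements the paths $\epsilon_1\colon X'\to I$ on $B$ and $\epsilon_2\colon Z'\to I$ on $B'$ already in hand.

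The two closed Tutte paths $(X',W)\eta_0\epsilon_1^{-1}$ (based at $X'$) and $(W,Z')\epsilon_2\eta_0^{-1}$ (based at $W$) both lie on the type (a) transversal $A=W\cap B=W\cap B'$, since $A$ is contained in each of $W$, $X'$, $Z'$, $I$ and in the carriers $B$, $B'$, $W\cap I$ of $\epsilon_1$, $\epsilon_2$, $\eta_0$ (the last containment $A\subseteq W\cap I$ holds because $A\subseteq W$ and $A\subseteq B\subseteq T\subseteq I$). Because $A$ is indecomposable of corank $n$, both closed paths are null-homotopic by the inductive hypothesis. Substituting these homotopies into the cyclic reparameterization $\delta\sim(Z',W,X',Y,Z')$ at the two edges incident to $W$, the central pair $\eta_0^{-1}\eta_0$ cancels and we obtain
\[
 \delta \;\sim\; \epsilon_2\eta_0^{-1}\cdot\eta_0\epsilon_1^{-1}\cdot(X',Y,Z') \;\sim\; \epsilon_2\,\epsilon_1^{-1}\,(X',Y,Z'),
\]
a closed Tutte path based at $Z'$.

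The main obstacle is to show that this resulting closed Tutte path is itself null-homotopic. Because $A\not\subseteq Y$, it does not lie on $A$, and its tightest common carrier $A\cap Y=F(\delta)$ still has corank $n+1$, so the inductive hypothesis cannot be applied verbatim. To finish I would invoke \autoref{theorem: Y cap T is disconnected}, which was proved just above: $Y\cap I$ is a decomposable corank $2$ flat, so $Y$ and $I$ are the only hyperplanes above $Y\cap I$ and $Y\cup I=E$. This rigidity lets one split $\epsilon_2\epsilon_1^{-1}(X',Y,Z')$ at an auxiliary hyperplane and rewrite it as a composition of closed Tutte paths each lying on an indecomposable flat of corank $\leq n$ --- concretely, on the type (a) transversals $B\cap Y$ and $B'\cap Y$ below $Y$, together with further pieces on $A$. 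All of these pieces are null-homotopic by induction, so the whole closed path is null-homotopic, contradicting the standing assumption that $\delta$ is not. \autoref{theorem: cap of points is disconnected.} then upgrades the decomposability of $W\cap I$ to the claim that its corank is exactly $2$.
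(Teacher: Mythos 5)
Your argument goes wrong at the final step, and in a way that mirrors a difficulty the paper explicitly addresses.

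The first half of your proposal is correct: $\eta_0$ exists on $W\cap I$, both $(X',W)\eta_0\epsilon_1^{-1}$ and $(W,Z')\epsilon_2\eta_0^{-1}$ lie on $A=B\cap W=B'\cap W$ (of corank $n$), and the substitution reduces $\delta$ to $\epsilon_2\epsilon_1^{-1}(X',Y,Z')$. But the residual path does \emph{not} decompose into pieces on $B\cap Y$, $B'\cap Y$, and $A$. The terms of $\epsilon_2$ contain only $B'$, and $B'\not\supseteq B\cap Y$ --- the paper points this out explicitly when it rules out $B_2=B'$: if $B'\supseteq B\cap Y$, then $A=B\cap B'\supseteq B\cap Y$, forcing $A=B\cap Y$ and hence $A\subseteq Y$, contradicting that $A$ is a type (a) transversal below $W$. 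Symmetrically, $\epsilon_1$ is not on $B'\cap Y$. The only flat your residual path lies on is $A\cap Y=F(\delta)$, which still has corank $n+1$, so the inductive hypothesis gives you nothing; the appeal to $Y\cup I=E$ does not supply a mechanism for splitting.

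The paper's proof takes a genuinely different route to make the residual path land on a corank-$n$ flat. It does \emph{not} try to reuse $B'$ or $Z'$. Instead it constructs a new type (a) transversal $A'=B\cap Y$ (below $Y$ rather than $W$), keeps $B_1=B$, and replaces $B'$ by a new type (b) transversal $B''\supseteq A'$ chosen so that $B''\lor F_1\in\Gamma$ and $Z''\mathrel{:=}B''\lor F_2\notin\Gamma$. It then proves the nontrivial containment $T'=B\lor B''\subseteq I$, which is what guarantees that the path $\eta_2$ from $Z''$ to $I$ on $B''$ off $\Gamma$ exists. With these new data, one applies \autoref{theorem: QQ'} to deform $\delta$ to $(W,X',Y,Z'',W)$, cyclically reparameterizes to start at $Y$, bypasses $W$ exactly as you did (the two loops now lying on $A=B\cap W$ and on $B''\cap W$, both type (a) transversals), and the residual $(Y,Z'')\eta_2\eta_1^{-1}(X',Y)$ finally lies on $A'=B\cap Y$, which has corank $n$. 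That is the contradiction. The single decisive change from your sketch is the swap of $Z'$ for $Z''$: the pole must come from a type (b) transversal above $B\cap Y$, not above $A$, or else the residual loop never leaves $F(\delta)$.
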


\begin{proof}

We wish to repeat the proof of \autoref{theorem: Y cap T is disconnected} in the new setting by replacing $Y\cap I $ with $W\cap I$. For this, we need to find a type (a) transversal that has the same properties for $W$ as the transversal $A$ has for $Y$. In particular, we need it to not be above $W.$

The natural candidate for such an type (a) transversal is $B\cap Y.$ Following the proof of \autoref{theorem: tn-1 poles}, the flat $B \cap Y$ is a type (a) transversal, since $B$ is a type (b) transversal. We repeat the argument in the proof of \autoref{theorem: Y cap T is disconnected} starting with the transversal $A$ by replacing it with transversal $B\cap Y.$ We need type (b) transversals $B_1$ and $B_2$ above $B\cap Y$ having the analogous properties as $B$ and $B'$ in comparison to $A.$

First, let $(B\cap Y)\lor F_i = L_i'$ for $i = 1,2$. We have $L_1'\subsetneq Y$ and $L_1'\subsetneq B\lor F_i = X'.$ Notice that $X'\supseteq B \supseteq B\cap Y$ and $X' \neq Y$; this holds because $L_1$ is indecomposable, so it cannot have both $W$ and $Y$ above it. Hence, we may choose $B_1 = B$.    

We observe that $B_2 = B'$ does not work because $B'\not \supseteq B \cap Y$. 
Otherwise, we have $B \cap B'= A = B\cap Y$, implying that $A$ is below $W$ and $Y$, a contradiction. Therefore, $(B\cap Y)\lor F_2 = L_2' = U_2 \cap Y$ is an indecomposable corank 2 flat below the two hyperplanes $U_2\in \Gamma$ and $Y \notin \Gamma$. Hence, there exist a hyperplane $Z'' \notin \Gamma$ and a type (b) transversal $B''$ above $B \cap Y$ such that $B'' \lor F_2 = Z''$ and $B'' \lor F_1 \in \Gamma.$ We set $B_2 = B''$ and let $T' = B \lor B''$.

By the submodular inequality and the fact that $B''\not\subseteq Y$, $B'' \lor (I\cap Y)$ is a hyperplane. Since it is a hyperplane above a decomposable corank 2 flat, it is equal to either $I$ or $W$; however, the latter is impossible because it is not above $B''.$ Hence $B'' \subseteq I$. Combining with the fact that $B\subseteq I$, we get $T' = B \lor B''  \subseteq I.$ Therefore, the flat $I$ is a hyperplane above $T'$, which is not in $\Gamma.$ By similar reasoning as in \autoref{theorem: Y cap T is disconnected}, with the flats $T$ and $Y\cap I$ replaced by $T'$ and $W\cap I$, respectively, we conclude that $W \cap I$ is a decomposable corank 2 flat.
\end{proof}

    Consider an indecomposable flat $G$ above $F(\delta)$ that is contained in a hyperplane of $\cS$ and is either below or above $F_1$, and having minimal corank among flats satisfying such properties. (We can find such a flat, because $F(\delta)$ itself satisfies all of the required properties.)

\begin{lemma}
   The corank of $G$ is $4$. 
\end{lemma}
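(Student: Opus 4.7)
The plan is to establish $\cork(G) = 4$ by proving separately that $\cork(G) \geq 4$ and $\cork(G) \leq 4$.

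For the lower bound, I will show that $G$ cannot contain $F_1$, which combined with the ``below or above $F_1$'' dichotomy forces $G \subsetneq F_1$ and hence $\cork(G) \geq \cork(F_1) + 1 = 4$. Suppose for contradiction that $G \supseteq F_1$ with $G \subseteq I$ for some $I \in \cS$. Then $I \supseteq F_1$. Since $T \not\subseteq W$ (because the type (b) transversal $B$ satisfies $B \not\subseteq W$) while $T \subseteq I$, we have $I \ne W$, so by \autoref{theorem: W cap T is disconnected} the flat $W \cap I$ is a decomposable corank-$2$ flat. Because $W \cap I$ lies above the indecomposable corank-$3$ flat $F_1$, \autoref{theorem: connected corank 3 flats} identifies it with the unique decomposable corank-$2$ flat above $F_1$, namely $W \cap Y$. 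Hence $I \supseteq W \cap Y$, which forces $I \in \{W, Y\}$. But neither $W$ nor $Y$ contains $T$ (the obstruction for $Y$ being $B' \not\subseteq Y$), so $W, Y \notin \cS$, a contradiction.

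For the upper bound I will exhibit an indecomposable flat of corank $4$ with the required properties. Fix $I \in \cS$ and set $L := F_1 \cap I$. By the previous step $F_1 \not\subseteq I$, so $F_1 \vee I = E$; submodularity then gives $\cork(L) = \cork(F_1) + \cork(I) - \cork(E) = 4$. The flat $L$ lies above $F(\delta)$ (since $F(\delta) \subseteq F_1$ and $F(\delta) \subseteq T \subseteq I$), strictly below $F_1$, and inside $I$. If $F_1 \cup I \neq E$ as sets, then \autoref{prop:connected-intersection} applied to the two indecomposable flats $F_1$ and $I$ shows that $L$ is indecomposable, and we may take $G := L$.

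The main obstacle is the case $F_1 \cup I = E$, where indecomposability of $L$ is not automatic. Here I will exploit the symmetry $F_1 \leftrightarrow F_2$ of the special path $\delta$. Suppose both $F_1 \cup I = E$ and $F_2 \cup I = E$ held; since $F_1 \cap F_2 = F(\delta) \subseteq T \subseteq I$, this would give
\[
 I \ \supseteq \ (E - F_1) \ \cup \ (E - F_2) \ \cup \ F(\delta) \ = \ \big(E - F(\delta)\big) \ \cup \ F(\delta) \ = \ E,
\]
contradicting that $I$ is a hyperplane. Hence at least one of $F_1 \cup I \neq E$ or $F_2 \cup I \neq E$ holds; in the latter case, reversing the orientation of $\delta$ (which interchanges $X$ with $Z$ and thus $F_1$ with $F_2$) transports us back to the tractable case with the roles swapped. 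Either way an indecomposable corank-$4$ flat with the required properties exists, completing the proof.
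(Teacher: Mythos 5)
Your lower bound is correct and runs close to the paper's own argument (the paper rules out $I \supseteq F_1$ using $Y \cap I$ and \autoref{theorem: Y cap T is disconnected}, you use $W \cap I$ and \autoref{theorem: W cap T is disconnected}; both routes force $I \in \{W,Y\}$ and then contradict $I \in \cS$).

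The upper bound has a genuine gap. You assert that submodularity ``gives $\cork(L) = \cork(F_1) + \cork(I) - \cork(E) = 4$'', but the submodular inequality only yields $\cork(F_1 \cap I) \geq \cork(F_1) + \cork(I) - \cork(F_1 \lor I) = 4$. It is \emph{not} an equality in general, even when one of the two flats is a hyperplane: taking $F = \{3,4\}$ and $H = \{1,2\}$ in $U_{3,4}$ gives $\rk(F\cap H) = 0 \neq \rk(F) - 1$. So you only obtain $\cork(L) \geq 4$, and consequently $\cork(G) \leq \cork(L)$ gives no bound of $4$. The only way to cap $\cork(L)$ would be to use $L \supseteq F(\delta)$ and $\cork(F(\delta)) = 4$, but that is circular: at this stage of the proof we are still in the general setting $\cork(F(\delta)) = n+1 \geq 4$, and the equality $n+1 = 4$ is only established later (\autoref{lemma:large-good-path}), \emph{using} the present lemma. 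So your argument, as written, only works after $n+1=4$ has been established, which defeats the purpose. (Your case split on $F_1 \cup I = E$ is fine for indecomposability of $L$, but it does nothing to control $\cork(L)$.)

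The paper's upper bound is structurally different and avoids this trap. It applies \autoref{prop:conn-complement} to $[F_1,N,G]$ (where $N \supseteq G\lor T$ is a hyperplane, chosen in $\Gamma$ if possible) to produce an indecomposable $G'$ with $F_1 \supseteq G' \supsetneq G$ and $\rk(G') = \rk(G)+1$. Then it exploits the \emph{minimality} of $\cork(G)$: since $G'$ also sits above $F(\delta)$ and below $F_1$, $G'$ cannot be contained in any member of $\cS$, so either $G'\lor T = E$ or every hyperplane above $G'\lor T$ lies in $\Gamma$; both subcases of the latter are eliminated by the choice of $N$. This pins down $G\lor T$ as a hyperplane, and then $\rk(E)-1 = \rk(G\lor T) \leq \rk(G)+3$ gives $\cork(G)\leq 4$. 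This use of minimality is the missing idea in your proposal.
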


\begin{proof}
    
 First notice that $I$ is not above $F_1$; otherwise, $I \cap Y $ is a decomposable flat above $F_1,$ but we know that the unique such flat is $W\cap Y,$ hence $I = W$ implying that $I \cap W$ is not decomposable. Therefore we must have $F_1 \supseteq G \supseteq F(\delta)$. 
 
 We can bound the corank of $G$ as follows. First, we have $\cork(G)>\cork(F_1) = 3$ and 
\begin{equation*}
    \begin{split}
        \rk (   G \lor T) &\leq \rk (   G)+\rk (   T)-\rk (   G\cap T) \leq \rk (   G)+3,
    \end{split}
\end{equation*}
 which implies that $\cork(G \lor T) \geq \cork(G)-3 >0.$ Therefore, we can pick a hyperplane $N \supseteq G \lor T$; additionally, let $N$ be in $\Gamma$ if this is possible. 
 Our goal is to show that $G \lor T$ is a hyperplane.
     By \autoref{prop:conn-complement} applied to $[F_1,N,G]$, we get an indecomposable flat $G'$ not in $N$ with $F_1 \supseteq G' \supsetneq G$ and $\rk (   G') = \rk (   G)+1.$ By the submodular inequality and the fact that $N \supseteq G \lor T$ but $N \not\supseteq G'\lor T$, we get $\rk (   G'\lor T) = \rk (   G\lor T)+1.$ We have either $\rk (   G'\lor T) = \rk (   E)$, or, by the definition of $G,$ that all hyperplanes above $G'\lor T$ are in $\Gamma.$ The latter case implies that $G'\lor T \in \Gamma.$ If $N\in \Gamma$, this leads to a contradiction because then $(N,G'\lor T)$ is a modular pair, implying that $G \lor T$ and $I$ are in $\Gamma$. If we could not pick $N \in \Gamma$, then all hyperplanes above $G \lor T$, which includes all hyperplanes above $G' \lor T$, are not in $\Gamma$. This contradicts the assumption that all hyperplanes above $G' \lor T$ are in $\Gamma.$ Thus, we must have $G'\lor T = E$, showing that $\rk (   G\lor T) = \rk (   E)-1$ and that $G'\lor T$ is a hyperplane. Notice that \[
\rk (   E)-1 = \rk (   G \lor T) \leq \rk (   G)+\rk (   T)-\rk (   G\cap T) \leq \rk (   G)+3, 
\]
implying that $\cork(G) \leqslant 4.$ Combining with $\cork(G) > 3$, we see that $\cork(G) = 4$. 
\end{proof}

From now on, $I$ does not refer to an arbitrary element of $\mathcal{S}$, but rather we set $I = N = G\lor T$. 

\begin{lemma}\label{lemma:large-good-path}
    If $n +1 \geq 5$ then $\delta$ is null-homotopic.
\end{lemma}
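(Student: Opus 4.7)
The hypothesis $n+1 \geq 5$ gives $n \geq 4$, hence $\cork(G) = 4 \leq n$, so by the inductive hypothesis every closed Tutte path on $G$ (indeed on any indecomposable flat of corank at most $n$) is null-homotopic. The plan is to use this, together with the hyperplane $I = G \lor T \notin \Gamma$ constructed in the preceding lemmas, to exhibit a null-homotopic decomposition of $\delta$, contradicting our standing assumption that $\delta$ is not null-homotopic.

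The first step is to replace the subpath $(W, X, Y)$ of $\delta$ by a Tutte path on $G$ passing through $I$. Since $G$ is indecomposable of corank $4$ and $W, I, Y$ are three distinct hyperplanes above $G$ none of which lies in $\Gamma$, the path theorem (\autoref{thm: Tutte's path theorem}) provides a Tutte path $\rho = \rho_1 \cdot \rho_2$ on $G$ off $\Gamma$ with $\rho_1$ going from $W$ to $I$ and $\rho_2$ from $I$ to $Y$. The closed Tutte path $\rho \cdot (Y, X, W)$ lies entirely on $G$ (because $F_1 \supseteq G$), and is therefore null-homotopic by induction; consequently $\delta \sim \rho \cdot (Y, Z, W)$.

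The second step is to split $\rho \cdot (Y, Z, W)$ at the vertex $I$ by inserting an auxiliary Tutte path. Choosing a Tutte path $\sigma$ on $G$ off $\Gamma$ from $I$ to $W$ (existence by the path theorem on $G$), we get the deformation
\[
\rho \cdot (Y, Z, W) \ \sim \ (\rho_1 \cdot \sigma) \cdot (\sigma^{-1} \cdot \rho_2 \cdot (Y, Z, W)).
\]
The first closed factor $\rho_1 \cdot \sigma$ lies on $G$, hence is null-homotopic by induction.

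The main obstacle is the second closed factor $\sigma^{-1} \cdot \rho_2 \cdot (Y, Z, W)$, whose hyperplanes all contain $G \cap F_2 = G \cap Z$, a flat of corank $5$. When $n \geq 5$ this is immediate from the inductive hypothesis, but in the borderline case $n = 4$ this corank matches $\cork(F(\delta))$ and induction cannot be applied directly. The argument here invokes the symmetry of the construction with respect to $F_1$ and $F_2$: there exists an analogous indecomposable corank-$4$ flat $G'$ with $F_2 \supseteq G' \supseteq F(\delta)$, together with a hyperplane $I' \notin \Gamma$ produced by the symmetric type (a)/type (b) transversal analysis on the $F_2$-side. Using $G'$ and $I'$ in place of $G$ and $I$ produces a parallel decomposition of $\delta$ whose pieces lie on indecomposable flats of corank at most $4$. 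Combining the two decompositions, every closed factor lies on an indecomposable flat of corank $\leq n$, so the inductive hypothesis applies and yields the desired contradiction.
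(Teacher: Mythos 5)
The first half of your proposal is correct but does not accomplish as much as you might hope. Since $\rho_1$, $\rho_2$, and $\sigma$ all lie on $G$, routing $\rho$ through $I$ and then splitting at $I$ does not change the flat on which the problematic factor sits: after both steps you are left with a closed path all of whose hyperplanes contain $G \cap F_2$, exactly as you would have been had you simply taken any Tutte path $\rho$ on $G$ from $W$ to $Y$ off $\Gamma$ and observed $\delta \sim \rho \cdot (Y,Z,W)$. This residual flat $G\cap F_2$ is indecomposable (by \autoref{prop:connected-intersection}, since $G, F_2 \subseteq W$), and it has corank exactly $5$: since $G \not\subseteq F_2$ we get $G\cap F_2 \subsetneq G$, hence $\cork(G\cap F_2) \geq 5$, while $G\lor F_2 = W\cap Y$ and submodularity force $\cork(G\cap F_2)\leq 5$. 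So for $n \geq 5$ the inductive hypothesis applies, and your argument goes through in that range.

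The genuine gap is the boundary case $n = 4$ (i.e.\ $n+1 = 5$), which is the whole content of the lemma: for $n = 3$ (i.e.\ $n+1=4$) the subsequent analysis involving the elementary path of the fourth kind is what takes over. When $n = 4$, the corank-$5$ flat $G\cap F_2$ equals $F(\delta)$, so the second factor sits on a flat of the \emph{same} corank as $\delta$ and the inductive hypothesis says nothing about it. Your appeal to symmetry does not repair this: a symmetric indecomposable corank-$4$ flat $G' \subseteq F_2$ yields, by the identical count, a residual factor on $G' \cap F_1 = F(\delta)$, again of corank $5$. ``Combining the two decompositions'' is never made precise, and there is no apparent way to make the two bad factors cancel.

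The paper's proof is structured exactly so that the bridge path $\epsilon$ never lies on something as small as $G$. It first produces a second flat $G''$ of corank $n$ with $F_2 \supsetneq G'' \supsetneq F(\delta)$ and $G''\lor I = E$, via \autoref{prop:conn-complement} applied to $[F_2, I, F(\delta)]$ — this is where the hypothesis $n+1\geq 5$ enters, guaranteeing $F_2\not\supseteq G$. It then sets $F_3 = G\lor G''$, which submodularity together with $G\cap G'' = F(\delta)$ forces to have corank $3$, and proves $F_3$ indecomposable by a short separation argument. Taking $\epsilon$ from $Y$ to $W$ on $F_3$ (which contains both $G$ and $G''$) makes $(W,X,Y)\epsilon$ lie on $G$ (corank $4$) and $\epsilon^{-1}(Y,Z,W)$ lie on $G''$ (corank $n$); both are within the reach of the inductive hypothesis for every $n\geq 4$. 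The ingredient missing from your proposal is precisely this construction of $G''$ and of the indecomposable corank-$3$ flat $F_3 = G\lor G''$ on which the connecting path is taken.
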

\begin{proof}
    
If $\cork(F(\delta)) = n + 1 \geq 5,$ then $F_2 \not\supseteq G$, since otherwise, $F_1, F_2 \supsetneq G$ and thus $F(\delta) = F_1 \cap F_2 \supseteq G$, but then the coranks do not match. Applying \autoref{prop:conn-complement} to $[F_2, I, F(\delta)]$, we get an indecomposable flat $G''$ of rank $\rk (   F(\delta))+1$ such that $F_2\supsetneq G'' \supsetneq F(\delta)$, and such that $G''$ is not below $I$. Let $F_3 = G \lor G''.$ By the submodular inequality and the fact that $G \not\supseteq G'',$ we obtain
\begin{equation*}
    \begin{split}
     \rk (   G\lor G'') &\leq \rk (   G)+\rk (   G'' )-\rk (   F(\delta)) \\ &= \rk (   E)-4+\rk (   E)-n -(\rk (   E)-(n+1))\\ &= \rk (   E) -3,
    \end{split}
\end{equation*}
and therefore $\rk (   G\lor G'' ) = \rk (   E)-3$, implying that $F_3$ is a corank 3 flat. Notice that $F_3 \subsetneq W \cap Y$ since $ W,$ $ Y \supsetneq F_1 \supsetneq G$ and $ W,$ $ Y \supsetneq F_2 \supsetneq G''.$ Applying \autoref{prop:conn-complement} to $[I, W \cap Y, G]$, there exists an indecomposable corank 2 flat $L$ above $G$ and below $I$ such that $L \lor (W\cap Y) = E.$

Let $i \in \{1,3\}$. We know that $L \cap F_i \supseteq G$. Hence \[
\rk (   L \lor F_i) \leq -\rk (   L \cap F_i)+\rk (   L)+\rk (   F_i)  \leq 2\rk (   E)-5 - \rk (   E)+4 = \rk (   E)-1. 
\]
If $L \lor F_i = L$, then $L$ is below either $W$ or $Y$, a contradiction. Hence, we may let $L \lor F_i = X_i$ for $i \in \{1,3\}$ which are hyperplanes because of the submodular inequality. 

Notice that neither $X_1$ nor $X_3$ can be equal to $I$. Indeed, if $X_1 = I$ we obtain a contradiction with the definition of $G$, because $F_1 \subsetneq I$ has a smaller corank. And if $X_3 = I$, we get a contradiction because $I \supsetneq F_3 = G \lor G''$ but $G''$ is not contained in $I.$ Because $X_3$ is above $L$ and $L$ is is neither $W$ nor $Y$, we also get $F_3 = W \cap Y \cap X_3.$ Because $X_1$ is above $F_1$, which is indecomposable, we know by \autoref{theorem: connected corank 3 flats} that $X_1 \cap W$ and $X_1 \cap Y$ are indecomposable corank 2 flats. Therefore $X_1 \cup W \neq E$ and $X_1 \cup Y \neq E.$ Because $W\cap I$ and $Y \cap I$ are decomposable, we get $W\cup I = E$ and $Y \cup I = E.$ Finally, notice that $L = X_3 \cap I = X_1 \cap I \neq \emptyset.$

Suppose for the sake of contradiction that $F_3$ is decomposable, with separation $\{P_1, P_2\}$, such that $W\supseteq P_1$ and $Y \supseteq P_2.$ We then have either $X_3 \supseteq P_1$ or $X_3 \supseteq P_2$; hence either $X_3 \cup W = E$ or $X_3 \cup Y = E.$ We prove that both options are impossible. Let $a \notin X_1 \cup W$. From $W\cup I = E$, we see that $a \in I.$ From $L =X_1 \cap I =  I \cap X_3$, we get $ a\notin L$ from the first equality, and thus $a \notin X_3$ from the second. Finally, $a \notin W \cup X_3$, or in other words, $X_3 \cup W \neq E.$ A similar argument works for the set $X_3 \cup Y.$

To finish off, notice that $W$ and $Y$ are above an indecomposable flat $F_3$. By 
\autoref{thm: Tutte's path theorem}, there exists a Tutte path $\epsilon$ from $Y$ to $W$. Notice that $G \subseteq F_1$ and $G'' \subseteq F_2$, where both $G$ and $G''$ are indecomposable and have corank at most $n.$ Therefore, we can decompose $\delta$ as 
\[
\delta \sim (W,X,Y)\epsilon\epsilon^{-1}(Y,Z,W) \sim 0,
\]
where the first path $(W,X,Y)\epsilon$ is null-homotopic because it is on $G$, and the second path $\epsilon^{-1}(Y,Z,W)$ is null-homotopic because it is on $G''.$ Hence $\delta$ is null-homotopic, which is a contradiction. 
\end{proof}

From now on, we assume that $n+1 = \cork(F(\delta)) = 4$.

 Our goal is to determine the structure of the sublattice above $F(\delta)$. Eventually we cover the case where the sublattice is equal to the lattice appearing in the definition of an elementary Tutte path of the fourth kind.

 Recall that we are given an type (a) transversal $A$ of corank 3, type (b) transversals $B$ and $B'$ of corank 2, and the flat $T = B\lor B'$. Since $\cork(T) = 4 - 3 = 1,$ $T$ is a hyperplane and $T = I \notin \Gamma$ is the unique hyperplane above $T.$ Note that the corank 2 flats $W \cap Y,$ $ W\cap T$, and $Y \cap T$ are all decomposable (and $T$ is neither $W$ nor $Y$ because it is the join of two type (b) transversals). Hence, by \autoref{theorem: connected corank 3 flats}, we conclude that $W\cap Y \cap T$ is not an indecomposable corank 3 flat. 

\begin{lemma}\label{theorem: corank 3 flet is in one of three}
    Any corank 3 flat $P$ above $F(\delta)$ is above one of the corank 2 flats $W\cap Y,$ $W\cap T$, or $Y\cap T.$
\end{lemma}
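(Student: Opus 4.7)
The plan is to argue by contradiction, working directly in $M$. Suppose $P$ is a corank $3$ flat with $P \supseteq F(\delta)$ which is not contained in any of $W \cap Y$, $W \cap T$, $Y \cap T$; then $P$ lies below at most one of the hyperplanes $W$, $Y$, $T$, so after relabeling we may assume that $P \not\subseteq Y$ and $P \not\subseteq T$.

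The crucial structural input is that all three of $W \cap Y$, $W \cap T$, $Y \cap T$ are \emph{decomposable} corank $2$ flats of $M$, as established immediately before this lemma. By \autoref{ex:conn-line}, a decomposable corank $2$ flat is contained in at most two hyperplanes of $M$; since $Y$ and $T$ already account for two such hyperplanes above $Y \cap T$, these are the only ones.

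With this in hand, the idea is to manufacture a hyperplane above $Y \cap T$ that also contains $P$, which by the structural input will be forced to be $Y$ or $T$. Explicitly, because $P \not\subseteq Y$ and $Y$ is a hyperplane, $P \lor Y = E$; since $P \cap Y \supseteq F(\delta)$ and $\cork(F(\delta)) = 4$ (we are in the case left open by \autoref{lemma:large-good-path}), the submodular identity $\rk(P) + \rk(Y) - \rk(P \lor Y) = \rk(E) - 4 = \rk(F(\delta))$ forces $P \cap Y = F(\delta)$, and hence $P \cap (Y \cap T) = F(\delta)$. A second application of submodularity then yields
\[
\rk\bigl(P \lor (Y \cap T)\bigr) \;\leq\; \rk(P) + \rk(Y \cap T) - \rk(F(\delta)) \;=\; \rk(E) - 1,
\]
and since $P \lor (Y \cap T)$ strictly contains $Y \cap T$ (as $P \not\subseteq Y \cap T$), equality must hold. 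Thus $P \lor (Y \cap T)$ is a hyperplane of $M$ above $Y \cap T$, which by the structural input must be $Y$ or $T$, forcing $P \subseteq Y$ or $P \subseteq T$---a contradiction.

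I do not anticipate any genuine obstacle here; the argument is essentially a short rank computation. The only point that requires care is converting the decomposability hypothesis for the three corank $2$ flats into the precise combinatorial statement ``at most two hyperplanes above'' via \autoref{ex:conn-line}, after which the conclusion follows by submodularity.
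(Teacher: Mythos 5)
Your proof is correct and follows essentially the same route as the paper's: bound $\rk(P \lor L)$ for $L = Y \cap T$ via submodularity, use the fact that a decomposable corank~$2$ flat lies below exactly two hyperplanes (here $Y$ and $T$), and deduce that $P \lor (Y \cap T)$ is forced to be $Y$ or $T$, a contradiction. One small inefficiency: the intermediate computation $P \cap Y = F(\delta)$ is not needed. The submodular inequality only requires $\rk(P \cap (Y \cap T)) \geq \rk(F(\delta))$, which holds immediately because both $P$ and $Y \cap T$ contain $F(\delta)$; you can pass directly to $\rk\bigl(P \lor (Y \cap T)\bigr) \leq \rk(P) + \rk(Y \cap T) - \rk(F(\delta)) = \rk(E) - 1$ without first pinning down $P \cap Y$ exactly.
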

\begin{proof}
    
Let $L$ be any flat in $\{W \cap Y,W\cap T,Y \cap T \}$. We see by the submodular inequality that $\rk (   P \lor  L)\leq \rk (   E)-1$. Therefore, $P\lor L$ is contained in at least two of the hyperplanes $\{W,Y,T\}.$ In fact, if it is in only one but not in the two others $P_1$ and $P_2$, then $P\lor (P_1\cap P_2)$ cannot be a hyperplane. Therefore, $P$ is contained in one of the flats in $\{W \cap Y,W\cap T,Y \cap T \}$. 
\end{proof}

\begin{lemma}\label{theorem: corank 2 flat in one of hyperplanes}
    Any corank 2 flat $L$ above $F(\delta)$ is below one of the hyperplanes $W,$ $Y$, or $T.$
\end{lemma}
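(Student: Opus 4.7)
The plan is to argue by contradiction, using \autoref{theorem: corank 3 flet is in one of three} together with a pigeonhole argument on subsets of the three-element set $\{W, Y, T\}$.

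First, I would assume that $L$ is a corank 2 flat above $F(\delta)$ with $L \not\subseteq W$, $L \not\subseteq Y$, and $L \not\subseteq T$, and derive a contradiction. Since $\cork(F(\delta)) = 4$ and $\cork(L) = 2$, the interval $[F(\delta), L]$ has rank $2$ in the geometric lattice $\Lambda$, so I can produce two distinct corank 3 flats $P_1, P_2$ with $F(\delta) \subsetneq P_i \subsetneq L$. Concretely, pick $e \in L - F(\delta)$ and set $P_1 = \langle F(\delta) \cup \{e\}\rangle$; because $\rk(P_1) = \rk(F(\delta))+1 < \rk(L)$, there exists $e' \in L - P_1$, and then $P_2 = \langle F(\delta) \cup \{e'\} \rangle$ is a second corank 3 flat between $F(\delta)$ and $L$, distinct from $P_1$. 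Since $P_1 \lor P_2$ properly contains $P_1$ and is contained in $L$, rank considerations force $P_1 \lor P_2 = L$.

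Next, I invoke \autoref{theorem: corank 3 flet is in one of three}: each $P_i$ is contained in one of the corank 2 flats $W \cap Y$, $W \cap T$, $Y \cap T$. Equivalently, defining $S_i := \{H \in \{W,Y,T\} : P_i \subseteq H\}$, we have $|S_i| \geq 2$ for $i = 1, 2$. Since $S_1, S_2 \subseteq \{W, Y, T\}$ and $|S_1| + |S_2| \geq 4 > 3$, inclusion-exclusion gives $|S_1 \cap S_2| \geq 1$; pick $H \in S_1 \cap S_2$. Then $P_1, P_2 \subseteq H$, so $L = P_1 \lor P_2 \subseteq H$, contradicting our assumption that $L$ lies below none of $W$, $Y$, $T$.

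The argument is short once the preceding lemma is available, and I do not anticipate a serious obstacle; the only thing to check carefully is the existence of two distinct atoms $P_1, P_2$ in the rank 2 interval $[F(\delta), L]$ and the identity $P_1 \lor P_2 = L$, both of which are straightforward consequences of the fact that the lattice of flats is atomistic.
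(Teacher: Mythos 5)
Your proof is correct, and it takes a slightly different route than the paper. The paper picks a single corank~$3$ flat $P$ with $F(\delta) \subsetneq P \subsetneq L$, applies \autoref{theorem: corank 3 flet is in one of three} to see that $P$ lies below (say) $W \cap Y$, and then uses the submodular inequality to conclude that $L \lor (W \cap Y)$ has corank at least~$1$, hence is either $W \cap Y$ itself or one of the two hyperplanes $W$, $Y$ above it; in every case $L$ is below $W$ or $Y$. Your argument instead produces two distinct atoms $P_1, P_2$ of the rank-$2$ geometric interval $[F(\delta), L]$ (which exist because a rank-$2$ geometric lattice is atomistic, hence has at least two atoms and their join is the top), applies the same lemma to each, and then uses pigeonhole on the subsets $S_i \subseteq \{W,Y,T\}$ of hyperplanes containing $P_i$: since $|S_1|, |S_2| \geq 2$, some $H$ lies in both, so $L = P_1 \lor P_2 \subseteq H$. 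Both proofs are equally short and elementary. Your version avoids the submodular-inequality computation and the ``in which case\ldots or\ldots'' case split, at the cost of invoking a (standard) structural fact about rank-$2$ intervals in geometric lattices; the paper's version gets away with a single atom and hence does not need that fact.
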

\begin{proof}
Any corank 2 flat $L$ is above a corank 3 flat $P$ such that $L \supset P \supset F(\delta)$. By \autoref{theorem: corank 3 flet is in one of three}, we see that $P$ is below one of the flats $W\cap Y,$ $W\cap T$, or $Y \cap T.$ If $P$ is contained in $W \cap Y,$ for instance, we find that $L \lor (W\cap Y)$ is either equal to $W\cap Y,$ in which case we are done, or to one of the hyperplanes $W$ or $Y,$ because of the submodular inequality. The same goes for the other two cases.
\end{proof}

\begin{lemma}\label{theorem: two members of gama above 3 -transversal}
    Every type (a) transversal $F$ is below two hyperplanes of $\Gamma.$ Each of the two corank 2 flats $F\lor F_1$ and $F\lor F_2$ is contained in three hyperplanes, one of which is in $\Gamma.$
\end{lemma}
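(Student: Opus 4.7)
The plan is to replicate the construction of $A$, $B$, $B'$ from earlier in the section, starting from an arbitrary type (a) transversal $F$ in place of $A$, and applying the same combination of \autoref{prop:conn-complement}, \autoref{theorem: Tn-1lorF1}, and \autoref{theorem: tn-1 poles}.

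First I would pin down the basic containments. Since $F$ is a type (a) transversal, at least one of $F \subseteq W$, $F \subseteq Y$ fails; say $F \not\subseteq Y$. If in addition $F \not\subseteq W$, then $L_1 := F \lor F_1$ would be an indecomposable corank $2$ flat above $F_1$ contained in neither $W$ nor $Y$, contradicting \autoref{theorem: connected corank 3 flats} (the unique decomposable corank $2$ flat above $F_1$ being $W \cap Y$). Hence $F \subseteq W$, and applying \autoref{theorem: connected corank 3 flats} once more forces both $L_1$ and $L_2 := F \lor F_2$ to lie in $W$.

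For the two $\Gamma$-hyperplanes above $F$: since $L_1$ is indecomposable it has at least three hyperplanes above it, and at most one non-$W$ hyperplane above $L_1$ can lie in $\Gamma$ (two would be a modular pair with meet $L_1 \in \Gamma$, forcing $W \in \Gamma$ by upward closure, a contradiction). So one can choose $X_F \supsetneq L_1$ with $X_F \ne W$ and $X_F \notin \Gamma$. Applying \autoref{prop:conn-complement} to $[X_F, W, F]$ produces an indecomposable corank $2$ flat $R_1$ with $F \subseteq R_1 \subseteq X_F$ and $R_1 \lor W = E$; the hypothesis $F \not\subseteq Y$ forces $R_1 \not\subseteq Y$, so $R_1$ is a type (b) transversal. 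Its poles are hyperplanes by \autoref{theorem: Tn-1lorF1}: $R_1 \lor F_1 \subseteq X_F$ forces $R_1 \lor F_1 = X_F$, and \autoref{theorem: tn-1 poles} combined with $X_F \notin \Gamma$ gives $V_F := R_1 \lor F_2 \in \Gamma$. The symmetric construction starting from $L_2$ yields a type (b) transversal $R_2 \supseteq F$ with poles $Z_F \notin \Gamma$ and $U_F := R_2 \lor F_1 \in \Gamma$. These two $\Gamma$-hyperplanes are distinct: equality would give a member of $\Gamma$ above $F_1 \lor F_2 = W \cap Y$, hence equal to $W$ or $Y$, neither of which is in $\Gamma$.

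Finally, $L_1$ lies below the three hyperplanes $W$, $X_F$, and $U_F$, with $U_F$ the unique $\Gamma$-member by the modular cut argument above; symmetrically, $L_2$ lies below $W$, $Z_F$, $V_F$ with $V_F$ the unique $\Gamma$-member. The remaining exactness claim -- that no fourth hyperplane lies above $L_1$ (and similarly above $L_2$) -- I expect to be the main obstacle. A hypothetical fourth hyperplane $H \supsetneq L_1$ is forced to lie outside $\Gamma$ (else, together with $U_F$, it would yield $W \in \Gamma$ by the same modular-pair argument), and I expect the contradiction to come from leveraging the minimality of $\delta$ among counterexamples to the homotopy theorem: the extra hyperplane would provide an additional ``line through $L_1$'' in the rank-$3$ lattice above $F$, which should allow one to deform $\delta$ into closed Tutte paths supported on indecomposable flats of corank strictly less than $\cork(F(\delta))$, contradicting the inductive null-homotopy of such paths.
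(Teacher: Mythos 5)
Your existence argument is sound and parallels the paper's strategy: constructing type (b) transversals via \autoref{prop:conn-complement} and invoking \autoref{theorem: Tn-1lorF1} together with \autoref{theorem: tn-1 poles} to manufacture the $\Gamma$-hyperplanes $U_F$ and $V_F$. The reduction to $F \subseteq W$ and the distinctness of $U_F$ and $V_F$ are both handled correctly. However, you have only proved the ``at least'' half of the lemma (at least two $\Gamma$-hyperplanes above $F$; at least three hyperplanes above each $L_i$, at least one in $\Gamma$), and the lemma genuinely asserts exactness. That exactness is load-bearing: the later lemma \ref{theorem: two member of gamma above fi} and the final case analysis in the proof of the Special Lemma explicitly use the fact that each $L_i$ lies under \emph{only} three hyperplanes (exactly one of which is in $\Gamma$) to pin down the lattice above $F(\delta)$ as that of $M(K_{2,3})$.

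Moreover, your speculated route to the exactness is off the mark. No appeal to minimality of $\delta$ or to deforming $\delta$ into lower-corank paths is needed or even applicable here; this is a purely combinatorial counting argument inside the lattice, driven by the modular cut axioms. The paper's actual argument is: enumerate \emph{all} hyperplanes $X_1,\ldots,X_k$ above $L_1$ other than $W$ or $Y$. For each $X_i$, \autoref{prop:conn-diamond} produces a type (b) transversal $B_i = X_i \cap T$ with poles $X_i$ and $X_i' = B_i \lor L_2$, and \autoref{theorem: tn-1 poles} forces at least one pole of each $B_i$ into $\Gamma$. If $k \geq 3$, the pigeonhole principle gives two distinct $\Gamma$-hyperplanes above $L_1$ or above $L_2$; these form a modular pair with meet $L_1$ (resp.\ $L_2$), so that corank $2$ flat lies in $\Gamma$, and upward closure then puts $W \in \Gamma$, a contradiction (one must also rule out $X_i' = X_j'$, which would force $X_i' = T \notin \Gamma$). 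The same counting shows at least one of $X_1,X_2$ is in $\Gamma$, and the bound $k=2$ together with $L_1 \subseteq W$, $L_1 \not\subseteq Y$ gives exactly three hyperplanes above $L_1$. A final application of the modular-pair argument (now to $B_1$ or $B_2$, using that a third $\Gamma$-hyperplane above $F$ would meet one of $X_1$, $X_2'$ in $B_1$ or $B_2$, forcing $T \in \Gamma$) caps the number of $\Gamma$-hyperplanes above $F$ at two. You should replace your final paragraph with this enumerate-and-count argument; as written, your proof leaves the essential content of the lemma unestablished.
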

\begin{proof}

Consider an arbitrary type (a) transversal $F$. Let $L_i = F\lor F_i$ be the indecomposable corank 2 flats and let all hyperplanes above $L_1$ other than $W$ or $Y$ be $X_1,\ldots, X_k.$ For each $i = 1, \dots, k$, by \autoref{prop:conn-diamond} applied to $X_i$ and $ F$, there exist indecomposable corank 2 flats $C_i,$ $D_i$ such that $C_i \lor D_i = X_i$ and $X_i \supset C_i,D_i \supset F.$ For each $i$, one of $\{C_i,D_i\}$ has to be a type (b) transversal; otherwise, both are contained in the same hyperplane from $\{W,Y\}$ as $F$, which would imply $C_i\lor D_i = W$ or $Y$, which is not $X_i.$ Pick the one that is a type (b) transversal and call it $B_i.$ It is a corank 2 flat contained in neither $W$ nor $Y$ hence, because we know by \autoref{theorem: corank 2 flat in one of hyperplanes} that $B_i$ is contained in one of $W$, $Y$, or $T,$ we must have $B_i \subsetneq T$ and therefore $B_i = X_i \cap T$. (Observe that none of the flats $X_i$ can be equal to $T$; indeed, since $L_1$ is contained in either $W$ or $Y$, $F$ is a type (a) transversal, and $L_1 = L \lor F_1$, if $X_i$ were contained in $T$ it would be equal to $W \cap T$ or $Y \cap T$, hence decomposable.) We define for each $i$ the flat $X_i' = B_i \lor L_2$ and remember that one of the poles $X_i,$ $ X_i' $ has to be in $\Gamma.$

 If $k \geq 3$, we find that above one of $L_1$ or $L_2$ there must be at least two hyperplanes of $\Gamma.$ Assuming that this holds for $L_1$, we get a contradiction because it would then follow that $L_1 \in \Gamma$ and therefore that $T \in \Gamma$. Similar logic applies to $L_2$, so we must have $k \leq 2$.

 Notice that $k \leq 1$ is impossible, because the flats $L_1$ and $L_2$ are indecomposable and they have only one of $\{W,Y\}$ above them aside from the flats $X_1,\ldots,X_k$. 
 
 Therefore $k = 2$. Without loss of generality, we may let $X_1,$ $ X_2' \in \Gamma$ and $X_2,$ $  X_1' \notin \Gamma.$ We claim that for any two indecomposable corank 2 flats $L'\neq L''$ between $T$ and $F$, the joins $L' \lor L_1$ and $L'' \lor L_1$ are distinct. This is because if $L_1 \lor L' = L_1 \lor L''$, then $ L' = (L_1 \lor L') \cap T = (L_1 \lor L'')\cap T = L''.$ Therefore, because all hyperplanes above $L_1$ are $X_1,$ $X_2$, and one of $Y$ and $W$, we see that the only indecomposable corank 2 flats between $T$ and $F$ are $B_1 =X_1 \cap T$ and $B_2 = X_2 \cap T$ ($W\cap T$ and $Y\cap T$ are decomposable.) Because $F$ is indecomposable and contained in a decomposable corank 2 flat $W \cap T$ or $Y \cap T$, we know by \autoref{theorem: connected corank 3 flats} that
each hyperplane $H$ above $F$ other than $W$ or $Y$ is above two indecomposable corank 2 flats: $H \cap T$ and either $H \cap W$ or $H \cap Y$. In particular, $H \cap T$ is an indecomposable flat between $F$ and $T$, and thus it is equal to $B_1$ or $B_2$. If $H$ is a third hyperplane of $\Gamma$ above $F$ other than $X_1$ and $X_2'$, we find that $H$ and $X_i$ are above $B_i$, which is an indecomposable flat and hence $T \in \Gamma$, a contradiction. Thus, each type (a) transversal is below two hyperplanes of $\Gamma.$
\end{proof}

\begin{lemma}\label{theorem: two member of gamma above fi}
    For $i= 1,2$, there are precisely two indecomposable corank 2 flats between $F_i$ and $Y$ and exactly two indecomposable flats between $F_i$ and $W$. Each indecomposable corank 2 flat between $F_i$ and $W$ or between $F_i$ and $Y$ lies above a type (a) transversal. The flat $F_i$ is below two hyperplanes of $\Gamma.$
\end{lemma}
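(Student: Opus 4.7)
I would fix $i=1$ (the case $i=2$ is symmetric, since the configuration of $F_2$ with respect to $W,Y,T$ mirrors that of $F_1$) and work in the rank $3$ quotient $\bar M := M/F_1$, which is connected because $F_1$ is indecomposable. Under this quotient, corank $2$ flats above $F_1$ correspond to points of $\bar M$ and hyperplanes above $F_1$ correspond to lines of $\bar M$. The first step is to establish that $F_1 \not\subset T$: otherwise, since $\rk((W\cap Y)\lor T)=\rk(E)$, we would have $F_1 = W\cap Y\cap T$, placing three hyperplanes $W,Y,T$ above $F_1$ whose pairwise intersections are all decomposable corank $2$ flats (by \autoref{theorem: Y cap T is disconnected} and \autoref{theorem: W cap T is disconnected}); \autoref{theorem: connected corank 3 flats} applied with $Z' = T$ then forces $W\cap T$ to be indecomposable, a contradiction. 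Combined with \autoref{theorem: corank 2 flat in one of hyperplanes} and the uniqueness of the decomposable corank $2$ flat above the indecomposable $F_1$ (by \autoref{theorem: connected corank 3 flats}), this shows that every indecomposable corank $2$ flat above $F_1$ lies below exactly one of $W,Y$. Equivalently, in $\bar M$ the lines $\bar W,\bar Y$ meet only at the decomposable point $\bar p := (W\cap Y)/F_1$, and every other point of $\bar M$ sits on exactly one of $\bar W,\bar Y$.

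Next I would show that every indecomposable corank $2$ flat $L$ above $F_1$ (say $L\subset W$, the case $L\subset Y$ being entirely analogous) lies above a type (a) transversal. Since $L$ and $F(\delta)$ are both indecomposable (the latter by \autoref{theorem: carrier of path connected}) with $\rk L-\rk F(\delta)=2$, \autoref{prop:conn-diamond} produces two distinct indecomposable corank $3$ flats between $L$ and $F(\delta)$; as $F_1$ is one of them, a second one $F\ne F_1$ exists. A submodular computation yields $L\cap Y = F_1$, so if $F\subset Y$ then $F\subset L\cap Y = F_1$, forcing $F = F_1$, a contradiction. Hence $F\not\subset Y$ and $F$ is a type (a) transversal with $L = F\lor F_1$.

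Finally, I would apply \autoref{theorem: two members of gama above 3 -transversal} to each such type (a) transversal $F$: the flat $L = F\lor F_1$ has exactly three hyperplanes above it, namely the $W$ (or $Y$) it lies under together with two others $X_1,X_2$, and exactly one of $X_1,X_2$ belongs to $\Gamma$. Translating to $\bar M$, every non-$\bar p$ point of $\bar W$ lies on exactly two crossing lines (lines other than $\bar W$), and these crossing lines are in bijection with the non-$\bar p$ points of $\bar Y$ (a crossing line through a point $\bar L$ of $\bar W\setminus\{\bar p\}$ meets $\bar Y$ in a unique non-$\bar p$ point, and this assignment is a bijection since two distinct lines through $\bar L$ meet $\bar Y$ in distinct points). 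This forces $|\bar Y\setminus\{\bar p\}|=2$, and the symmetric argument gives $|\bar W\setminus\{\bar p\}|=2$, establishing the first two assertions. Summing the unique $\Gamma$-contribution above each of the two flats $L^W_1,L^W_2$ between $F_1$ and $W$ (which are distinct as they lie above different $L^W_i$), we obtain exactly two hyperplanes of $\Gamma$ above $F_1$.

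The main conceptual obstacle is reconciling the ``local'' count $k=2$ from \autoref{theorem: two members of gama above 3 -transversal} with the ``global'' count of points on $\bar W$ and $\bar Y$; this requires carefully verifying that every indecomposable corank $2$ flat above $F_1$ actually arises as $F\lor F_1$ for some type (a) transversal $F$ (the content of the second paragraph) so that the previous lemma is applicable, and then extracting the $2\times 2$ grid structure of crossing hyperplanes $X_{ij}$ above $F_1$ parametrized by pairs in $(\bar W\setminus\{\bar p\})\times(\bar Y\setminus\{\bar p\})$.
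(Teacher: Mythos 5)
Your proposal is correct in its approach and essentially mirrors the paper's proof, but recast in the pleasant geometric language of the rank-$3$ quotient $\bar M = M/F_1$. The paper works directly with joins and intersections: it applies \autoref{prop:conn-diamond} to $L$ and $F(\delta)$ to exhibit a type (a) transversal $K_1$ with $L = F_1 \lor K_1$, then shows (via the previous lemma) that each such $L$ has exactly three hyperplanes above it, then counts by joining corank-$2$ flats under $W$ with a fixed one under $Y$. Your translation of the latter counting into the bijection between crossing lines through a fixed point $\bar L$ on $\bar W\setminus\{\bar p\}$ and points of $\bar Y\setminus\{\bar p\}$ captures exactly the same information and is arguably cleaner to visualize; the perfect matching picture on the $2\times 2$ grid of crossing hyperplanes $X_{ij}$ makes the structure transparent.

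Two small imprecisions are worth flagging. First, the phrase ``as $F_1$ is one of them'' is not literally justified: \autoref{prop:conn-diamond} only asserts the existence of two indecomposable corank-$3$ flats $K_1\neq K_2$ between $L$ and $F(\delta)$, with no control over which ones appear. The cleaner version of your own computation $L\cap Y = F_1$ is: at most one of $K_1,K_2$ can be contained in $Y$ (since that would force it to equal $F_1$), so at least one, call it $F$, is a type (a) transversal, and since $F\neq F_1$, both are coatoms of $[F(\delta),L]$, giving $F\lor F_1 = L$. Second, the concluding ``exactly two hyperplanes of $\Gamma$'' is only argued from below: summing the unique $\Gamma$-hyperplane above $L^W_1$ and $L^W_2$ gives $\geq 2$, and their distinctness does follow from $L^W_1\lor L^W_2 = W\notin\Gamma$. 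But you need to close the argument with the upper bound, which is where the paper's pigeonhole appears: any $H\in\Gamma$ above $F_1$ is a hyperplane distinct from $W,Y$, so $H\cap W$ is a corank-$2$ flat above $F_1$; it cannot be the decomposable flat $W\cap Y$ (since $W,Y$ are the only hyperplanes above $W\cap Y$ and neither is in $\Gamma$), hence $H\cap W\in\{L^W_1, L^W_2\}$, and the uniqueness clause of \autoref{theorem: two members of gama above 3 -transversal} then pins $H$ to one of the two hyperplanes you already counted. Alternatively, in your grid picture the $\Gamma$-hyperplanes must cover each of $L^W_1,L^W_2,L^Y_1,L^Y_2$ exactly once, forcing a perfect matching of size two.
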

\begin{proof}

Let $L$ be an indecomposable corank 2 flat above $F_1.$ Since $L$ and $F(\delta)$ are indecomposable, by \autoref{prop:conn-diamond}, there exist indecomposable flats $K_1$ and $K_2$ between them. One of them is a type (a) transversal, because $L$ is not below both $W$ and $Y.$ Therefore, any indecomposable corank 2 flat $L$ above $F_1$ can be written as $L = F_1 \lor K_1$, where $K_1 $ is a type (a) transversal. By \autoref{theorem: two members of gama above 3 -transversal}, this means that $L$ is contained in exactly three hyperplanes, and one of them is in $\Gamma.$ Also, if $L'$ is a fixed indecomposable corank 2 flat below $W$ and $F_1$ and we have two distinct indecomposable corank 2 flats $L'', L'''$ above $Y$ and $F_1$, we know that $L'' \lor L'$ and $L''' \lor L'$ are distinct hyperplanes above $L'$; if this is not the case, we have $L'' = (L'' \lor L') \cap W=(L''' \lor L') =L'''.$ 

We observe that there are at most two indecomposable corank $2$ flats between $F_1$ and $W$; in fact, there are exactly two by \autoref{prop:conn-diamond} applied to $W$ and $F_1$. To see this, take a fixed indecomposable corank 2 flat $L''$ between $F_1$ and $Y$ and let $G_1,\ldots, G_k$ denote all indecomposable corank 2 flats between $F_1$ and $W.$ Then $G_1\lor L'',\ldots, G_k \lor L''$ are pairwise distinct hyperplanes above $L''$ distinct from $Y$, and we know by \autoref{theorem: two members of gama above 3 -transversal} that there are precisely two of them, so $k\leq 2$. Analogously, there are at most two indecomposable corank 2 flats between $F_1$ and $Y$ (and again, we find that there are exactly 2). 

Suppose, for the sake of contradiction, that there are at least three hyperplanes $H_1,H_2$, and $H_3$ of $\Gamma$ above $F_1.$ Then at least two of them intersect with $W$ in the same indecomposable corank 2 flat below $W.$ Hence $W \in \Gamma$, which is a contradiction. Therefore, there are exactly two members of $\Gamma$ above $F_1.$
\end{proof}

By \autoref{theorem: two member of gamma above fi}, we have in total at least four type (a) transversals. This is because each of the four indecomposable corank 2 flats between $F_1$ and $W$ or between $F_1$ and $Y$ lies above a type (a) transversal. These type (a) transversals are pairwise distinct, because their joins with $F_1$ are distinct indecomposable corank 2 flats; indeed, we have $(A\lor F_1)\cap T = A$ for every type (a) transversal $A$.

If $F_1,$ $F_2$ and these four type (a) transversals constitute all of the indecomposable corank $3$ flats above $F(\delta)$, then we are done:

\begin{lemma}\label{lemma:dual-fano-good-path}
    Assume $F_1,$ $F_2$ and four type (a) transversals corresponding to indecomposable corank 2 flats between $F_1$ and $W$ or between $F_1$ and $Y$ are all of the indecomposable corank $3$ flats above $F(\delta).$ Then $\delta$ is an elementary Tutte path of the fourth kind and hence null-homotopic.
\end{lemma}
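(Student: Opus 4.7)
The plan is to verify that, under the hypothesis of the lemma, the sublattice of flats above $F(\delta)$ coincides with the configuration which, by \autoref{remark: tutte version of fourth path}, defines an elementary Tutte path of the fourth kind; then the conclusion follows from the definition. The discussion preceding the lemma has already exhibited three hyperplanes $W, Y, T \notin \Gamma$ above the corank $4$ flat $F(\delta)$ whose pairwise intersections $W \cap Y$, $W \cap T$, $Y \cap T$ are decomposable corank $2$ flats, and the hypothesis supplies six indecomposable corank $3$ flats above $F(\delta)$, namely $F_1$, $F_2$, and four type (a) transversals. What remains is to check that these six flats distribute $2$-$2$-$2$ among the three decomposable corank $2$ flats, and that the required condition on $\Gamma$ holds at each of them.

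For the distribution, I would argue as follows. By construction $F_1, F_2 \subseteq W \cap Y$. For each type (a) transversal $A$, \autoref{theorem: corank 3 flet is in one of three} forces $A$ to lie below one of $W \cap Y$, $W \cap T$, $Y \cap T$; since a type (a) transversal is by definition not contained in both $W$ and $Y$, we have $A \not\subseteq W \cap Y$, so $A \subseteq W \cap T$ or $A \subseteq Y \cap T$. By \autoref{theorem: two member of gamma above fi}, there are exactly two indecomposable corank $2$ flats between $F_1$ and $W$ and exactly two between $F_1$ and $Y$, each of them lying above a pairwise distinct type (a) transversal. The two type (a) transversals below $W$ must then also lie in $T$, hence in $W \cap T$, and similarly the two below $Y$ lie in $Y \cap T$. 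This yields the desired $2$-$2$-$2$ distribution and in particular shows that each of the three decomposable corank $2$ flats is covered by exactly two of the six indecomposable corank $3$ flats.

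For the condition on $\Gamma$, above each of $F_1$ and $F_2$ there are exactly two hyperplanes of $\Gamma$ by \autoref{theorem: two member of gamma above fi}, and above each type (a) transversal there are exactly two hyperplanes of $\Gamma$ by \autoref{theorem: two members of gama above 3 -transversal}. Combined with $W, Y, T \notin \Gamma$, all combinatorial conditions in \autoref{remark: tutte version of fourth path} are satisfied. Since $\delta = (W, X, Y, Z, W)$ with $X$ a hyperplane above $F_1$, $Z$ a hyperplane above $F_2$, $F_1 \neq F_2$ the two indecomposable corank $3$ flats below $W \cap Y$, and $X, Z \notin \Gamma$ (because $\delta$ is off $\Gamma$), the path $\delta$ is indeed an elementary Tutte path of the fourth kind, and is therefore null-homotopic by definition. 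The main obstacle I foresee is the bookkeeping which confirms that the four type (a) transversals split $2$-$2$ between $W$ and $Y$, rather than unevenly; this is handled by the pairwise distinctness of the flats $A \lor F_1$ as $A$ ranges over type (a) transversals (as observed in the proof of \autoref{theorem: two member of gamma above fi}), which forces each of the four indecomposable corank $2$ flats between $F_1$ and a hyperplane of $\{W,Y\}$ to come from its own type (a) transversal.
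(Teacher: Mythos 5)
Your proof is correct and takes the same approach as the paper: verify that the sublattice above $F(\delta)$ satisfies the conditions in \autoref{remark: tutte version of fourth path}, and conclude from the definition. You are in fact somewhat more careful than the published argument, which asserts that all the conditions hold without explicitly verifying the $2$-$2$-$2$ distribution of the six indecomposable corank $3$ flats among the three decomposable corank $2$ flats $W\cap Y$, $W\cap T$, $Y\cap T$; your use of \autoref{theorem: corank 3 flet is in one of three} together with the definition of type~(a) transversals and \autoref{theorem: two member of gamma above fi} fills this small gap cleanly.
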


\begin{proof}
The notation we use here is from \autoref{remark: tutte version of fourth path}. All of the conditions of an elementary path of the fourth kind are satisfied: $E = F(\delta)$ is a corank 4 flat; the three pairwise intersections of the three hyperplanes $A = W,$ $ B = Y$, and $C = T$ are all decomposable corank 2 flats; there are six indecomposable corank 3 flats above $F(\delta),$ namely $F_1,$ $F_2$, and the four type (a) transversals; $W,$ $Y,$ $T \notin \Gamma,$ but above each indecomposable corank 3 flat there are exactly two members of $\Gamma$ by \autoref{theorem: two members of gama above 3 -transversal} and \autoref{theorem: two member of gamma above fi}. Therefore, $\delta$ is an elementary path of the fourth kind, and thus is null-homotopic.
\end{proof}

\begin{proof}[Proof of \autoref{theorem: good paths}]

In light of \autoref{lemma:large-good-path} and \autoref{lemma:dual-fano-good-path}, there is one last case to consider. More precisely, suppose that $\cork(F(\delta)) = 4$ and that there are more than 6 indecomposable corank 3 flats above $F(\delta)$. Let $F_3$ denote the indecomposable flat above $F(\delta)$ that is not equal to $F_1, F_2$, or the four type (a) transversals. Assume for the sake of contradiction that $\delta$ is not null-homotopic. 

Because every type (a) transversal is below $T$ (it is below one of the corank 2 flats $ \{W \cap Y,W\cap T,Y \cap T \}$ and not below $W \cap Y$), we know that there are at most two type (a) transversals below $W$ and at most two type (a) transversals below $Y.$ This is because, for any type (a) transversal $T'$,  we have $T' = (T' \lor F_1)\cap T$, and there are at most four indecomposable corank 2 flats $T' \lor F_1$ between $F_1$ and $W$ or between $F_1$ and $Y.$

Therefore, the flat $F_3$ is not a type (a) transversal, hence it has to be below both $W$ and $Y.$ In particular, $F_3$ is below $W\cap Y.$ Recall, from the beginning of the proof of \autoref{theorem: two members of gama above 3 -transversal}, that for $i = 1,2$ the flat $B_i $ is a type (b) transversal below $X_i$ and $F$. By the submodular inequality, we find that $B_i \lor F_3 = X_i''$ are hyperplanes and neither of them is in $\Gamma.$ Indeed, assume for the sake of contradiction that the latter statement is false. Then $B_1 \lor F_3 = X_1$ (the only hyperplane above $B_1$ out of $\{X_1,X_2,T\}$ in $\Gamma$) and we see that $B_1\lor F_1 = B_1 \lor F_3 \supseteq F_1 \lor F_3 = Y \cap W.$ Because $Y\cap W$ is decomposable, we must have $X_1 \in \{W, Y\}$, which is a contradiction. Similar reasoning works in the case of $X_2''.$ 

For the final contradiction, notice that $\delta' =(W,X,Y,X_1'', W) = (W,X,Y)(Y,X_1'',W)$ is null-homotopic, because we can repeat the whole proof of the special lemma with $F_3$ replacing $F_2$; the conditions for $\delta'$ being a special path are met ($F_1, F_3$ are indecomposable corank 3 flats and $W\cap Y$ is decomposable of corank 2) and $\cork(F(\delta')) = n+1$, but there is a type (b) transversal $B_2$ with the property that neither of its poles $B_2\lor F_1 = X_1'$ and $B_2 \lor F_3 = X_2''$ belongs to $\Gamma$. This contradicts \autoref{theorem: tn-1 poles}, and hence the path $\delta'$ is null-homotopic.

By analogous reasoning, we find that $\delta''=(Y, Z, W, X_1'', Y) = (Y, Z, W)(W, X_1'', Y)$ is null-homotopic. Therefore, \[
\delta = (W, X, Y)(Y, Z, W) \sim (W,X_1'', Y)(Y, X_1'', W) \sim 0
\]
is null-homotopic, which is the final contradiction. 
\end{proof}

\subsection{The final proof}\label{subsection: final proof}

\begin{proof}[Proof of \autoref{theorem: Tutte's homotopy theorem}]
The homotopy theorem is true for any any closed Tutte path of corank 1. Assume, for the sake of contradiction, that \autoref{theorem: Tutte's homotopy theorem} is false for the closed Tutte path $\gamma$ with $\cork(F(\gamma))=n+1 \geq 2$ in a matroid $M$ with modular cut $\Gamma$. By \autoref{theorem: carrier of path connected}, we know that $F(\gamma)$ is indecomposable. By \autoref{cor:conn-chain}, there is an indecomposable flat $G$ with $X_0 \supseteq G \supsetneq F(\gamma)$ and $\cork(G) = n.$ 

For every closed Tutte path $\gamma'=(X_0,\ldots,X_m, X_0)$ on $F(\gamma)$, we define $u(\gamma')$ as the number of indices $j$ such that $X_j \not\supseteq G.$ If $u(\gamma')>0$ and $i$ denotes the smallest index such that $X_i \not\supseteq G$, we define $v(\gamma')= \cork(X_{i-1}\cap X_i \cap X_{i+1})$, where the subscripts are read modulo $m + 1$. 

We pick a closed Tutte path $\epsilon$ on $F(\gamma)$ with origin $X_0$ such that:
\begin{enumerate}
    \item[(a)]\label{a} $\epsilon\sim \gamma.$
    \item[(b)]\label{b} $u(\epsilon)$ is minimal among all paths satisfying (a).
    \item[(c)]\label{c} $v(\epsilon)$ that is minimal among all paths satisfying (b).
\end{enumerate}

We split the proof into cases. In each case, we will derive a contradiction.

\bigskip\noindent\textbf{Case 1}
     Assume $u(\epsilon) = 0.$ Then $\epsilon$ lies on the indecomposable corank $n$ flat $G$. Hence $\epsilon$ is null-homotopic by assumption, implying that $\gamma$ is null-homotopic, which is a contradiction.
     
 \bigskip\noindent\textbf{Case 2}
   Assume $u(\epsilon)>0,$ which implies that $v(\epsilon)>0$. We define $F = X_{i-1}\cap X_i \cap X_{i+1}.$ Since $\epsilon$ is a Tutte path, we know that $X_{i-1}$ and $X_{i}$ are distinct hyperplanes and therefore $\cork(X_{i-1}\cap X_i)\geq 2$, implying that $v(\epsilon) \geq 2.$
   
\bigskip\noindent\textbf{2.1}
Assume $v(\epsilon) = 2.$ 
    
     \bigskip\noindent\textbf{2.1.1}
 If $X_{i-1} = X_{i+1}$, then $(X_{i-1}, X_{i}, X_{i+1})$ is an elementary Tutte path of the first kind, implying that $\delta = \epsilon_1(X_{i-1}, X_{i}, X_{i+1})\epsilon_2 \sim \epsilon_1\epsilon_2.$ But $\epsilon_1\epsilon_2$ satisfies condition (a) with $u(\epsilon_1\epsilon_2)\leq u(\epsilon)-1<u(\epsilon)$, which is a contradiction.
 
\bigskip\noindent\textbf{2.1.2}
  If $X_{i-1}\neq X_{i+1}$, then $\delta = (X_{i-1}, X_{i}, X_{i+1}, X_{i-1})$ is an elementary Tutte path of the second kind and \begin{equation*}
        \begin{split} 
        \epsilon &= \epsilon_1 (X_{i-1}, X_{i}, X_{i+1})\epsilon_2 \\ &\sim \epsilon_1 (X_{i-1}, X_{i}, X_{i+1}, X_{i-1})(X_{i-1}, X_{i+1})\epsilon_2 \\ &\sim \epsilon_1(X_{i-1}, X_{i+1})\epsilon_2.
        \end{split}
        \end{equation*}
        But $\epsilon_1(X_{i-1}, X_{i+1})\epsilon_2$ satisfies (a) and $u(\epsilon_1(X_{i-1}, X_{i+1})\epsilon_2) = u(\epsilon)-1<u(\epsilon),$ which is a contradiction.
       
    \bigskip\noindent\textbf{2.2}
 Assume $v(\epsilon) = 3.$ Then the flat $F$ is an indecomposable corank 3 flat by \autoref{theorem: carrier of path connected}, and $G\lor F =L$ is a corank 2 flat because $F$ does not contain $G$. Furthermore, 
        \begin{equation*}
        \begin{split}
        \rk (   G\lor F) & \leq \rk (   G)+\rk (   F)-\rk (   G\cap F) \\ &=  \rk (   E)-n + \rk (   E)-3-(\rk (   E)-(n+1)) \\  &= \rk (   E)-2.
        \end{split}
        \end{equation*}
         Let $Z = L\lor (X_i \cap X_{i+1})$. Then $Z$ is a hyperplane, because $X_i \cap X_{i+1}$ is not above $G.$ 
         
     \bigskip\noindent\textbf{2.2.1}
            Assume $Z\notin \Gamma.$ If $Z = X_{i+1}$, we define $\delta  = (Z)$; if not, let $\delta  = (Z, X_{i+1})$. Either way, $\delta$ is a Tutte path. Notice that if $L$ is indecomposable, the path $(X_{i-1},X_i,Z, X_{i-1})$ is elementary of the second kind. Thus, we have
        \begin{equation*}
        \begin{split} 
        \epsilon &= \epsilon_1 (X_{i-1}, X_{i}, X_{i+1})\epsilon_2 \\ &\sim \epsilon_1 (X_{i-1}, X_{i}, Z, X_{i-1})(X_{i-1}, Z)\delta \epsilon_2 \\
        &\sim \epsilon_1(X_{i-1}, Z)\delta \epsilon_2.
        \end{split}
        \end{equation*}
        But the path $\epsilon_1(X_{i-1}, Z)\delta \epsilon_2$ satisfies (a) and $u(\epsilon_1(X_{i-1}, Z)\delta \epsilon_2) \leq u(\epsilon)-1,$ which is a contradiction.
        
        If $L\supset G$ is not indecomposable, it follows from \autoref{theorem: disconnected corank2 on connected corank 3} that there is an indecomposable corank 3 flat $F'$ such that $L \supsetneq F'\supseteq G.$ We know by \autoref{theorem: connected corank 3 flats} that there is an indecomposable corank 2 flat $L'$ above $F'$ and $X_{i-1}$; note that the flat $X_{i-1}$ is above at least two corank 2 flats above $F'$, and $L$ is the unique decomposable corank 2 flat above $F'$ by \autoref{theorem: connected corank 3 flats}. There is a hyperplane $T \notin \Gamma$ above $L'$ that is not equal to $X_{i-1}$ because $ X_{i-1}\notin \Gamma.$ We also know from \autoref{theorem: connected corank 3 flats} that $X_{i-1}\cap T$ and $Z \cap T$ are indecomposable corank 2 flats. But then $(X_{i-1}, X_{i}, Z, T, X_{i-1})$ is a special path, and $X_{i-1}\cap X_i\cap Z = F$, which is distinct from $X_{i-1}\cap Z\cap T=F'$. Thus $\cork(F(\epsilon)) = n+1\geq 4$ (because it is below $F$ and $F'$). Therefore, by \autoref{theorem: good paths}, $(X_{i-1}, X_{i}, Z, T, X_{i-1})$ is null-homotopic. We have
        \begin{equation*}
    \begin{split} 
        \epsilon &= \epsilon_1 (X_{i-1}, X_{i}, X_{i+1})\epsilon_2 \\ &\sim \epsilon_1 (X_{i-1}, X_{i}, Z,T, X_{i-1})(X_{i-1},T, Z)\delta\epsilon_2 \\
        &\sim \epsilon_1(X_{i-1},T, Z)\delta\epsilon_2 \\
        & \sim \epsilon_3\delta\epsilon_2,
        \end{split}
        \end{equation*}
        where $\epsilon_3$ is a closed Tutte path on $G$ since $T , Z \supseteq G.$ Therefore the path $ \epsilon_3 \delta \epsilon_2$ satisfies (a) and $u(\epsilon_3 \delta \epsilon_2) \leq u(\epsilon)-1,$ which is a contradiction.
        
\bigskip\noindent\textbf{2.2.2}
         Assume $Z \in \Gamma.$
        By \autoref{prop:conn-diamond}, there there exists an indecomposable corank 2 flat $L'$ between $X_{i+1}$ and $F$ other than $X_i\cap X_{i+1}$. If $L'$ is below $X_{i-1},$ then $(X_{i-1}, X_{i}, X_{i+1}, X_{i-1})$ is an elementary path of the second kind and \begin{equation*}
        \begin{split}
        \epsilon  &= \epsilon_1 (X_{i-1}, X_{i}, X_{i+1})\epsilon_2 \\ &\sim \epsilon_1 (X_{i-1}, X_{i}, X_{i+1}, X_{i-1})(X_{i-1}, X_{i+1})\epsilon_2 \\ &\sim \epsilon_1(X_{i-1}, X_{i+1})\epsilon_2,
        \end{split}
        \end{equation*}
        meaning that $\epsilon_1(X_{i-1}, X_{i+1})\epsilon_2$ satisfies condition (a) but $u(\epsilon_1(X_{i-1}, X_{i+1})\epsilon_2) = u(\epsilon)-1<u(\epsilon)$, which is a contradiction. Therefore, $(X_i\cap X_{i-1}) \lor L'  = U$ and $L\lor L' = V$ are distinct hyperplanes  not equal to any of $X_{i-1},X_i$, or $X_{i+1}.$ Notice that $V \notin \Gamma$ because $Z  \supsetneq L$ is in $\Gamma$ but $X_{i-1}\supsetneq L$ is not in $\Gamma.$ 

First assume that $U \notin \Gamma.$ Because $(X_{i-1}, U, X_{i-1}), (X_{i-1}, V, X_{i-1})$, and $(X_{i+1}, X_i, U, X_{i+1})$ are elementary Tutte paths, we have
        \begin{equation*}
            \begin{split}
            \epsilon  &= \epsilon_1 (X_{i-1}, X_{i}, X_{i+1})\epsilon_2 \\
               &\sim \epsilon_1(X_{i-1},U,X_{i-1})(X_{i-1},U, X_{i}, X_{i+1})\epsilon_2 \\
               & \sim \epsilon_1(X_{i-1},U, X_{i}, X_{i+1})\epsilon_2 \\
               &\sim \epsilon_1(X_{i-1}, V, X_{i-1})(X_{i-1},U, X_{i}, X_{i+1})\epsilon_2 \\
                & \sim \epsilon_1(X_{i-1},V, U, X_{i}, X_{i+1})\epsilon_2\\
                & \sim \epsilon_1(X_{i-1},V,U, X_{i}, X_{i+1})(X_{i+1}, X_i, U, X_{i+1})\epsilon_2 \\
                & \sim \epsilon_1(X_{i-1},V,X_{i+1})\epsilon_2.
            \end{split}
        \end{equation*}

            But $V \supsetneq G$. Hence $\epsilon_1(X_{i-1},V,X_{i+1})\epsilon_2$ satisfies (a) and \[u(\epsilon_1(X_{i-1},V,X_{i+1})\epsilon_2) = u(\epsilon)-1<u(\epsilon),
            \] which is a contradiction.
    
            Assume that $U \in \Gamma.$ Notice that if all indecomposable corank 2 flats above $F$ are either above $U$ or $Z$, then the path $(X_{i-1}, X_i, X_{i+1}, V, X_{i-1})$ is an elementary path of the third kind. Therefore 
            \begin{equation*}
                \begin{split}
                      \epsilon  &= \epsilon_1 (X_{i-1}, X_{i}, X_{i+1})\epsilon_2 \\
                      &\sim \epsilon_1 (X_{i-1}, X_i, X_{i+1}, V, X_{i-1})(X_{i-1}, V, X_{i+1})\epsilon_2 \\
                      & \sim \epsilon_1 (X_{i-1}, V, X_{i+1})\epsilon_2, 
                \end{split}
            \end{equation*}
            which again means that $\epsilon_1 (X_{i-1}, V, X_{i+1})\epsilon_2$ satisfies (a) with \[u(\epsilon_1 (X_{i-1}, V, X_{i+1})\epsilon_2)<u(\epsilon),\] a contradiction. Thus, we may assume without loss of generality that there exists another indecomposable corank 2 flat $L''$ above $F$ which is neither above $U$ nor $Z.$

           \bigskip\noindent\textbf{2.2.2.1} First, assume that $X_{i+1}\supsetneq L''.$ We can then repeat the argument following the definition of  $L'$ in 2.2.2 with $L''$ replacing $L'$ and we find that $L'' \lor (X_{i-1}\cap X_i) \neq U$. Hence $L'' \lor (X_{i-1}\cap X_i)$ cannot be in $\Gamma$, since $U$ is above $X_i \cap X_{i-1}$ and in $\Gamma.$ The same argument for $U \notin \Gamma$ leads us to a contradiction.
Therefore $L'' \not\subseteq X_{i+1}.$ 

\bigskip\noindent\textbf{2.2.2.2} Second, assume that $L'' \subseteq X_i$. Then $L''\lor L = W_1$ is a hyperplane above $G$ by the submodular inequality, and $W_1$ is not equal to  $X_{i-1}$ or $Z$ because $X_{i-1}\cap X_i \neq L''$ and $ Z\cap X_i \neq L''.$ Notice that $W_1 \notin \Gamma$, because $W$ is above $L$ but $L \neq Z.$ We then deduce, because $(X_{i-1}, W_1, X_{i-1})$ is an elementary Tutte path, that
\begin{equation*}
    \begin{split}
          \epsilon  &= \epsilon_1 (X_{i-1}, X_{i}, X_{i+1})\epsilon_2 \\
          &\sim \epsilon_1 (X_{i-1}, W_1, X_{i-1})(X_{i-1}, W_1, X_i, X_{i+1})\epsilon_2\\
          & \sim \epsilon_1 (X_{i-1}, W_1, X_i, X_{i+1})\epsilon_2 \\
          & = \epsilon'.
    \end{split}
\end{equation*}
            Now observe that $\epsilon' = \epsilon_1 (X_{i-1}, W_1, X_i, X_{i+1})\epsilon_2$ satisfies (a) and (b), because $u(\epsilon) = u(\epsilon')$ and (c) because $v(\epsilon_1 (X_{i-1}, W_1, X_i, X_{i+1})\epsilon_2) = \cork( W_1, X_i, X_{i+1}) = \cork(L'') = 2.$ Therefore, we can replace $\epsilon$ with $\epsilon'$ and repeat the argument in 2.2.2 after we assumed $U \notin \Gamma$. We get that the flat $U' = L\lor  (   W_1 \cap X_i)=L\lor L'' = W_1 \notin \Gamma$, which has for $\epsilon'$ the same role as $U$ for $\epsilon$, is not in $\Gamma$. We know this case leads to a contradiction, because $\epsilon'$ can be deformed to some path with lower $u$.
            Hence $L'' \not\subseteq X_i.$

            We then conclude from the submodular inequality that $L'' \lor (X_i\cap X_{i+1}) = W_2$ is a hyperplane, with $W_2 \neq X_i, X_{i+1}, Z$, because $L''$ is not below $X_i, X_{i+1}$ and $Z\cap X_i \neq X_{i+1}\cap X_i.$

            \bigskip\noindent\textbf{2.2.2.3} Third, assume that $L'' \subsetneq X_{i-1}$. Since $(X_{i-1}, W_2, X_{i-1})$ is an elementary Tutte path (both $X_{i-1}$ and $W_2$ are above an indecomposable flat $L''$), we find that
            \begin{equation*}
                \begin{split}
                      \epsilon  &= \epsilon_1 (X_{i-1}, X_{i}, X_{i+1})\epsilon_2 \\
                      &\sim \epsilon_1 (X_{i-1}, W_2, X_{i-1})(X_{i-1}, W_2, X_i, X_{i+1})\epsilon_2 \\
                      & \sim \epsilon_1 (X_{i-1}, W_2, X_i, X_{i+1})\epsilon_2 \\
                      & = \epsilon'.
                \end{split}
            \end{equation*}
            As in 2.2.2.2., notice that $\epsilon'$ satisfies (a), (b), and (c) with $U'' = L\lor (W_2 \cap X_i) = L'' \cap L  = W_2 \notin \Gamma$. Hence we can replace $\epsilon$ with $\epsilon'$ in the argument after we assumed $U\notin \Gamma$. We know this leads to a contradiction because $U'' \notin \Gamma$.

            Hence $L'' \not\subseteq X_{i-1}.$ In this case, $L''\lor L = W_1,L'' \lor (X_i \cap X_{i+1}) =W_2$, and $L'' \lor (    X_{i-1}\cap X_i) = W_3$ are hyperplanes. We claim that these hyperplanes are pairwise distinct. For instance, assume for the sake of contradiction that $W_1 = W_2$; then $W_1$ is above $X_{i+1} \cap X_i$ and $L,$ and thus above $(X_{i+1} \cap X_i)\lor L$, which is $Z,$ but $L''$ is not below $Z$.
            Similarly, if $W_1 = W_3$, then $W_1$ is above $L$ and $(X_{i-1}\cap X_i)$, which is $X_{i-1}$, but $L''$ is not below $X_{i-1}.$
            
            Notice that the flats $W_j$ are not in $\Gamma$, because each of them lies above some corank 2 flats that are in $\Gamma$ and some corank $2$ flats that are not in $\Gamma$. Next, note that 
            \[
            (W_1, W_3, W_1), (W_2,X_i, W_2), (X_{i-1}, W_1, X_{i-1}), (X_{i-1}, W_3, X_{i-1}), (W_3, W_2, W_3)
            \]
            are elementary Tutte paths of the first kind. Consider
            \begin{equation*}
                \begin{split}
                    \epsilon'&= \epsilon_1(X_{i-1} ,W_1, W_2, X_{i+1})\epsilon_2 \\
                    &\sim \epsilon_1(X_{i-1}, W_1)(W_1, W_3, W_1)(W_1, W_2)(W_2, X_i, W_2)(W_2, X_{i+1})\epsilon_2 \\
                    & = \epsilon_1(X_{i-1}, W_1, W_3, W_2, X_i, X_{i+1})\epsilon_2 \\
                    & \sim \epsilon_1(X_{i-1}, W_1,X_{i-1})(X_{i-1}, W_3)(W_3, W_2,W_3)(W_3, X_i, X_{i+1})\epsilon_2 \\
                    & = \epsilon_1(X_{i-1}, W_3 ,X_i, X_{i+1})\epsilon_2 \\
                    & = \epsilon_1(X_{i-1}, W_3, X_{i-1})(X_{i-1},X_i, X_{i+1})\epsilon_2 \\
                    & \sim \epsilon.
                \end{split}
            \end{equation*}
           The Tutte path $\epsilon'$ satisfies (a), (b) ($W_3$ is above $G$), and (c) with $v(\epsilon') = 2.$ Hence we can replace $\epsilon$ with $\epsilon'$ in the argument starting at 2.2.2 and conclude that $U''' =L \lor (W_3\cap X_i) = L \lor L'' =W_1\notin \Gamma, $ where $U'''$ plays the analogous role to $U$. We thus have $U''' \notin \Gamma$, which we know leads to a contradiction.

    \bigskip\noindent\textbf{2.3}
 Assume $v(\epsilon)>3.$ Because $X_{i-1}\cap X_i$ and $F = X_{i-1}\cap X_i\cap X_{i+1}$ are indecomposable, there exists an indecomposable corank 3 flat $K$ with $X_{i-1}\cap X_i\supsetneq K \supsetneq F$ by \autoref{theorem: connected corank 3 flats}. The flat $K \lor G = L$ is indecomposable, because $K\not\supseteq G$ (because $X_i\not\supseteq G$), and is of corank $2$ because of the submodular inequality. Notice that $X_{i-1}\supsetneq L$, since $X_{i-1}\supsetneq G$ and $X_{i-1}\supsetneq K$. Pick a hyperplane $T$ above $L$ that is not equal to $X_{i-1}$ and, if possible, pick $T$ that is in $\Gamma.$

    By \autoref{prop:conn-complement} applied to $[X_i\cap X_{i+1}, T, F]$, we get an indecomposable corank $v(\epsilon)-1$ flat $F'$ such that $X_i\cap X_{i+1}\supsetneq F'\supsetneq F$ and $F' \lor T = E.$ Observe that $X_{i-1}\not\supseteq F'$, because otherwise $F = X_{i-1} \cap X_i\cap X_{i+1} \supseteq F'$, which is a contradiction. Therefore, $F'\lor L = T'$ is a hyperplane not equal to $T$ nor $X_{i-1}.$ Additionally, $L$ is indecomposable and $T' \notin \Gamma$ holds, because if we could pick $T\in \Gamma$ then it is the only hyperplane above $L$ in $\Gamma$ because $X_{i-1}\notin\Gamma, $ and if we could not, then there are no members of $\Gamma$ above $L.$

    By the submodular inequality and the fact that they are not proper subsets, we get that $K\lor F'= L'$ is a corank 2 flat. Notice that $T' \supsetneq L'$, because $T' \supsetneq F'$ and $T'\supsetneq L \supsetneq K$. Furthermore, $X_i\supsetneq F'$, because $X_i\cap X_{i+1}  \supsetneq F'$ and $X_i\cap X_{i-1}  \supsetneq K.$

    Assume that $L'$ is indecomposable. Then $(X_{i-1}, T', X_{i-1})$ is an elementary path of the first kind and 
\begin{equation*}
    \begin{split}
        \epsilon & \sim  \epsilon_1(X_{i-1}, T', X_{i-1})(X_{i-1}, X_i, X_{i+1})\epsilon_2 \\
        & =\epsilon_1(X_{i-1}, T', X_i, X_{i+1})\epsilon_2 \\
        & = \epsilon'.
    \end{split}
\end{equation*}
We have $T'\supsetneq G$. Hence $\epsilon'$ satisfies (a) and (b), and 
\[
v(\epsilon') = \cork ( T'\cap X_i \cap X_{i+1}) = \cork(F' )<\cork(F), 
\]
which is a contradiction.

Thus $L'$ is decomposable. Using \autoref{theorem: disconnected corank2 on connected corank 3}, we get an indecomposable corank 3 flat $K'$ with $L' \supsetneq K' \supsetneq F'$, since $F'$ is indecomposable. By the submodular inequality, and because $K' \subseteq X_i$, we see that $K' \lor G = L'' $ is a corank 2 flat, which is indecomposable by \autoref{theorem: connected corank 3 flats} (since $X_i \cap T'$ is the unique decomposable corank 2 flat above $K'$). Therefore, we can pick a hyperplane $U$ above $L''$ and below $T'$  ($T'$ is above two indecomposable corank 2 flats above $K'$) which is not equal to $T'$. We know that $U \notin \Gamma$, because $T' \notin\Gamma.$

Observe that $(T', U, X_i, X_{i-1}, T')$ is a special path, because $T'\cap X_i = L'$ is decomposable and $T'\cap U\cap X_i =K'$, $X_i\cap X_{i-1}\cap T' = K$ are indecomposable corank 3 flats. Hence this special path is null-homotopic by \autoref{theorem: good paths}. Now note that
\begin{equation*}
    \begin{split}
        \epsilon &\sim \epsilon_1(X_{i-1}, T', X_{i-1})(X_{i-1}, X_i, X_{i+1})\epsilon_2 \\
        & = \epsilon_1(X_{i-1},T',X_{i-1}, X_i, X_{i+1})\epsilon_2 \\
        & \sim \epsilon_1(X_{i-1},T')(T', U, X_i, X_{i-1}, T')(T',X_{i-1}, X_i, X_{i+1})\epsilon_2\\
        & = \epsilon_1(X_{i-1},T', U, X_i, X_{i-1}, T',X_{i-1}, X_i, X_{i+1})\epsilon_2 \\
        & = \epsilon_1(X_{i-1},T', U, X_i)( X_i, X_{i-1}, T', X_{i-1}, X_i)( X_{i}, X_{i+1})\epsilon_2 \\
        & = \epsilon_1(X_{i-1},T', U,X_i, X_{i+1})\epsilon_2 \\
        & = \epsilon'.
    \end{split}
\end{equation*}
But then $\epsilon'$ satisfies (a) and (b) with 
\[
v(\epsilon')= \cork(T' \cap X_i \cap X_{i+1}) = \cork(F') = v(\epsilon)-1<v(\epsilon),
\]
which is a contradiction. This concludes the proof of the homotopy theorem.
\end{proof}

\section{Tutte's thought process, in his own words}

At this point, the reader may be wondering how on earth Tutte came up with the homotopy theorem and its remarkably intricate proof in the first place. In this final section, we quote Tutte's own writing on the subject from \cite{Tutte71}\footnote{There is a PDF of this reference available online, but it seems that Section 6.4 appears only in the print version of the book, which is why we've chosen to excerpt it more or less in full here.}.

In the preface to \cite{Tutte71}, Tutte writes,

\begin{quote}
Chapter 6 is supplementary. It is meant to give very short descriptions of some parts of matroid theory that are not dealt with in the other five chapters. In particular it is concerned with the `homotopy theorem' and the characterization of regular and graphic matroids. The author has been informed that his treatment of these matters in his papers on matroids is exceptionally obscure. He hopes that a perusal of Chapter 6 may make it easier to read the detailed proofs.
\end{quote}

\medskip

Later, in \cite[Section 6.4]{Tutte71}, he continues:

\begin{quote}
We suppose given a matroid $M$
and a linear subclass
$C$ of $M$. We study re-entrant paths off $C$. Suppose we have two such paths,
$P$ and $Q$, of the following forms:
\[
\begin{aligned}
P &= (X_1, X_2, \ldots, X_n, X_1), \\
Q &= (X_i, Y_1, Y_2, \ldots, Y_k, X_i),
\end{aligned}
\]
where $1 \le i \le n$. Then another re-entrant path off $C$ is
\[
R = (X_1, X_2, \ldots, X_{i-1}, X_i, Y_1, Y_2, \ldots, Y_k, X_i, X_{i+1}, \ldots, X_n, X_1).
\]

We say that $P$ is \emph{deformed} into $R$ by the adjunction $Q$, or that
$R$ is \emph{deformed into $P$} by the deletion of $Q$. 
In homotopy theory we specify a class $U$
of re-entrant paths off $C$ called ``elementary''. Two paths $P$ and $R$ are
then said to be \emph{homotopic} if $R$ can be transformed into $P$ by a finite
sequence of operations, each of which adjoins or deletes an elementary path.

The problem of homotopy first arose in the following form: can we choose $U$
in some simple way so as to make all re-entrant paths of $C$ null-homotopic?

In choosing $U$ it seemed natural to include all paths of $C$ of the following
forms: $(X,Y,X)$ on a line and $(X,Y,Z,X)$ on a plane\footnote{In Tutte's original account, paths consist of circuits of a matroid $M$, which correspond to hyperplanes of the dual matroid $M^\ast$. Therefore, the lattice that Tutte considered is actually the lattice of unions of circuits of $M$, which by \cite[Propositions 1.7.8 and 2.1.6]{Oxley11} 
is the opposite lattice of the lattice of flats of $M^\ast$. If $F$ is a flat of corank $r$ in $M^\ast$, then the corresponding union of circuits $C = E - F$ in $M$ is by definition of \emph{dimension} $d = r - 1$. \emph{Lines} are unions of circuits that are of dimension $1$. \emph{Planes} are unions of circuits that are of dimension $2$.} . These are the
\emph{elementary paths} of the first and second kinds, respectively.
Attention was then drawn to paths, off $C$, on a plane $P$ of the form
\[
(X, Y, Z, T, X),
\]
where $X, Y, Z$ and $T$ are distinct, there are two distinct points $A$ and $B$
on $P$ such that each connected line on $P$ is on either $A$ or $B$,
$X \cup Y$ and $Z \cup T$ are lines on $A$, and $Y \cup Z$ and $T \cup X$ are lines on $B$.
It was found to be impossible to transform such a path into the null path by adjoining and deleting elementary paths of the
first and second kinds. Such paths were therefore included in $U$ as \emph{elementary paths of the third kind}.

An attempt was next made to show that a re-entrant path off $C$, confined to a
flat\footnote{In the spirit of the previous footnote, when Tutte talks about a \emph{flat} of $M$, he is referring to a union of circuits of $M$.} of $M$ of dimension $d$ (that is, rank $d+1$), could always be deformed into
a path in a flat of lower dimension by adjoining and deleting elementary
paths of the first, second and third kinds. This attempt succeeded only
partially. It was found that the operation is possible for all $d \geq 0$ if it is possible for $d=3$. But a close
investigation of the three-dimensional case disclosed a class of paths not
deformable into the null path by adjoining and deleting elementary paths
already recognized. These paths were therefore included in $U$ as elementary
paths of the fourth kind. It could then be shown that, with respect to $U$, all re-entrant
paths off $C$ were null-homotopic. This is the result that we have referred to
as the Homotopy Theorem.
\end{quote}


\begin{small}
 \bibliographystyle{plain}
 \bibliography{matroid}
\end{small}


\end{document}